\DeclareFontFamily{U}{rsfs}{%
\skewchar\font127}
\DeclareFontShape{U}{rsfs}{m}{n}{%
<-6>rsfs5<6-8.5>rsfs7<8.5->rsfs10}{}
\DeclareSymbolFont{rsfs}{U}{rsfs}{m}{n}
\DeclareRobustCommand*\rsfs{%
\@fontswitch\relax\mathrsfs}
\theoremstyle{plain}
\newtheorem{thm}{Theorem}[section]
\newtheorem{prop}[thm]{Proposition}
\newtheorem{lem}[thm]{Lemma}
\newtheorem{lemma}[thm]{Lemma}
\newtheorem{defi}[thm]{Definition}
\newtheorem{rmk}[thm]{Remark}
\newtheorem{cor}[thm]{Corollary}
\newtheorem{prop-defi}[thm]{Proposition-Definition}
\newtheorem{thm-defi}[thm]{Theorem-Definition}
\newtheorem{lem-defi}[thm]{Lemma-Definition}
\newtheorem{conj}[thm]{Conjecture}
\newtheorem{exam}[thm]{Example}
\newtheorem{example}[thm]{Example}
\newcommand{\vir}{\mathrm{vir}}
\newcommand{\rel}{\mathrm{rel}}
\newcommand{\red}{\mathrm{red}}
\newcommand{\std}{\mathrm{std}}
\newdimen\argwidth
\def\db[#1\db]{
 \setbox0=\hbox{$#1$}\argwidth=\wd0
 \setbox0=\hbox{$\left[\box0\right]$}
  \advance\argwidth by -\wd0
 \left[\kern.3\argwidth\box0 \kern.3\argwidth\right]}
\newcommand{\cC}{\mathcal{C}}
\newcommand{\eE}{\mathcal{E}}
\newcommand{\fF}{\mathcal{F}}
\newcommand{\gG}{\mathcal{G}}
\newcommand{\hH}{\mathcal{H}}
\newcommand{\oO}{\mathcal{O}}
\newcommand{\Hom}{\mathop{\rm Hom}\nolimits}
\newcommand{\dR}{\mathbf{R}}
\newcommand{\Hilb}{\mathop{\rm Hilb}\nolimits}
\newcommand{\Pic}{\mathop{\rm Pic}\nolimits}
\newcommand{\Chow}{\mathop{\rm Chow}\nolimits}
\newcommand{\id}{\textrm{id}}
\newcommand{\ch}{\mathop{\rm ch}\nolimits}
\newcommand{\td}{\mathop{\rm td}\nolimits}
\newcommand{\Ext}{\mathop{\rm Ext}\nolimits}
\newcommand{\rank}{\mathop{\rm rank}\nolimits}
\newcommand{\Coh}{\mathop{\rm Coh}\nolimits}
\newcommand{\ev}{\mathop{\rm ev}\nolimits}
\newcommand{\cneq}{\mathrel{\raise.095ex\hbox{:}\mkern-4.2mu=}}
\newcommand{\eqcn}{\mathrel{=\mkern-4.5mu\raise.095ex\hbox{:}}}
\newcommand{\Aut}{\mathop{\rm Aut}\nolimits}
\newcommand{\DT}{\mathop{\rm DT}\nolimits}
\newcommand{\GW}{\mathop{\rm GW}\nolimits}
\newcommand{\Sym}{\mathop{\rm Sym}\nolimits}
\newcommand{\End}{\mathop{\rm End}\nolimits}
\newcommand{\Ker}{\mathop{\rm Ker}\nolimits}
\newcommand{\BC}{{\mathbb{C}}}
\newcommand{\BE}{{\mathbb{E}}}
\newcommand{\BF}{{\mathbb{F}}}
\newcommand{\BH}{{\mathbb{H}}}
\newcommand{\BL}{{\mathbb{L}}}
\newcommand{\BQ}{{\mathbb{Q}}}
\newcommand{\BR}{{\mathbb{R}}}
\newcommand{\BZ}{{\mathbb{Z}}}
\newcommand{\CA}{{\mathcal A}}
\newcommand{\CB}{{\mathcal B}}
\newcommand{\CC}{{\mathcal C}}
\newcommand{\CE}{{\mathcal E}}
\newcommand{\CI}{{\mathcal I}}
\newcommand{\CO}{{\mathcal O}}
\newcommand{\CP}{{\mathcal P}}
\newcommand{\CU}{{\mathcal U}}
\newcommand{\CV}{{\mathcal V}}
\newcommand{\CX}{{\mathcal X}}
\newcommand{\CZ}{{\mathcal Z}}
\newcommand{\Fg}{{\mathfrak{g}}}
\newcommand{\Fq}{{\mathfrak{q}}}
\newcommand{\Ft}{{\mathfrak{t}}}
\newcommand{\fG}{{\mathfrak{G}}}
\newcommand\Kum{\mathrm{Kum}}
\newcommand{\pt}{{\mathsf{p}}}
\newcommand{\p}{{\mathbb{P}}}
\newcommand{\F}{{\mathsf{F}}}
\newcommand\Gr{\mathrm{Gr}}
\newcommand\pr{\mathrm{pr}}
\newcommand{\blangle}{\big\langle}
\newcommand{\brangle}{\big\rangle}
\renewcommand{\div}{\operatorname{div}}
\newcommand{\Mbar}{{\overline M}}
\title[{Gopakumar-Vafa type invariants of holomorphic symplectic  4-folds}]
{Gopakumar-Vafa type invariants of \\ holomorphic symplectic 4-folds}
\date{}
\author{Yalong Cao}
\address{RIKEN Interdisciplinary Theoretical and Mathematical Sciences Program (iTHEMS), 2-1, Hirosawa, Wako-shi, Saitama, 351-0198, Japan}
\email{yalong.cao@riken.jp}
\author{Georg Oberdieck}
\address{University of Bonn, Institut f\"ur Mathematik}
\email{georgo@math.uni-bonn.de}
\author{Yukinobu Toda}
\address{Kavli Institute for the Physics and Mathematics of the Universe (WPI), The University of Tokyo Institutes for Advanced Study, The University of Tokyo, Kashiwa, Chiba 277-8583, Japan}
\email{yukinobu.toda@ipmu.jp}
\begin{document}
\maketitle
\begin{abstract}
Using reduced Gromov-Witten theory, we define new invariants
which capture the enumerative geometry of curves on
holomorphic symplectic 4-folds.
The invariants are analogous to the BPS counts of Gopakumar and Vafa for Calabi-Yau 3-folds, Klemm and Pandharipande for Calabi-Yau 4-folds,  
and Pandharipande and Zinger for Calabi-Yau 5-folds.

We conjecture that our invariants are integers and give a sheaf-theoretic interpretation in terms of reduced $4$-dimensional Donaldson-Thomas invariants of one-dimensional 
stable sheaves.
We check our conjectures for the product of two $K3$ surfaces and for the cotangent bundle of $\mathbb{P}^2$.
Modulo the conjectural holomorphic anomaly equation, we compute our invariants also for the Hilbert scheme of two points on a $K3$ surface.
This yields a conjectural formula for the number of isolated genus $2$ curves of minimal degree on a very general hyperk\"ahler $4$-fold of $K3^{[2]}$-type.
The formula may be viewed as a $4$-dimensional analogue of the classical Yau-Zaslow formula
concerning counts of rational curves on $K3$ surfaces.

In the course of our computations, we also derive a new closed formula for the Fujiki constants of the Chern classes of tangent bundles
of both Hilbert schemes of points on $K3$ surfaces and generalized Kummer varieties.
\end{abstract}

\setcounter{tocdepth}{1}
\tableofcontents

\section{Introduction}
\subsection{Gopakumar-Vafa invariants}
Gromov-Witten invariants of a smooth projective variety $X$ are defined
by integration over the virtual class \cite{BF, LT} of the moduli space $\Mbar_{g,n}(X,\beta)$ of genus $g$ degree $\beta \in H_2(X,\BZ)$ stable maps:
\begin{align}\label{intro GWinv}
\blangle \tau_{k_1}(\gamma_1) \cdots \tau_{k_n}(\gamma_n) \brangle^{\GW}_{g,\beta}
%\mathrm{GW}_{g, \beta}(\gamma_1, \ldots, \gamma_n)
=\int_{[\overline{M}_{g, n}(X, \beta)]^{\rm{vir}}}
\prod_{i=1}^n \mathrm{ev}_i^{\ast}(\gamma_i)\cdot\psi_i^{k_i}.
\end{align}
Here $\mathrm{ev}_i \colon \overline{M}_{g,n}(X, \beta)\to X$
is the evaluation map at the $i$-th marking, $\psi_i$ is the $i$-th cotangent line class,
and $\gamma_i \in H^{\ast}(X,\BQ)$ are cohomology classes.
Since $\overline{M}_{g,n}(X, \beta)$ is a Deligne-Mumford stack,
Gromov-Witten invariants are in general rational numbers, even if all $\gamma_i$ are integral.
Moreover the enumerative meaning of Gromov-Witten invariants is often not clear.
%because the moduli space parametrizes also maps with contracted components,  

For Calabi-Yau 3-folds, Gopakumar and Vafa \cite{GV} 
found explicit linear transformations
which transform the Gromov-Witten invariants
to a set of invariants (called {\em Gopakumar-Vafa invariants})
which they conjectured to be integers.
In an ideal geometry, where all curves are isolated, disjoint and smooth,
Gopakumar-Vafa invariants
should be the actual count of curves of given genus and degree.
The integrality of Gopakumar-Vafa invariants was proven recently in \cite{IP}.
A similar transformation of Gromov-Witten invariants into (conjectural) $\BZ$-valued invariants has been proposed for Calabi-Yau 4-folds by Klemm and Pandharipande \cite{KP},
and for Calabi-Yau 5-folds by Pandharipande and Zinger \cite{PZ}.
Universal transformations are expected in every dimension \cite{KP}.

Let $X$ be a holomorphic symplectic $4$-fold,
by which we mean a smooth complex projective 4-fold
which is equipped with a non-degenerate holomorphic 2-form $\sigma\in H^0(X,\Omega^2_X)$.
Since the obstruction sheaf has a trivial quotient, the ordinary Gromov-Witten invariants of $X$ vanish for all non-zero curve classes.
As a result, also all Klemm-Pandharipande invariants of $X$ vanish. %are hence not interesting.
Instead a reduced Gromov-Witten theory is obtained by Kiem-Li's cosection localization \cite{KiL}.
It is defined as in \eqref{intro GWinv} but by integration over the {\em reduced} virtual fundamental class:\footnote{We will only work with the reduced virtual class in this paper, hence we will denote it simply by $[ - ]^{\vir}$.}
\begin{equation}\label{intro vir cla}[\overline{M}_{g, n}(X, \beta)]^{\vir}\in A_{2-g+n}(\overline{M}_{g, n}(X, \beta)). \end{equation}

We are interested here in integer-valued invariants,
which underlie the (reduced) Gromov-Witten invariants \eqref{intro GWinv} of the holomorphic symplectic $4$-fold $X$. % and capture the enumerative geometry of its algebraic curves.
In genus $0$, all Gromov-Witten invariants can be reconstructed from the \textit{primary invariants},~i.e.~the integrals \eqref{intro GWinv} where all $k_i=0$.
Our proposal for the genus $0$ primary invariants is as follows:
\begin{defi}\emph{(Definition \ref{def of g=0 GV inv})}\label{intro def of g=0 GV inv}
For any $\gamma_1, \ldots, \gamma_n \in H^{\ast}(X,\BZ)$, 
we define the genus $0$ Gopakumar-Vafa invariant $n_{0, \beta}(\gamma_1, \ldots, \gamma_n) \in \BQ$ recursively by:
$$\blangle \tau_{0}(\gamma_1)\cdots\tau_{0}(\gamma_n) \brangle^{\GW}_{0,\beta}=\sum_{\begin{subarray}{c}k\geqslant   1, k|\beta  \end{subarray}}k^{n-3}\, n_{0, \beta/k}(\gamma_1, \ldots, \gamma_n). $$
\end{defi}
In fact, through a twistor space construction, this definition follows immediately from a similar definition on Calabi-Yau 5-folds given in \cite{PZ} (see~\S \ref{sect on def of gv} for more explanations). 

In genus 1, the situation is more complicated and does not follow from 5-fold geometry. 
Since the virtual dimension of \eqref{intro vir cla} is $1+n$, we require one marked point and an insertion $\gamma\in H^4(X,\mathbb{Z})$.
%to define genus 1 Gromov-Witten invariants. 
Because curves in imprimitive curve classes are very difficult to control in an ideal geometry (see Section~\ref{sect on heur arg})
we will restrict us to a primitive curve class (i.e.~where $\beta$ is not a multiple of a class in $H_2(X,\BZ)$).
\begin{defi}\emph{(Definition \ref{defn g1 GV primitive})} \label{intro defn g1 GV primitive}
Assume that $\beta \in H_2(X,\BZ)$ is primitive. For any $\gamma\in H^4(X, \mathbb{Z})$, we define
the genus $1$ Gopakumar-Vafa invariant $n_{1,\beta}(\gamma)\in \mathbb{Q} $ by
\[
\blangle \tau_0(\gamma) \brangle^{\GW}_{1,\beta}
= n_{1,\beta}(\gamma) - \frac{1}{24} \blangle \tau_0(\gamma) \tau_0(c_2(T_X)) \brangle_{0,\beta}^{\GW}.
\]
\end{defi}
In genus 2, the situation is even more complicated and attracting.
In fact, the appearance of genus 2 invariants is a new phenomenon that is not available on ordinary Calabi-Yau 4-folds and Calabi-Yau 5-folds.  
By the virtual dimension of \eqref{intro vir cla},
one expects a finite number of isolated genus $2$ curves.
The genus $2$ Gopakumar-Vafa invariant should be a count of these curves.
\begin{defi}\emph{(Definition \ref{defn g2 GV primitive})}\label{intro defn g2 GV primitive}
Assume that $\beta \in H_2(X,\BZ)$ is primitive. We define the genus $2$ Gopakumar-Vafa invariant $n_{2,\beta}\in \mathbb{Q}$ by
\[
\blangle \varnothing \brangle^{\GW}_{2,\beta}=n_{2,\beta}
- \frac{1}{24} n_{1,\beta}(c_2(X))
+ \frac{1}{2 \cdot 24^2} \blangle \tau_0(c_2(X)) \tau_0(c_2(X)) \brangle^{\GW}_{0,\beta}
+ \frac{1}{24} N_{\mathrm{nodal},\beta},
\]
%where $n_{1,\beta}(-)$ is given as in Definition \ref{intro defn g1 GV primitive} and
where $N_{\mathrm{nodal},\beta}\in \mathbb{Q}$ is the virtual count of rational nodal curves as defined in Eqn.~\eqref{Nnodal}.  
\end{defi}
Our first main conjecture is about the integrality of these definitions: 
\begin{conj}\emph{(Conjecture \ref{conj on integrality})}\label{intro conj on integrality}
With the notations as above, we have 
$$ n_{0,\beta}(\gamma_1,\ldots,\gamma_n), \,\,\, n_{1, \beta}(\gamma), \, \,\, n_{2, \beta} \in \mathbb{Z}. $$
%$$(i) \,\, n_{0,\beta}(\gamma_1,\ldots,\gamma_n)\in \mathbb{Z}, \,\,(ii) \,\, n_{1, \beta}(\gamma)\in \mathbb{Z}, \,\,(iii) \,\, n_{2, \beta} \in \mathbb{Z}. $$
\end{conj}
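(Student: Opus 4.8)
The plan is to prove integrality by producing a sheaf-theoretic model in which the three families of invariants become manifestly $\BZ$-valued, and then establishing a Gromov--Witten/Donaldson--Thomas type correspondence identifying the GW-defined quantities of Definitions~\ref{intro def of g=0 GV inv}--\ref{intro defn g2 GV primitive} with these sheaf counts. On the sheaf side one works with moduli spaces $M_{\beta,\chi}$ of one-dimensional Gieseker-stable sheaves $F$ on $X$ with $[\Supp F]=\beta$ and $\chi(F)=\chi$. Because $X$ carries a nowhere-degenerate $2$-form $\sigma$, the natural $4$-dimensional Donaldson--Thomas obstruction theory (in the sense of Oh--Thomas) admits a cosection, exactly as on the Gromov--Witten side \cite{KiL}; quotienting by it produces a \emph{reduced} $\DT_4$ virtual class and, after fixing orientations and inserting the appropriate primary and tautological classes, integer-valued reduced invariants $\DT_{\beta,\chi}(\cdots)\in\BZ$. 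Integrality on this side is automatic, so the entire content of Conjecture~\ref{intro conj on integrality} is transferred to the correspondence.

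For the genus $0$ invariants there is a second, independent route that I would pursue first. As indicated after Definition~\ref{intro def of g=0 GV inv}, the multiple-cover formula is precisely the one Pandharipande--Zinger impose on Calabi--Yau $5$-folds \cite{PZ}, and the twistor construction of \S\ref{sect on def of gv} identifies $\langle\prod\tau_0(\gamma_i)\rangle^{\GW}_{0,\beta}$ with the corresponding genus $0$ GW series of a CY$5$ total space. M\"obius inversion then writes $n_{0,\beta}(\gamma_1,\dots,\gamma_n)$ as an explicit $\BZ$-linear combination of the $\langle\,\cdot\,\rangle^{\GW}_{0,\beta/k}$, so that integrality of the genus $0$ GV invariants becomes exactly the Pandharipande--Zinger integrality statement for the associated CY$5$. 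I would therefore reduce genus $0$ to that $5$-fold statement and attack the latter via the stable-sheaf moduli $M_{\beta,\chi}$ with $\chi$ in the range governing rational curves, matching the $k^{n-3}$ weights to the contributions of \'etale multiple covers.

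The genus $1$ and $2$ cases do not descend from $5$-fold geometry and must be treated by local-to-global analysis. Here I would first compute, in the ideal local model, the reduced GW contribution of an isolated smooth curve $C\subset X$ of genus $g\le 2$ with its normal bundle $N_{C/X}$, together with that of a rational nodal curve, and check that the subtractions in Definitions~\ref{intro defn g1 GV primitive}--\ref{intro defn g2 GV primitive} --- the $\tfrac{1}{24}$-multiples of the $c_2(X)$-insertions and the $\tfrac{1}{24}N_{\mathrm{nodal},\beta}$ term --- are exactly the degenerate (lower-genus and nodal) contributions stripped off by the BPS expansion; this reproduces, heuristically, the integer count of genus $g$ curves (compare Section~\ref{sect on heur arg}). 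To turn this into a proof I would match each local GW contribution with the contribution of the structure sheaf $\CO_C$, respectively its deformations, to the reduced $\DT_4$ invariant of $M_{\beta,\chi}$, and then globalize: a cosection/degeneration argument, or a universality statement for the $\sigma$-reduced theories, should propagate the local identity to the full invariant and hence yield $n_{1,\beta}(\gamma),\,n_{2,\beta}\in\BZ$.

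The hard part will be genus $2$. Unlike genus $0$ and $1$, there is no Calabi--Yau $3$/$4$/$5$ precedent, so both the correct integral sheaf-theoretic count and the precise shape of the GW/DT correspondence at this genus are genuinely new. The delicate point is disentangling the true count of isolated genus $2$ curves from the two kinds of excess contributions already present in $\langle\varnothing\rangle^{\GW}_{2,\beta}$: the contracted-component contributions of genus $1$ curves (absorbed by the $n_{1,\beta}(c_2(X))$ term) and the contributions of genus $0$ curves acquiring a node (absorbed by $N_{\mathrm{nodal},\beta}$ together with the $c_2(X)^2$ term). Showing that the reduced $\DT_4$ count of one-dimensional sheaves reproduces exactly this combination, and that the universal coefficients $-\tfrac1{24}$, $\tfrac{1}{2\cdot 24^2}$, $\tfrac1{24}$ are thereby forced, is where I expect the main difficulty to lie; verifying the resulting identity on the product of two $K3$ surfaces and on $T^\ast\BP^2$ gives the crucial consistency check before attempting the general case.
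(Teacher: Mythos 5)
The statement you are addressing is a \emph{conjecture} in the paper: the authors do not prove it in general, but justify it by showing that under the ideal-geometry hypotheses of \S\ref{sect on heur arg}--\S\ref{subsec:ideal geometry primitive} the quantities $n_{0,\beta}$, $n_{1,\beta}(\gamma)$, $n_{2,\beta}$ are honest enumerative counts of curves (hence integers), and then verify integrality by explicit closed-form computation for $S\times T$, $T^{\ast}\mathbb{P}^2$, and (modulo the holomorphic anomaly equation) $S^{[2]}$. Your proposal reproduces this philosophy --- ideal-geometry matching plus a GV/$\DT_4$ correspondence --- but presents it as a proof strategy, and its load-bearing step fails.

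The gap is your claim that integrality is ``automatic'' on the sheaf side. It is not. The reduced $\DT_4$ class of Kiem--Park lives in $A_2(M_\beta,\mathbb{Q})$; it is built from half Euler classes of quadratic bundles (defined only after isotropic reduction, and only up to sign), and the insertions $\tau_i$ involve $\ch_{3+i}(\mathbb{F}_{\mathrm{norm}})$, which are rational classes. The paper's own computations make this concrete: on $T^{\ast}\mathbb{P}^2$ one has $\langle\tau_1(H^2)\rangle^{\DT_4}_{1}=-\tfrac12$ and $\langle\tau_3(1)\rangle^{\DT_4}_{1}=-\tfrac18$, and the correspondence of Conjecture~\ref{conj on DT4/GV} extracts $n_{1,\beta}(\gamma)$ and $n_{2,\beta}$ from $\DT_4$ invariants through the rational coefficients $-\tfrac12$ and $-\tfrac1{12}$. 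So even granting the full GV/$\DT_4$ correspondence, integrality would not follow; it would remain a separate open statement about these rational combinations. Likewise your genus-$0$ reduction trades one open integrality conjecture for the (also unproven) Pandharipande--Zinger integrality on Calabi--Yau $5$-folds. What can actually be carried out --- and what the paper does --- is (a) the ideal-geometry computation of \S\ref{sect on heuristic of g=1 GW/GV}--\S\ref{sect on heuristic of g=2 GW/GV}, whose genus-$2$ part requires the nontrivial splitting of the reduced virtual class via the relative moduli space $\Mbar_{2,0}(\mathcal{C}^0_\beta/S^0_\beta,1)$ (Proposition~\ref{prop:Comparision virtual class}), a point your outline does not engage with, and (b) case-by-case verification from closed formulas (modular forms with integer coefficients for $K3\times K3$ and $K3^{[2]}$, localization for $T^{\ast}\mathbb{P}^2$). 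Neither step yields a general proof, and your proposal does not either.
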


The definitions above are found via computations in 
an `ideal' geometry where we assume that algebraic curves behave in the expected way, see \S \ref{sect on heur arg}, \S \ref{subsec:ideal geometry primitive}.\footnote{Similar considerations in ideal geometries were taken by Klemm-Pandharipande on Calabi-Yau 4-folds \cite{KP} and Pandharipande-Zinger on Calabi-Yau 5-folds \cite{PZ}, though our case looks more complicated (see \S \ref{sect on heur arg} for details). }
We justify Conjecture~\ref{intro conj on integrality} in such an ideal case, which takes the whole \S \ref{sect on heuristic of g=1 GW/GV}, \S \ref{sect on heuristic of g=2 GW/GV}.

\subsection{GV/$\DT_4$ correspondence}
The second main theme of this paper is to give a sheaf theoretic interpretation of Gopakumar-Vafa invariants. This is motivated by
the parallel work of \cite{CMT1, CT2} on ordinary Calabi-Yau 4-folds.   

Let $M_\beta$ be the moduli scheme of one dimensional stable sheaves $F$ on $X$ with $\ch_3(F)=\beta$, $\chi(F)=1$. 
By \cite{KiP, Sav}, the ordinary
$\DT_4$ virtual class \cite{BJ, OT} (see also \cite{CL1}) of $M_{\beta}$ vanishes. 
By Kiem-Park's cosection localization \cite{KiP}, we instead have a (reduced) virtual class
\begin{align}\label{intro red vir class}[M_\beta]^{\vir}\in A_2(M_\beta,\mathbb{Q}). \end{align}
As usual, the virtual class depends on a choice of orientation \cite{CGJ, CL2}. More precisely, for each connected component of $M_{\beta}$, there 
are two choices of orientation which affect the virtual class by a sign (component-wise).
To define descendent invariants, consider the insertion operators:  
\begin{align*}\tau_i: H^{*}(X,\mathbb{Z})\to H^{*+2i-2}(M_\beta,\mathbb{Q}), \end{align*}
\begin{align*}\tau_i(\bullet):=(\pi_{M})_{\ast}\left(\pi_X^{\ast}(\bullet) \cup\ch_{3+i}(\mathbb{F}_{\mathrm{norm}})\right),
\end{align*}
where $\mathbb{F}_{\mathrm{norm}}$ is the normalized universal sheaf, i.e. $
\det(\pi_{M*}\mathbb{F}_{\mathrm{norm}})\cong \oO_{M_\beta}$.
As in Gromov-Witten theory, for any $\gamma_1, \ldots, \gamma_n \in H^{\ast}(X,\mathbb{Z})$ and $k_1,\ldots,k_n \in \mathbb{Z}_{\geqslant   0}$, we define $\DT_4$ invariants: 
\begin{align}\label{def of all DT4 invs}
\blangle\tau_{k_1}(\gamma_1),\ldots,\tau_{k_n}(\gamma_n) \brangle^{\DT_4}_{\beta}:=\int_{[M_{\beta}]^{\rm{vir}}}\prod_{i=1}^n\tau_{k_i}(\gamma_i)\in\mathbb{Q}.
\end{align}

Here is the second main conjecture of this paper, which gives a sheaf theoretic interpretation of our Gopakumar-Vafa invariants. 
\begin{conj}\emph{(Conjecture \ref{conj on DT4/GV})}\label{intro conj on DT4/GV}
For certain choice of orientation, the following holds. \\
When $\beta$ is an effective curve class, 
\begin{align}\label{g=0 part} \tag{i}
\blangle\tau_0(\gamma_1),\ldots,\tau_0(\gamma_n) \brangle^{\DT_4}_{\beta}=n_{0,\beta}(\gamma_1,\ldots,\gamma_n). \end{align}
When $\beta$ is a primitive curve class, 
\begin{align}\label{g=1 part} \tag{ii}
\blangle\tau_1(\gamma) \brangle^{\DT_4}_{\beta}=-\frac{1}{2}\blangle \tau_1(\gamma) \brangle^{\GW}_{0,\beta}-n_{1, \beta}(\gamma).
\end{align}
When $\beta$ is a primitive curve class, 
\begin{align}\label{g=2 part} \tag{iii}
-\blangle\tau_3(1) \brangle^{\DT_4}_{\beta}
-\frac{1}{12}\blangle\tau_1(c_2(X)) \brangle^{\DT_4}_{\beta}=n_{2, \beta}.
\end{align}
\end{conj}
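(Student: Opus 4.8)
The plan is to verify the correspondence in the \emph{ideal geometry} of \S\ref{sect on heur arg}, where the curves in the primitive class $\beta$ behave as expected, by matching the local contributions of each type of curve on the two sides, and then to corroborate the outcome with the unconditional computations for $S_1\times S_2$ and $T^\ast\BP^2$. The organizing principle is that in this geometry $M_\beta$ decomposes according to the genus of the supporting curve: there is a finite set of isolated genus $2$ curves, a $1$-dimensional family of genus $1$ curves, and a $2$-dimensional family of rational curves, and over each such family $M_\beta$ is a relative compactified Jacobian. The reduced virtual dimension $2$ is distributed as $(\dim\,\text{family})+(\dim\Pic)$, namely $2+0$, $1+1$, $0+2$ respectively, so the three parts of the conjecture are governed by the rational, elliptic, and genus $2$ loci.

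The computational engine is Grothendieck-Riemann-Roch. Near a smooth genus $g$ curve $C\subset X$ the universal sheaf is $\iota_\ast\mathcal{P}$ for the inclusion $\iota\colon \Pic^{g}(C)\times C\hookrightarrow \Pic^{g}(C)\times X$ and a normalized Poincar\'e sheaf $\mathcal{P}$, and since $C\hookrightarrow X$ has codimension $3$,
\[
\ch(\mathbb{F}_{\mathrm{norm}})=\iota_\ast\!\left(\ch(\mathcal{P})\cdot\td(N_{C/X})^{-1}\right).
\]
First I would extract the graded pieces $\ch_{3+i}(\mathbb{F}_{\mathrm{norm}})$ and feed them through the insertion operators $\tau_i$. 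For part (i) the Jacobian fibres are points, the $\tau_0$-insertions become incidence conditions against the cycles dual to the $\gamma_j$, and the $\DT_4$ integral counts the rational curves through these cycles, which is $n_{0,\beta}(\gamma_1,\dots,\gamma_n)$ by definition; this case is moreover compatible with the genus $0$ reduction to $5$-fold geometry already invoked in the text. For part (ii) the elliptic locus contributes an elliptic fibration over a $1$-dimensional base, and the GRR expansion of $\tau_1(\gamma)$ splits into a contribution from the genuine genus $1$ curves, to be matched with $n_{1,\beta}(\gamma)$, and a contribution supported on the rational-curve locus, to be identified with the genus $0$ descendent $-\tfrac12\blangle\tau_1(\gamma)\brangle^{\GW}_{0,\beta}$.

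Part (iii) is the genuinely new case and the heart of the matter. For an isolated smooth genus $2$ curve $C$ the Jacobian $A\cneq\Pic^2(C)$ is an abelian surface, the reduced virtual dimension equals $\dim A=2$, and $[M_\beta]^{\vir}$ restricts to $\pm[A]$. The key step is to evaluate
\[
-\ch_6(\mathbb{F}_{\mathrm{norm}})-\tfrac1{12}\,\pi_X^\ast c_2(X)\cup\ch_4(\mathbb{F}_{\mathrm{norm}}),
\]
push it to $A$, and integrate over $[A]$. Using the GRR formula, the top piece comes from $c_1(\mathcal{P})^3$: its mixed K\"unneth component lies in $H^1(A)\otimes H^1(C)$, and integration over $C$ of its square recovers the principal polarization $\Theta$ on $A$, whose self-intersection is a fixed positive number. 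Thus the genus $2$ locus contributes a universal constant per curve, i.e.\ the signed count $n_{2,\beta}$, provided the normalization is correct; the role of the term $-\tfrac1{12}\tau_1(c_2(X))$ is precisely to cancel the subleading $\td(N_{C/X})^{-1}$ contribution proportional to $c_2$, since $\td_2=\tfrac1{12}(c_1^2+c_2)$. In parallel one must handle the singular compactified Jacobians of the nodal rational curves in class $\beta$; on the Gromov-Witten side these degenerate curves are exactly what the correction terms $-\tfrac1{24}n_{1,\beta}(c_2(X))$, the $\tau_0(c_2(X))\tau_0(c_2(X))$ term, and $+\tfrac1{24}N_{\mathrm{nodal},\beta}$ of Definition \ref{intro defn g2 GV primitive} subtract off, so after these cancellations the two sides should agree.

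The main obstacle is twofold. First, one must show that the reduced Kiem-Park class localizes component by component to the (compactified) Jacobian contributions \emph{with the correct signs}; the orientation dependence in \eqref{intro red vir class} is the reason the statement is only expected to hold for a specific choice of orientation, and fixing this coherently across all components is delicate. Second, and more seriously, the contribution of the \emph{singular} fibres over nodal curves is genuinely subtle: the compactified Jacobian of a nodal curve is itself singular, so its descendent integral and its matching with $N_{\mathrm{nodal},\beta}$ is where the ideal-geometry heuristic is most fragile. For this reason the decisive evidence will come from the two explicit examples: for $X=S_1\times S_2$ both sides reduce to genus-counting on $K3$ surfaces of Yau-Zaslow type, and for $X=T^\ast\BP^2$ to an explicit local model, where all three identities of Conjecture \ref{intro conj on DT4/GV} can be checked directly.
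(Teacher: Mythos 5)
Your treatment of parts (i) and (ii) is essentially the paper's own: in the ideal geometry $M_\beta$ is identified with the surface of rational curves (resp.\ with the total space of the elliptic family, via $\Pic^1(E)\cong E$), the universal sheaf is a pushforward from the curve family, and Grothendieck--Riemann--Roch converts $\tau_0$ and $\tau_1$ into the incidence count $n_{0,\beta}$ and into $-\tfrac12\blangle\tau_1(\gamma)\brangle^{\GW}_{0,\beta}+n_{1,\beta}(\gamma)$ respectively; this is exactly \S 2.8--2.9 of the paper, corroborated by the unconditional computations for $S\times T$ and $T^{\ast}\BP^2$. (One caveat even here: the statement is a conjecture, and what both you and the paper produce for (i)--(ii) is a justification in an idealized setting plus example checks, not a proof.)

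For part (iii) you diverge from the paper, and this is where the gap lies. The paper explicitly does \emph{not} justify (iii) by a direct computation on $M_\beta$: Remark 2.13 states that the left-hand side $-\blangle\tau_3(1)\brangle^{\DT_4}_{\beta}-\tfrac{1}{12}\blangle\tau_1(c_2(X))\brangle^{\DT_4}_{\beta}$ is identified (via the appendix of the stable-pairs paper [COT1]) with a stable pair invariant $P_{-1,\beta}$, and the genus $2$ equality is obtained indirectly through that route. Your proposed direct argument leaves the two decisive computations unperformed. First, the ``universal constant per genus $2$ curve'': after GRR the pushforward of $c_1(\mathcal{P})^3/6$ to $A=\Pic^2(C)$ involves not only $\int_A\Theta^2$ but also the $H^2(A)\otimes H^0(C)$ K\"unneth component of $c_1(\mathcal{P})$ fixed by the normalization $\det(\pi_{M*}\mathbb{F}_{\mathrm{norm}})\cong\oO$, together with the $\td_2$ and $\td_3$ corrections from $N_{C/X}$ (whose determinant is $\omega_C$, not trivial); whether the combination $-\tau_3(1)-\tfrac1{12}\tau_1(c_2(X))$ integrates to exactly $\mp 1$ per curve is precisely the content of the normalization you defer, and the heuristic that $-\tfrac1{12}\tau_1(c_2)$ ``cancels the $\td_2$ term'' does not by itself account for the $c_1(\omega_C)$-dependent pieces. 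Second, and more seriously, the rational and elliptic families contribute to \emph{both} $\blangle\tau_3(1)\brangle^{\DT_4}_\beta$ and $\blangle\tau_1(c_2(X))\brangle^{\DT_4}_\beta$, and these contributions must reproduce the three correction terms $-\tfrac1{24}n_{1,\beta}(c_2(X))+\tfrac{1}{2\cdot 24^2}\blangle\tau_0(c_2)\tau_0(c_2)\brangle^{\GW}_{0,\beta}+\tfrac1{24}N_{\mathrm{nodal},\beta}$ in the definition of $n_{2,\beta}$; note already that applying your part (ii) with $\gamma=c_2(X)$ gives $-\tfrac1{12}\blangle\tau_1(c_2)\brangle^{\DT_4}=\tfrac1{24}\blangle\tau_1(c_2)\brangle^{\GW}_{0,\beta}+\tfrac1{12}n_{1,\beta}(c_2)$, whose coefficient $\tfrac1{12}$ does not match the $-\tfrac1{24}$ in $n_{2,\beta}$, so $\blangle\tau_3(1)\brangle^{\DT_4}$ must itself carry a compensating elliptic-family contribution --- none of this bookkeeping, nor the contribution of the singular compactified Jacobians over the nodal rational curves, is carried out. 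Since the whole content of (iii) is these coefficients, the proposal as written does not establish the genus $2$ identity even at the level of rigor the paper achieves for (i) and (ii).
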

As in Conjecture \ref{intro conj on integrality}, we verify these equalities in the ideal geometry (see \S \ref{sect on ideal DT4 comp0}, \S \ref{sect on ideal DT4 comp} and  also \S \ref{sec:embedded rational curve family} for details). An exception is the last equality involving genus 2 invariants, which we obtain indirectly through stable pair theory \cite{COT1} (see Remark \ref{rmk on dt4/pt}). 

Besides computations in the ideal geometry mentioned above, we study several examples and prove our conjectures in those cases.

\subsection{Verification of conjectures I: $K3\times K3$}
Let $X=S\times T$ be the product of two $K3$ surfaces. 
When the curve class $\beta \in H_2(S \times T, \BZ)$
is of non-trivial degree over both $S$ and $T$, then
the obstruction sheaf of the moduli space of stable maps has two linearly independent cosections,
which implies that the (reduced) Gromov-Witten invariants of $X$ in this class vanish. Therefore we always restrict ourselves to curve classes of form
\begin{equation}\label{intro curv cla}\beta\in H_2(S)\subseteq H_2(X). \end{equation} 
By Behrend's product formula \cite{Bproduct} (see Eqn.~\eqref{product formula}), we can easily compute all Gromov-Witten invariants and determine the Gopakumar-Vafa invariants as follows. 

\begin{thm}\emph{(Proposition \ref{prop on GW for prod})} \label{intro prop on GW for prod}
Let $\gamma,\gamma'\in H^{4}(X)$, $\alpha\in H^6(X)$ and let
$$\gamma=A_1\cdot 1\otimes \pt+D_1\otimes D_2+A_2\cdot \pt\otimes 1, \quad \gamma'=A'_1\cdot 1\otimes \pt+D'_1\otimes D'_2+A'_2\cdot \pt\otimes 1, $$
$$\alpha=\theta_1\otimes \pt+\pt\otimes \theta_2 $$
be their K\"unneth decompositions. 
Then we have
%the genus $0$ Gopakumar-Vafa invariants are:
\[
n_{0,\beta}(\gamma, \gamma') =(D_1\cdot\beta)\cdot (D_1'\cdot\beta)\cdot\int_T(D_2\cdot D_2')\cdot N_{0}\left(\frac{\beta^2}{2}\right),
\]
$$n_{0,\beta}(\alpha)=(\theta_1\cdot \beta)\,N_{0}\left(\frac{\beta^2}{2}\right). $$
If $\beta$ is primitive, we have
\begin{align*}
n_{1, \beta}(\gamma)= 24 A_2\,  N_1(\beta^2/2), \quad n_{2,\beta}= N_2\left( \frac{\beta^2}{2} \right), 
\end{align*}
where
\begin{align*}\sum_{l\in\mathbb{Z}}N_{0}(l)\, q^l&=\frac{1}{q} \prod_{n\geqslant   1}\frac{1}{(1-q^n)^{24}}, \\
\sum_{l \in \BZ} N_{1}(l)\,q^l &=\left(\frac{1}{q} \prod_{n\geqslant   1}\frac{1}{(1-q^n)^{24}}\right)\left(q \frac{d}{dq}G_2(q)\right), \\
\sum_{l\in\mathbb{Z}}N_{2}(l)\, q^l&=\left(\frac{1}{q} \prod_{n\geqslant   1}\frac{1}{(1-q^n)^{24}}\right) \left( 24 q \frac{d}{dq} G_2 - 24 G_2 - 1 \right), \end{align*}
%\begin{equation}\label{intro defi of N0}\sum_{l\in\mathbb{Z}}N_{0}(l)\, q^l=\frac{1}{q} \prod_{n\geqslant   1}\frac{1}{(1-q^n)^{24}},  \end{equation}
%\begin{equation}\label{intro equ py formula} \sum_{l \in \BZ} N_{1}(l)\,q^l =\left(\frac{1}{q} \prod_{n\geqslant   1}\frac{1}{(1-q^n)^{24}}\right) 
%\left(q \frac{d}{dq}G_2(q)\right),   \end{equation}
%\begin{equation}\label{intro defi of N2}\sum_{l\in\mathbb{Z}}N_{2}(l)\, q^l
%=\left(\frac{1}{q} \prod_{n\geqslant   1}\frac{1}{(1-q^n)^{24}}\right) \left( 24 q \frac{d}{dq} G_2 - 24 G_2 - 1 \right),\end{equation}
with Eisenstein series: 
$$G_2(q) = -\frac{1}{24} + \sum_{n \geqslant   1} \sum_{d|n} d q^n. $$
In particular, Conjecture \ref{intro conj on integrality} holds for $X=S\times T$. 
%where $n_{0,h}(S)$ $($$h\in \mathbb{Z}_{\geqslant  0}$$)$ fits into the generating series 
%\begin{equation}\label{equ on yz for}\sum_{h\geqslant  0}n_{0,h}(S)\, q^h=\prod_{n\geqslant   1}\frac{1}{(1-q^n)^{24}}.  \end{equation}
\end{thm}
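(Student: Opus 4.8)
The plan is to evaluate every Gromov--Witten invariant entering Definitions \ref{intro def of g=0 GV inv}, \ref{intro defn g1 GV primitive} and \ref{intro defn g2 GV primitive} by Behrend's product formula, to factor each one into a \emph{reduced} Gromov--Witten invariant of the $K3$ surface $S$ times an intersection or Hodge integral on $T$, and then to invert the defining recursions. Since $\beta\in H_2(S)$, every stable map in class $\beta$ contracts the $T$--direction, so the only nontrivial cosection is the one induced by $\sigma_S$; the reduced perfect obstruction theory on $\Mbar_{g,n}(S\times T,\beta)$ therefore splits as the reduced theory of $S$ (in which $\sigma_S$ is cosected away) together with the contribution of constant maps to $T$. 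Behrend's formula \eqref{product formula} turns this into a factorization of the virtual class, and as a first sanity check the reduced virtual dimension $2-g+n$ of \eqref{intro vir cla} equals $\bigl(\text{reduced dim of }\Mbar_{g,n}(S,\beta)\bigr)+\dim_{\BC}T$, the second summand being carried by the point of $T$ to which the curve contracts.

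First I would insert the K\"unneth decompositions and discard the vanishing terms. On the $S$--factor, any K\"unneth component carrying the fundamental class $1\in H^0(S)$ yields a reduced $K3$ invariant with a $\tau_0(1)$--insertion and only $\tau_0$'s elsewhere, which vanishes by the string equation; on the $T$--factor, because all markings map to a single contraction point, the surviving class is $\mathrm{ev}_T^\ast$ of a product of the $T$--components, so any term producing $\pt_T\cdot\pt_T$ or $\pt_T\cdot D$ in $H^{>4}(T)$ dies, and terms of wrong degree on $S$ die by dimension. This isolates exactly the claimed survivors: for two $H^4$--insertions only $D_1\otimes D_2$ against $D_1'\otimes D_2'$ remains, giving $(D_1\cdot\beta)(D_1'\cdot\beta)$ by the divisor equation on $S$ and $\int_T D_2D_2'$ on $T$; for one $H^6$--insertion only $\theta_1\otimes\pt_T$ survives, giving $(\theta_1\cdot\beta)$; and in genus $1$ only $A_2\,(\pt_S\otimes 1)$ contributes, since the contracted $T$--contribution is the Euler class $e(\mathbb{E}^\vee\otimes T_tT)=c_2(T_T)+\lambda_1^2$, whose $\lambda_1^2$ part vanishes ($\lambda_1^2=0$ on $\Mbar_{1,1}$) and whose $c_2(T_T)$ part integrates to $\int_T c_2(T_T)=24$.

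Next I would identify the residual reduced invariants of $S$ with their known closed forms. The genus $0$ invariants are the Yau--Zaslow numbers $N_0(\beta^2/2)$ with generating series $1/\Delta$; the genus $1$ and $2$ invariants, together with the $\lambda$--classes forced on them by the reduced obstruction theory, are computed by the KKV formula and are quasimodular, producing the Eisenstein combinations $qG_2'$ and $24qG_2'-24G_2-1$. Substituting into the recursions and inverting then gives the stated $n_{g,\beta}$: in genus $0$ this is a M\"obius inversion over the divisors $k\mid\beta$, consistent with the multiple--cover structure of $K3$ reduced invariants (and the formulas depend only on $\beta^2/2$ by deformation invariance); in genus $1$ and $2$ one checks the explicit correction terms directly. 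Here I would use $c_2(X)=c_2(T_S)+c_2(T_T)=24\,(\pt_S\otimes 1)+24\,(1\otimes\pt_T)$, so that $A_2=24$ and $n_{1,\beta}(c_2(X))=576\,N_1(\beta^2/2)$, whence the term $-\tfrac{1}{24}n_{1,\beta}(c_2(X))$ of Definition \ref{intro defn g2 GV primitive} reproduces exactly the $24qG_2'/\Delta$ summand of $N_2$. Integrality of $N_0,N_1,N_2$ is then immediate, since $1/\Delta=q^{-1}\prod_{n\geq 1}(1-q^n)^{-24}$, $qG_2'$ and $G_2+\tfrac{1}{24}$ all have integral $q$--expansions, while the prefactors $(D_i\cdot\beta)$, $\int_T D_2D_2'$ and $24$ are integers; this gives Conjecture \ref{intro conj on integrality} for $X=S\times T$.

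The hard part will be genus $2$. Unlike genus $0$ and $1$, the contracted contribution of $T$ is now a genuine Hodge integral over $\Mbar_2$ for the rank--$4$ bundle $\mathbb{E}^\vee\otimes T_tT$; after using $c_1(T_T)=0$ and $\int_T c_2(T_T)=24$ this reduces to classical $\lambda$--class integrals on $\Mbar_2$, which must be evaluated and matched against the genus $2$ KKV invariants of $S$. One must also compute the virtual nodal count $N_{\mathrm{nodal},\beta}$ of \eqref{Nnodal} and verify that the four terms of Definition \ref{intro defn g2 GV primitive} combine into the single series $(24qG_2'-24G_2-1)/\Delta$, the pieces $-24G_2-1$ arising from the genus $2$ reduced $K3$ series together with the nodal correction and the $\tfrac{1}{2\cdot 24^2}\blangle\tau_0(c_2(X))\tau_0(c_2(X))\brangle^{\GW}_{0,\beta}$ term. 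This reconciliation of the $\Mbar_2$ Hodge integrals, the nodal correction, and the modular genus $2$ $K3$ series is the most delicate bookkeeping in the proof.
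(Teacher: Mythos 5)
Your genus $0$ and genus $1$ arguments are correct and follow the same route as the paper: Behrend's product formula \eqref{product formula} factors the reduced virtual class as (reduced class on $S$) times $[T]$, resp.\ $c_2(T)\cap [T]$, the K\"unneth bookkeeping isolates exactly the terms you name, and the residual $K3$ integrals are the Yau--Zaslow/KMPS numbers $N_0$ and the Pandharipande--Yin genus-one series $N_1$; inverting the defining recursions, together with the vanishing of $\blangle \tau_0(\gamma)\tau_0(c_2(X))\brangle^{\GW}_{0,\beta}$ (which follows from your genus $0$ evaluation since $c_2(X)$ has no $H^2\otimes H^2$ K\"unneth component), gives the stated $n_{0,\beta}$ and $n_{1,\beta}$.

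Your genus $2$ paragraph, however, rests on a misconception. There is no nontrivial Hodge integral to evaluate and no ``genus $2$ reduced $K3$ series'' to match: the same product formula already gives $[\Mbar_{2,0}(S\times T,\beta)]^{\vir}=0$, because the constant-map factor $[\Mbar_{2,0}(T,0)]^{\vir}=e(\BE^{\vee}\boxtimes T_T)\cap[\Mbar_{2}\times T]$ vanishes identically: for rank-$2$ $\BE$ and $c_1(T_T)=0$ one has $e(\BE^{\vee}\boxtimes T_T)=\lambda_2^2+c_2(T_T)\,(\lambda_1^2-2\lambda_2)$, and both summands are killed by Mumford's relation \eqref{eq:Mumford Relation}. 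Hence $\blangle\varnothing\brangle^{\GW}_{2,\beta}=0$ and likewise $\blangle\tau_0(c_2(X))\tau_0(c_2(X))\brangle^{\GW}_{0,\beta}=0$, so the genus $2$ theory of $S$ never enters. The entire content of $n_{2,\beta}$ comes from the two remaining correction terms of Definition \ref{defn g2 GV primitive}: $\tfrac{1}{24}\,n_{1,\beta}(c_2(X))=24\,N_1(\beta^2/2)$, contributing $24\,q\frac{d}{dq}G_2/\Delta$ as you say, and $-\tfrac{1}{24}N_{\mathrm{nodal},\beta}$, which \emph{alone} produces the $(-24G_2-1)/\Delta$ piece --- not, as you write, a combination of a genus $2$ $K3$ series, the nodal correction and the $\blangle\tau_0(c_2(X))^2\brangle$ term. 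The one computation you still owe is $N_{\mathrm{nodal},\beta}$ itself: by the product formula it reduces to $\tfrac{1}{2}\bigl[\,24\int_{[\Mbar_{0,2}(S,\beta)]^{\vir}}(\ev_1\times\ev_2)^{\ast}\Delta_S-24\int_{[\Mbar_{0,1}(S,\beta)]^{\vir}}\psi_1\,\bigr]=\tfrac{1}{2}\bigl[\,24\,\beta^2 N_0(\beta^2/2)+48\,N_0(\beta^2/2)\,\bigr]$, with generating series $24^2G_2/\Delta+24/\Delta$. With these corrections your strategy closes.
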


On the Donaldson-Thomas side,
a main result of this paper is the explicit computation of all $\DT_4$ invariants of $X = S \times T$
for the classes \eqref{intro curv cla},
see Theorem \ref{thm:K3xK3 DT invariants} for the formulae.
We obtain a perfect match with our prediction:

\begin{thm}[Corollary~\ref{verify conjs}] 
%For one (of the two) orientations,
%With respect to a particular (canonical) choice of orientation,
Conjecture \ref{intro conj on DT4/GV} holds for $X = S \times T$ and all effective curve classes $\beta\in H_2(S,\mathbb{Z})\subseteq H_2(X,\mathbb{Z})$.
\end{thm}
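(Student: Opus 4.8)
The statement is a corollary in the strong sense: its proof consists of comparing two families of explicit generating series that have already been produced, namely the closed formulas for the Gopakumar--Vafa invariants $n_{0,\beta}, n_{1,\beta}, n_{2,\beta}$ in Theorem~\ref{intro prop on GW for prod} and the closed formulas for the $\DT_4$ invariants of $X=S\times T$ in Theorem~\ref{thm:K3xK3 DT invariants}. The plan is to verify the three identities (i)--(iii) of Conjecture~\ref{intro conj on DT4/GV} separately, in each case substituting both sides and checking equality of the resulting $q$-series in the variable recording $\beta^2/2$. Since every invariant involved is expressed through the three modular series $N_0, N_1, N_2$ (or, on the Gromov--Witten side, through Behrend's product formula), the verification is ultimately a coefficient-by-coefficient identity of quasimodular forms.

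First I would record the structural fact that makes the $\DT_4$ side computable and that underlies Theorem~\ref{thm:K3xK3 DT invariants}. For $\beta\in H_2(S)\subseteq H_2(X)$, any stable one-dimensional sheaf $F$ on $X$ with $\ch_3(F)=\beta$ has connected support projecting to a single point of $T$; hence the moduli space $M_\beta$ decomposes as $M_\beta^S\times T$, where $M_\beta^S$ is the moduli space of one-dimensional stable sheaves on the $K3$ surface $S$ in class $\beta$, and the $T$-factor records the location of the support. This product structure reduces the reduced virtual class~\eqref{intro red vir class} and all descendent integrals to integrals over $M_\beta^S$ against K\"unneth components, and produces exactly the series $N_0, N_1, N_2$. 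From Theorem~\ref{thm:K3xK3 DT invariants} I would then read off the four series $\blangle\tau_0(\gamma_1),\dots,\tau_0(\gamma_n)\brangle^{\DT_4}_\beta$, $\blangle\tau_1(\gamma)\brangle^{\DT_4}_\beta$, $\blangle\tau_3(1)\brangle^{\DT_4}_\beta$ and $\blangle\tau_1(c_2(X))\brangle^{\DT_4}_\beta$, fixing throughout the single orientation of~\eqref{intro red vir class} referred to in the conjecture, consistently on every connected component.

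The matching then proceeds term by term. For part~(i) I compare $\blangle\tau_0(\gamma_1),\dots,\tau_0(\gamma_n)\brangle^{\DT_4}_\beta$ with $n_{0,\beta}$ from Theorem~\ref{intro prop on GW for prod}; the pairings $(D\cdot\beta)$ and the surface integral $\int_T(D_2\cdot D_2')$ should appear on the $\DT_4$ side precisely from applying $\tau_0$ to the K\"unneth pieces of the $\gamma_i$. For part~(ii) an auxiliary computation is needed: besides $n_{1,\beta}(\gamma)=24A_2\,N_1(\beta^2/2)$, the right-hand side involves the genus~$0$ descendent Gromov--Witten invariant $\blangle\tau_1(\gamma)\brangle^{\GW}_{0,\beta}$, which I would evaluate by Behrend's product formula exactly as the primary invariants were evaluated in Theorem~\ref{intro prop on GW for prod}; one then checks that the $\DT_4$ integral equals $-\tfrac12\blangle\tau_1(\gamma)\brangle^{\GW}_{0,\beta}-n_{1,\beta}(\gamma)$. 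For part~(iii) I substitute $n_{2,\beta}=N_2(\beta^2/2)$, note that $c_2(X)$ has K\"unneth decomposition $24\,(\pt\otimes 1)+24\,(1\otimes\pt)$, and verify that $-\blangle\tau_3(1)\brangle^{\DT_4}_\beta-\tfrac{1}{12}\blangle\tau_1(c_2(X))\brangle^{\DT_4}_\beta$ reproduces the series defining $N_2$, namely $\bigl(\tfrac1q\prod_{n\geqslant1}\tfrac{1}{(1-q^n)^{24}}\bigr)\bigl(24\,q\tfrac{d}{dq}G_2-24G_2-1\bigr)$.

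The main obstacle is part~(iii). Unlike (i) and (ii), this genus~$2$ identity is not obtained from a direct ideal-geometry count but only indirectly, through stable pair theory (cf.\ Remark~\ref{rmk on dt4/pt}); so for $S\times T$ one must genuinely assemble the two descendent $\DT_4$ series $\blangle\tau_3(1)\brangle^{\DT_4}_\beta$ and $\blangle\tau_1(c_2(X))\brangle^{\DT_4}_\beta$ and show that their specific combination collapses to the quasimodular form $N_2$. The delicate points are the correct bookkeeping of the two K\"unneth summands of $c_2(X)$ under the decomposition $M_\beta=M_\beta^S\times T$ and the consistent choice of orientation, since a sign error in either would destroy the match, and the combination of $q\tfrac{d}{dq}G_2$, $G_2$ and the constant $-1$ leaves no slack. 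By contrast, once the orientation is fixed and the product reduction is in place, parts~(i) and~(ii) reduce to routine manipulations of K\"unneth components and elementary $q$-series identities.
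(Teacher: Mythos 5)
Your proposal is correct and follows essentially the same route as the paper: Corollary~\ref{verify conjs} is proved there by direct inspection, matching the explicit $\DT_4$ formulas of Theorem~\ref{thm:K3xK3 DT invariants} against the Gopakumar--Vafa formulas of Proposition~\ref{prop on GW for prod} and the descendent evaluation $\blangle\tau_1(\gamma)\brangle^{\GW}_{0,\beta}$ of Lemma~\ref{lemma:tau1 K3xK3}, exactly as you outline (including the check that $-N''-2N_0+48N_1=N_2$ for part~(iii)). The only cosmetic difference is that the paper reduces $\blangle\tau_1(\gamma)\brangle^{\GW}_{0,\beta}$ to primary invariants via the divisor equation before applying the product formula, rather than applying the product formula directly to the descendent integral.
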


Here, since the moduli space $M_{\beta}$ is connected, 
there are precisely two choices of orientation. 
We pick the one specified in Eqn.~\eqref{vir class StimesT}  (invariants for the other differ only by an overall sign).

Contrary to the case of Gromov-Witten invariants,
the computation of $\DT_4$ invariants on $S \times T$ is highly non-trivial.
In Theorem \ref{thm on vir clas}, we first identify the virtual class explicitly.
This expresses the $\DT_4$ invariants as tautological integrals on a (smooth) moduli space 
of one dimensional stable sheaves on the $K3$ surface $S$. By Markman's framework of monodromy operators \cite{Markman}, 
we then relate such integrals to tautological integrals on the Hilbert schemes of points on $S$ (see \S \ref{sect on des int on mod} and \S \ref{sect on transpot} for details).
Finally, we determine these integrals explicitly in \S \ref{sect on fujiki} and \S \ref{sect on des inte on hil}
by a combination of the universality result of Ellingsrud-G\"{o}ttsche-Lehn \cite{EGL}, constraints from Looijenga-Lunts-Verbitsky Lie algebra \cite{LL,Ver}
and known computations of Euler characteristics.

In particular, we found a remarkable closed formula 
for Fujiki constants of Chern classes of Hilbert schemes $S^{[n]}$ of points on $S$, which takes the following beautiful form (see also Proposition \ref{prop on fujiki on kumm} for the formula on generalized Kummer varieties):
\begin{thm}\emph{(Theorem \ref{thm on fujiki cons})}\label{intro thm on fujiki cons}
Let $S$ be a $K3$ surface. For any $k \geqslant   0$,
\[ \sum_{ n \geqslant   k} C( c_{2n-2k}(T_{S^{[n]}}))\,q^n 
=
\frac{ (2k)!}{ k! 2^{k}} \left( q \frac{d}{dq} G_2(q) \right)^k \prod_{n \geqslant   1} \frac{1}{(1-q^n)^{24}}. \]
\end{thm}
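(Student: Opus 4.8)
The plan is to reduce each Fujiki constant to a single universal integral, to assemble these integrals into a generating series of product type, and then to identify the remaining correction factor using the Looijenga--Lunts--Verbitsky action together with a few boundary values. Recall that for the hyperk\"ahler $2n$-fold $X=S^{[n]}$ the Fujiki constant of a class $\alpha\in H^{4(n-k)}(X)$ is the scalar $C(\alpha)$ characterized by $\int_X\alpha\cup\lambda^{2k}=C(\alpha)\,Q(\lambda)^k$ for all $\lambda\in H^2(X)$, where $Q$ is the Beauville--Bogomolov--Fujiki form (not to be confused with the formal variable $q$). Applying this to $\alpha=c_{2n-2k}(T_X)$ and to $\lambda=\ell$, the class induced on $S^{[n]}$ by a line bundle $L$ on $S$, gives $\int_X c_{2n-2k}(T_X)\,\ell^{2k}=C(c_{2n-2k}(T_X))\,Q(\ell)^k$ with $Q(\ell)=\int_S c_1(L)^2=:L^2$. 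The left-hand integral is of the type governed by the universality theorem of Ellingsrud--G\"ottsche--Lehn: it is a universal polynomial in $L^2,\ L\cdot c_1(S),\ c_1(S)^2$ and the topological Euler characteristic $e(S)=\int_S c_2(S)$. For a $K3$ surface $c_1(S)=0$ and $e(S)=24$, and by homogeneity in $L$ (each $\ell$ is linear in $L$, so the total $L$-degree equals $2k$) only the monomial $(L^2)^k$ survives. Hence the integral equals $a_{n,k}(L^2)^k$ for a universal number $a_{n,k}$, and $C(c_{2n-2k}(T_{S^{[n]}}))=a_{n,k}$ is independent of $S$ and $L$.

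I would then study the generating series $F_k(q):=\sum_{n\geq k}a_{n,k}\,q^n$. Since $e(S)=24$ enters the Ellingsrud--G\"ottsche--Lehn generating-function formalism multiplicatively, I expect $F_k(q)=G_k(q)\prod_{m\geq 1}(1-q^m)^{-24}$ for a power series $G_k$. The case $k=0$ is a convenient anchor: there $c_{2n}(T_X)$ is a top class, so $C(c_{2n})=\int_{S^{[n]}}c_{2n}(T_{S^{[n]}})=e(S^{[n]})$, and G\"ottsche's formula yields $F_0(q)=\prod_m(1-q^m)^{-24}$, i.e.\ $G_0=1$, as the claimed formula demands. Likewise the lowest term $n=k$ of $F_k$ is $a_{k,k}=C(1)$, the classical Fujiki constant of $K3^{[k]}$, which equals $(2k)!/(k!\,2^k)$; this already reproduces the leading coefficient of the conjectured right-hand side and pins down the combinatorial prefactor.

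It remains to identify $G_k$. Here I would invoke the Looijenga--Lunts--Verbitsky $\mathfrak{so}$-action on $H^{\ast}(S^{[n]})$. The Chern classes $c_{2i}(T_X)$ are monodromy invariant, so their pairing against the Verbitsky component $\Sym^k H^2(X)\hookrightarrow H^{2k}(X)$ is computed by the invariant quadratic form; this both explains why the Fujiki constant is well defined and produces the double factorial $(2k)!/(k!\,2^k)$ as the ratio between the norm of the $k$-th symmetric power and $Q^k$. Combining this with the weight-$2k$ quasimodularity of such hyperk\"ahler characteristic-number series, I would argue that $G_k$ is a multiple of $\bigl(q\,\tfrac{d}{dq}G_2(q)\bigr)^k$ and fix the scalar by the boundary values above, or by an induction on $k$ that raises the power of $q\,\tfrac{d}{dq}G_2$ by one at each step. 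The main obstacle is precisely this last stage: proving that $G_k$ lies in the finite-dimensional space of quasimodular forms of weight $2k$ and controlling the Looijenga--Lunts--Verbitsky norm computation sharply enough to obtain the clean factor $\bigl(q\,\tfrac{d}{dq}G_2\bigr)^k$ with no lower-weight corrections. By contrast, the Ellingsrud--G\"ottsche--Lehn reduction and the Euler-characteristic anchor are comparatively routine, so the representation-theoretic bookkeeping that forces the exact modular form is the crux.
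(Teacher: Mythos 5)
Your reduction via Ellingsrud--G\"ottsche--Lehn universality (each Fujiki constant is a universal number $a_{n,k}$ independent of the $K3$ and of $L$), the $k=0$ anchor via G\"ottsche's formula, and the identification of the leading coefficient $a_{k,k}=C(1)=(2k)!/(k!\,2^k)$ all match the paper's first step and are fine. But the decisive part of your argument --- that $G_k$ is a weight-$2k$ quasimodular form and hence a scalar multiple of $\bigl(q\,\tfrac{d}{dq}G_2\bigr)^k$ --- is a genuine gap, and you have correctly flagged it as the crux without supplying a mechanism. There is no quasimodularity theorem for these Fujiki-constant series that you can invoke, and even if one granted that $G_k$ lies in the space of quasimodular forms of weight $2k$, that space (spanned by monomials $E_2^aE_4^bE_6^c$ with $2a+4b+6c=2k$) has dimension growing with $k$, so the single boundary value you have in hand (the coefficient of $q^k$) cannot pin $G_k$ down; ``induction on $k$ raising the power of $q\,\tfrac{d}{dq}G_2$'' is not an argument. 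The LLV action does explain monodromy invariance of the Chern classes and the combinatorial factor $(2k)!/(k!\,2^k)$, but it gives no control over the $q$-dependence.

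The paper closes this gap with two ideas absent from your proposal. First, the EGL universal series is not merely of product type but \emph{exponential}: for every surface $S$ and line bundle $L$ one has $\sum_n q^n\int_{S^{[n]}}c(T_{S^{[n]}})e^{t\,c_1(L_n)}=\prod_m(1-q^m)^{-e(S)}\exp\bigl(c_1(L)^2t^2\mathsf{A}+c_1(L)c_1(S)t\,\mathsf{B}\bigr)$ for universal series $\mathsf{A},\mathsf{B}$. Extracting the $t^{2k}$-coefficient on a $K3$ forces $G_k=\tfrac{(2k)!}{k!}\mathsf{A}^k$ for a \emph{single} series $\mathsf{A}$ independent of $k$, so the whole theorem reduces to computing $\mathsf{A}$ --- your ansatz $F_k=G_k\prod(1-q^m)^{-24}$ only factors out the $k=0$ answer and misses this. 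Second, $\mathsf{A}$ cannot be computed on $K3$ surfaces without circularity; the paper evaluates it by specializing the \emph{same} universal series to an abelian surface $A$, where the \'etale cover $A\times\Kum_{n-1}(A)\to A^{[n]}$ splits the tangent bundle and reduces $\int_{A^{[n]}}c_1(L_n)^2c_{2n-2}(T_{A^{[n]}})$ to $\tfrac{1}{n^2}(c_1(L)^2)\,e(\Kum_{n-1}(A))$ with $e(\Kum_{n-1}(A))=n^3\sum_{d|n}d$, giving $\mathsf{A}=\tfrac12 q\tfrac{d}{dq}G_2$. Without the exponential structure and the abelian-surface/generalized-Kummer computation (or some substitute for them), your outline does not yield a proof.
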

The right hand side, up to the combinatorical prefactor $(2k)! / (k! 2^k)$, is precisely the generating series of counts of genus $k$ curves on a $K3$ surface passing through $k$ generic points as considered by Bryan and Leung \cite{BL}. This suggests a relationship to the work of G\"ottsche on curve counting on surfaces \cite{Gott2},
which will be taken up in a follow-up work.

\subsection{Verification of conjectures II: $T^*\mathbb{P}^2$}
Let $T^{\ast} \p^2$ be the total space of the cotangent bundle on $\p^2$,
which is holomorphic symplectic.
Let $H \in H^2(T^{\ast} \p^2)$ be the pullback of the hyperplane class and use the identification $H_2(T^{\ast} \p^2, \BZ) \equiv \BZ$ given by taking the degree against $H$.
By Graber-Pandharipande's virtual localization formula \cite{GP}, we can compute all genus Gromov-Witten invariants (Proposition \ref{prop:TstartP2}) and determine the 
Gopakumar-Vafa invariants. 
\begin{prop}\emph{(Corollary~\ref{cor on inte on local p2})} 
\begin{align*}
n_{0,d}(H^2,H^2)&=
\left\{\begin{array}{rcl} 1   &\mathrm{if} \,\, d=1, \\ 
 -1  &\mathrm{if} \,\,  d=2,  \\
  0    & \,\,  \mathrm{otherwise}. 
\end{array} \right.  \\
n_{1,1}(H^2)&=0,  \quad n_{2,1}=0. 
\end{align*}
In particular, Conjecture \ref{intro conj on integrality} holds for $T^{\ast} \p^2$. 
\end{prop}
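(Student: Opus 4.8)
The plan is to take Proposition~\ref{prop:TstartP2} as the sole computational input---it supplies every reduced Gromov--Witten invariant of $X=T^{\ast}\p^2$ in each degree $d$, computed by Graber--Pandharipande localization with respect to the $\BZ$-scaling on the cotangent fibres---and then to invert the defining relations of the Gopakumar--Vafa invariants. Before doing so I would record the two pieces of topological data that enter. First, since the cotangent fibres are affine, every compact curve lies in the zero section $\p^2\subset X$, and under the identification $H_2(X,\BZ)\equiv\BZ$ the class $H^2\in H^4(X)$ restricts to the point class of $\p^2$. Second, along the zero section the tangent bundle splits as $T_X|_{\p^2}\cong T_{\p^2}\oplus\Omega^1_{\p^2}$, so that
\[
c(T_X)|_{\p^2}=(1+H)^3(1-H)^3=(1-H^2)^3=1-3H^2,
\]
giving $c_1(T_X)=0$ (consistent with the holomorphic symplectic structure) and $c_2(T_X)|_{\p^2}=-3H^2$. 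This last identity is what lets me convert every insertion of $c_2(X)$ into a multiple of $H^2$.

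\emph{Genus $0$.} I would specialize Definition~\ref{intro def of g=0 GV inv} to the $n=2$ insertions $\gamma_1=\gamma_2=H^2$, where the prefactor is $k^{n-3}=k^{-1}$, obtaining
\[
\blangle \tau_0(H^2)\,\tau_0(H^2)\brangle^{\GW}_{0,d}=\sum_{k\mid d}\tfrac{1}{k}\,n_{0,d/k}(H^2,H^2).
\]
This is an invertible, triangular (Dirichlet-type) system in $d$, solved recursively starting from $d=1$. Feeding in the Gromov--Witten numbers from Proposition~\ref{prop:TstartP2} then yields $n_{0,1}(H^2,H^2)=1$ and $n_{0,2}(H^2,H^2)=-1$; the substantive point for $d\geqslant 3$ is the vanishing $n_{0,d}=0$, which I expect to read off from the closed form of the genus $0$ series supplied by the Proposition.

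\emph{Genus $1$ and $2$.} For these we use $d=1$, which is primitive. Rearranging Definition~\ref{intro defn g1 GV primitive} gives
\[
n_{1,1}(H^2)=\blangle \tau_0(H^2)\brangle^{\GW}_{1,1}+\tfrac{1}{24}\,\blangle \tau_0(H^2)\,\tau_0(c_2(T_X))\brangle^{\GW}_{0,1},
\]
and substituting $c_2(T_X)=-3H^2$ together with the relevant values from Proposition~\ref{prop:TstartP2} gives $n_{1,1}(H^2)=0$. For genus $2$ I would invert Definition~\ref{intro defn g2 GV primitive}: by linearity $n_{1,1}(c_2(X))=-3\,n_{1,1}(H^2)=0$, the genus $0$ term equals $9\,\blangle\tau_0(H^2)\tau_0(H^2)\brangle^{\GW}_{0,1}$, and $N_{\mathrm{nodal},1}=0$ (as defined in Eqn.~\eqref{Nnodal}) since a degree $1$ curve is a smooth line admitting no nodal rational member. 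What remains is the single relation expressing $n_{2,1}$ through $\blangle\varnothing\brangle^{\GW}_{2,1}$ and this genus $0$ term, and plugging in the computed numbers gives $n_{2,1}=0$.

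The checks in genus $1$ and $2$ are finite and routine once Proposition~\ref{prop:TstartP2} is in hand; the only place requiring genuine care is the genus $0$ vanishing for all $d\geqslant 3$, since this controls an infinite family of invariants and therefore rests on the explicit generating series produced by the localization computation rather than on any single evaluation. Given all of the above, the asserted values are integers, so Conjecture~\ref{intro conj on integrality} holds for $T^{\ast}\p^2$.
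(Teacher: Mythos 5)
Your overall route is the same as the paper's: substitute the localization values of Proposition~\ref{prop:TstartP2} into Definitions~\ref{def of g=0 GV inv}, \ref{defn g1 GV primitive}, \ref{defn g2 GV primitive}, using $c_2(T_X)|_{\p^2}=-3H^2$ to convert every $c_2$-insertion into $H^2$. The genus $0$ inversion (including the vanishing for all $d\geqslant 3$, which follows from the closed form $(-1)^{d-1}/d$ and the M\"obius-type recursion) and the genus $1$ cancellation $\tfrac18-\tfrac18=0$ are correct as written.

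There is, however, one genuine gap, and it sits exactly where the paper's proof does its only real work: your justification of $N_{\mathrm{nodal},1}=0$. You argue that a degree $1$ curve is a smooth line, so there are no nodal rational members. But $N_{\mathrm{nodal},\beta}$ is \emph{defined} by the virtual integral \eqref{Nnodal}, and its interpretation as an enumerative count of nodal curves is only valid in an ideal geometry, which $T^{\ast}\p^2$ in degree $1$ is not: a line $\ell\subset\p^2\subset X$ has $N_{\ell/X}\cong\CO(1)\oplus\CO(-1)\oplus\CO(-2)$ rather than $\CO\oplus\CO\oplus\CO(-2)$, and the preimage of the diagonal under $\ev_1\times\ev_2$ on $\Mbar_{0,2}(X,1)$ is the $3$-dimensional flag variety rather than a finite set. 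In fact neither term of \eqref{Nnodal} vanishes: using $\Delta_X|_{\p^2\times\p^2}=\Delta_{\p^2}\cdot\pr_1^{\ast}c_2(\Omega^1_{\p^2})=3\,\pr_1^{\ast}(H^2)\pr_2^{\ast}(H^2)$, the diagonal term equals $3$, and the descendent term $\blangle\tau_1(c_2(T_X))\brangle^{\GW}_{0,1}+\blangle\tau_3(1)\brangle^{\GW}_{0,1}=-3+6=3$ (computed via Lemma~\ref{div equ on GW} and Eqn.~\eqref{tau31}). So $N_{\mathrm{nodal},1}=\tfrac12(3-3)=0$ by a cancellation, not because each piece is zero. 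Your heuristic would equally "prove" that the first integral vanishes, which is false; you need to carry out this computation to close the argument. Once that is supplied, your genus $2$ arithmetic $\tfrac{1}{128}-\tfrac{9}{1152}=0$ goes through.
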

On the sheaf side, we can compute $\DT_4$ invariants for small degree curve classes. 
\begin{prop}\emph{(Proposition~\ref{prop on dt4 tp2})} 
For certain choice of orientation, we have 
$$\blangle\tau_0(H^2), \tau_0(H^2) \brangle^{\DT_4}_{1}=1, \quad \blangle\tau_0(H^2), \tau_0(H^2) \brangle^{\DT_4}_{2}=-1, \quad \blangle\tau_0(H^2), \tau_0(H^2) \brangle^{\DT_4}_{3}=0,$$
$$\blangle\tau_1(H^2) \brangle^{\DT_4}_{1}=-\frac{1}{2}, \quad \blangle\tau_1(H^2) \brangle^{\DT_4}_{2}=\frac{1}{2}, \quad \blangle\tau_1(H^2) \brangle^{\DT_4}_{3}=0, $$
$$\blangle\tau_2(H) \brangle^{\DT_4}_{1}=-\frac{1}{4}, \quad \blangle\tau_2(H)\brangle^{\DT_4}_{2 }=-\frac{1}{4}, \quad \blangle\tau_2(H) \brangle^{\DT_4}_{3}=0, $$
$$\blangle\tau_3(1) \brangle^{\DT_4}_{1}=-\frac{1}{8}, \quad \blangle\tau_3(1)\brangle^{\DT_4}_{2 }=\frac{1}{8}, \quad \blangle\tau_3(1) \brangle^{\DT_4}_{3}=0. $$
In particular, Conjecture \ref{intro conj on DT4/GV} \eqref{g=0 part} holds for all $d\leqslant 3$, and Conjecture \ref{intro conj on DT4/GV} \eqref{g=1 part}, \eqref{g=2 part} hold. 
\end{prop}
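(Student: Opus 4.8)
The plan is to compute the four displayed $\DT_4$ numbers for each degree $d \in \{1,2,3\}$ directly, and then read off the stated instances of Conjecture~\ref{intro conj on DT4/GV} by combining them with the Gromov--Witten values of the preceding proposition. The first step is to identify the moduli space $M_d$ of one-dimensional stable sheaves with $\ch_3 = d[\ell]$, $\chi = 1$. A pure one-dimensional sheaf with proper support projects finitely onto a degree-$d$ plane curve $\bar C \subset \mathbb{P}^2$, and the normal bundle of such a lift $C$ decomposes as $N_{C/X} = N_{C/\mathbb{P}^2} \oplus \Omega^1_{\mathbb{P}^2}|_C$. Since $\Omega^1_{\mathbb{P}^2}|_C$ has no sections for $d \leqslant 3$ (e.g.\ $\CO(-1)\oplus\CO(-2)$ over a line and $\CO(-2)\oplus\CO(-4)$ over a smooth conic), a short argument shows every such sheaf is scheme-theoretically supported on the zero section, so that $M_d \cong M_{\mathbb{P}^2}(d,1)$ is the Simpson moduli space on $\mathbb{P}^2$, smooth and projective of dimension $d^2+1$; thus $M_1 \cong \mathbb{P}^2$, $M_2 \cong \mathbb{P}^5$, and $M_3$ is $10$-dimensional. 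Throughout I would retain the torus $T = (\mathbb{C}^{\ast})^2 \times \mathbb{C}^{\ast}_t$ (the $(\mathbb{C}^{\ast})^2$ acting on $\mathbb{P}^2$, the $\mathbb{C}^{\ast}_t$ scaling the cotangent fibres so that $\sigma$ has weight one), since the fibre directions enter every obstruction computation with a $t$-weight.

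\textbf{The reduced class.} Next I would determine $[M_d]^{\vir}$. As $M_d$ is smooth of dimension $d^2+1$ whereas the reduced virtual dimension is $2$, a local $\Ext$ computation gives $\ext^1_0 = d^2+1$ and $\ext^2_0 = 2d^2$; after removing the hyperbolic summand cut out by the Kiem--Park cosection \cite{KiP} the reduced obstruction is an orthogonal bundle $\mathrm{Ob}^{\red}$ of complex rank $2(d^2-1)$, and $[M_d]^{\vir} = \sqrt{e}(\mathrm{Ob}^{\red}) \cap [M_d] \in A_2(M_d)$ is its square-root Euler class in the sense of Oh--Thomas \cite{OT}. For $d=1$ the obstruction vanishes and $[M_1]^{\vir} = \pm[\mathbb{P}^2]$, the sign being the choice of orientation that I fix once and for all. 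For $d=2,3$ I would represent $\sqrt{e}(\mathrm{Ob}^{\red})$ as $\pm e(\Lambda)$ for a $T$-equivariant maximal isotropic subbundle $\Lambda$ of complex rank $d^2-1$, extracted from the explicit universal sheaf.

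\textbf{Descendents and integration.} To evaluate the insertions I would build the normalized universal sheaf $\mathbb{F}_{\mathrm{norm}}$ on $M_d \times X$ and compute its Chern character by Grothendieck--Riemann--Roch along the zero-section embedding, whose conormal contribution is $\td(\pi^{\ast}\Omega^1_{\mathbb{P}^2}\otimes t)^{-1}$; the classes $\tau_i(\gamma) = \pi_{M\ast}(\pi_X^{\ast}\gamma \cdot \ch_{3+i}(\mathbb{F}_{\mathrm{norm}}))$ become explicit $T$-equivariant classes on $M_d$. For $d=1$, where $M_1 = (\mathbb{P}^2)^{\vee}$ and $\mathbb{F}$ is the structure sheaf of the incidence divisor, the values $-\tfrac12,-\tfrac14,-\tfrac18$ of $\langle \tau_1(H^2)\rangle$, $\langle \tau_2(H)\rangle$, $\langle \tau_3(1)\rangle$ arise directly from $\td(\Omega^1_{\mathbb{P}^2}\otimes t)^{-1} = 1 - \tfrac12 c_1 + \cdots$ upon integrating over $\mathbb{P}^2$, and one checks that the $t$-dependence cancels because the reduced virtual dimension equals the insertion degree. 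For $d=2,3$ I would feed $\ch_{3+i}(\mathbb{F}_{\mathrm{norm}})$ and $e(\Lambda)$ into $T$-equivariant localization in the style of Graber--Pandharipande \cite{GP} and Oh--Thomas \cite{OT}, the $T$-fixed sheaves being supported on the finitely many coordinate-line configurations, which reduces each integral to a finite residue sum.

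\textbf{Main obstacle and conclusion.} The principal difficulty is the $d=2,3$ evaluation: fixing a \emph{global} orientation so that $\sqrt{e}(\mathrm{Ob}^{\red})$ carries the sign making the three columns consistent, and controlling the many $t$-weighted residues so that the equivariant parameters cancel and leave the asserted rationals --- in particular the total vanishing at $d=3$, which I expect to come from a weight/dimension vanishing of $\sqrt{e}(\mathrm{Ob}^{\red})$ paired against the descendents rather than from cancellation between fixed loci. Once the four numbers are known for each $d$, Conjecture~\ref{intro conj on DT4/GV}\,\eqref{g=0 part} for $d \leqslant 3$ is immediate from the values $n_{0,d}(H^2,H^2) = 1,-1,0$. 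For the primitive class $d=1$, part \eqref{g=1 part} follows from $\langle\tau_1(H^2)\rangle^{\GW}_{0,1} = 1$ together with $n_{1,1}(H^2)=0$, while part \eqref{g=2 part} follows after computing in addition $\langle\tau_1(c_2(X))\rangle^{\DT_4}_1$: using $c_2(T_X) = -3H^2$ (up to fibre weights) this equals $\tfrac32$, and $-(-\tfrac18) - \tfrac{1}{12}\cdot\tfrac32 = 0 = n_{2,1}$, as required.
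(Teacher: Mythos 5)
Your proposal is correct and follows essentially the same route as the paper: identify $M_d$ with the Simpson moduli space on $\mathbb{P}^2$ by showing every stable sheaf is scheme-theoretically supported on the zero section (the paper does this via $\Hom(F,F\otimes T^*\mathbb{P}^2)\subset\Hom(F,F\otimes\CO_{\mathbb{P}^2}(-1)^{\oplus 3})=0$, which needs no restriction to $d\leqslant 3$), realize the reduced virtual class as $\pm e(\Lambda)$ for a maximal isotropic subbundle $\Lambda$ of the cotangent-twisted $\eE xt^1$-bundle, compute the descendents by GRR along the embedding, and obtain the $d=3$ vanishing from a trivial factor in the reduced obstruction space at every torus-fixed point — exactly the mechanism you anticipated. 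The only divergence is at $d=2$, where the paper avoids localization entirely: since $M_2\cong|\CO_{\mathbb{P}^2}(2)|\cong\mathbb{P}^5$, Bott's formula identifies $\dR\hH om_{\pi_M}(\CO_{\mathcal C},\CO_{\mathcal C}\boxtimes T^*\mathbb{P}^2)[1]$ globally as $\CO_{\mathbb{P}^5}(-1)^{\oplus3}\oplus\CO_{\mathbb{P}^5}(1)^{\oplus3}\oplus\CO_{\mathbb{P}^5}^{\oplus2}$, so the reduced class is $\pm e(\CO_{\mathbb{P}^5}(-1)^{\oplus3})\cap[\mathbb{P}^5]$ and the four integrals reduce to elementary intersection numbers in $h$; your equivariant residue sum would return the same values but with substantially more bookkeeping.
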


\subsection{Verification of conjectures III: $K3^{[2]}$}
Consider the Hilbert scheme $S^{[2]}$ of two points on a $K3$ surface $S$.
By a result of Beauville \cite{Beauville}, $S^{[2]}$ is irreducible hyperk\"ahler,
i.e. it is simply connected and the space of its holomorphic $2$-forms is spanned by a (unique) symplectic form.
Because the genus $0$ Gromov-Witten theory of $S^{[2]}$ is completely known by \cite{O1, O2, QuasiK3}
(see Theorem \ref{thm:Hilb genus 0} for the primitive case),
all genus $0$ Gopakumar-Vafa invariants are easily computed. 
For simplicity, we check the integrality conjecture in the following basic case (ref.~\S \ref{pf of inte of g0 for hilb2}):
\begin{thm} \label{thm:Hilb 2 genus 0 integrality}
Conjecture \ref{intro conj on integrality} holds for all effective curve classes on $S^{[2]}$ in genus $0$ and with one marked point.
\end{thm}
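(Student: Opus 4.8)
The plan is to reduce the integrality of $n_{0,\beta}(\gamma)$ to the explicitly known genus $0$ Gromov--Witten theory of $S^{[2]}$ together with a Möbius inversion. Fix the insertion $\gamma\in H^6(S^{[2]},\BZ)$, write an effective class as $\beta=m\beta_0$ with $\beta_0$ primitive and $m$ its divisibility in $H_2(S^{[2]},\BZ)$, and abbreviate $g_d=\blangle\tau_0(\gamma)\brangle^{\GW}_{0,d\beta_0}$ and $n_d=n_{0,d\beta_0}(\gamma)$. With one marked point the defining recursion of Definition \ref{def of g=0 GV inv} reads $g_m=\sum_{d\mid m}(m/d)^{-2}n_d$, equivalently $m^2 g_m=\sum_{d\mid m}d^2 n_d$. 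Inverting over the divisor lattice yields the closed expression
\begin{equation*}
n_{0,\beta}(\gamma)=n_m=\frac{1}{m^2}\sum_{d\mid m}\mu(m/d)\,d^2\,g_d ,
\end{equation*}
so the whole problem is to understand the arithmetic of the rational numbers $g_d$ and to show that this alternating sum clears its denominator $m^2$.

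Next I would make the $g_d$ explicit. By Theorem \ref{thm:Hilb genus 0} the primitive genus $0$ invariants of $S^{[2]}$ are Fourier coefficients of an explicit (quasi-)modular form with integral expansion, and by deformation and monodromy invariance of the reduced invariants the number $\blangle\tau_0(\gamma)\brangle^{\GW}_{0,\beta}$ depends on $\beta$ only through the discrete data $(q(\beta),\mathrm{div}(\beta))$ and linearly through a canonical integral pairing $L(\gamma,\beta)\in\BZ$. Guided by the product case (Proposition \ref{prop on GW for prod}), where $n_{0,\beta}(\alpha)=(\theta_1\cdot\beta)\,N_0(\beta^2/2)$ and the corresponding Gromov--Witten series carries the multiple-cover factor $\sum_{k\mid\beta}k^{-3}$, I expect the analogous shape
\begin{equation*}
g_d=L(\gamma,\beta_0)\cdot\frac{1}{d^{2}}\sum_{e\mid d}e^{3}\,N\!\left(e^{2}q(\beta_0),\ \mathrm{div}\right),
\end{equation*}
where the integers $N(\cdots)$ are read off from the modular form of Theorem \ref{thm:Hilb genus 0}. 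A genuine feature of the $K3^{[2]}$ geometry, absent in the product case, is that rescaling $\beta_0\mapsto d\beta_0$ alters the class of $\beta$ in the discriminant group $\BZ/2$ according to the parity of $d$, so that $N$ may interpolate between two modular series; this must be tracked in the sum over $e$.

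Granting this form of $g_d$, integrality follows by the same telescoping as in the product case: since $d^2 g_d=L(\gamma,\beta_0)\sum_{e\mid d}e^3 N(\cdots)$, substituting into the Möbius inversion and interchanging the order of summation gives the inner sum $\sum_{e\mid d\mid m}\mu(m/d)=[e=m]$, whence
\begin{equation*}
n_{0,\beta}(\gamma)=L(\gamma,\beta_0)\cdot m\cdot N\!\left(m^{2}q(\beta_0),\ \mathrm{div}\right)=L(\gamma,\beta)\cdot N\!\left(q(\beta),\ \mathrm{div}\right)\in\BZ ,
\end{equation*}
the denominator $m^2$ cancelling against the factor $m^{3}$ produced by the telescoping and leaving the overall factor $m$ that combines with $L(\gamma,\beta_0)$ into $L(\gamma,\beta)$. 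Integrality is then immediate from $L(\gamma,\beta)\in\BZ$ and from the integrality of the Fourier coefficients $N(\cdots)$.

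The hard part will be the second step: proving that the \emph{imprimitive} invariants $g_d$ actually obey the clean multiple-cover form above, rather than merely being rational numbers delivered by the known theory. This is where the full (not only primitive) genus $0$ results of \cite{O1,O2,QuasiK3}, the reduction of the invariants to $(q(\beta),\mathrm{div}(\beta))$ via monodromy, and the correct bookkeeping of the $\BZ/2$ divisibility type under scaling must be combined. Once the multiple-cover shape is secured, the Möbius inversion and the integrality of the modular coefficients complete the proof without further input.
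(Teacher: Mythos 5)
Your opening step (Möbius inversion of Definition \ref{def of g=0 GV inv}) coincides with the paper's, but the two steps you defer are exactly where the content lies, and as formulated they contain genuine gaps. First, the multiple-cover structure you posit for the imprimitive invariants $g_d$ is not the one that actually holds on $S^{[2]}$: the multiple cover formula of \cite{O2}, proven in \cite{QuasiK3}, involves the \emph{signed} BPS invariants $\widetilde{n}_{0,\beta}(\gamma)=\sum_{k\mid\beta}\mu(k)k^{-2}(-1)^{[\beta]+[\beta/k]}\blangle\tau_0(\gamma)\brangle^{\GW}_{0,\beta/k}$, where the residue $[\beta/k]\in\BZ/2$ genuinely changes with $k$. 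Your ``clean'' shape $g_d=L\cdot d^{-2}\sum_{e\mid d}e^3N(\cdots)$ omits these signs, so the telescoping identity $\sum_{e\mid d\mid m}\mu(m/d)=[e=m]$ does not apply as written. The paper resolves this by relating $n_{0,\beta}$ to $\widetilde{n}_{0,\beta}$ through an explicit case analysis on $\div(\beta)$ (in the worst case $n_{0,\beta}=\widetilde{n}_{0,\beta}-\widetilde{n}_{0,\beta/2}$), after which deformation invariance of $\widetilde{n}$ in $(\mathsf{q}(\beta),[\beta/\div(\beta)],(\beta,\gamma))$ reduces everything to the primitive case.

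Second, and more seriously, your final integrality step fails. In the primitive case you cannot conclude integrality from ``integrality of the Fourier coefficients of the modular form'' together with an integral pairing $L(\gamma,\beta)$: by Theorem \ref{thm:Hilb genus 0} the pushforward class is $G_\beta h_\beta$, where $h_\beta=(\beta,-)$ lies only in $H^2(X,\BQ)$ (the embedding $H_2(X,\BZ)\hookrightarrow H^2(X,\BQ)$ has nontrivial index for $K3^{[2]}$-type), so the pairing $\int_X h_\beta\cup\gamma$ with integral $\gamma$ is in general a half-integer — e.g. it equals $(\beta,\beta)=-1/2$ for the exceptional class. The paper's actual argument at this point is geometric, not arithmetic: for a very general deformation with $\beta$ primitive (hence irreducible), $\Mbar_{0,1}(X,\beta)$ is an algebraic space of expected dimension, so $\ev_{\ast}[\Mbar_{0,1}(X,\beta)]^{\vir}=\ev_{\ast}[\Mbar_{0,1}(X,\beta)]$ is an honest integral homology class, and integrality of $\widetilde{n}_{0,\beta}(\gamma)$ follows by pairing with $\gamma\in H^6(X,\BZ)$. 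Without some substitute for this geometric input, your arithmetic route does not close.
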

Higher genus Gromov-Witten invariants are more difficult to compute even for primitive curve classes. 
Nevertheless there are several conjectures on the structure of these invariants, including (i) a \textit{quasi-Jacobi form property}, and (ii) a \textit{holomorphic anomaly equation}
(see \cite[Conj.~A \&~C]{HilbHAE}, see also \cite{O3} for a progress report). Assuming these conjectures and using several explicit evaluations of Gromov-Witten invariants,
we obtain a complete computation of all genus $1$ and $2$ Gromov-Witten invariants of $S^{[2]}$ in primitive classes, see Theorem \ref{thm:Hilb genus 12}.
From this, all Gopakumar-Vafa invariants are computed in Theorems~\ref{thm: Hilb GV genus 1} and~\ref{thm: Hilb GV genus 2}.

With the help of a computer program, we obtain the following check of integrality:

\begin{thm}\emph{(Corollaries \ref{cor on chc int 1} and \ref{cor on chc int 2})}
Assume Conjectures A and C of \cite{HilbHAE}.
Then the genus 1 and 2 part of Conjecture \ref{intro conj on integrality} hold for all primitive curve classes $\beta\in H_2(S^{[2]},\mathbb{Z})$ satisfying $(\beta,\beta)\leqslant 100$, where $(-,-)$ is the Beauville-Bogomolov-Fujiki pairing as in \S \ref{sect on curv clas}.
\end{thm}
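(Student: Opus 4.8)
The plan is to reduce the integrality statement to a finite arithmetic check on the Fourier coefficients of explicitly determined quasi-Jacobi forms. The key structural input is that, by deformation invariance of the reduced Gromov-Witten invariants together with the monodromy action on $H^{\ast}(S^{[2]})$, each primitive-class invariant $\blangle \cdots \brangle^{\GW}_{g,\beta}$ depends on $\beta$ only through the Beauville-Bogomolov-Fujiki square $(\beta,\beta)$ and the divisibility $\mathrm{div}(\beta)\in\{1,2\}$ (the two types allowed by the discriminant group of $K3^{[2]}$-type). Hence all genus $1$ and genus $2$ primitive invariants organize into finitely many generating series in a modular variable $q$ tracking $\tfrac{1}{2}(\beta,\beta)$, together with an elliptic variable recording the pairing of $\beta$ and the insertion against a fixed divisor; the bound $(\beta,\beta)\le 100$ then singles out a finite list of coefficients to verify.

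First I would pin down these generating series. By Conjecture A of \cite{HilbHAE}, the genus $1$ and genus $2$ primitive Gromov-Witten potentials lie in an explicit finite-dimensional space of quasi-Jacobi forms of prescribed weight and index; by Conjecture C (the holomorphic anomaly equation), the dependence of each potential on the quasimodular generator $G_2$ is determined recursively from lower-genus data and elliptic derivatives. Combining the two constraints reduces each potential to finitely many unknown coefficients, which I would fix using the Gromov-Witten evaluations already available: the completely known genus $0$ theory (Theorem \ref{thm:Hilb genus 0}) feeds the anomaly recursion, and is supplemented by the direct higher-genus evaluations assembled in Theorem \ref{thm:Hilb genus 12}. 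This produces closed formulas for $\blangle \tau_0(\gamma)\brangle^{\GW}_{1,\beta}$ and $\blangle \varnothing \brangle^{\GW}_{2,\beta}$ as coefficients of explicit quasi-Jacobi forms.

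Next I would convert these Gromov-Witten series into Gopakumar-Vafa series via the transformations of Definitions \ref{intro defn g1 GV primitive} and \ref{intro defn g2 GV primitive}. In genus $1$ this only requires subtracting the genus $0$ correction $\tfrac{1}{24}\blangle \tau_0(\gamma)\tau_0(c_2(T_X))\brangle^{\GW}_{0,\beta}$, read off from the known genus $0$ theory, giving the closed form of Theorem \ref{thm: Hilb GV genus 1}. In genus $2$ I additionally need the previously computed $n_{1,\beta}(c_2(X))$, the genus $0$ term $\blangle \tau_0(c_2(X))\tau_0(c_2(X))\brangle^{\GW}_{0,\beta}$, and the virtual nodal count $N_{\mathrm{nodal},\beta}$ of Eqn.~\eqref{Nnodal}; the last of these I would express in closed form through the genus $0$ reduced theory, since the nodal count is governed by genus $0$ data. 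Assembling all terms yields the explicit quasi-Jacobi form of Theorem \ref{thm: Hilb GV genus 2}.

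With explicit generating series for $n_{1,\beta}(\gamma)$ and $n_{2,\beta}$ in hand, the theorem reduces to extracting, for each primitive $\beta$ with $(\beta,\beta)\le 100$ and each divisibility type, the relevant Fourier coefficient and confirming it is an integer — a finite computation to be carried out by computer. The main obstacle I anticipate is the second step: correctly identifying the weight, index, and anomaly data so that Conjectures A and C genuinely determine the potentials, and checking that the explicit evaluations suffice to fix every remaining unknown coefficient. A secondary difficulty is obtaining a clean closed form for $N_{\mathrm{nodal},\beta}$ that is compatible with the quasi-Jacobi structure. Once these structural inputs are secured, the integrality verification itself is routine, but it demands careful bookkeeping of the factors of $24$ introduced by the Gopakumar-Vafa transformation.
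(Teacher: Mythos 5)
Your overall strategy is the paper's: use Conjectures A and C together with monodromy and a handful of geometric evaluations to pin down the genus $1$ and $2$ primitive potentials as explicit quasi-Jacobi forms, pass to Gopakumar--Vafa invariants via the defining transformations (including a closed form for $N_{\mathrm{nodal},\beta}$ from the genus $0$ theory), and then check finitely many coefficients by computer. One small circularity in your write-up: you cite Theorem~\ref{thm:Hilb genus 12} as an input to fixing the unknown coefficients, but that theorem is the \emph{output} of the fixing procedure; the actual inputs are the localization computation on $T^{\ast}\p^2$ (class $B-A$) and the vanishings from the embedded rational curve family (classes such as $B+F+A$).

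The genuine gap is in the final integrality check in genus $1$. The statement to be verified is that $n_{1,\beta}(\gamma)\in\BZ$ for \emph{all} integral $\gamma\in H^4(X,\BZ)$, equivalently that the class $n_{1,\beta}=\tfrac12 a_\beta h_\beta^2+b_\beta c_2(T_X)$ lies in $H^4(X,\BZ)$. Your plan of ``extracting the relevant Fourier coefficient and confirming it is an integer'' would fail here: the natural coefficients $a_\beta,b_\beta$ of the generating series are in general \emph{not} integers (e.g.\ $b_\beta=35/8$ at $(\beta,\beta)=3/2$ and $a_\beta$ odd in the half-integral cases, cf.\ Table~\ref{bps_table}), yet the class $n_{1,\beta}$ is still integral. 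Converting integrality of the class into a checkable arithmetic condition on $a_\beta,b_\beta$ requires an explicit integral basis of the Hodge lattice $H^{2,2}(S^{[2]},\BZ)$ --- the paper uses Novario's basis to derive the criterion of Lemma~\ref{lemma:integrality} (for $(\beta,\beta)\in 2\BZ$: $a_\beta\in 2\BZ$ and $3b_\beta\in\BZ$; for $(\beta,\beta)=2d-\tfrac12$: $a_\beta,\,24b_\beta,\,\tfrac18 a_\beta-3b_\beta\in\BZ$), and it is \emph{this} criterion that the computer verifies. Without that lattice-theoretic step your genus $1$ check is not well-posed. The genus $2$ part is unaffected, since $n_{2,\beta}$ is a single rational number and the direct coefficient check suffices.
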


\subsection{A Yau-Zaslow type formula on $K3^{[2]}$}

A hyperk\"ahler variety is of $K3^{[2]}$-type if it is deformation-equivalent to the Hilbert scheme of $2$ points of a $K3$ surface $S$.
Given a primitive curve class $\beta \in H_2(S^{[2]},\BZ)$,
consider the very general deformation $(X,\beta')$ of a pair $(S^{[2]},\beta)$, where $\beta$ stays of Hodge type on all fibers.
By the deformation theory of hyperk\"ahler varieties, the
variety $X$ then has Picard rank $1$ and the algebraic classes in $H_2(X,\BZ)$ are generated by $\beta'$.
In particular $\beta'$ is irreducible.
In this case, it is natural to expect that curves in $(X,\beta')$ forms an ideal geometry in the sense of \S \ref{sect on heur arg}, \S \ref{subsec:ideal geometry primitive}.
In other words, after a generic deformation, our Gopakumar-Vafa invariants should give
enumerative information about curves in these hyperk\"ahler varieties of $K3^{[2]}$-type.

In genus $2$, this yields the following conjectural formula for the number of isolated (rigid) genus $2$ curves on a very general hyperk\"ahler variety of $K3^{[2]}$-type of minimal degree.
This may be viewed as a $4$-dimensional analogue of the classical Yau-Zaslow formula concerning counts of rational curves on $K3$ surfaces:
\begin{thm}\emph{(Theorem \ref{thm: Hilb GV genus 2})} \label{intro thm: Hilb GV genus 2} Assume Conjectures A and C of \cite{HilbHAE}.
For any hyperk\"ahler variety $X$ of $K3^{[2]}$-type
and primitive curve class $\beta \in H_2(X,\BZ)$, 
the genus $2$ Gopakumar-Vafa invariant $n_{2,\beta}$
is the coefficient determined by $\beta$ (see Definition~\ref{defn:beta coefficient})
of the quasi-Jacobi form
\begin{multline*} \widetilde{I}(y,q) = 
\frac{\Theta^2}{\Delta}\Bigg[ 
\frac{5}{384} \wp E_2^{3} + \frac{25}{6144} E_2^{4} + \frac{35}{384} \wp E_2^{2} - \frac{5}{512} E_2^{3} + \frac{5}{384} \wp E_2 E_4 + \frac{7}{3072} E_2^{2} E_4 \\
- \frac{71}{64} \wp E_2 + \frac{27}{512} E_2^{2} - \frac{47}{384} \wp E_4 + \frac{5}{4608} E_2 E_4 - \frac{13}{18432} E_4^{2} - \frac{1}{96} \wp E_6 \\
+ \frac{1}{1152} E_2 E_6 + \frac{9}{8} \wp - \frac{5}{32} E_2 - \frac{23}{1536} E_4 - \frac{5}{1152} E_6 + \frac{1}{8} \Bigg],
\end{multline*}
where the functions $\Theta,\Delta, \wp, E_i$ are defined in \S \ref{sect on qj form}.
\end{thm}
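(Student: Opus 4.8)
The plan is to evaluate the right-hand side of Definition~\ref{defn g2 GV primitive} term by term, exploiting the fact that for $K3^{[2]}$-type each ingredient is a deformation invariant of the pair $(X,\beta)$. Consequently every invariant below depends only on the two lattice-theoretic invariants of a primitive class $\beta$: the Beauville--Bogomolov--Fujiki square $(\beta,\beta)$ and its divisibility. This is precisely what allows us to organize each invariant as the $\beta$-coefficient, in the sense of Definition~\ref{defn:beta coefficient}, of a two-variable generating series in $(y,q)$, with $q$ tracking $(\beta,\beta)$ and $y$ the divisibility datum, and it is what forces the final answer to be a single quasi-Jacobi form as in \S\ref{sect on qj form}.

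First I would isolate the four quantities entering Definition~\ref{defn g2 GV primitive}: solving for $n_{2,\beta}$ expresses it as $\blangle \varnothing \brangle^{\GW}_{2,\beta} + \tfrac{1}{24} n_{1,\beta}(c_2(X)) - \tfrac{1}{2\cdot 24^2}\blangle \tau_0(c_2(X))\,\tau_0(c_2(X)) \brangle^{\GW}_{0,\beta} - \tfrac{1}{24}N_{\mathrm{nodal},\beta}$. Unwinding Definition~\ref{defn g1 GV primitive} gives $n_{1,\beta}(c_2(X)) = \blangle \tau_0(c_2(X)) \brangle^{\GW}_{1,\beta} + \tfrac{1}{24}\blangle \tau_0(c_2(X))\,\tau_0(c_2(X)) \brangle^{\GW}_{0,\beta}$, so only the genus~$2$ invariant, one genus~$1$ descendent, one genus~$0$ two-point function, and the nodal count remain. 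The genus~$0$ two-point function is read off from the completely known genus~$0$ theory (Theorem~\ref{thm:Hilb genus 0}): the class $c_2(X)\in H^4(X)$ is fixed, and its pairings against the curve class are governed by the Fujiki relations of Theorem~\ref{thm on fujiki cons}, which reduce $\blangle \tau_0(c_2(X))\,\tau_0(c_2(X)) \brangle^{\GW}_{0,\beta}$ to an explicit multiple of the genus~$0$ Yau--Zaslow series $\Theta^2/\Delta$.

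The genus~$2$ invariant is the heart of the matter, and here the strategy is the one forced by Conjectures~A and~C of \cite{HilbHAE} and already executed in Theorem~\ref{thm:Hilb genus 12}. The quasi-Jacobi property (Conjecture~A) places the generating series of $\blangle \varnothing \brangle^{\GW}_{2,\beta}$ in a finite-dimensional space of quasi-Jacobi forms of prescribed weight and index, whose index part is fixed by the $\Theta^2/\Delta$ factor; the holomorphic anomaly equation (Conjecture~C) then determines the full dependence on the almost-holomorphic generator $E_2$ by relating $\partial_{E_2}$ of the genus~$2$ series to genus~$0$ and genus~$1$ data already in hand; and finitely many explicit evaluations of genus~$2$ Gromov--Witten invariants pin down the remaining holomorphic constants. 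The nodal count $N_{\mathrm{nodal},\beta}$ of \eqref{Nnodal} is treated in the same spirit: by its definition it is assembled from genus~$0$ descendent information, hence is again a coefficient of an explicit quasi-Jacobi form of the same index.

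Finally I would combine the four series with the weights $1,\ \tfrac{1}{24},\ -\tfrac{1}{2\cdot 24^2},\ -\tfrac{1}{24}$ recorded above and simplify. Since all four share the factor $\Theta^2/\Delta$, the combination equals $\Theta^2/\Delta$ times a polynomial in $\wp, E_2, E_4, E_6$, and collecting terms produces the displayed bracket of Theorem~\ref{intro thm: Hilb GV genus 2}; this last step is purely mechanical and is best performed by computer, exactly as in the accompanying integrality verifications. The main obstacle is the genus~$2$ piece, which rests entirely on Conjectures~A and~C: the delicate point is to carry the $E_2$-bookkeeping of the holomorphic anomaly equation and the Fujiki-type reduction of the $c_2(X)$ insertions consistently across all genera, so that the four generating series become honest quasi-Jacobi forms of a common index and weight that may legitimately be added. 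Matching these index and weight normalizations is where essentially all of the care is required.
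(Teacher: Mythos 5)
Your proposal is correct and follows essentially the same route as the paper: the paper's proof of Theorem~\ref{thm: Hilb GV genus 2} simply plugs the genus $0$ theory (Theorem~\ref{thm:Hilb genus 0}), the genus $1$ and $2$ quasi-Jacobi forms obtained from the holomorphic anomaly equation (Theorem~\ref{thm:Hilb genus 12}), and the nodal count from \cite{NO} into Definition~\ref{defn g2 GV primitive} and collects terms. The only slip is cosmetic: the variable $y$ records the coefficient of the exceptional class $A$ in $\beta_{h,k}=B+hF+kA$ rather than a "divisibility datum" (all classes here are primitive), but this does not affect the argument.
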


In genus $1$, it is convenient to encode the invariants in the \textit{genus 1 Gopakumar-Vafa class} 
\[ n_{1,\beta} \in H^4(X,\BQ) \]
which is defined by
\[ \forall\,\, \gamma \in H^4(X,\BQ): \quad \int_X n_{1,\beta} \cup \gamma = n_{1,\beta}(\gamma), \]
where $n_{1,\beta}(\gamma)$ is given in Definition \ref{intro defn g1 GV primitive}.
In an ideal geometry, $n_{1,\beta}$ is the class of the surface swept out by elliptic curves in class $\beta$.
Theorem~\ref{thm: Hilb GV genus 1} then yields a conjectural formula for this class.
We list the first values of the genus $1$ and $2$ Gopakumar-Vafa invariants of hyperk\"ahler varieties of $K3^{[2]}$-type in Table~\ref{bps_table} and Table~\ref{gvg2_table} below.
Since the deformation class of a pair $(X,\beta)$ where $\beta$ is a primitive curve class, only depends on the square $(\beta,\beta)$ (see \cite{O2}),
the Gopakumar-Vafa invariants only depend on $(\beta,\beta)$.

It is interesting to compare the enumerative significance of the listed invariants with the known geometry of curves on very general hyperk\"ahler 4-folds of $K3^{[2]}$-type with curve class $\beta$.
In the case $(\beta,\beta)=-5/2$, any curve in class $\beta$
is a line in a Lagrangian $\p^2 \subset X$, see \cite{HT}.
In particular, there are no higher genus curves, and indeed we observe the vanishing of the $g=1,2$ Gopakumar-Vafa invariants in this case.
Similarly, the case $(\beta,\beta)=-1/2$ corresponds to the exeptional curve class on $K3^{[2]}$
(the class of the exceptional curve of the Hilbert-Chow morphism $K3^{[2]} \to \mathrm{Sym}^{2}(K3)$),
and again there are no higher genus curves. The case $(\beta,\beta)=-2$ is similar, see \cite{HT}.
The first time we see elliptic curves is in case $(\beta,\beta)=0$,
which corresponds to the fiber class of a Lagrangian fibration $X \to \p^2$. Elliptic curves appear here in fibers over the discriminant.
The case $(\beta,\beta)=3/2$ corresponds to a very general Fano variety of lines on a cubic 4-fold,
with $\beta$ the minimal curve class (of degree $3$ against the Pl\"ucker polarization).
Since there are no cubic genus $2$ curves in a projective space (see also Example~\ref{example:Fano1}),
there are no genus $2$ curves in this class; again, this matches the vanishing observed in the table.
The case $(\beta,\beta)=2$ are the double covers of EPW sextics \cite{EPW}. %, and again we expect no genus $2$ curves.
The first time we should see isolated smooth genus $2$ curves is the case $(\beta,\beta)=11/2$,
which are precisely the Debarre-Voisin 4-folds \cite{DV}.
Here, the explicit geometry of curves has not been studied yet.
It would be very interesting to construct the expected $3465$ isolated smooth genus $2$ curves explicitly.
In fact, to the best of the authors' knowledge,
there exists so far no known example of a smooth isolated (rigid) genus $2$ curves on a hyperk\"ahler $4$-fold,
and this may be perhaps the simplest case.

\begin{table}[ht]
{\renewcommand{\arraystretch}{1.1}
\begin{tabular}{| c | c | c |}
\hline
\!$(\beta,\beta)$\!  & $a_{\beta}$   & $b_{\beta}$ \\
\hline
$-5/2$  & $0$ & $0$ \\
$-2$            & $0$ & $0$ \\
$-1/2$ & $ 0$ & $ 0$ \\
$0$ & $ 6$ & $ 1$ \\
$3/2$ & $ 105$ & $ 35/8$ \\
$2$ & $ 360$ & $ 30$ \\
$7/2$ & $ 3840$ & $ 40$ \\
$4$ & $ 9360$ & $ 300$ \\
$11/2$ & $ 74970$ & $ -6405/4$ \\
$6$ & $ 157080$ & $ -1540$ \\
$15/2$ & $ 1034496$ & $ -55224$ \\
$8$ & $ 1982820$ & $ -94570$ \\
$19/2$ & $ 11288760$ & $ -965720$ \\
$10$ & $ 20371680$ & $ -1702680$ \\
\hline
\end{tabular}
\quad
\begin{tabular}{| c | c | c |}
\hline
\!$(\beta,\beta)$\!  & $a_{\beta}$   & $b_{\beta}$ \\
\hline
$23/2$ & $ 103461120$ & $ -12187560$ \\
$12$ & $ 178607520$ & $ -21135240$ \\
$27/2$ & $ 826591920$ & $ -124077800$ \\
$14$ & $ 1378589520$ & $ -210090760$ \\
$31/2$ & $ 5903493120$ & $ -1077138720$ \\
$16$ & $ 9574935480$ & $ -1781067420$ \\
$35/2$ & $ 38376042111$ & $ -65957272227/8$ \\
$18$ & $ 60812926920$ & $ -13338391770$ \\
$39/2$ & $ 230147470080$ & $ -56902511160$ \\
$20$ & $ 357559991712$ & $ -90266652168$ \\
$43/2$ & $ 1286717384040$ & $ -359854419320$ \\
$22$ & $ 1965075202440$ & $ -560881363980$ \\
$47/2$ & $6762292992000$ & $-2110582343520$ \\
$24$ & $10172904142800$ & $-3237985250920$ \\
\hline
\end{tabular}}
\vspace{8pt}
\caption[]{The first coefficients of the genus $1$ Gopakumar-Vafa class\footnotemark
%\begin{minipage}{\linewidth}
$$n_{1,\beta} = \frac{1}{2} a_{\beta} h_{\beta}^2 + b_{\beta} c_2(T_X)$$
%\end{minipage}
for a hyperk\"ahler $4$-fold of $K3^{[2]}$-type with primitive curve class $\beta$
(see \S \ref{sect on curv clas} for the definition of the dual divisor $h_{\beta}$).
In an ideal geometry (ref.~\S \ref{subsec:ideal geometry primitive}), $n_{1,\beta}$ is the class of the surface swept out by the elliptic curves in class $\beta$.}
\label{bps_table} 
\end{table}

\begin{table}[ht]
{\renewcommand{\arraystretch}{1.1}
\begin{tabular}{| c | c|}
\hline
\!$(\beta,\beta)$\! & $n_{2,\beta}$ \\
\hline
$-5/2$  & $0$ \\
$-2$ & $0$ \\
$-1/2$ & $0$ \\
$0$ & $0$ \\
$3/2$ & $0$ \\
$2$ & $0$ \\
$7/2$ & $0$ \\
$4$ & $0$\\
$11/2$ & $ 3465$ \\
$6$ & $ 7920$ \\
$15/2$ & $ 153720$ \\
$8$ & $ 321300$ \\
$19/2$ & $ 3527370$ \\
$10$ & $ 6902280$ \\
\hline
\end{tabular}
\quad
\begin{tabular}{| c | c | c |}
\hline
\!$(\beta,\beta)$\!  & $n_{2,\beta}$ \\
\hline
$23/2$ & $ 55981800$ \\
$12$ & $ 104091120$ \\
$27/2$ & $ 691537770$ \\
$14$ & $ 1234210950$ \\
$31/2$ & $ 7087424400$ \\
$16$ & $ 12229093800$ \\
$35/2$ & $ 62706694050$ \\
$18$ & $ 105164743320$ \\
$39/2$ & $ 492018813720$ \\
$20$ & $ 805306494960$ \\
$43/2$ & $ 3490512517800$ \\
$22$ & $ 5593478602320$ \\
$47/2$ & $ 22715949849120$ \\
$24$ & $ 35731375344000$ \\
\hline
\end{tabular}}
\quad
\begin{tabular}{| c | c | c |}
\hline
\!$(\beta,\beta)$\!  & $n_{2,\beta}$ \\
\hline
$51/2$ & $ 137145316350735$ \\
$26$ & $ 212193639864360$ \\
$55/2$ & $ 775018459086480$ \\
$28$ & $ 1181532282033600$ \\
$59/2$ & $ 4129199523398880$ \\
$30$ & $ 6211686830906340$ \\
$63/2$ & $ 20865837137909400$ \\
$32$ & $ 31011424430679000$ \\
$67/2$ & $ 100506478032240210$ \\
$34$ & $ 147733008377317200$ \\
$71/2$ & $ 463428612330788160$ \\
$36$ & $ 674306145117002160$ \\
$75/2$ & $ 2052965259390710250$ \\
$38$ & $ 2959299345635755920$ \\
$79/2$ & $8765107896801841200$ \\
%$40$ & $12525573273890896800$ \\
\hline
\end{tabular}

\vspace{8pt}
\caption{The first genus 2 Gopakumar-Vafa invariants of a hyperk\"ahler $4$-fold of $K3^{[2]}$-type
in a primitive curve class $\beta$.}
\label{gvg2_table}
\end{table}

\subsection{Appendix}
In the appendix \S \ref{append}, we discuss several cases where we can extend the above GW/GV/$\DT_4$ correspondence to imprimitive curve classes. 

\subsection*{Notation and convention}
All varieties and schemes are defined over $\mathbb{C}$. 
For a morphism $\pi \colon X \to Y$ of schemes
and objects $\fF, \gG \in \mathrm{D^{b}(Coh(\textit{X\,}))}$ we will use
\[ \dR \hH om_{\pi}(\fF, \gG) := \dR \pi_{\ast} \dR \hH om_X(\fF, \gG). \]

A class $\beta\in H_2(X,\mathbb{Z})$ is called \textit{effective} if there exists a non-empty curve $C \subset X$ with class $[C] = \beta$. An effective class $\beta$ is called \textit{irreducible} if it is not the sum of two effective classes, and it is called \textit{primitive} if it is not a positive integer multiple of an effective class.

A holomorphic symplectic variety is a smooth projective variety
together with a non-degenerate holomorphic two form $\sigma\in H^0(X,\Omega^2_X)$. 
A holomorphic symplectic variety is \textit{irreducible hyperk\"ahler}
if $X$ is simply connected and $H^0(X, \Omega_X^2)$ is generated by a symplectic form.
A $K3$ surface is an irreducible hyperk\"ahler  variety of dimension $2$.
%simply connected smooth projective surface $S$ such that $K_S\cong \oO_S$.
%A hyperk\"ahler variety $X$ is a smooth projective variety which is simply connected and such that the space of holomorphic $2$-forms $H^0(X, \Omega_X^2)$ is generated by a symplectic form.

\footnotetext{Although there are fractional numbers in Table~\ref{bps_table}, the corresponding classes $n_{1,\beta}$ are integral, see Lemma~\ref{lemma:integrality}.}

\subsection*{Acknowledgement} 
We thank Chen Jiang, Young-Hoon Kiem, Sergej Monavari, Rahul Pandharipande and Hyeonjun Park for helpful discussions.
A special thanks goes to Luca Battistella for several useful discussions about the splitting of the Gromov-Witten virtual class in genus $2$ and 
to Sergej Monavari for his help on a localization computation.

Y. C. is partially supported by RIKEN Interdisciplinary Theoretical and Mathematical Sciences
Program (iTHEMS), World Premier International Research Center Initiative (WPI), MEXT, Japan, 
JSPS KAKENHI Grant Number JP19K23397 and Royal Society Newton International Fellowships Alumni 2020 and 2021. 
G.O. is partially supported by Deutsche Forschungsgemeinschaft (DFG) - OB 512/1-1. 
Y. T. is partially supported by World Premier International Research Center Initiative (WPI initiative), MEXT, Japan, and
Grant-in Aid for Scientific Research grant (No. 19H01779) from MEXT, Japan.

\section{Gopakumar-Vafa invariants}
Let $X$ be a holomorphic symplectic 4-fold
with symplectic form $\sigma \in H^0(X,\Omega^2_X)$.

In this section we first recall the definition of (reduced) Gromov-Witten invariants,
and then give our definition of Gopakumar-Vafa invariants.
In Section \ref{sect on heur arg}, we justify the definition 
by working in an ideal geometry of curves.

\subsection{Gromov-Witten invariants}
Let $\overline{M}_{g, n}(X, \beta)$
be the moduli space of $n$-pointed genus $g$ stable maps
to $X$ representing the non-zero curve class $\beta \in H_2(X,\BZ)$.
The moduli space $\Mbar_{g,n}(X,\beta)$ admits a perfect obstruction theory \cite{BF, LT}.
By the construction of \cite[\S 2.2]{MP_GWNL} the symplectic form $\sigma$ induces an everywhere surjective cosection of the obstruction sheaf.
By Kiem-Li's theory of cosection localization \cite{KiL} 
it follows that the standard virtual class as defined in \cite{BF,LT}
vanishes and instead there exists a reduced virtual fundamental class:
$$[\overline{M}_{g, n}(X, \beta)]^{\vir}\in A_{2-g+n}(\overline{M}_{g, n}(X, \beta)). $$
In this paper we will always work with the reduced virtual fundamental class
which we will hence simply denote by $[ - ]^{\vir}$.

Given cohomology classes $\gamma_i \in H^{\ast}(X)$ 
and integers $k_i \geqslant   0$ 
the (reduced) Gromov-Witten invariants of $X$ in class $\beta$ are defined by
\begin{align}\label{GWinv}
\blangle \tau_{k_1}(\gamma_1) \cdots \tau_{k_n}(\gamma_n) \brangle^{\GW}_{g,\beta}
%\mathrm{GW}_{g, \beta}(\gamma_1, \ldots, \gamma_n)
=\int_{[\overline{M}_{g, n}(X, \beta)]^{\rm{vir}}}
\prod_{i=1}^n \mathrm{ev}_i^{\ast}(\gamma_i)\cdot\psi_i^{k_i},
\end{align}
where $\mathrm{ev}_i \colon \overline{M}_{g,n}(X, \beta)\to X$
is the evaluation map at the $i$-th marking and $\psi_i$ is the $i$-th cotangent line class.
By the properties of the reduced virtual class, the integral \eqref{GWinv}
is invariant under deformations of the pair $(X,\beta)$ with preserve the Hodge type of the class $\beta$.
We call the invariant \eqref{GWinv} a \emph{primary Gromov-Witten invariant} if all the $k_i$ are zero.

\subsection{Relations}
We record several basic relations among genus $0$ Gromov-Witten invariants
which will be used later on in the text.
For the first reading, this section map be skipped.

\begin{lemma}\label{div equ on GW}
Let $D$ be a divisor on $X$ such that $d:= D\cdot \beta\neq 0$. Then 
$$\blangle \tau_1(\gamma) \brangle^{\GW}_{0,\beta}
=
\frac{1}{d^2} \blangle \tau_0(\gamma) \tau_0(D^2) \brangle^{\GW}_{0,\beta} -\frac{2}{d} \blangle \tau_0(\gamma \cdot D) \brangle^{\GW}_{0,\beta}.$$
\end{lemma}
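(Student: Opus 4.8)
The plan is to introduce the three-pointed auxiliary invariant $\blangle \tau_1(\gamma)\,\tau_0(D)\,\tau_0(D)\brangle^{\GW}_{0,\beta}$ and evaluate it in two different ways: once by repeatedly applying the divisor equation, and once by applying the genus $0$ topological recursion relation (TRR). Comparing the two expressions will give the claim after dividing by $d^2$. Throughout I write $d=D\cdot\beta\neq 0$, and I use that the divisor equation is valid for the reduced theory (it follows from the forgetful-map compatibility $\pi^\ast[\overline M_{0,n}(X,\beta)]^{\vir}=[\overline M_{0,n+1}(X,\beta)]^{\vir}$, which respects the cosection, so the reduced classes satisfy the usual pushforward identity).

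First I would strip off the two insertions $\tau_0(D)$ one at a time using the divisor equation, remembering that the descendent $\tau_1(\gamma)$ produces the correction term $\tau_0(\gamma\cdot D)$ (while $\tau_0(D)$ produces none). This gives the purely ``divisorial'' evaluation
\[
\blangle \tau_1(\gamma)\,\tau_0(D)\,\tau_0(D)\brangle^{\GW}_{0,\beta}
= d^2\,\blangle \tau_1(\gamma)\brangle^{\GW}_{0,\beta}
+ 2d\,\blangle \tau_0(\gamma\cdot D)\brangle^{\GW}_{0,\beta}.
\]
Next I would apply the genus $0$ TRR, putting the descendent at the marking carrying $\gamma$ and using the two markings carrying $D$ as the distinguished pair; writing $\psi_1$ as a sum of boundary divisors and using the splitting axiom, one obtains
\[
\blangle \tau_1(\gamma)\,\tau_0(D)\,\tau_0(D)\brangle^{\GW}_{0,\beta}
= \sum_{\beta_1+\beta_2=\beta}\sum_{e}
\blangle \tau_0(\gamma)\,\tau_0(T_e)\brangle^{\GW}_{0,\beta_1}\,
\blangle \tau_0(T^e)\,\tau_0(D)\,\tau_0(D)\brangle^{\GW}_{0,\beta_2},
\]
where $\{T_e\},\{T^e\}$ are Poincaré-dual bases of $H^{\ast}(X)$.

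I would then analyze the three types of terms. The terms with $\beta_1=0$ vanish because the component carrying the marking $\gamma$ together with the node is contracted with only two special points, hence unstable. The terms with $\beta_2=0$ are classical: the factor $\blangle \tau_0(T^e)\,\tau_0(D)\,\tau_0(D)\brangle^{\GW}_{0,0}=\int_X T^e\cup D^2$, and summing over $e$ reassembles $\sum_e T_e\,(\int_X T^e\cup D^2)=D^2$, so this part contributes exactly $\blangle \tau_0(\gamma)\,\tau_0(D^2)\brangle^{\GW}_{0,\beta}$. The crux is the genuine splitting terms with $\beta_1,\beta_2\neq 0$: here a dimension count (the reduced virtual dimension exceeds the ordinary one by $1$, while the split boundary divisor has codimension $1$) forces exactly one reduction, so the boundary contribution is $[\text{reduced}]\boxtimes[\text{ordinary}]+[\text{ordinary}]\boxtimes[\text{reduced}]$. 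Each summand thus contains an \emph{ordinary} Gromov-Witten invariant of $X$ in a \emph{nonzero} class, which vanishes because the obstruction sheaf of $X$ has an everywhere-surjective cosection. Hence all these terms are zero, and I obtain $\blangle \tau_1(\gamma)\,\tau_0(D)\,\tau_0(D)\brangle^{\GW}_{0,\beta}=\blangle \tau_0(\gamma)\,\tau_0(D^2)\brangle^{\GW}_{0,\beta}$. Equating the two evaluations and solving for $\blangle\tau_1(\gamma)\brangle^{\GW}_{0,\beta}$ yields the lemma.

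The main obstacle is the last point: justifying that the reduced virtual class restricts on the boundary divisor (where the class splits as $\beta_1+\beta_2$ with both parts nonzero) as a sum of terms carrying a single reduction, so that the vanishing of ordinary invariants applies. This ``single-reduction'' splitting of the reduced class is the genuinely theory-specific input; it is the analogue for holomorphic symplectic $4$-folds of the well-documented behavior of the reduced class on $K3$ surfaces, and I would either cite that framework or argue it directly from the additivity of the symplectic cosection over the node together with the dimension bookkeeping above. Everything else—the divisor equation and the purely combinatorial form of the genus $0$ TRR—is standard and requires only careful bookkeeping of the descendent corrections.
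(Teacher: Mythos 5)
Your proposal is correct and is essentially the paper's own argument: the paper likewise evaluates $\blangle \tau_1(\gamma)\tau_0(D)^2\brangle^{\GW}_{0,\beta}$ once by the divisor equation (giving $d^2\blangle\tau_1(\gamma)\brangle + 2d\blangle\tau_0(\gamma\cdot D)\brangle$) and once by rewriting $\psi_1$ in boundary divisors together with the splitting formula for reduced virtual classes (citing \cite[\S 7.3]{MPT}), giving $\blangle\tau_0(\gamma)\tau_0(D^2)\brangle$. The only difference is that you spell out the boundary analysis and the single-reduction splitting that the paper delegates to the citation.
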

\begin{proof}
By the divisor equation (e.g.~\cite[pp.~305]{CK})
\[ \blangle \tau_1(\gamma) \tau_0(D)^2 \brangle^{\GW}_{0,\beta} = d^2 \blangle \tau_1(\gamma) \brangle^{\GW}_{0,\beta} + 2 d \blangle \tau_0(\gamma\cdot D ) \brangle^{\GW}_{0,\beta}. \]
On the other hand, by rewriting $\psi_1$ in terms of boundary divisors 
and using the splitting formula for reduced virtual classes as in \cite[\S 7.3]{MPT}
one gets
\[ \blangle \tau_1(\gamma)  \tau_0(D)^2 \brangle^{\GW}_{0,\beta} = \blangle \tau_0(\gamma)\tau_0(D^2) \brangle^{\GW}_{0,\beta}. \qedhere \]
\end{proof}

\begin{lemma} \label{lemma:One relation}
For any $\gamma \in H^4(X)$, we have:
$\blangle \tau_1(\gamma) \brangle^{\GW}_{0,\beta}
=
\blangle \tau_2(1) \tau_0(\gamma) \brangle^{\GW}_{0,\beta}. $
\end{lemma}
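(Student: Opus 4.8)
The plan is to deduce the identity from the behaviour of the descendent $\psi$-classes under the forgetful morphism, reducing everything to a boundary contribution that dies by the splitting property of the reduced virtual class, exactly in the spirit of Lemma~\ref{div equ on GW}.

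First I would write, placing $\tau_2(1)$ at the first marking and $\tau_0(\gamma)$ at the second,
\[ \blangle \tau_2(1)\tau_0(\gamma)\brangle^{\GW}_{0,\beta} = \int_{[\overline{M}_{0,2}(X,\beta)]^{\vir}} \psi_1^2\cdot \mathrm{ev}_2^{\ast}\gamma, \]
and let $\pi\colon \overline{M}_{0,2}(X,\beta)\to \overline{M}_{0,1}(X,\beta)$ be the map forgetting the \emph{first} marking. Since the cosection of \cite{KiL} is pulled back from $X$, the reduced classes are compatible with $\pi$, i.e.\ $[\overline{M}_{0,2}(X,\beta)]^{\vir}=\pi^{\ast}[\overline{M}_{0,1}(X,\beta)]^{\vir}$, and the kept marking satisfies $\mathrm{ev}_2=\mathrm{ev}_1\circ\pi$. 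By the projection formula the integral becomes $\int_{[\overline{M}_{0,1}(X,\beta)]^{\vir}} \pi_{\ast}(\psi_1^2)\cdot \mathrm{ev}_1^{\ast}\gamma$.

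The main computational step is the genus-$0$ push-forward identity on $\overline{M}_{0,1}(X,\beta)$
\[ \pi_{\ast}(\psi_1^2)=\psi_1-\delta, \]
where $\psi_1$ now denotes the cotangent class at the unique remaining marking and $\delta$ is the class of the locus of maps with reducible domain. To obtain it I would use $\psi_1=c_1(\omega_\pi)+D$, with $D$ the section cut out by the remaining marking, together with $\pi_\ast c_1(\omega_\pi)^2=\kappa_1$, $\pi_\ast\big(c_1(\omega_\pi)\cdot D\big)=\psi_1$ and $\pi_\ast D^2=-\psi_1$, which give $\pi_\ast(\psi_1^2)=\kappa_1+\psi_1$; and then Mumford's genus-$0$ relation $\kappa_1=12\lambda_1-\delta=-\delta$ (equivalently, $\kappa_1$ vanishes on the open locus of irreducible domains by a $\mathbb{P}^1$-bundle computation, hence is supported on the boundary). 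This yields
\[ \blangle \tau_2(1)\tau_0(\gamma)\brangle^{\GW}_{0,\beta}= \blangle \tau_1(\gamma)\brangle^{\GW}_{0,\beta}-\int_{[\overline{M}_{0,1}(X,\beta)]^{\vir}}\delta\cup\mathrm{ev}_1^{\ast}\gamma. \]

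It then remains to show the boundary integral vanishes, and this is the step I expect to be the crux. Each component of $\delta$ parametrizes stable maps whose domain breaks into two genus-$0$ components of classes $\beta_1+\beta_2=\beta$, and by stability (one marking and one node on each side) both $\beta_i$ are non-zero. By the splitting formula for reduced virtual classes \cite[\S 7.3]{MPT}, the reduced class restricts on such a boundary divisor to a sum of two terms, in each of which one factor carries the reduced class and the other the \emph{standard} virtual class of \cite{BF,LT}; on a holomorphic symplectic target the standard class of each factor vanishes by cosection localization, so the restriction is $0$ and the boundary integral drops out, proving the claim. The delicate points, and the main obstacle, are precisely the genus-$0$ $\kappa$--$\psi$ bookkeeping giving $\pi_\ast(\psi_1^2)=\psi_1-\delta$ and the careful application of the reduced splitting to conclude that the boundary term vanishes; both run entirely parallel to the argument already invoked for Lemma~\ref{div equ on GW}.
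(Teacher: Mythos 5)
Your argument is correct, and it reaches the identity by a genuinely different route than the paper. The paper's proof is the one sketched for Lemma~\ref{div equ on GW}: cap with an auxiliary divisor $D$ satisfying $d = D\cdot\beta \neq 0$ to get a three\nobreakdash-pointed space, rewrite $\psi_1$ there as a sum of boundary divisors (all terms with both degrees non-zero dying by the reduced splitting formula of \cite[\S 7.3]{MPT}), and then use the string and divisor equations to express both $\blangle \tau_1(\gamma)\brangle^{\GW}_{0,\beta}$ and $\blangle \tau_2(1)\tau_0(\gamma)\brangle^{\GW}_{0,\beta}$ as the same combination of primary invariants. You instead stay on the two-pointed space and push $\psi_1^2$ down the forgetful map, using $\psi_1 = c_1(\omega_\pi)+D$, the standard section identities, and the genus-$0$ Grothendieck--Riemann--Roch relation $\pi_{\ast}(c_1(\omega_\pi)^2) = -\delta$ (valid for any family of prestable genus-$0$ curves, so in particular over $\Mbar_{0,1}(X,\beta)$), and then kill the boundary class $\delta$ by the same reduced splitting, since every boundary divisor of $\Mbar_{0,1}(X,\beta)$ has both degrees non-zero by stability. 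The decisive geometric input --- vanishing of boundary contributions against the reduced class --- is identical in the two proofs; what differs is the combinatorial packaging. Your version does not need a divisor with $D\cdot\beta\neq 0$ and avoids the string/divisor-equation bookkeeping, at the price of the $\kappa$--$\psi$ comparison; and as you observe, even the precise coefficient in Mumford's relation is dispensable, since the $\p^1$-bundle computation on the open locus plus excision already shows $\pi_{\ast}(c_1(\omega_\pi)^2)$ is a combination of boundary divisors, which is all the vanishing argument requires. Both proofs are complete.
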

\begin{proof}
Arguing as in Lemma~\ref{div equ on GW} we can express both sides in terms of primary Gromov-Witten invariants, which yields the result.
\begin{comment}
For the left hand side this was done in Lemma~\ref{div equ on GW}.
For the right hand side consider the invariant
\begin{equation} 
\blangle \tau_2(1) \tau_0(\gamma) \tau_0(D) \brangle^{\GW}_{0,\beta}. \label{invariant GW} 
\end{equation}
By rewriting $\psi_1$ in terms of boundary and then applying the string equation, we see \eqref{invariant GW} equals $\blangle \tau_1(1) \tau_0(\gamma D) \brangle^{\GW}_{0,\beta} - \blangle \tau_0(\gamma D) \brangle^{\GW}_{0,\beta}$.
On the other hand, by applying the divisor equation
we get that \eqref{invariant GW} also equals 
$d \blangle \tau_2(1) \tau_0(\gamma) \brangle^{\GW}_{0,\beta} +
\blangle \tau_1(D) \tau_0(\gamma) \brangle^{GW}_{0,\beta}$.
Combining both equalities yields
\[
d^2 \blangle \tau_1(\gamma) \brangle^{\GW}_{0,\beta} =
\blangle \tau_1(\gamma) \tau_0(D^2) \brangle^{\GW}_{0,\beta}
-2d \blangle \tau_0(\gamma D) \brangle^{\GW}_{0,\beta}.
\]
The claim now follows from by comparision with Lemma~\ref{div equ on GW}.
\end{comment}
\end{proof}

\begin{lemma} \label{lemma:Relation 2}
$\blangle \tau_3(1) \brangle_{0,\beta}^{\GW} = \blangle \tau_2(1) \tau_2(1) \brangle_{0,\beta}^{\GW}$.
%\[ \blangle \tau_1(c_2) \brangle_{0,\beta} = \blangle \tau_0(c_2), \tau_2(1) \brangle_{0,\beta}. \]
\end{lemma}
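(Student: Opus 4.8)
The plan is to reduce both $\blangle\tau_3(1)\brangle^{\GW}_{0,\beta}$ and $\blangle\tau_2(1)\tau_2(1)\brangle^{\GW}_{0,\beta}$ to one and the same $\BQ$-linear combination of \emph{primary} invariants, exactly in the spirit of Lemma~\ref{div equ on GW} and Lemma~\ref{lemma:One relation}. Fix an ample divisor $D$ on $X$ and set $d:=D\cdot\beta>0$ (such a $D$ exists since $\beta\neq0$ is effective). The two engines of the reduction are the divisor equation and a \emph{collapsed} genus~$0$ topological recursion relation adapted to the reduced theory.

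The key input is the following collapsed TRR: for $k\geqslant 1$ and classes $\gamma_1,\gamma_2,\gamma_3\in H^{\ast}(X)$ (three marked points),
\[ \blangle \tau_k(\gamma_1)\,\tau_a(\gamma_2)\,\tau_b(\gamma_3)\brangle^{\GW}_{0,\beta} = \delta_{a0}\,\delta_{b0}\,\blangle \tau_{k-1}(\gamma_1)\,\tau_0(\gamma_2\gamma_3)\brangle^{\GW}_{0,\beta}. \]
To prove this I would write one factor of $\psi_1$ on the first marking as a sum of boundary divisors via the standard genus~$0$ TRR, separating marking~$1$ from markings~$2,3$, and then apply the splitting formula for the reduced virtual class \cite[\S7.3]{MPT}. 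On each boundary term the reduced class splits as $\mathrm{red}\boxtimes\mathrm{ord}+\mathrm{ord}\boxtimes\mathrm{red}$; since the ordinary genus~$0$ class of $X$ vanishes in every nonzero curve class, the only surviving term is the one in which the bubble carrying markings~$2,3$ is contracted (curve class $0$), the component carrying marking~$1$ then inheriting the reduced class in class $\beta$ (the other distribution is unstable). The contracted bubble is $\Mbar_{0,3}(X,0)=X$, on which $\psi_2=\psi_3=0$; hence the term vanishes unless $a=b=0$, and when $a=b=0$ it contributes $\ev_\bullet^{\ast}(\gamma_2\gamma_3)$ at the node, giving the right-hand side. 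No correction appears because marking~$1$ is a genuine marking on the $\beta$-component, so its leftover cotangent class restricts to the cotangent class on the smaller moduli space.

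With this in hand I would compute a short list of three-pointed invariants in two ways -- once by the divisor equation and once by the collapsed TRR -- to peel off the descendents. Applying the scheme to $\blangle\tau_3(1)\tau_0(D)^2\brangle^{\GW}_{0,\beta}$ and to $\blangle\tau_2(1)\tau_2(1)\tau_0(D)\brangle^{\GW}_{0,\beta}$ gives $\blangle\tau_3(1)\brangle^{\GW}_{0,\beta}=-\tfrac2d\blangle\tau_2(D)\brangle^{\GW}_{0,\beta}$ and $\blangle\tau_2(1)\tau_2(1)\brangle^{\GW}_{0,\beta}=-\tfrac2d\blangle\tau_2(1)\tau_1(D)\brangle^{\GW}_{0,\beta}$ (in the second case the collapsed TRR kills the invariant outright, because the spectator marking carries a $\psi$-class). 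Continuing with $\blangle\tau_2(D)\tau_0(D)^2\brangle$, $\blangle\tau_2(1)\tau_1(D)\tau_0(D)\brangle$, $\blangle\tau_1(D)\tau_1(D)\tau_0(D)\brangle$ and $\blangle\tau_1(D)\tau_0(D^2)\tau_0(D)\brangle$, together with Lemma~\ref{div equ on GW} applied to $\tau_1(D^2)$, expresses both $\blangle\tau_3(1)\brangle^{\GW}_{0,\beta}$ and $\blangle\tau_2(1)\tau_2(1)\brangle^{\GW}_{0,\beta}$ as explicit combinations of the primary invariants $\blangle\tau_0(D^3)\brangle$, $\blangle\tau_0(D)\tau_0(D^3)\brangle$ and $\blangle\tau_0(D^2)\tau_0(D^2)\brangle$. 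A final use of the divisor equation in the form $\blangle\tau_0(D)\tau_0(D^3)\brangle=d\blangle\tau_0(D^3)\brangle$ shows the two combinations coincide -- both equal $\tfrac{6}{d^4}\blangle\tau_0(D^2)\tau_0(D^2)\brangle-\tfrac{8}{d^3}\blangle\tau_0(D^3)\brangle$ -- which is the claim.

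The only genuinely non-formal step is the collapsed TRR, and within it the matching of boundary contributions: one must check that (i) the vanishing of ordinary invariants really eliminates every boundary term with both curve classes nonzero, and (ii) no $\psi$-at-the-node correction is introduced when restricting the leftover $\psi_1$ to the $\beta$-component. Everything after that is bookkeeping with the divisor equation under the dimension constraint that each integrand have degree equal to the reduced dimension $2+n$; this constraint both forces many auxiliary invariants to vanish and keeps the list of surviving primaries short, so the final comparison is a finite linear-algebra check.
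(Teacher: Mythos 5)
Your argument is correct and is essentially the paper's own proof: both reduce $\blangle \tau_3(1)\brangle_{0,\beta}^{\GW}$ and $\blangle \tau_2(1)\tau_2(1)\brangle_{0,\beta}^{\GW}$ to the same combination $\tfrac{6}{d^4}\blangle\tau_0(D^2)\tau_0(D^2)\brangle_{0,\beta}^{\GW}-\tfrac{8}{d^3}\blangle\tau_0(D^3)\brangle_{0,\beta}^{\GW}$ of primary invariants by playing the divisor equation against the genus-$0$ topological recursion, which in the reduced theory collapses exactly as you describe (only the boundary term with a contracted three-pointed bubble survives, killing any spectator $\psi$-class). The only difference is presentational: you isolate the collapsed TRR as an explicit lemma and note where Lemma~\ref{lemma:One relation} enters, whereas the paper leaves this implicit in the phrase ``applying topological recursions'' and solves the resulting eight-relation linear system by computer.
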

\begin{proof}
Let $D \in H^2(X)$ such that $d:= D \cdot \beta \neq 0$. Consider the following invariants:
\begin{alignat*}{2}
A_0 & =  \blangle \tau_3(1) \brangle_{0, \beta}^{\GW} & B_0 & = \blangle \tau_2(1) \tau_2(1) \brangle_{0,\beta}^{\GW} \\
A_1 & =  \blangle \tau_2(D) \brangle_{0, \beta}^{\GW} & B_1 & = \blangle \tau_2(1) \tau_1(D) \brangle_{0,\beta}^{\GW} \\
A_2 & =  \blangle \tau_1(D^2) \brangle_{0, \beta}^{\GW} & B_0 & = \blangle \tau_2(1) \tau_0(D^2) \brangle_{0,\beta}^{\GW} \\
A_3 & = \blangle \tau_0(D^3) \brangle_{0, \beta}^{\GW}  & & \\
C_2 & =  \blangle \tau_1(D) \tau_1(D) \brangle_{0, \beta}^{\GW} & C_3 & = \blangle \tau_1(D) \tau_0(D^2) \brangle_{0,\beta}^{\GW} \\
F & =  \blangle \tau_0(D^2) \tau_0(D^2) \brangle_{0, \beta}^{\GW}.
\end{alignat*}

Applying topological recursions to the invariants on the left then yields the following relations on the right:
\begin{alignat*}{2}
& \blangle \tau_3(1) \tau_0(D) \tau_0(D) \brangle_{0,\beta}^{\GW}: & \quad B_2 & = d^2 A_0 + 2d A_1 + A_2 \\
& \blangle \tau_2(D) \tau_0(D) \tau_0(D) \brangle_{0,\beta}^{\GW}: & \quad C_3 & = d^2 A_1 + 2d A_2 + A_3 \\
& \blangle \tau_1(D^2) \tau_0(D) \tau_0(D) \brangle_{0,\beta}^{\GW}: & \quad F & = d^2 A_2 + 2d A_3 \\
& \blangle \tau_2(1) \tau_0(D^3) \brangle_{0,\beta}^{\GW}: & \quad - A_3 & = d B_2 + C_3 \\
& \blangle \tau_1(D) \tau_0(D^2) \tau_0(D) \brangle_{0,\beta}^{\GW}: & \quad d A_3 & = d C_3 + F \\
& \blangle \tau_2(1) \tau_2(1) \tau_0(D) \brangle_{0,\beta}^{\GW}: & \quad 0 & = d B_0 + 2 B_1 \\
& \blangle \tau_2(1) \tau_1(D) \tau_0(D) \brangle_{0,\beta}^{\GW}: & \quad 0 & = d B_1 + C_2 + B_2 \\
& \blangle \tau_1(D) \tau_1(D) \tau_0(D) \brangle_{0,\beta}^{\GW}: & \quad 0 & = d C_2 + 2 C_3. 
\end{alignat*}
Putting all together (using the assistance of a computer) one finds:
\begin{equation} \blangle \tau_3(1) \brangle_{0,\beta}^{\GW} = A_0 = - \frac{8}{d^3} A_3 + \frac{6}{d^4} F = B_0 = \blangle \tau_2(1) \tau_2(1) \brangle_{0,\beta}^{\GW}. \label{tau31} 
\qedhere \end{equation}
%The second claim follows similarly (but a shorter calculation). One obtains:
%\[
%\blangle \tau_1(c_2) \brangle_{0,\beta}
%=
%\frac{1}{d^2} \Big( \blangle \tau_0(c_2) \tau_0(D^2) \brangle_{0,\beta} - 2d \blangle \tau_0( c_2 \cdot D ) \brangle_{0,\beta} \Big)
%=
%\blangle \tau_0(c_2) \tau_2(1) \brangle_{0,\beta}.
%\]
\end{proof}

\begin{lemma} \label{lemma:psi1 in terms of Tp}
Assume that all fibers of the universal curve $p : \CC \to \Mbar_{0,0}(X,\beta)$ are isomorphic to $\p^1$.
Let $\pi : \Mbar_{0,1}(X,\beta) \to \Mbar_{0,0}(X,\beta)$ be the forgetful morphism.
Then
\[ c_1(\omega_{\pi}) = \psi_1. \]
In particular, with $f : \CC \to X$ the universal map, we have
\begin{equation}\label{equ on des g0 gw ideal}
\blangle \tau_1(\gamma) \brangle^{\GW}_{0, \beta} 
= \int_{ p^{\ast}[ \Mbar_{0,0}(X,\beta) ]^{\vir} } f^{\ast}(\gamma)\,c_1(\omega_p).
\end{equation}
\end{lemma}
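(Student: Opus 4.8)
The plan is to identify the one-pointed moduli space with the universal curve, transport the two tautological ingredients ($\psi_1$ and $\mathrm{ev}_1$) through this identification, and then reduce the class statement to the behavior of the virtual class under the forgetful morphism.

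First I would set $M := \Mbar_{0,0}(X,\beta)$ and use the canonical identification of the universal curve $p\colon \CC \to M$ with the forgetful morphism $\pi\colon \Mbar_{0,1}(X,\beta) \to \Mbar_{0,0}(X,\beta)$, under which $p = \pi$. Under this identification a point of $\CC$ lying over $[(C,f)]\in M$ is a point $x_1$ of the fiber $C$, the universal map $f\colon \CC \to X$ becomes the evaluation map $\mathrm{ev}_1$, and the hypothesis that all fibers of $p$ are isomorphic to $\p^1$ says exactly that $p=\pi$ is a \emph{smooth} morphism of relative dimension one (flat with smooth, irreducible rational fibers). This smoothness is what removes every stabilization, bubbling, and nodal subtlety from the argument below.

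For the class identity I would argue as follows. Since $\pi$ is smooth, its relative dualizing sheaf agrees with the relative cotangent bundle, $\omega_\pi = \Omega_\pi$. On the other hand the cotangent line bundle $\mathbb{L}_1$ (with $\psi_1 = c_1(\mathbb{L}_1)$) has by definition fiber $T^{\ast}_{x_1}C$ at a stable map $(C,x_1,f)$; identifying the domain $C$ with the fiber of $p$ through the corresponding point of $\CC$, this is precisely the fiber of $\Omega_\pi$. In general $\mathbb{L}_1$ and $\omega_\pi$ differ only by a correction supported on the locus where the marked point specializes to a node of the fiber (there are no other marked points with which it could collide, as $n=0$); because all fibers are smooth $\p^1$'s this locus is empty, so $\mathbb{L}_1 = \omega_\pi$ as line bundles and hence $\psi_1 = c_1(\omega_\pi) = c_1(\omega_p)$.

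For the ``in particular'' I would invoke the compatibility of virtual classes under the forgetful morphism. Since $\pi$ is flat we have $\pi^{!} = \pi^{\ast}$, and the (reduced) virtual classes satisfy $[\Mbar_{0,1}(X,\beta)]^{\vir} = \pi^{\ast}[\Mbar_{0,0}(X,\beta)]^{\vir} = p^{\ast}[\Mbar_{0,0}(X,\beta)]^{\vir}$. Combining this with $\mathrm{ev}_1 = f$ and $\psi_1 = c_1(\omega_p)$ in the definition \eqref{GWinv} gives
\[
\blangle \tau_1(\gamma) \brangle^{\GW}_{0,\beta}
= \int_{[\Mbar_{0,1}(X,\beta)]^{\vir}} \mathrm{ev}_1^{\ast}(\gamma)\,\psi_1
= \int_{p^{\ast}[\Mbar_{0,0}(X,\beta)]^{\vir}} f^{\ast}(\gamma)\,c_1(\omega_p),
\]
as claimed. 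The main obstacle I anticipate is the last step for the \emph{reduced} class: while the forgetful-morphism axiom is standard for the ordinary virtual class, for the reduced class one must check that the Kiem--Li cosection on $\Mbar_{0,1}(X,\beta)$ is genuinely pulled back from that on $\Mbar_{0,0}(X,\beta)$, so that the reduced obstruction theory is compatible with flat pullback; this holds because both cosections are induced by the single two-form $\sigma$ via the construction of \cite{MP_GWNL}, which is manifestly compatible with forgetting the marked point. By contrast, the identity $\psi_1 = c_1(\omega_p)$ is essentially tautological once smoothness of $p$ is used to kill the boundary correction.
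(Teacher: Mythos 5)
Your proposal is correct and follows essentially the same route as the paper: identify $\Mbar_{0,1}(X,\beta)$ with $\CC$, use the smoothness of all fibers to get $\psi_1 = c_1(\omega_p)$, and conclude via $[\Mbar_{0,1}(X,\beta)]^{\vir} = \pi^{\ast}[\Mbar_{0,0}(X,\beta)]^{\vir}$. The only presentational difference is that where you invoke the general ``correction supported at nodes'' comparison between $\mathbb{L}_1$ and $\omega_\pi$, the paper makes this explicit by identifying the universal curve over $\Mbar_{0,1}(X,\beta)$ with $\CC \times_{\Mbar_{0,0}(X,\beta)} \CC$ and the universal section with the diagonal, then pulling back $\Omega_p$ along the projection; your extra remark on compatibility of the cosection with the forgetful map is a reasonable (and correct) point of care that the paper leaves implicit.
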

\begin{proof}
Let $\tilde{p} : \CC_1 \to \Mbar_{0,1}(X,\beta)$ be the universal curve and let $s : \Mbar_{0,1}(X,\beta) \to \CC_1$ be the universal section.
By definition, we have
\[ \psi_1 = s^{\ast}( c_1(\omega_{\tilde{p}}) ) = s^{\ast} c_1(\Omega_{\tilde{p}}) . \]
Recall that we have $\CC\cong \Mbar_{0,1}(X,\beta)$. Moreover, since $\CC \to \Mbar_{0,0}(X,\beta)$ parametrizes only smooth curves,
we have
\[ \CC_{1} \cong \CC \times_{\Mbar_{0,0}(X,\beta)} \CC. \]
Under this isomorphism, the section $s$ is identified with the diagonal morphism.
We have the fiber diagram
\[
\begin{tikzcd}
\CC \times_{\Mbar_{0,0}(X,\beta)} \CC \ar{r}{\tilde{\pi}} \ar{d}{\tilde{p}} & \CC \ar{d}{p} \\
\Mbar_{0,1}(X,\beta) \ar{r}{\pi} \ar[bend left]{u}{s} & \Mbar_{0,0}(X,\beta).
\end{tikzcd}
\]
Hence since $\tilde{\pi} \circ s = \id$, we have
\[ \psi_1 = s^{\ast} c_1(\Omega_{\tilde{p}}) = s^{\ast} \tilde{\pi}^{\ast} c_1(\Omega_{p}) = c_1(\Omega_{p}). \]
The second part follows since
\[ [\Mbar_{0,1}(X,\beta)]^{\vir} = \pi^{\ast} [\Mbar_{0,0}(X,\beta) ]^{\vir}. \qedhere \]
\end{proof}

\subsection{Definition of GV invariants}\label{sect on def of gv}
We consider the definition of Gopakumar-Vafa invariants.

In genus 0, by \cite{BL, MP_GWNL},
reduced Gromov-Witten invariants of $X$
are equal to the (ordinary) Gromov-Witten invariants in fiber classes of the twistor space
$\CX \to \p^1$ associated to the symplectic form $\sigma$
(alternatively, we can view $X$ embedded in a suitable $1$-parameter family of holomorphic symplectic $4$-folds such that the corresponding classifying map is transverse to the Noether-Lefschetz divisor defined by $\beta$).
The definition of genus $0$ Gopakumar-Vafa invariants for Calabi-Yau 5-folds proposed by Pandharipande and Zinger in \cite[Eqn.~(0.2)]{PZ}
hence can be viewed as a definition
for genus $0$ Gopakumar-Vafa invariants of $X$
as follows:
%
%In this way we can obtain a definition of genus 0 GV invariants of $X$ from the
%Using \cite[Eqn. (0.2)]{PZ} we obtain the following:
\begin{defi}\label{def of g=0 GV inv}
For any $\gamma_1, \ldots, \gamma_n \in H^{\ast}(X,\BZ)$, 
we define the genus $0$ Gopakumar-Vafa invariant $n_{0, \beta}(\gamma_1, \ldots, \gamma_n) \in \BQ$ by
$$\blangle \tau_{0}(\gamma_1)\cdots\tau_{0}(\gamma_n) \brangle^{\GW}_{0,\beta}=\sum_{\begin{subarray}{c}k\geqslant   1, k|\beta  \end{subarray}}k^{n-3}\, n_{0, \beta/k}(\gamma_1, \ldots, \gamma_n). $$
\end{defi}
The case of genus $1$ does {\em not} follow from the 5-fold geometry, 
since the virtual class of the moduli spaces differ by a factor of $(-1)^g \lambda_g$,
see \cite{MP_GWNL,O2}.
Instead we propose a definition of genus $1$ Gopakumar-Vafa invariants based on computations in an ideal geometry of curves in class $\beta$.
Because curves in imprimitive curve classes are very difficult to control,
we restrict hereby to the primitive case
(i.e.~to those $\beta$ which are not a multiple in $H_2(X,\BZ)$).
Consider the Chern classes of the tangent bundle of $X$:
\[ c_k(X) := c_k(T_X) \in H^{2k}(X,\BZ). \]
\begin{defi} \label{defn g1 GV primitive}
Assume that $\beta \in H_2(X,\BZ)$ is primitive. For any $\gamma\in H^4(X, \mathbb{Z})$, we define
the genus $1$ Gopakumar-Vafa invariant $n_{1,\beta}(\gamma)\in \mathbb{Q} $ by
\[
\blangle \tau_0(\gamma) \brangle^{\GW}_{1,\beta}
= n_{1,\beta}(\gamma) - \frac{1}{24} \blangle \tau_0(\gamma) \tau_0(c_2(T_X)) \brangle_{0,\beta}^{\GW}.
\]
\end{defi}
%The restriction to primitive curve classes above reflects the fact that the geometry of curves in imprimitive classes is very difficult to control.
Next we come to the genus $2$ Gopakumar-Vafa invariants.
Since the virtual dimension of the moduli space $\Mbar_{2,0}(X,\beta)$ is zero,
GV invariants are defined without cohomological constraints.
In other words, we expect that $n_{2,\beta}$ should be given by the enumerative count of genus $2$ curves in class $\beta$. For the definition 
we require the following invariant introduced in \cite{NO}:
\begin{equation} \label{Nnodal}
N_{\mathrm{nodal},\beta}:=
\frac{1}{2}\left[
\int_{[\overline{M}_{0,2}(X,\beta)]^{\vir}} (\ev_1 \times \ev_2)^{\ast}(\Delta_X) - \int_{[ \Mbar_{0,1}(X,\beta) ]^{\vir}} \frac{\ev_1^{\ast}(c(T_X))}{1-\psi_1}
\right], \end{equation}
where 
\begin{itemize}
\item $\Delta_X \in H^8(X \times X)$ is the class of the diagonal, and
%\item $\lambda_k$ is the $k$-th Chern class of the Hodge bundle $\BE \to \Mbar_{1,0}(X,\beta)$ (with fiber $H^0(C,\omega_C)$ at a point $[f : C \to X]$), and
\item $c(T_X) = 1 + c_2(T_X) + c_4(T_X)$ is the total Chern class of $T_X$.
\end{itemize}
The invariant $N_{\mathrm{nodal},\beta}$ is the expected number of rational nodal curves in class $\beta$ \cite[Prop.~1.2]{NO}\footnote{Here 
we use genus reduction to rewrite the term $N_X$ in \cite[\S 1.3]{NO} as the first term in Eqn.~\eqref{Nnodal}.}.
%since $24 \lambda_1 \in H^2(\Mbar_{1,1})$ is Poincar\'e dual to the class of a point the first term in \eqref{Nnodal} 
%is the Gromov-Witten count of elliptic curves of fixed $j$-invariant,
%and the second term in \eqref{Nnodal} is the excess contribution coming from genus $0$ curves. The first term also can be computed from genus $0$ Gromov-Witten invariants by the basic reduction:
%\begin{align*}
%\int_{[\overline{M}_{1,0}(X,\beta)]^{\vir}} 24 \lambda_1
%& = \int_{[\overline{M}_{1,1}(X,\beta)]^{\vir}} 24 \lambda_1 \ev_1^{\ast}(D_{\beta}) \\
%& = \int_{[\overline{M}_{0,3}(X,\beta)]^{\vir}} \ev_1^{\ast}(D_{\beta}) (\ev_2 \times \ev_3)^{\ast}(\Delta_X) \\
%& = \blangle \tau_0 \tau_0(\Delta_X) \brangle_{0,\beta}
%\end{align*}
%where 
%\begin{itemize}
%\item $D_{\beta} \in H^2(X,\mathbb{Q})$ is any divisor class with $\int_{\beta} D_{\beta} = 1$
%\end{itemize}
\begin{defi}\label{defn g2 GV primitive}
Assume that $\beta \in H_2(X,\BZ)$ is primitive. We define the genus $2$ Gopakumar-Vafa invariant $n_{2,\beta}\in \mathbb{Q}$ by
\[
\blangle \varnothing \brangle^{\GW}_{2,\beta}=n_{2,\beta}
- \frac{1}{24} n_{1,\beta}(c_2(X))
+ \frac{1}{2 \cdot 24^2} \blangle \tau_0(c_2(X)) \tau_0(c_2(X)) \brangle^{\GW}_{0,\beta}
+ \frac{1}{24} N_{\mathrm{nodal},\beta}.
\]
\end{defi}

\begin{rmk}
For primitive $\beta \in H_2(X,\BZ)$, we obtain the following:
\begin{gather*}
n_{0,\beta}(\gamma_1, \ldots, \gamma_n) = \blangle \tau_0(\gamma_1) \cdots \tau_0(\gamma_n) \brangle_{0,\beta}^{\GW}, \\
n_{1,\beta}(\gamma) = \blangle \tau_0(\gamma) \brangle^{\GW}_{1,\beta}
+ \frac{1}{24} \blangle \tau_0(\gamma) \tau_0(c_2(X)) \brangle^{\GW}_{0,\beta}, \\
n_{2,\beta}
= \blangle \varnothing \brangle^{\GW}_{2,\beta}
+ \frac{1}{24} \blangle \tau_0(c_2(X)) \brangle_{1,\beta}^{\GW}
+ \frac{1}{2 \cdot 24^2} \blangle \tau_0(c_2(X))\tau_0(c_2(X)) \brangle^{\GW}_{0,\beta}
- \frac{1}{24}  N_{\mathrm{nodal},\beta}.
\end{gather*}
It would be interesting to obtain a conceptual understanding for the form of these formulae.
\end{rmk}

As in the cases of Calabi-Yau 4-folds and 5-folds \cite{KP, PZ}, our first main conjecture concerns the integrality of the Gopakumar-Vafa invariants on holomorphic symplectic 4-folds.
\begin{conj}[Integrality]\label{conj on integrality}
We have 
$$n_{0,\beta}(\gamma_1,\ldots,\gamma_n), \,\,\, n_{1, \beta}(\gamma), \,\,\, n_{2, \beta} \in \mathbb{Z}. $$
\end{conj}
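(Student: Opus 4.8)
The plan is to prove the three integrality statements separately, in each case by passing to an \emph{ideal geometry} of curves in class $\beta$ (and in its divisors $\beta/k$) in the sense of \S\ref{sect on heur arg}: one assumes that the curves realizing these classes are smooth, rigid and pairwise disjoint, with normal bundles of the expected generic form, so that every reduced Gromov-Witten invariant decomposes as a finite sum of local contributions indexed by connected families of embedded curves. The defining formulas of Definitions~\ref{def of g=0 GV inv}, \ref{defn g1 GV primitive} and \ref{defn g2 GV primitive} are reverse-engineered precisely so that, once these local contributions are subtracted, what remains is an honest count; the task is then to verify this match genus by genus. Throughout, the descendent-to-primary relations recorded in Lemmas~\ref{div equ on GW}--\ref{lemma:psi1 in terms of Tp} are used to reduce all contributing integrals to tautological integrals over the relevant families.

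For genus $0$ I would first invoke the twistor reduction recalled in \S\ref{sect on def of gv}, which identifies the reduced genus $0$ theory of $X$ in class $\beta$ with the ordinary genus $0$ theory of the total space $\CX$ of the twistor family in a fiber class. Definition~\ref{def of g=0 GV inv} is then literally the Pandharipande-Zinger transformation on Calabi-Yau $5$-folds, and the weight $k^{n-3}$ is exactly the degree-$k$ multiple-cover factor of a rigid rational curve carrying $n$ primary insertions. Conditionally on the Pandharipande-Zinger integrality for Calabi-Yau $5$-folds this gives the result immediately; unconditionally, in the ideal geometry one checks it directly, since M\"obius inverting the defining relation expresses each $n_{0,\beta/k}(\gamma_1,\dots,\gamma_n)$ as a $\BZ$-linear evaluation of the integral classes $\gamma_i$ against the finitely many rigid smooth rational curves in class $\beta/k$, which is manifestly an integer.

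For genus $1$ with $\beta$ primitive, I would argue that in the ideal geometry only two kinds of curves contribute to $\blangle \tau_0(\gamma) \brangle^{\GW}_{1,\beta}$: the smooth elliptic curves, which sweep out a surface $S_\beta \subset X$ and contribute $\int_{S_\beta}\gamma$, and the rigid rational curves, whose genus $1$ local contribution is the standard $-\tfrac{1}{24}$ multiple of the $c_2$-weighted genus $0$ contribution, reflecting the Hodge integral $\int_{\overline{M}_{1,1}}\lambda_1 = \tfrac{1}{24}$; this matches exactly the term $-\tfrac{1}{24}\blangle \tau_0(\gamma)\tau_0(c_2(T_X))\brangle^{\GW}_{0,\beta}$. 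Subtracting it, as in Definition~\ref{defn g1 GV primitive}, leaves $n_{1,\beta}(\gamma) = \int_X n_{1,\beta}\cup\gamma$ with $n_{1,\beta}=[S_\beta]$, which is an integer because $[S_\beta] \in H_4(X,\BZ)$ is the class of an honest surface and $\gamma \in H^4(X,\BZ)$.

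The genus $2$ case is the crux and, I expect, the main obstacle. Here one must decompose $\blangle \varnothing \brangle^{\GW}_{2,\beta}$ into contributions of: (a) the finitely many isolated smooth genus $2$ curves, whose number is the desired integer $n_{2,\beta}$; (b) the family of elliptic curves, contributing through a single genus-raising contraction and producing the $-\tfrac{1}{24}n_{1,\beta}(c_2(X))$ term; (c) the rational curves, contributing through two separate genus-$1$ contractions (whence the $\tfrac{1}{2\cdot 24^2}\blangle \tau_0(c_2(X))\tau_0(c_2(X))\brangle^{\GW}_{0,\beta}$ term, with the $\tfrac12$ from the symmetry of the two contracted components); and (d) genuine nodal rational curves, accounted for by $\tfrac{1}{24}N_{\mathrm{nodal},\beta}$ as defined in Eqn.~\eqref{Nnodal}. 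Pinning down these local contributions requires controlling the reduced genus $2$ virtual class and its behaviour on the boundary strata, namely a splitting of the class into products of lower-genus classes decorated by $\lambda$- and $\psi$-classes, combined again with Lemmas~\ref{div equ on GW}--\ref{lemma:psi1 in terms of Tp} to evaluate the resulting descendents. Once each stratum's contribution is identified with its correction term, Definition~\ref{defn g2 GV primitive} yields $n_{2,\beta}$ as the bare number of isolated genus $2$ curves, hence an integer. The delicate points, which have no analogue in the Calabi-Yau $4$- and $5$-fold theories, are the exact combinatorial coefficients $\tfrac{1}{24}$ and $\tfrac{1}{2\cdot 24^2}$ arising from the $\lambda_g$-integrals and automorphisms of the degenerate domains, and the separate bookkeeping of nodal rational curves through $N_{\mathrm{nodal},\beta}$.
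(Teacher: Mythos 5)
You should first be clear that the statement you are addressing is a \emph{conjecture}: the paper does not prove it for general holomorphic symplectic $4$-folds, and neither does your plan. What you have outlined is essentially the paper's own heuristic \emph{justification} in an ideal geometry (\S\ref{sect on heur arg}--\S\ref{sect on heuristic of g=2 GW/GV}), so in overall strategy you are on the same track as the authors. But an ideal-geometry computation can only motivate the definitions; it cannot establish integrality, because the genericity hypotheses (smoothness and properness of the curve families, the stated normal bundles, absence of higher-genus curves) are themselves unproven. The paper's actual evidence is that justification \emph{plus} unconditional verifications for $S\times T$ and $T^{*}\p^2$ and conditional ones for $K3^{[2]}$.

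Within the ideal-geometry computation your genus $2$ plan has two concrete gaps. First, splitting $[\Mbar_{2,0}(X,\beta)]^{\vir}$ over the boundary strata ``into products of lower-genus classes decorated by $\lambda$- and $\psi$-classes'' is exactly the na\"ive decomposition that the paper warns \emph{does not hold} (see the footnote in \S\ref{sect on heuristic of g=2 GW/GV}); the correct route is Proposition~\ref{prop:Comparision virtual class}, which expresses the reduced class as an Euler class capped against the \emph{relative} virtual class of $\Mbar_{2,0}(\CC^0_{\beta}/S^0_{\beta},1)$ --- it is the relative class that splits over $M_1 \cup (M_2/\BZ_2)$, after which the $M_1$ piece is killed by a Mumford-relation computation. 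Second, the coefficient $\tfrac{1}{24}$ in front of $N_{\mathrm{nodal},\beta}$ is \emph{not} obtained from a local analysis of degenerate domains: the paper only posits that this contribution is local, hence a universal multiple of $N_{\mathrm{nodal},\beta}$, and fixes the constant by a test computation on the Fano variety of lines of a cubic $4$-fold (\S\ref{sect las ste}), which in turn assumes the conjectural holomorphic anomaly equation of \S\ref{sec:Hilb K3}. If you claim to derive $\tfrac{1}{24}$ directly you must supply that computation; it is not in the paper. A smaller point: in genus $0$ for imprimitive $\beta$, rational curves in an ideal geometry move in $2$-dimensional families that can be reducible and mutually intersecting (\S\ref{sect on heur arg}), so there are no ``finitely many rigid rational curves in class $\beta/k$''; genus $0$ integrality rests on the twistor reduction to the (still conjectural) Pandharipande--Zinger integrality for Calabi--Yau $5$-folds, not on a direct count.
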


\subsection{Ideal geometry}\label{sect on heur arg}
We will justify our definition of Gopakumar-Vafa invariants
by working in an  `ideal' geometry where we assume curves on $X$ deform in families of expected dimensions and have expected genericity properties.
This discussion is inspired by the  `ideal' geometry of curves on Calabi-Yau 4-folds by \cite{KP} and on Calabi-Yau 5-folds by \cite{PZ}.
%However we will see notable differences.
Concretely, since the virtual dimension of $\Mbar_{g,0}(X,\beta)$ is $2-g$, we expect that:
\begin{quote}
Any genus $g$ curve moves in a smooth compact $(2-g)$-dimensional family.
\end{quote}
In particular, there are no curves of genus $g \geqslant   3$.

We discuss now the expected behaviour of the curves in these families.
%Compared to the case of CY 4-folds and CY 5-folds \cite{KP,PZ},
%we will see that the behaviour of curves in these families
%is more complicated, even in ideal situation.
We start with genus zero. Let $p : \CC^0_{\beta} \to S^0_{\beta}$ be a family of rational curves in class $\beta$ over a smooth $2$-dimensional surface $S^0_{\beta}$, fiberwise embedded in $X$. Then we can have the following behaviour:
\begin{enumerate}
\item[(i)] All the curves parametrized by $S^0_{\beta}$ can be reducible.

Reason: Let $\beta = \beta_1 + \beta_2$ and let $\CC^0_{\beta_i} \to S^0_{\beta_i}$ be a $2$-dimensional family of rational curves in class $\beta_i$.
Let $S^0_{\beta_1, \beta_2}$ be the preimage of the diagonal under the evaluation maps
\[ j_1 \times j_2 : \CC^0_{\beta_1} \times \CC^0_{\beta_2} \to X \times X. \]
Then $S^0_{\beta_1, \beta_2}$ is of expected dimension $3+3-4=2$, so by gluing the curves we can obtain a $2$-dimensional family of reducible rational curves in class $\beta$.

\item[(ii)] Given a generic curve $\CC^0_s:=p^{-1}(s) \subset X$ in the family, there exists another curve $\CC^0_{s'} \subset X$ in the family which meets it.

Reason: This follows by the same reasoning as in (i).

\item[(iii)] For finitely many $s \in S$, we expect the curve $\CC^0_s \subset X$ to be nodal\footnote{
A naive model for (ii,iii) would be for the image of $\CC_\beta^0$ in $X$ to be $(S \times \p^1) / ((s,0) \sim (gs, \infty))$, where $g \in \Aut(S)$ is an automorphism
with a finite number of fixed points (which lead to nodal curves).}.

Reason: The moduli space $\Mbar_{0,2}(X,\beta)$ is of expected dimension $4$, and hence the preimage of the diagonal under $\ev_1 \times \ev_2$ is of expected dimension $0$.\footnote{This is related to what is called the double point number.}

\item[(iv)] Even if all fibers of $\CC^0_{\beta} \to S_{\beta}$ are smooth $\p^1$'s, the natural morphism $j : \CC^0_{\beta} \to X$ is not necessarily an immersion.

(The differential $dj : T_{\CC^0_{\beta}} \to j^{\ast}(T_X)$ is expected to have a kernel in codimension $\geqslant   2$.)
\end{enumerate}

Similarly given a family $p : \CC^1_{\beta} \to S^1_{\beta}$ of elliptic curves in class $\beta$ over a smooth $1$-dimensional curve $S^1_{\beta}$, fiberwise embedded in $X$, all the curves parametrized by $S^1_{\beta}$ can be reducible. The argument is similar to (i) above, by considering the preimage of the diagonal under the evaluation maps
\[ j_1 \times j_2 : \CC^0_{\beta_1} \times \CC^1_{\beta_2} \to X \times X, \quad \mathrm{where}\,\, \beta=\beta_1+\beta_2, \]
where $\CC^0_{\beta_1}$ (resp. $\CC^1_{\beta_2}$) is a family of rational curves in class $\beta_1$ (resp. elliptic curves in class $\beta_2$).

The genus $2$ curves we expect to be smooth and finite. By dimension reasons they should be disjoint from elliptic curves, but can have finite intersection points with the family of rational curves. In the moduli space $\Mbar_{2,0}(X,\beta)$ we will hence also see genus $2$ curves with rational tails.

In summary, the geometry of curves is more complicated then for both CY 4-folds and CY 5-folds. Especially for imprimitive curve classes $\beta$, it becomes increasingly difficult to control.

%Since the virtual dimension is larger, they behave more complicated than the
%families of curves encounted on CY4 and CY5-folds.
\subsection{Ideal geometry: Primitive case}\label{subsec:ideal geometry primitive}
We make the following additional assumptions:
\begin{itemize}
\item $X$ is irreducible hyperk\"ahler,
\item the effective curve class $\beta \in H_2(X,\BZ)$ is primitive.
\end{itemize}
By the global Torelli for (irreducible) hyperk\"ahler varieties \cite{Ver, Huy}
(in fact, the local surjectivity of the period map is sufficient)
the pair $(X,\beta)$ is deformation equivalent (through a deformation with keeps $\beta$ of Hodge type)
to a pair $(X', \beta')$ where $\beta' \in H_2(X,\BZ)$ is irreducible.
Hence we may without loss of generality make the following stronger assumption:\footnote{The statement is false if we do not assume that $X$ is irreducible hyperk\"ahler, for example on $X$ the product of two $K3$ surfaces,
a class $\beta = (\beta_1, \beta_2)$ with both $\beta_i$ non-zero effective,
any deformation that keeps $\beta$ Hodge, will keep both $\beta_i$ Hodge. In particular $\beta$ stays reducible under deformations.}
\begin{itemize}
\item the effective curve class $\beta \in H_2(X,\BZ)$ is irreducible.
\end{itemize}
%assume that the class is irreducible $\beta$.
Under these assumptions our ideal geometry of curves simplifies to the following form:
\begin{enumerate}
\item
The rational curves in $X$ of class $\beta$
move in a proper 2-dimensional smooth family of embedded irreducible rational curves. Except for a finite number of rational nodal curves, the rational curves are smooth, with normal bundle $\CO_{\p^1} \oplus \CO_{\p^1} \oplus \mathcal{O}_{\mathbb{P}^{1}}(-2)$. 
\item
The arithmetic genus $1$ curves in $X$ of class $\beta$ move in a proper 1-dimensional smooth family of embedded irreducible genus 1 curves. Except for a finite number of rational nodal curves, the genus one curves are smooth elliptic curves. For the convenience of computations, we also assume the normal bundle of elliptic curves is $L\oplus L^{-1}\oplus \oO$, where $L$ is a generic degree zero line bundle.
%Finitely many of the curves are rational nodal.
\item
All genus two curves are smooth and rigid.
\item
There are no curves of genus $g\geqslant   3$.
\end{enumerate}

\begin{example} \label{example:Fano1}
Let $Y \subset \p^5$ be a very general smooth cubic 4-fold and let $F(Y) \subset \mathrm{Gr}(2,6)$ be the Fano variety of lines on $Y$. By a result of Beauville and Donagi \cite{BD}, $F(Y)$ is an irreducible hyperk\"ahler 4-fold, and together with its Pl\"ucker polarization it is the generic member of a locally complete family of polarized hyperk\"ahler varieties deformation equivalent to the second punctual Hilbert scheme of a $K3$ surface.
The algebraic classes in $H_2(F(Y),\BZ)$ are of rank $1$. Let $\beta$ be the generator
which pairs positively with the polarization (it is of degree $3$ with respect to the Pl\"ucker polarization).
The geometry of curves in class $\beta$ has been studied in \cite{OSY, NO, GK}.
The Chow variety of curves in class $\beta$ is given by
\[ \Chow_{\beta}(F(Y)) = S \cup \Sigma, \]
where $S \subset F(Y)$ is the smooth irreducible surface of lines of second type, and $\Sigma$ is a smooth curve parametrizing genus $1$ curves.
There are precisely $3780$ rational nodal curves corresponding to the intersection points $S \cap \Sigma$, and all other rational curves are isomorphic to $\p^1$. Moreover, there are no curves of genus $\geqslant   2$ in class $\beta$. We see that the curves in $F(Y)$ of class $\beta$ satisfy the requirements of the ideal geometry.
\end{example}

\subsection{Justification:~GV in genus $1$}\label{sect on heuristic of g=1 GW/GV}
For a given class $\gamma \in H^4(X,\BZ)$ let $\Gamma \subset X$ be a generic topological cycle
whose class is Poincar\'e dual to $\gamma$.
In an ideal geometry, the Gopakumar-Vafa invariant $n_{1,\beta}(\gamma)$ 
should be the (enumerative) number $n(\Gamma)$ of arithmetic genus $1$ curves in $X$ which are incident to $\Gamma$.
To derive an expression for it using Gromov-Witten invariants,
we start with the genus $1$ Gromov-Witten invariant:
\begin{equation} \blangle \tau_0(\gamma) \brangle^{\GW}_{1,\beta}, \label{g1 invariant} \end{equation}
where $\beta \in H_2(X,\BZ)$ is primitive.
Assuming the ideal geometry of Section~\ref{subsec:ideal geometry primitive}
we will analyze the contributions from genus $0$ and genus $1$ curves to it
(there are no contributions from genus $2$ curves since they never meet the cycle $\Gamma$). We show that the contribution from genus $1$ curves is precisely $n(\Gamma)$.
This will yield the expression for $n_{1,\beta}(\gamma)$.
%We study the contributions of rational curve families (RCF)  and elliptic curve families (ECF) to the genus $1$ GW invariants
%$\blangle \tau_0(\gamma) \brangle^{\GW}_{1,\beta}$ for a cycle $\gamma \in H^4(X)$. As the 4-cycle generically avoids 
%genus 2 curves, we do not need to consider their contributions. 

\subsubsection{Contribution from genus one curves} \label{subsubsec:contributions gv genus 1 from genus 1}
Let $p:\mathcal{C}^1_\beta \to S^1_\beta$ be a $1$-dimensional
family of elliptic curves of class $\beta$
as in Section~\ref{subsec:ideal geometry primitive}, and let $j : \mathcal{C}^1_\beta \to X$ be the evaluation map. %(which we assume to be an embedding).

Since $\Gamma$ (which represents $\gamma \in H^4(X,\BZ)$) is chosen generic,
it intersects $\CC^1_\beta$
in precisely $(\CC^1_\beta \cdot \gamma)$ many points.
Following Section~\ref{subsec:ideal geometry primitive}, we assume that the incident curves are smooth elliptic curves $E$ with normal bundle
$N_{E/X} = L \oplus L^{-1} \oplus \CO$.
We find the contribution of this family to the invariant \eqref{g1 invariant}
is
\[
(\CC^1_\beta \cdot \gamma) \int_{ [\Mbar_{1,1}(E,1)]^{\text{vir}}} \ev_1^{\ast}(\omega) = (\CC^1_\beta \cdot \gamma) = n(\Gamma),
\]
where $\omega \in H^2(E,\BZ)$ is the class of a point
and the trivial factor $H^1(E,N_{E/X}) = H^1(E,\CO_E) = \BC$ 
in the obstruction sheaf
does not appear because we used the reduced virtual fundamental class.
\begin{comment}
of degree $d \beta$ to be:
\[
(\CC^1_\beta \cdot \gamma) \int_{ [\Mbar_{1,1}(E,d)]^{\text{vir}}} \ev_1^{\ast}(\omega) = (\CC^1_\beta \cdot \gamma)\,\sigma(d),
\]
where the reduced obstruction theory accounts for removing the trivial factor $H^1(E,N_{E/X}) = H^1(E,\CO_E) = \BC$.
Summing over all elliptic curve families, we justify terms in RHS of \eqref{equ of pot def of g=1}.  
\end{comment}

\subsubsection{Contribution from genus zero curves}
\label{subsubsec:Contributions genus zero to GV genus 1}
Let $p : \CC^0_\beta \to S^0_\beta$ be a $2$-dimensional family
of embedded rational curves of class $\beta$ in $X$ parametrized by a smooth surface $S^0_{\beta}$. The generic fiber of $p$ is isomorphic to $\p^1$
but over finitely many points we can have a rational nodal curve.
The insertion $\Gamma$ intersects the divisor $\CC^0_{\beta}$ in a curve that we can assume maps to a curve in $S^0_{\beta}$.
In particular, it avoids the singular fibers.
For simplicity we may hence assume that there are no nodal fibers, and that this is the only family of rational curves in class $\beta$.
We will compute the contribution of this family to the genus $1$ GW invariant \eqref{g1 invariant}.

Under these assumptions, for any genus $1$ degree $\beta$ stable map $f : C \to X$, the source curve splits canonically as $C \cong E \cup \p^1$, where $E$ is an elliptic curve
glued to $\p^1$ at one point $p$. The map $f$ is of degree 0 on $E$, and of degree $\beta$ on $\p^1$.
Hence
\[ \Mbar_{1,0}(X,\beta) = \Mbar_{0,1}(X,\beta) \times \Mbar_{1,1}. \]
By comparing the obstruction theories on the level of virtual classes, we  get
\begin{align*}
 [ \Mbar_{1,0}(X,\beta) ]^{\text{vir}} 
& =  \left[ \frac{c( \BE^{\vee} \otimes \ev_1^{\ast}(T_X))}{1 - \lambda_1 - \psi_1} \right]_{3} \cap \left( [ \Mbar_{1,1} ] \times [ \Mbar_{0,1}(X,\beta) ]^{\text{vir}} \right) \\
& =  \left(\psi_1^3 - \psi_1^2 \lambda_1 + \ev_1^{\ast}(c_2(X)) (\psi_1 - \lambda_1)\right) \left( [ \Mbar_{1,1} ] \times [ \Mbar_{0,1}(X,\beta) ]\right),
\end{align*}
where $[ - ]_{d}$ denotes taking the degree $d$ part, $\BE \to \Mbar_{g}$ is the Hodge bundle over the moduli space of curves
(having fiber $H^0(C,\omega_C)$ over a point $[C]$) with first Chern class $\lambda_1 = c_1(\BE) \in A^1(\Mbar_{1,1})$,
and $\psi_1$ is the usual psi class on the moduli space $\Mbar_{0,1}(X,\beta)$.
In the last line we have used that the dimension of $\Mbar_{0,k}(X,\beta)$ is equal to the expected dimension, so
\begin{equation} [ \Mbar_{0,k}(X,\beta)]^{\vir} = [\Mbar_{0,k}(X,\beta) ]. \label{vir equal fundamental class} \end{equation} 
Finally, as we will do often, we have suppressed pullback maps along the projection to the factors.

Consider the forgetful morphism $\pi \colon \Mbar_{1,1}(X,\beta) \to \Mbar_{1,0}(X,\beta)$
which at the same time is the universal curve over the moduli space.
In particular, we have a decomposition 
$$\Mbar_{1,1}(X,\beta) = \CE \cup \CP,$$ 
where $\CE \to \Mbar_{1,1}$ and $\CP \to \Mbar_{0,1}(X,\beta)$ are the universal curves. Since $\pi$ is flat of relative dimension $1$, we have
%\begin{multline*}
\[
[ \Mbar_{1,1}(X,\beta) ] =
\pi^{\ast} [ \Mbar_{1,0}(X,\beta) ]
=
a_{\ast}\left( [ \Mbar_{1,2} ] \times [ \Mbar_{0,1}(X,\beta) ] \right) + b_{\ast}\left( [ \Mbar_{1,1} ] \times [ \Mbar_{0,2}(X,\beta) ] \right),
\]
%\end{multline*}
where $a : \CE \to \Mbar_{1,1}(X,\beta)$ and $b : \CP \to \Mbar_{1,1}(X,\beta)$ are the natural inclusions.
We find
\begin{equation}
\label{efgwrh233}
\begin{aligned}
\blangle \tau_0(\gamma) \brangle^{\GW}_{1,\beta}
& = 
\int_{ [ \Mbar_{1,1} ] \times [ \Mbar_{0,2}(X,\beta) ]^{\text{vir}}} \ev_2^{\ast}(\gamma) \pi^{\ast}( \psi_1^3 - \psi_1^2 \lambda_1 + \ev_1^{\ast}(c_2(X)) (\psi_1-\lambda_1) ) \\
& +
\int_{ [\Mbar_{1,2} ] \times  [\Mbar_{0,1}(X,\beta) ]^{\text{vir}}} \ev_1^{\ast}(\gamma) \pi^{\ast}( \psi_1^3 - \psi_1^2 \lambda_1 + \ev_1^{\ast}(c_2(X)) (\psi_1-\lambda_1) ),
\end{aligned}
\end{equation}
where for the second summand %(where the marked point of $\Mbar_{1,1}(X,\beta)$ is on the elliptic curve) 
the $\gamma$ is pulled back along the evaluation map $\ev_1 : \Mbar_{0,1}(X,\beta) \to X$
(since the map is constant on the elliptic curve).
%we used that the evaluation map at the marked point
%is just the evaluation map at the meeting point.

In the second term in Eqn.~\eqref{efgwrh233}, the integrand over the factor $\Mbar_{1,2}$ is pulled back from $\Mbar_{1,1}$; hence this term vanishes.
We conclude that Eqn.~\eqref{efgwrh233} is equal to:
\begin{align*}
& \quad \,\, \int_{\Mbar_{1,1}} (-\lambda_1) \int_{ [\Mbar_{0,2}(X,\beta)]^{\text{vir}} } \ev_2^{\ast}(\gamma) \pi^{\ast}(\psi_1^2 + \ev_1^{\ast}(c_2(X))) \\
& = -\frac{1}{24} \Big( \blangle \tau_2(1) \tau_0(\gamma) \brangle^{\GW}_{0,\beta} + \blangle \tau_0(c_2(X))\tau_0(\gamma) \brangle^{\GW}_{0,\beta} - \blangle \tau_1(\gamma) \brangle^{\GW}_{0,\beta} \Big) \\
& = - \frac{1}{24} \blangle \tau_0(c_2(X)) \tau_0(\gamma) \brangle^{\GW}_{0,\beta}.
\end{align*} 
Here in the second step, we used that $\pi^{\ast}(\psi_1) = \psi_1 - s_{\ast}(1)$, where $s : \Mbar_{0,1}(X,\beta) \to \Mbar_{0,2}(X,\beta)$ is the section,
so that $\pi^{\ast}(\psi_1^2) = \psi_1^2 - s_{\ast}(\psi_1)$,
and in the last step we used Lemma~\ref{lemma:One relation}.
%As any primitive curve class can be deformed to an irreducible class, the above computation gives all contributions to genus 1 GW invariants when $\beta$ is primitive. 

\subsubsection{Conclusion}
By the discussion above we have obtained that in the ideal geometry we have
\[
\blangle \tau_0(\gamma) \brangle^{\GW}_{1,\beta}
=
n(\Gamma) - \frac{1}{24} \blangle \tau_0(c_2(X)) \tau_0(\gamma) \brangle^{\GW}_{0,\beta}.
\]
Since $n(\Gamma)$ is the Gopakuma-Vafa invariant $n_{1,\beta}(\gamma)$ in the ideal geometry, this ends the justification for both Definition~\ref{defn g1 GV primitive} and
integrality of genus $1$ invariants in Conjecture \ref{conj on integrality}.

\subsection{Justification:~GV in genus $2$}\label{sect on heuristic of g=2 GW/GV}
In the ideal geometry of Section~\ref{subsec:ideal geometry primitive},
the genus two Gopakumar-Vafa invariant $n_{2,\beta}$ should be the
(enumerative) number of genus $2$ curves in the irreducible curve class $\beta$.
We hence make the ansatz
\begin{equation} \label{genus 2 integral}
\blangle \varnothing \brangle^{\GW}_{2,\beta} = 
\int_{[ \Mbar_{2,0}(X,\beta) ]^{\vir}} 1
=
n_{2,\beta} + \cdots,
\end{equation}
where the dots stand for the contributions from curves of genus $\leqslant 1$.
In this section we derive an expression for these lower genus contributions.

\subsubsection{Contribution from genus one curves}
We consider first the contributions from a $1$-dimensional family of elliptic curves $\mathcal{C}^1_\beta \to S^1_\beta$ parametrized by a smooth curve $S^1_{\beta}$, 
but with the \textit{additional assumption} that there are no nodal rational curves in the family.

For simplicity of notation we also assume that the family $\mathcal{C}^1_{\beta}$ parametrizes all curves in class $\beta$ (so there are no rational or genus $2$ curves).
We compute the invariant $\blangle \varnothing \brangle^{\GW}_{2,\beta}$
in this geometry.

Under the above assumption we have the isomorphism
\[ \Mbar_{2,0}(X,\beta) = \Mbar_{1,1}(X,\beta) \times \Mbar_{1,1}, \]
and with an argument parallel to Section~\ref{subsubsec:Contributions genus zero to GV genus 1}, the virtual class is:
\[ [\Mbar_{2,0}(X,\beta)]^{\vir} =
\left[ \frac{ c(\BE^{\vee} \otimes \ev_1^{\ast}(T_X)) }{1-\lambda_1-\psi_1} \right]_{3} \cap [\Mbar_{1,1}(X,\beta)]^{\vir} \times [\Mbar_{1,1}], \]
where $\psi_1$ is the cotangent line class on $\Mbar_{1,1}(X,\beta)$
and $\lambda_1 \in H^2(\Mbar_{1,1})$, both pulled back to the product via the projection to the factors. One obtains that:
\begin{align*}
\blangle \varnothing \brangle^{\GW}_{2,\beta}
& = \int_{[ \Mbar_{1,1}(X,\beta) ]^{\text{vir}} \times [ \Mbar_{1,1} ] }
-\lambda_1 \psi_1^2 - \lambda_1 \ev_1^{\ast}(c_2(T_X)) \\
%(\psi_1^{2} + \ev_1^{\ast}(c_2(X))) (\psi_1 - \lambda_1) \\
& = - \frac{1}{24} \int_{[\Mbar_{1,1}(X,\beta)]^{\text{vir}}} \psi_1^2 + \ev_1^{\ast}(c_2(X)).
\end{align*}
By our assumption there are no family of rational curves in class $\beta$, so that 
we have  $\psi_1 = \tau^{\ast}(\psi_1)$,
where $\tau : \Mbar_{1,1}(X,\beta) \to \Mbar_{1,1}$ is the forgetful morphism to the moduli space of stable curves,
and therefore $\psi_1^2=0$.
We conclude that
\[ \blangle \varnothing \brangle^{\GW}_{2,\beta} = -\frac{1}{24} \blangle \tau_0(c_2(X)) \brangle^{\GW}_{1,\beta} = -\frac{1}{24} n_{1,\beta}(c_2(X)). \]
%= - \frac{1}{24} \int_{[\Mbar_{1,1}(X,\beta)]} \ev_1^{\ast}(c_2(X))
%We get that the first term becomes 
%\[ \int_{ [ \Mbar_{1,1}(X,\beta) ]^{\text{vir}}} \tau^{\ast}(\psi_1^2) = 0. \]
In total hence we see that the family
$\mathcal{C}^1_\beta \to S^1_\beta$
contributes
$-\frac{1}{24} n_{1,\beta}(c_2(X))$ to the integral \eqref{genus 2 integral}.

Assume more generally that there are both rational and elliptic curves in class $\beta$,
but still no nodal rational curves.
Then by the discussion in Section~\ref{sect on heuristic of g=1 GW/GV}
and the above computation we have that
$-\frac{1}{24} n_{1,\beta}(c_2(X))$
is precisely the contribution from the elliptic curves to \eqref{genus 2 integral}.
Hence this contribution remains valid also in the presence of rational curves.

\subsubsection{Contribution from genus zero curves}
Let $p : \CC^0_\beta \to S^0_\beta$ be a family
of degree $\beta$ embedded rational curves in $X$ parametrized
by a smooth surface $S^0_{\beta}$. 
We assume that there are no curves of genus $1$ or $2$,
and that all rational curves parametrized by $S^0_{\beta}$ are smooth.
Since $\beta$ is irreducible, this means that all of them are isomorphic to $\p^1$.

By our assumption, we have an isomorphism of moduli spaces:
\[
M:= \Mbar_{2,0}(X,\beta) \cong \Mbar_{2,0}(\CC^0_{\beta}/S^0_{\beta}, 1),
\]
where the right hand side is the moduli space of genus $2$ degree $1$ stable maps to the fibers of $\CC^0_\beta \to S^0_\beta$.
In particular, we have a diagram:
\[
\begin{tikzcd}
C \ar[swap]{dr}{\pi} \ar{r}{\rho} \ar[bend left]{rr}{\tilde{f}} & q^{\ast} \CC^0_{\beta} \ar{r}{\tilde{q}} \ar{d}{\tilde{p}} & \CC^0_{\beta} \ar{r}{f_{\beta}} \ar{d}{p} & X \\
& M \ar{r}{q} & S^0_{\beta},
\end{tikzcd}
\]
where $C \to M$ is the universal curve over the moduli space (for both
$\Mbar_{2,0}(X,\beta)$ and $\Mbar_{2,0}(\CC^0_{\beta}/S^0_{\beta}, 1)$),
$\tilde{f} : C \to \CC^0_{\beta}$ is the universal map of
$\Mbar_{2,0}(\CC^0_{\beta}/S^0_{\beta}, 1)$,
and $q$ is the structure morphism to the base.
By definition the middle square is fibered.
The moduli space $\Mbar_{2,0}(\CC^0_{\beta}/S^0_{\beta}, 1)$ carries naturally a virtual fundamental class which we denote by
\[ [ M ]^{\text{rel}} := [\Mbar_{2,0}(\CC^0_{\beta}/S^0_{\beta}, 1) ]^{\vir} \in A_{6}(M). \]
We also denote the reduced virtual fundamental class of $\Mbar_{2,0}(X,\beta)$ by
\[  [M]^{\vir} := [ \Mbar_{2,0}(X,\beta) ]^{\vir} \in A_{0}(M). \]
Since $f_{\beta}$ is fiberwise an embedding we have 
the subbundle $T_p \subset f_{\beta}^{\ast}(T_X)$. Let
\[ N = f_{\beta}^{\ast}(T_X)/T_p \]
be the quotient, which is locally free of rank $3$.
The key to our discussion is the following comparision of virtual fundamental classes.

\begin{prop} \label{prop:Comparision virtual class}
We have
\[ [ M ]^{\vir} = e\Big( \tilde{p}_{\ast}\big( (R^1 \rho_{\ast} \CO_C) \otimes \tilde{q}^{\ast}N \big) \Big) \cap [M ]^{\rel}. \]
\end{prop}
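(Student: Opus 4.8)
The plan is to prove the identity by comparing the two perfect obstruction theories that the single scheme $M$ carries: the reduced one associated with stable maps to $X$, which produces $[M]^{\vir}$, and the one associated with stable maps to the fibers of $\CC^0_{\beta}\to S^0_{\beta}$, which produces $[M]^{\rel}$. Writing $F := f_{\beta}\circ\tilde f \colon C \to X$ for the universal stable map to $X$, these theories are governed, relative to the Artin stack $\mathfrak{M}_{2,0}$ of prestable genus $2$ curves, by the complexes $R\pi_{\ast}F^{\ast}T_X$ and $R\pi_{\ast}\tilde f^{\ast}T_p$ respectively (the latter also relative to the smooth base $S^0_{\beta}$). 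The entire comparison will be driven by the normal bundle sequence
\[ 0 \to T_p \to f_{\beta}^{\ast}T_X \to N \to 0 \]
on $\CC^0_{\beta}$, which is available precisely because $f_{\beta}$ is a fiberwise embedding.

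First I would pull this sequence back along $\tilde f$ and apply $R\pi_{\ast}$ to obtain the distinguished triangle
\[ R\pi_{\ast}\tilde f^{\ast}T_p \to R\pi_{\ast}F^{\ast}T_X \to R\pi_{\ast}\tilde f^{\ast}N \xrightarrow{+1} \]
which displays the $X$-theory as the relative (fiberwise) theory corrected by the term $R\pi_{\ast}\tilde f^{\ast}N$; the whole content of the proposition is then to evaluate this correction. Using $\pi = \tilde p\circ\rho$ and $\tilde f = \tilde q\circ\rho$ together with the projection formula, I would rewrite it as
\[ R\pi_{\ast}\tilde f^{\ast}N \cong R\tilde p_{\ast}\big( \tilde q^{\ast}N \otimes R\rho_{\ast}\CO_C \big). \]
Since $\rho$ has connected fibers, $\rho_{\ast}\CO_C = \CO$ while $R^1\rho_{\ast}\CO_C$ is supported on the section of $q^{\ast}\CC^0_{\beta}\to M$ cut out by the contracted genus $2$ tails and is there the rank $2$ bundle $H^1$ of those tails. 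This splits the correction term into a base part $R\tilde p_{\ast}\tilde q^{\ast}N$ and an excess part $\tilde p_{\ast}\big( R^1\rho_{\ast}\CO_C \otimes \tilde q^{\ast}N \big)[-1]$.

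The heart of the argument is to identify these two pieces. The excess part sits in degree $1$, and since it is supported on a section finite over $M$ it has no higher pushforward; as $R^1\rho_{\ast}\CO_C$ has rank $2$ and $N$ has rank $3$, it is exactly the rank $6$ obstruction bundle $\tilde p_{\ast}(R^1\rho_{\ast}\CO_C\otimes\tilde q^{\ast}N)$ of the statement. For the base part I would use the ideal-geometry normal bundle $N|_{\CC_s} = \CO_{\p^1}\oplus\CO_{\p^1}\oplus\CO_{\p^1}(-2)$ of a fiber $\CC_s\cong\p^1$ to see that $R^0\tilde p_{\ast}\tilde q^{\ast}N$ has rank $2$ and $R^1\tilde p_{\ast}\tilde q^{\ast}N$ has rank $1$. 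The rank $2$ part consists of the deformations of $\CC_s$ inside $X$; these are precisely the directions $q^{\ast}T_{S^0_{\beta}}$ that distinguish the base $\mathfrak{M}_{2,0}\times S^0_{\beta}$ of the relative theory from the base $\mathfrak{M}_{2,0}$ of the $X$-theory, so they cancel against the base change and contribute trivially. The rank $1$ obstruction coming from the $\CO_{\p^1}(-2)$ summand is the crux, and this is the main obstacle: since the curve is isotropic, $\sigma$ induces a surjection $N|_{\CC_s}\twoheadrightarrow \Omega_{\CC_s}\cong\CO_{\p^1}(-2)$, and I must show that the cosection $R^1\pi_{\ast}F^{\ast}T_X\to\CO$ built from $\sigma$ restricts to an isomorphism on the resulting line $H^1(\CC_s,\Omega_{\CC_s})\cong\BC$, so that this is exactly the trivial obstruction quotient removed in passing to the reduced class $[M]^{\vir}$.

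Granting this matching, after absorbing the rank $2$ deformations into the base change and removing the rank $1$ obstruction by reduction, the only surviving correction between the reduced $X$-theory and the relative theory is the rank $6$ obstruction bundle $V := \tilde p_{\ast}(R^1\rho_{\ast}\CO_C\otimes\tilde q^{\ast}N)$. I would then conclude by the functoriality of virtual classes: the triangle together with the reduction identifies the difference of the two obstruction theories with $V$ placed in degree $1$, so the excess-intersection (virtual pullback) formula yields
\[ [M]^{\vir} = e(V)\cap[M]^{\rel}. \]
This can be made completely explicit here because the ideal-geometry hypotheses force $M$ to be smooth: then $[M]^{\rel}$ and $[M]^{\vir}$ are the Euler classes of the respective obstruction bundles capped with $[M]$, and the computation above exhibits the reduced $X$-obstruction bundle as an extension of $V$ by the relative obstruction bundle, whence $e(\mathrm{Ob}^{\red}_X) = e(V)\,e(\mathrm{Ob}^{\rel})$ and the claimed identity follows.
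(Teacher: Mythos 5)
Your argument is essentially the paper's: the same normal bundle triangle coming from $0 \to T_p \to f_{\beta}^{\ast}T_X \to N \to 0$, the same Leray decomposition of $R\pi_{\ast}\tilde f^{\ast}N$ via $\pi = \tilde p\circ\rho$ and $\rho_{\ast}\CO_C = \CO$, the same identification of the degree-$0$ part with $q^{\ast}T_{S^0_{\beta}}$ and trivialization of $R^1\tilde p_{\ast}\tilde q^{\ast}N$ by the cosection, and the same conclusion by the excess intersection formula. One correction: your closing claim that the ideal-geometry hypotheses force $M$ to be smooth, so that both $[M]^{\rel}$ and $[M]^{\vir}$ are Euler classes of obstruction bundles capped with $[M]$, is false and should be deleted. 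The space $M = \Mbar_{2,0}(\CC^0_{\beta}/S^0_{\beta},1)$ is the union of the stratum $M_1 = \Mbar_{2,1}\times\Mbar_{0,1}(\CC^0_{\beta}/S^0_{\beta},1)$ of dimension $7$ and the stratum $M_2/\BZ_2$ of dimension $6$, so it is neither smooth nor equidimensional, and $[M]^{\rel}$ (of dimension $6$) is not the fundamental class of anything smooth. Fortunately this remark is not load-bearing: the distinguished triangle of virtual tangent complexes together with the excess intersection (virtual pullback) formula already yields the identity, exactly as in the paper, and the decomposition of $[M]^{\rel}$ over the two strata is only needed in the subsequent evaluation.
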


%Since $f_{\beta}$ is fiberwise an embedding we have 
%the subbundle $T_p \subset f_{\beta}^{\ast}(T_X)$
%giving rise to the short exact sequence
%\[ 0 \to T_{p} \to f_{\beta}^{\ast}(T_X) \to f_{\beta}^{\ast}(T_X)/T_p \to 0, \]
%with quotient $f_{\beta}^{\ast}(T_X)/T_p$ locally free of rank $3$.
%Since $(R^1 \rho_{\ast} \CO_C)$ is fiberwise torsion of length $2$,
%\[ \tilde{p}_{\ast}\big( (R^1 \rho_{\ast} \CO_C) \otimes \tilde{q}^{\ast} N \big) \]
%is a vector bundle of rank $6$. Hence the above proposition makes sense.

For the proof we start with the two basic lemmata:
\begin{lemma} \label{lemma:pi star N 1}
 We have
\[
\pi_{\ast}( \tilde{f}^{\ast}N) \cong q^{\ast}(T_{S^0_\beta}).
\]
\end{lemma}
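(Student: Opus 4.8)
The plan is to compute $\pi_\ast(\tilde f^\ast N)$ directly by passing through the contraction $\rho$, and then to identify the answer with $q^\ast T_{S^0_\beta}$ through a Kodaira--Spencer argument on the base family.

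First I would use the factorizations $\tilde f = \tilde q \circ \rho$ and $\pi = \tilde p \circ \rho$ read off from the diagram. Since $\tilde q^\ast N$ is locally free, the projection formula gives $\dR\rho_\ast(\tilde f^\ast N) \cong \tilde q^\ast N \dotimes \dR\rho_\ast \CO_C$, and taking $H^0$ yields $\rho_\ast(\tilde f^\ast N) \cong \tilde q^\ast N \otimes \rho_\ast \CO_C$. The essential input is that $\rho_\ast \CO_C \cong \CO_{q^\ast \CC^0_\beta}$: over each $m \in M$ the map $\rho_m$ contracts the genus $2$ tail $\Sigma$ to the attaching point of the spine $\p^1$ and is an isomorphism away from it, so all fibers of $\rho$ are connected with $h^0(\CO) = 1$, whence $\rho_\ast \CO_C = \CO$ by fiberwise connectedness together with base change. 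The sheaf $R^1\rho_\ast\CO_C$ (the relative $H^1$ of the genus $2$ tails) is nonzero but sits in cohomological degree $1$ and so does not enter the degree-zero pushforward. Since the bottom term of the Leray spectral sequence for $\pi = \tilde p \circ \rho$ gives $\pi_\ast = \tilde p_\ast \circ \rho_\ast$ on $R^0$, I conclude $\pi_\ast(\tilde f^\ast N) \cong \tilde p_\ast \tilde q^\ast N$.

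Next I would invoke base change along the cartesian middle square. The morphism $p$ is smooth and proper with fibers $\p^1$, and $N$ is a vector bundle, hence flat over $S^0_\beta$; by assumption (1) of \S\ref{subsec:ideal geometry primitive} the restriction of $N$ to a fibre is $\CO_{\p^1}\oplus\CO_{\p^1}\oplus\CO_{\p^1}(-2)$, so $h^0 = 2$ is constant along the family. As $S^0_\beta$ is reduced, Grauert's theorem shows that $p_\ast N$ is locally free of rank $2$ and that its formation commutes with base change; therefore $\tilde p_\ast \tilde q^\ast N \cong q^\ast p_\ast N$, and it remains to identify $p_\ast N$ with $T_{S^0_\beta}$.

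For the last step I would build the Kodaira--Spencer map. The relative tangent sequence $0 \to T_p \to T_{\CC^0_\beta} \to p^\ast T_{S^0_\beta} \to 0$ together with the differential $df_\beta\colon T_{\CC^0_\beta} \to f_\beta^\ast T_X$ produces, after composing with the quotient $f_\beta^\ast T_X \to N$, a map that annihilates $T_p$ (because $df_\beta$ carries $T_p$ isomorphically onto the kernel $T_p \subset f_\beta^\ast T_X$) and hence descends to $\bar\kappa\colon p^\ast T_{S^0_\beta} \to N$. Applying $p_\ast$ and using $p_\ast p^\ast T_{S^0_\beta} = T_{S^0_\beta}$ gives a morphism $T_{S^0_\beta} \to p_\ast N$ which on each fibre is the Kodaira--Spencer map $T_{S^0_\beta,s} \to H^0(\p^1_s, N_{\p^1_s/X})$ of the family of embedded rational curves. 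The main obstacle is to see that this is an isomorphism: both sides are locally free of rank $2$, so by Nakayama it suffices to check bijectivity fibrewise, which is exactly the assertion that the smooth $2$-dimensional family $\CC^0_\beta \to S^0_\beta$ is complete and unobstructed of the expected dimension $2 = h^0(N_{\p^1/X})$ --- part of the ideal-geometry hypotheses of \S\ref{subsec:ideal geometry primitive}. Combining the three steps gives $\pi_\ast(\tilde f^\ast N) \cong q^\ast p_\ast N \cong q^\ast T_{S^0_\beta}$, as desired.
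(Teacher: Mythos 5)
Your proposal is correct and follows essentially the same route as the paper: push forward along $\rho$ using $\rho_\ast\CO_C=\CO_{q^\ast\CC^0_\beta}$ and the projection formula, apply base change along the cartesian square to get $q^\ast p_\ast N$, and identify $p_\ast N$ with $T_{S^0_\beta}$ via the fibrewise identification $T_{S^0_\beta,s}\cong H^0(\CC^0_{\beta,s},N_{\CC^0_{\beta,s}/X})$. You merely spell out two steps the paper leaves implicit (Grauert via the constant $h^0=2$ of $\CO\oplus\CO\oplus\CO(-2)$, and the explicit Kodaira--Spencer map), which is consistent with the paper's argument.
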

\begin{proof}
By the Cohomology and Base Change Theorem we have
\[ \rho_{\ast}(\CO_C) = \CO_{q^{\ast}\CC^0_{\beta}}. \]
Hence we find that
\begin{align*}
\pi_{\ast} \tilde{f}^{\ast} N
& = \tilde{p}_{\ast} \rho_{\ast} \rho^{\ast} \tilde{q}^{\ast}N  \\
& = \tilde{p}_{\ast}( \rho_{\ast}(\CO_C) \otimes \tilde{q}^{\ast}N ) \\
& = \tilde{p}_{\ast} \tilde{q}^{\ast}N  \\
& = q^{\ast} p_{\ast} N \\
& = q^{\ast}T_{S^0_{\beta}},
\end{align*}
where in the second equality we used that $N$ is locally free,
and in the forth equality we used flat base change. For the last step we used that $S^0_{\beta} = \Mbar_{0,0}(X,\beta)$ is smooth with
tangent bundle given by $p_{\ast}N$
(which at each point $s \in S^0_\beta$ has fiber $H^0(\CC^0_{\beta,s}, N_{\CC^0_{\beta,s}/X})$).
\end{proof}

\begin{lemma} \label{lemma:pi star N 2}
We have the exact sequence:
\[
0 \to \tilde{p}_{\ast}\big( (R^1 \rho_{\ast} \CO_C) \otimes \tilde{q}^{\ast}N \big) \to 
R^1 \pi_{\ast}( \tilde{f}^{\ast}N)
\to R^1 \tilde{p}_{\ast}( \tilde{q}^{\ast} N)
\to 0
\]
and $R^1 \tilde{p}_{\ast}( \tilde{q}^{\ast} N) = \CO_{M}$.
\end{lemma}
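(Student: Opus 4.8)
The plan is to read off the exact sequence from the Grothendieck (Leray) spectral sequence for the composition $\pi = \tilde{p}\circ\rho$, and then to evaluate the outer term $R^1\tilde{p}_{\ast}(\tilde{q}^{\ast}N)$ by base change along the $\p^1$-fibration $\tilde{p}$ together with a trivialization coming from the symplectic form. Throughout I will use that $\tilde{f} = \tilde{q}\circ\rho$, so that $\tilde{f}^{\ast}N = \rho^{\ast}\tilde{q}^{\ast}N$, and that $\tilde{q}^{\ast}N$ is locally free.

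First I would set up the spectral sequence $E_2^{a,b} = R^a\tilde{p}_{\ast}(R^b\rho_{\ast}\,\tilde{f}^{\ast}N) \Rightarrow R^{a+b}\pi_{\ast}(\tilde{f}^{\ast}N)$. Since $\tilde{q}^{\ast}N$ is locally free, the projection formula gives $R^b\rho_{\ast}(\rho^{\ast}\tilde{q}^{\ast}N) \cong (R^b\rho_{\ast}\CO_C)\otimes\tilde{q}^{\ast}N$. As in the proof of Lemma~\ref{lemma:pi star N 1}, Cohomology and Base Change yields $\rho_{\ast}\CO_C = \CO_{q^{\ast}\CC^0_{\beta}}$, while $R^b\rho_{\ast}\CO_C = 0$ for $b\geqslant 2$ because the fibres of $\rho$ are (contracted) curves; likewise $R^a\tilde{p}_{\ast} = 0$ for $a\geqslant 2$ because the fibres of $\tilde{p}$ are $\p^1$'s. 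Hence only the low-degree terms survive and, using $E_2^{2,0}=R^2\tilde{p}_{\ast}(\tilde{q}^{\ast}N)=0$, one obtains a short exact sequence with middle term $R^1\pi_{\ast}(\tilde{f}^{\ast}N)$ and outer terms $\tilde{p}_{\ast}\big((R^1\rho_{\ast}\CO_C)\otimes\tilde{q}^{\ast}N\big)$ and $R^1\tilde{p}_{\ast}(\tilde{q}^{\ast}N)$. A fibrewise check over $[f\colon C\to X]\in M$ confirms the identification of the two outer terms: writing $C = \p^1\cup\Sigma$ with $\Sigma$ the contracted genus $2$ tail, the normalization sequence gives $H^1(C,\tilde{f}^{\ast}N) \cong H^1(\p^1,N|_{\p^1})\oplus H^1(\Sigma,\CO_{\Sigma}^{\oplus 3})$, the first summand being $R^1\tilde{p}_{\ast}(\tilde{q}^{\ast}N)$ and the second $\tilde{p}_{\ast}((R^1\rho_{\ast}\CO_C)\otimes\tilde{q}^{\ast}N)$. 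In particular the sequence splits, so I may present it, as in the statement, with the contracted-curve term as sub-object and $R^1\tilde{p}_{\ast}(\tilde{q}^{\ast}N)$ as quotient; this ordering is the natural one, since the quotient map is precisely the cosection induced by $\sigma$, which vanishes on $\Sigma$ because $\tilde{f}$ contracts it.

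It then remains to identify $R^1\tilde{p}_{\ast}(\tilde{q}^{\ast}N)\cong\CO_M$. As the middle square is Cartesian and $\tilde{q}^{\ast}N$ is pulled back from $N$, base change gives $R^1\tilde{p}_{\ast}(\tilde{q}^{\ast}N)\cong q^{\ast}R^1p_{\ast}N$. On each fibre $\p^1$ of $p$ the normal bundle is $\CO\oplus\CO\oplus\CO(-2)$ (ideal geometry), so $R^1p_{\ast}N$ is a line bundle on $S^0_{\beta}$. To trivialize it I would use the symplectic form: the alternating pairing $\sigma$ kills the line subbundle $T_p\subset f_{\beta}^{\ast}T_X$, and hence the induced map $f_{\beta}^{\ast}T_X\to T_p^{\vee}=\omega_p$ descends to a morphism $\bar{\sigma}\colon N \to \omega_p$ which fibrewise is the projection onto the $\CO(-2)=\omega_{\p^1}$ summand; by \cite{MP_GWNL} it is surjective. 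It therefore induces an isomorphism $R^1p_{\ast}N \xrightarrow{\sim} R^1p_{\ast}\omega_p$, and relative Serre duality (the trace) gives $R^1p_{\ast}\omega_p\cong\CO_{S^0_{\beta}}$; pulling back along $q$ yields $R^1\tilde{p}_{\ast}(\tilde{q}^{\ast}N)\cong\CO_M$.

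The main obstacle is this last identification. One must verify that $\bar{\sigma}\colon N\to\omega_p$ is fibrewise nonzero on the destabilizing $\CO(-2)$ summand, so that it induces an \emph{isomorphism} on $R^1p_{\ast}$ rather than merely a morphism of line bundles; this is exactly where the holomorphic symplectic structure, and the everywhere-surjectivity of the cosection of \cite{MP_GWNL}, enters, and it is what makes the canonical trivialization $\cong\CO_M$ possible. The remaining bookkeeping, namely the projection formula, the vanishing of higher direct images, and the splitting that fixes which term is sub and which is quotient, is routine.
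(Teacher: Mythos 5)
Your proof is correct and follows essentially the same route as the paper: the Leray spectral sequence for the composition $\pi = \tilde{p}\circ\rho$ gives the exact sequence, and flat base change plus the cosection of \cite{MP_GWNL} trivializes $R^1\tilde{p}_{\ast}(\tilde{q}^{\ast}N)\cong q^{\ast}(R^1p_{\ast}N)\cong\CO_M$. The only substantive refinement is that you observe the five-term sequence naturally places $R^1\tilde{p}_{\ast}(\tilde{q}^{\ast}N)$ as the sub-object rather than the quotient and justify the swap via the fibrewise splitting, a point the paper's one-line proof glosses over.
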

\begin{proof}
The first statement is just an application of the Leray-Serre spectral sequence for the composition $\pi = \tilde{p} \circ \rho$.
For the second statement, we have by flat base change that:
\[ 
R^1 \tilde{p}_{\ast}( \tilde{q}^{\ast} N) \cong q^{\ast}( R^1 p_{\ast} N ).
\]
By the existence of a global cosection, we have a surjection
$R^1 p_{\ast} N \to \CO_{S^0_{\beta}}$.
Since $p_{\ast} N$ is locally free of rank $2$, $R^1 p_{\ast} N$ is locally free of rank $1$, so using the cosection it is isomorphic to $\CO_{S^0_{\beta}}$.
\end{proof}

\begin{proof}[Proof of Proposition~\ref{prop:Comparision virtual class}]
The `standard' virtual tangent bundle\footnote{If $E^{\bullet} \to \BL_M$ is a perfect obstruction theory,
then the associated virtual tangent bundle is $T^{\vir}_M := (E^{\bullet})^{\vee}$.} 
of $\Mbar_{2,0}(X,\beta)$ relative to the Artin stack of prestable curves $\mathfrak{M}_2$
is by definition given by
\[ T_{\Mbar_{2,0}(X,\beta)/\mathfrak{M}_2}^{\std} = R \pi_{\ast} f^{\ast}(T_X), \]
where $f = f_{\beta} \circ \tilde{f} : C \to X$ is the universal map.
The reduced virtual tangent bundle is defined to be the cone:
\[
T_{\Mbar_{2,0}(X,\beta)/\mathfrak{M}_2}^{\vir} = \left( R \pi_{\ast} f^{\ast}T_X \right)^{\red}
:= \mathrm{Cone}( R \pi_{\ast} f^{\ast}(T_X) \xrightarrow{\mathsf{sr}_{\sigma}} \CO[-1] )[-1],
\]
where $\mathsf{sr}_{\sigma}$ is the semi-regularity map associated to the symplectic form $\sigma$, see \cite{MP_GWNL, MPT}.
The inclusion $T_p \subset f_{\beta}^{\ast}(T_X)$ induces a natural distinguished triangle:
\begin{equation} \label{eq:A}
R \pi_{\ast} \tilde{f}^{\ast} T_p \to \left( R \pi_{\ast} f^{\ast}T_X \right)^{\red} \to \left( R \pi_{\ast} \tilde{f}^{\ast}N \right)^{\red}. \end{equation}
where the third term is defined as the cone of the first map.
By Lemma~\ref{lemma:pi star N 2} and since the restriction of $\mathsf{sr}_{\sigma}$ to $\tilde{p}_{\ast}\big( (R^1 \rho_{\ast} \CO_C) \otimes \tilde{q}^{\ast}N \big)$ vanishes,
%to $R^1 \tilde{p}_{\ast}( \tilde{q}^{\ast} N)$ remains surjective,
we have
\begin{equation} h^{1}\left( ( R \pi_{\ast} \tilde{f}^{\ast}N )^{\red} \right)  = \tilde{p}_{\ast}\big( (R^1 \rho_{\ast} \CO_C) \otimes \tilde{q}^{\ast}N \big). \label{h1 eqn} \end{equation}
Similarly, the virtual tangent bundle of the perfect obstruction theory of $\Mbar_{2,0}(\CC^0_{\beta}/S^0_{\beta})$
fits into the distinguished triangle
\begin{equation} \label{eq:B} R \pi_{\ast}( \tilde{f}^{\ast} T_p ) \to T^{\vir}_{\Mbar_{2,0}(\CC^0_{\beta}/S^0_{\beta},1)/\mathfrak{M}_2} \to q^{\ast}(T_{S^0_\beta}). \end{equation}
By Lemma~\ref{lemma:pi star N 1} there exists a natural morphism
\[ q^{\ast}(T_{S^0_\beta}) \to \left( R \pi_{\ast} \tilde{f}^{\ast}N \right)^{\red}, \]
which induces an isomorphism in degree $0$ cohomology.
This morphism induces a morphism from the complex \eqref{eq:B} to the complex \eqref{eq:A},
and combining with Eqn.~\eqref{h1 eqn}, we obtain the 
%Combining \eqref{eq:A} and \eqref{eq:B} and using this morphism, we hence obtain the 
distinguished triangle:
\[
T^{\vir}_{\Mbar_{2,0}(\CC^0_{\beta}/S^0_{\beta},1)/\mathfrak{M}_2} \to
T_{\Mbar_{2,0}(X,\beta)/\mathfrak{M}_2}^{\vir}
\to \tilde{p}_{\ast}\big( (R^1 \rho_{\ast} \CO_C) \otimes \tilde{q}^{\ast}N \big)[-1].
\]
The claim now follows from the excess intersection formula.
\end{proof}
The moduli space $M$ decomposes naturally as the union
\[ M = M_1 \cup (M_2 / \BZ_2), \]
where
\begin{gather*} M_1 = \Mbar_{2,1} \times \Mbar_{0,1}(\CC^0_{\beta}/S^0_{\beta},1), \\
M_2 = \Mbar_{1,1} \times \Mbar_{1,1} \times \Mbar_{0,2}(\CC^0_{\beta}/S^0_{\beta},1),
\end{gather*}
and $\BZ_2$ acts by interchanging the two factors of $\Mbar_{1,1}$ and switching the markings on $\Mbar_{0,2}(\CC^0_{\beta}/S^0_{\beta},1)$.
The class $[ M ]^{\rel}$ is of dimension $6$, but the dimensions of $M_1$ and $M_2$ are $7$ and $6$ respectively.
In particular, there exists some class $\alpha \in A_{6}(M_1)$  such that
\[ [ M]^{\rel} = \xi_{1 \ast}(\alpha) + \frac{1}{2} \xi_{2 \ast}[M_2],  \]
where $\xi_i : M_i \to M$ are the natural (gluing) morphisms.\footnote{
The na\"ive splitting of the virtual class $[ \Mbar_{2,0}(X,\beta)]^{\vir}$ as the sum
\[
[ \Mbar_{2,1} \times \Mbar_{0,1}(X,\beta) ]
\left[ \frac{ c( \BE^{\vee} \otimes \ev_1^{\ast}(T_X)) }{1 - \psi_1 - \psi_1'} \right]_{7}
+
\frac{1}{2} [\Mbar_{1,1}^2 \times \Mbar_{0,2}(X,\beta)]^{\text{vir}}
\left[ \frac{ c( \BE_a^{\vee} \otimes \ev_1^{\ast}(T_X)) }{1 - \lambda_{a,1} - \psi_1} 
 \frac{ c( \BE_b^{\vee} \otimes \ev_2^{\ast}(T_X)) }{1 - \lambda_{b,1} - \psi_2} \right]_{6}
\]
does {\em not} hold. The long detour to the relative virtual class $[M]^{\rel}$ is necessary to decompose the virtual class!
}
By Proposition~\ref{prop:Comparision virtual class}, we find that:
\begin{equation} \label{genus 2 GW invariant first step}
\begin{aligned}
\blangle \varnothing \brangle^{\GW}_{2,\beta} 
& = \int_{[ M ]^{\rel}} e\Big( \tilde{p}_{\ast}\big( (R^1 \rho_{\ast} \CO_C) \otimes \tilde{q}^{\ast}N \big) \Big) \\
& = \int_{\alpha} \xi_1^{\ast} e\Big( \tilde{p}_{\ast}\big( (R^1 \rho_{\ast} \CO_C) \otimes \tilde{q}^{\ast}N \big) \Big) \\
& \quad + \frac{1}{2} \int_{ \Mbar_{1,1} \times \Mbar_{1,1} \times \Mbar_{0,2}(\CC^0_{\beta}/S^0_{\beta},1)}
\xi_2^{\ast} e\Big( \tilde{p}_{\ast}\big( (R^1 \rho_{\ast} \CO_C) \otimes \tilde{q}^{\ast}N \big) \Big).
\end{aligned}
\end{equation}
These two terms are analyzed as follows:

\begin{lemma} \label{lemma:genus 2 eval 1} We have the vanishing
\[ e\Big( \xi_1^{\ast} \tilde{p}_{\ast}\big( (R^1 \rho_{\ast} \CO_C) \otimes \tilde{q}^{\ast}N \big) \Big) = 0. \]
\end{lemma}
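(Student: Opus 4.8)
The plan is to restrict the obstruction bundle to the component $M_1$, identify it explicitly as an external tensor product involving the genus $2$ Hodge bundle and the normal bundle $N$, and then show that its top Chern class is forced to vanish by the holomorphic symplectic structure together with the Mumford relations on the Hodge bundle.

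First I would identify $\xi_1^{\ast}\tilde{p}_{\ast}\big((R^1\rho_{\ast}\CO_C)\otimes\tilde{q}^{\ast}N\big)$ on $M_1 = \Mbar_{2,1}\times\Mbar_{0,1}(\CC^0_\beta/S^0_\beta,1)$. Over $M_1$ the universal curve is a genus $2$ component (from the $\Mbar_{2,1}$ factor) glued to a rational bridge (from the $\Mbar_{0,1}$ factor), and the map $\rho$ contracts the genus $2$ component while restricting to an isomorphism on the bridge. Hence $R^1\rho_{\ast}\CO_C$ is supported on the node section $\sigma\colon M_1\hookrightarrow q^{\ast}\CC^0_\beta$ with stalk the dual Hodge bundle $\BE^{\vee}$ of the genus $2$ curve. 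By the projection formula together with $\tilde{p}\circ\sigma=\id$ and $\tilde{q}\circ\sigma=\ev$ (the evaluation at the node), this yields
\[
\xi_1^{\ast}\tilde{p}_{\ast}\big((R^1\rho_{\ast}\CO_C)\otimes\tilde{q}^{\ast}N\big)\cong \BE^{\vee}\otimes\ev^{\ast}N,
\]
a rank $6$ bundle, where $\BE$ is pulled back from $\Mbar_{2,1}$ and $\ev\colon M_1\to\CC^0_\beta$ is the evaluation map.

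The crucial input is the structure of $N$ coming from $\sigma$. The symplectic form induces a nondegenerate alternating pairing on $f_{\beta}^{\ast}T_X$; since the line subbundle $T_p$ is automatically isotropic we have $T_p\subseteq T_p^{\perp}$, so $N':=T_p^{\perp}/T_p\subset N$ is a rank $2$ subbundle inheriting a nondegenerate alternating form, with line-bundle quotient $L:=N/N'$. In particular $\wedge^2 N'\cong\CO$, so $c_1(N')=0$, and I may write the Chern roots of $N$ as $\{w,-w,\ell\}$ with $w^2=-c_2(N')$ and $\ell=c_1(L)$. It is exactly this vanishing of $c_1(N')$ that makes the Euler class vanish: for a general rank $3$ bundle the analogous class is nonzero, so the symplectic structure is essential here.

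Finally I would compute by the splitting principle. Writing $\lambda_i=c_i(\BE)$,
\[
e\big(\BE^{\vee}\otimes\ev^{\ast}N\big)=\prod_{v\in\{w,-w,\ell\}}\big(v^2-\lambda_1 v+\lambda_2\big)=\big((w^2+\lambda_2)^2-\lambda_1^2 w^2\big)\big(\ell^2-\lambda_1\ell+\lambda_2\big).
\]
Mumford's relation $c(\BE)c(\BE^{\vee})=1$ in genus $2$ gives $\lambda_1^2=2\lambda_2$ and $\lambda_2^2=0$, so the first factor collapses to $w^4$. Since $w^4=\ev^{\ast}c_2(N')^2$ is the pullback of a codimension $4$ class from the $3$-fold $\CC^0_\beta$, it vanishes, and therefore $e(\BE^{\vee}\otimes\ev^{\ast}N)=0$. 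The main obstacle is the first step, namely pinning down $R^1\rho_{\ast}\CO_C$ over $M_1$ and checking the base-change and projection-formula identification of the restricted obstruction bundle as $\BE^{\vee}\otimes\ev^{\ast}N$; once this is in hand, the vanishing is forced by the combination of $c_1(N')=0$, the two Mumford relations, and the dimension of $\CC^0_\beta$.
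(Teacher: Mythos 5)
Your proposal is correct, and it reaches the vanishing by a genuinely different computation in its second half. The first half — identifying $\xi_1^{\ast}\tilde{p}_{\ast}\big((R^1\rho_{\ast}\CO_C)\otimes\tilde{q}^{\ast}N\big)$ with $\mathrm{pr}_1^{\ast}(\BE^{\vee})\otimes\tilde{\ev}_1^{\ast}(N)$ via the normalization sequence for the curve $C'=R\cup Z$ — is exactly the paper's argument, and you correctly flag it as the step requiring care. Where you diverge is the evaluation of $e(\BE^{\vee}\otimes\tilde{\ev}_1^{\ast}N)$: the paper embeds this bundle into the sequence $0\to\BE^{\vee}\otimes\BL_{p_1}^{\vee}\to\BE^{\vee}\otimes\ev_1^{\ast}(T_X)\to\BE^{\vee}\otimes\tilde{\ev}_1^{\ast}(N)\to 0$, writes the Euler class as the degree-$6$ part of a quotient of total Chern classes, and verifies the vanishing by a brute-force expansion (using $c(T_X)=1+c_2+c_4$, the Mumford relation, and $\dim\Mbar_{0,1}(X,\beta)=3$) that the authors delegate to a computer. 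You instead perform the symplectic reduction inside $N$ itself: $T_p$ is isotropic, so $N'=T_p^{\perp}/T_p$ is a rank-$2$ symplectic subbundle with $c_1(N')=0$, and the splitting-principle product collapses via $\lambda_1^2=2\lambda_2$, $\lambda_2^2=0$ to $e(\BE^{\vee}\otimes N')\cdot e(\BE^{\vee}\otimes L)=\tilde{\ev}_1^{\ast}(c_2(N')^2)\cdot(\ell^2-\lambda_1\ell+\lambda_2)$, which dies because $\CC^0_{\beta}$ is a threefold. Your route buys a closed-form, computer-free proof and isolates exactly where the holomorphic symplectic hypothesis enters (through $c_1(N')=0$ rather than through $c_{\mathrm{odd}}(T_X)=0$); the paper's route keeps the computation uniform with the parallel analysis of the $M_2$ component, where the same $c(\BE^{\vee}\otimes\ev^{\ast}T_X)/c(\BE^{\vee}\otimes\BL^{\vee})$ expression reappears, at the cost of an opaque final expansion. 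Both arguments ultimately consume the same dimension input, since $\Mbar_{0,1}(X,\beta)\cong\CC^0_{\beta}$.
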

\begin{proof}
Let $C \to M$ be the universal curve as before, and let $C' \to M_1$ be its pull back along $\xi_1 : M_1 \to M$.
There exists a natural decomposition $C' = R \cup_q Z$ where $R$ is the pullback of the universal curve over $\Mbar_{0,1}(X,\beta)$ and $Z$ is the pullback of the universal curve from $\Mbar_{2,1}$. The curves $R$ and $Z$ are glued along the marked points $v : M_1 \to C$.
In particular, we have the diagram
\[
\begin{tikzcd}
C' \ar{r} \ar{d}{\rho'} & C \ar{d}{\rho} \\
\xi_1^{\ast} q^{\ast} \CC^0_{\beta} \ar{r}{\tilde{\xi}_1} \ar{d}{\tilde{p}'} & q^{\ast} \CC^0_{\beta} \ar{d}{\tilde{p}} \\
M_1 \ar{r}{\xi_1} \ar[bend left=85]{uu}{v} \ar[bend left]{u}{x} & M,
\end{tikzcd}
\]
where $x=\rho' \circ v$ is the image of the gluing point.
Applying $\rho'_{\ast}$ to the normalization exact sequence
\[ 0 \to \CO_{C'} \to \CO_R \oplus \CO_Z \to \CO_{v} \to 0 \]
shows that
\[ \xi_1^{\ast} R^1 \rho_{\ast}(\CO_C) = R^1 \rho'_{\ast} \CO_C = R^1 \rho'_{\ast}( \CO_Z ) = x_{\ast}( \pr_1^{\ast} \BE^{\vee} ), \]
where $\pr_1 : M_1 \to \Mbar_{2,1}$ is the projection and %$x = \rho' \circ v$ is the image of the gluing point and 
$\BE \to \Mbar_{2,1}$ is the Hodge bundle (pulled back to the product).
We obtain that:
\[ \xi_1^{\ast} \tilde{p}_{\ast}\big( (R^1 \rho_{\ast} \CO_C) \otimes \tilde{q}^{\ast}N \big) \cong 
\tilde{\ev}_1^{\ast}(N) \otimes \mathrm{pr}_1^{\ast}(\BE^{\vee}), \]
where $\tilde{\ev}_1 = \tilde{q} \circ \tilde{\xi}_1 \circ x : M_1 \to \CC^0_{\beta}$ is the evaluation map.

Using the defining exact sequence $0 \to T_p \to f_{\beta}^{\ast}(T_X) \to N \to 0$ and that $\tilde{\ev}_1^{\ast}(T_p)$ is isomorpic to the
cotangent line bundle of $\Mbar_{0,1}(\CC^0_{\beta}/S^0_{\beta},1)$ at the marking, i.e. $\tilde{\ev}_1^{\ast}(T_p) \cong \BL_{p_1}^{\vee}$,
we obtain the exact sequence
\[ 0 \to \BE^{\vee} \otimes \BL_{p_1}^{\vee} \to \BE^{\vee} \otimes \ev_1^{\ast}(T_X) \to \tilde{\ev}_1^{\ast}(N) \otimes \mathrm{pr}_1^{\ast}(\BE^{\vee}) \to 0, \]
where $\ev_1 : \Mbar_{0,1}(\CC^0_{\beta}/S^0_{\beta},1) \cong \Mbar_{0,1}(X,\beta) \to X$ is the evaluation map to $X$
and we surpressed the pullbacks by the projection to the factors.
We conclude that
\begin{align*} 
e\Big( \xi_1^{\ast} \tilde{p}_{\ast}\big( (R^1 \rho_{\ast} \CO_C) \otimes \tilde{q}^{\ast}N \big) \Big) & = 
\left[ \frac{c( \BE^{\vee} \otimes \ev_1^{\ast}(T_X) )}{ c( \BE^{\vee} \otimes \BL_{p_1}^{\vee} ) } \right]_{6} \\
& =
\left[ \frac{ c( \BE^{\vee})^{4} + 2 \ev_1^{\ast}(c_2(T_X)) c(\BE^{\vee}) (1-\lambda_1) }{ (1-\psi_1)^2 - \lambda_1 (1 - \psi_1) + \lambda_2 } \right]_{6},
\end{align*}
where in the second equality we used the splitting principle
and the Mumford relation
\begin{equation} c(\BE)\,c(\BE^{\vee}) = (1 + \lambda_1 + \lambda_2) (1 - \lambda_1 + \lambda_2) = 1 + 2 \lambda_2 - \lambda_1^2 + \lambda_2^2 = 1. \label{eq:Mumford Relation} \end{equation}
Now a straightforward computation (using that $\Mbar_{0,1}(X,\beta)$ is of dimension $3$ and the Mumford relation, and which may be performed by a computer program)
shows that this degree $6$ component vanishes.
%\begin{lemma} We have
%\[ c( \BE^{\vee} \otimes \ev_1^{\ast}(T_X)) = 
%(1-\lambda_1 + \lambda_2)^2 \Big( (1-\lambda_1 + \lambda_2)^2 + 2 c_2(T_X) (1 - \lambda_1) \Big). \]
%\end{lemma}
%\begin{proof}
%Let $c_i = \ev^{\ast} c_i(T_X)$. And note that $c_4=c_2^2=0$.
%We assume $\BE^{\vee} = L_a \oplus L_b$ where $c_1(L_a) = 1+a$ and $c_1(L_b) = 1+b$. Then
%\begin{align*}
%c(\BE^{\vee} \otimes T_X)
%& = ((1+a)^4 + c_2 (1+a)^2) ( (1+b)^4 + c_2 (1+b)^2 ) \\
%& = c(\BE^{\vee})^4 + c_2\,c(\BE^{\vee})^2 ( (1+a)^2 + (1+b)^2 ) \\
%& = c(\BE^{\vee})^4 + c_2\, c(\BE^{\vee})^2 ( 2 + 2 (a+b) + (a+b)^2 - 2ab ) \\
%& = c(\BE^{\vee})^4 + c_2\, c(\BE^{\vee})^2 ( 2 - 2 \lambda_1 + \lambda_1^2 - 2 \lambda_2 ) \\
%& = c(\BE^{\vee})^2 ( c(\BE^{\vee})^2 + 2 c_2 (1-\lambda_1)),
%\end{align*}
%where we used the Mumford relation
%\[ c(\BE) c(\BE^{\vee}) = (1 + \lambda_1 + \lambda_2) (1 - \lambda_1 + \lambda_2) = 1 + 2 \lambda_2 - \lambda_1^2 + \lambda_2^2 = 1. \qedhere \]
%\end{proof}
%
%
%The computer code reads for SageMath:
%S = PolynomialRing(QQ, 'l1, l2, c2, psi')
%l1,l2, c2, psi = S.gens()
%rels1=[ l1**4, 2*l2-l1**2]
%rels2=[ psi**4, c2*psi**2, c2**2 ]
%I1=S.ideal( rels1 + rels2 )
%I=S.ideal( I1.groebner_basis() )
%Sq=QuotientRing(S, I)
%
%l1, l2, c2, psi = Sq.gens()
%a=(1 -l1 + l2)**4 + 2*c2*(1-l1+l2)**2*(1-l1)
%b=sum( (-1)**k*( -2*psi+psi**2 - l1*(1-psi) + l2 )**k for k in range(0,10 ))
%
%f=a*b
%fl=f.lift()
%l1,l2, c2, psi = S.gens()
%
%print("flift= ", fl)
%C1=fl.coefficient({ c2:1, psi:1})
%C2=fl.coefficient({ c2:0, psi:3})
%
%print(C1)
%print(C2)
\end{proof}
\begin{lemma}
\[ \int_{ \Mbar_{1,1} \times \Mbar_{1,1} \times \Mbar_{0,2}(\CC^0_{\beta}/S^0_{\beta},1)}
\xi_2^{\ast} e\Big( \tilde{p}_{\ast}\big( (R^1 \rho_{\ast} \CO_C) \otimes \tilde{q}^{\ast}N \big) \Big) =
\frac{1}{24^2} \blangle \tau_0(c_2(X)) \tau_0(c_2(X)) \brangle^{\GW}_{0,\beta}. \]
\end{lemma}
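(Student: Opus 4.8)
The plan is to follow the strategy of Lemma~\ref{lemma:genus 2 eval 1}, now for the stratum $M_2$ on which the genus $2$ domain degenerates into a rational bridge of degree $\beta$ carrying two elliptic tails. First I would pull back the obstruction sheaf along $\xi_2$ and identify it via the normalization sequence. Writing the pullback of the universal curve as $E_a\cup Z\cup E_b$, with the two elliptic tails $E_a,E_b$ glued to the genus $0$ bridge $Z$ along the two markings $x_1,x_2$, the computation of Lemma~\ref{lemma:genus 2 eval 1} applied to the normalization sequence gives $\xi_2^\ast R^1\rho_\ast\CO_C=x_{1\ast}(\pr_a^\ast\BE^\vee)\oplus x_{2\ast}(\pr_b^\ast\BE^\vee)$, where $\pr_a,\pr_b$ are the projections to the two $\Mbar_{1,1}$-factors and the bridge $Z$ contributes nothing to $R^1$. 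Tensoring by $\tilde q^\ast N$ and pushing forward yields the splitting
\[ \xi_2^\ast\tilde p_\ast\big((R^1\rho_\ast\CO_C)\otimes\tilde q^\ast N\big)\cong\big(\tilde{\ev}_1^\ast N\otimes\pr_a^\ast\BE^\vee\big)\oplus\big(\tilde{\ev}_2^\ast N\otimes\pr_b^\ast\BE^\vee\big), \]
so that the Euler class factors as a product of two rank $3$ Euler classes.

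Next I would evaluate each factor. Exactly as in Lemma~\ref{lemma:genus 2 eval 1}, using the sequence $0\to T_p\to f_\beta^\ast T_X\to N\to 0$ and $c_1(X)=0$, the coefficient of $\lambda_1$ in $e(\tilde{\ev}_i^\ast N\otimes\BE^\vee)$ equals $-c_2(\tilde{\ev}_i^\ast N)$. Since $\dim\Mbar_{1,1}=1$ we have $\lambda_1^2=0$ on each factor, so only this linear term survives, and integrating over the two $\Mbar_{1,1}$-factors by means of $\int_{\Mbar_{1,1}}\lambda_1=\tfrac{1}{24}$ collapses the integral to
\[ \frac{1}{24^2}\int_{\Mbar_{0,2}(\CC^0_\beta/S^0_\beta,1)}c_2(\tilde{\ev}_1^\ast N)\,c_2(\tilde{\ev}_2^\ast N). \]

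It now remains to identify this integral with $\blangle\tau_0(c_2(X))\tau_0(c_2(X))\brangle^{\GW}_{0,\beta}$. Here I would use that in the ideal geometry all fibers are smooth rational curves, so that the two evaluation maps identify $\Mbar_{0,2}(\CC^0_\beta/S^0_\beta,1)$ with the fibre product $\CC^0_\beta\times_{S^0_\beta}\CC^0_\beta$ and $\tilde{\ev}_i$ with the two projections. Consequently $c_2(\tilde{\ev}_i^\ast N)$ is the pullback of the fixed class $c_2(N)$ on $\CC^0_\beta$, and fibrewise integration over $S^0_\beta$ gives
\[ \int_{\Mbar_{0,2}(\CC^0_\beta/S^0_\beta,1)}c_2(\tilde{\ev}_1^\ast N)\,c_2(\tilde{\ev}_2^\ast N)=\int_{S^0_\beta}\big(p_\ast c_2(N)\big)^2. \]
The key point is then that $p_\ast c_2(N)=p_\ast f_\beta^\ast c_2(X)$. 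Indeed, from $c_1(X)=0$ one computes $c_2(N)=f_\beta^\ast c_2(X)+c_1(\omega_p)^2$, and $c_1(\omega_p)^2$ has vanishing fibre integral: this is the relation $p_\ast c_1(\omega_p)^2=0$, which follows from Grothendieck--Riemann--Roch applied to $\omega_p$ together with the relative duality $R^\bullet p_\ast\omega_p=\CO_{S^0_\beta}[-1]$ (equivalently, the first $\kappa$-class of a family of genus $0$ curves vanishes). Substituting $p_\ast c_2(N)=p_\ast f_\beta^\ast c_2(X)$ and undoing the fibre-product identification for $f_\beta^\ast c_2(X)$ reproduces exactly $\blangle\tau_0(c_2(X))\tau_0(c_2(X))\brangle^{\GW}_{0,\beta}$.

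The step I expect to be the main obstacle is this last identification. The delicate issue is that $c_2(\tilde{\ev}_i^\ast N)$ carries $\psi$-classes through $c_1(\omega_p)^2$, and one must be careful that the relevant class is the pullback $\tilde{\ev}_i^\ast c_1(\omega_p)$ of the intrinsic relative dualizing class on $\CC^0_\beta$ rather than the cotangent line class on the $2$-pointed moduli space — the two differ by boundary (bubbling) corrections supported on the diagonal. Working directly with $\tilde{\ev}_i^\ast$ of fixed classes on $\CC^0_\beta$ side-steps this, after which the vanishing $p_\ast c_1(\omega_p)^2=0$ does all the work. Alternatively, one can carry out the same reduction with the moduli $\psi$-classes and instead cancel the resulting cross terms $2\blangle\tau_2(1)\tau_0(c_2(X))\brangle^{\GW}_{0,\beta}+\blangle\tau_2(1)\tau_2(1)\brangle^{\GW}_{0,\beta}$ using the genus $0$ relations of Lemmas~\ref{lemma:One relation} and \ref{lemma:Relation 2}.
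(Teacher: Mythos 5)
Your proposal is correct, and its skeleton coincides with the paper's: the splitting $\xi_2^{\ast}R^1\rho_{\ast}\CO_C=x_{1\ast}(\BE_1^{\vee})\oplus x_{2\ast}(\BE_2^{\vee})$, the factorization of the Euler class into two rank-$3$ pieces, the extraction of one $\lambda_1$ from each $\Mbar_{1,1}$-factor (using $\lambda_1^2=0$ and $\int_{\Mbar_{1,1}}\lambda_1=\tfrac{1}{24}$), and the resulting reduction to $\tfrac{1}{24^2}\int c_2(\tilde{\ev}_1^{\ast}N)\,c_2(\tilde{\ev}_2^{\ast}N)$ are all exactly what the paper does, including your (correct) warning that $\tilde{\ev}_i^{\ast}(T_p)=\pi_i^{\ast}(\BL_{p_i}^{\vee})$, i.e.\ the relevant class is $\pi_i^{\ast}\psi_i$ and not $\psi_i$, the two differing by the diagonal boundary. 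Where you genuinely diverge is the last step. The paper expands $c_2(\tilde{\ev}_i^{\ast}N)=\ev_i^{\ast}c_2(X)+\pi_i^{\ast}(\psi_i)^2$, rewrites $\pi_i^{\ast}(\psi_i)^2=\psi_i^2-s_{\ast}(\psi_i)$, and cancels the unwanted terms with the universal genus-$0$ relations $\langle\tau_1(\gamma)\rangle_{0,\beta}^{\GW}=\langle\tau_2(1)\tau_0(\gamma)\rangle_{0,\beta}^{\GW}$ and $\langle\tau_3(1)\rangle_{0,\beta}^{\GW}=\langle\tau_2(1)\tau_2(1)\rangle_{0,\beta}^{\GW}$ of Lemmas~\ref{lemma:One relation} and~\ref{lemma:Relation 2}. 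You instead identify $\Mbar_{0,2}(\CC^0_{\beta}/S^0_{\beta},1)\cong\CC^0_{\beta}\times_{S^0_{\beta}}\CC^0_{\beta}$ (valid, since $\ev_1\times\ev_2:\Mbar_{0,2}(\p^1,1)\to\p^1\times\p^1$ is an isomorphism), push forward to $S^0_{\beta}$, and invoke $p_{\ast}c_1(\omega_p)^2=0$, which indeed follows from GRR for $p$ together with $Rp_{\ast}\CO_{\CC^0_{\beta}}=\CO_{S^0_{\beta}}$ (the genus-$0$ $\kappa_1$-vanishing). Both routes are sound; yours is more elementary and exploits the specific ideal geometry, while the paper's relations are stated for an arbitrary holomorphic symplectic $4$-fold and are reused elsewhere in the text. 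You even record the paper's route as your stated alternative, so nothing is missing.
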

\begin{proof}
Let $C' \to M_2$ be the pullback of the universal curve $C \to M$ to $M_2$.
We have a decomposition $C' = R \cup E_1 \cup E_2$, where $R$ is the universal $2$-pointed genus $0$ curve, and the $E_i$ are the universal genus $1$ curves.
Let
\[ x_1, x_2 : M_2 \to \xi_2^{\ast} q^{\ast} \CC^0_{\beta} \]
be the image of the marked points under the evaluation map $\rho' : C' \to \xi_2^{\ast} q^{\ast} \CC^0_{\beta}$.
We have
\[ \xi_2^{\ast} R^1 \rho_{\ast} \CO_C = R^1 \rho'_{\ast}(\CO_{C'}) = x_{1 \ast}( \BE_1^{\vee} ) \oplus x_{2 \ast}(\BE_2^{\vee}), \]
where $\BE_i = \pr_i^{\ast}(\BE)$ are the Hodge bundles pulled-back from the first or second copy of $M_2$.
We argue as in Lemma~\ref{lemma:genus 2 eval 1}, that is first we have
\[ \tilde{\ev}_i = \tilde{q} \circ \tilde{\xi}_2 \circ x_i. \]
Then with $\pi_i : \Mbar_{0,2}(\CC^0_{\beta}/S^0_{\beta},1) \to \Mbar_{0,1}(\CC^0_{\beta}/S^0_{\beta},1)$
the morphism that forgets all but the $i$-th marking we have that
\[ \tilde{\ev}_i^{\ast}(T_p) = \pi_i^{\ast} \tilde{\ev}^{\ast}(T_p) = \pi_i^{\ast}( \BL_{p_i}^{\vee} ). \]
(Here, we need the precompose with the forgetful morphism because the two markings can lie on a bubble
in which case the tangent space to a marking maps with zero to the tangent space of the image point; by precomposing with the forgetful map, 
we contract the bubbles).
As in Lemma~\ref{lemma:genus 2 eval 1} we then obtain that
\begin{equation} \label{eq:eval}
\begin{aligned}
& \quad \, \, \int_{ \Mbar_{1,1} \times \Mbar_{1,1} \times \Mbar_{0,2}(\CC^0_{\beta}/S^0_{\beta},1)}
\xi_2^{\ast} e\Big( \tilde{p}_{\ast}\big( (R^1 \rho_{\ast} \CO_C) \otimes \tilde{q}^{\ast}N \big) \Big) \\
& = \int_{\Mbar_{1,1} \times \Mbar_{1,1} \times \Mbar_{0,2}(\CC^0_{\beta}/S^0_{\beta},1)}
\frac{ c( \BE_1^{\vee} \otimes \ev_1^{\ast}(T_X) ) c( \BE_2^{\vee} \otimes \ev_1^{\ast}(T_X) ) }{ (1 - c_1(\BE_1) - \pi_1^{\ast}(\psi_1)) (1-c_1(\BE_1) - \pi_2^{\ast}(\psi_2)) }.
\end{aligned}
\end{equation}
For $i=1,2$ and $(\lambda, \psi, \ev) := (c_1(\BE_i), \pi_i^{\ast}(\psi_i), \ev_i)$, we have
\begin{align*}
& \quad\, \frac{ c( \BE^{\vee} \otimes \ev^{\ast}(T_X)) }{1 - \lambda - \psi} \\
& = ((1-\lambda)^4 + \ev^{\ast}(c_2(X)) (1-\lambda)^2) (1 + \lambda + \psi + 2 \lambda \psi + \psi^2 + 3 \lambda \psi^2 + \psi_1^3 + 4 \lambda \psi^3) \\
& =
\lambda \Big( (1+ \ev^{\ast}c_2(X)) ( 1 + 2 \psi + 3 \psi^2 + 4 \psi^3 )
+ (-4 - 2 \ev^{\ast} c_2(X))(1 + \psi + \psi^2 + \psi^3 ) \Big)
+ (\cdots) \\
& = \lambda \Big( -3 - 2 \psi - \psi^2 - \ev^{\ast}(c_2(X)) \Big) + (\cdots),
\end{align*}
where $(\cdots)$ are terms that are not multiples of $\lambda$.

Using this and Eqn.~\eqref{vir equal fundamental class}, the term \eqref{eq:eval} becomes:
\begin{align} \label{eq:evesdfs}
\frac{1}{24^2} \int_{[ \Mbar_{0,2}(X,\beta) ]^{\vir}} ( \ev_1^{\ast}(c_2(X)) + \pi_1^{\ast}(\psi_1)^2  )( \ev_2^{\ast}(c_2(X)) + \pi_2^{\ast}(\psi_2)^2 ).
\end{align}
On $\Mbar_{0,2}(X,\beta)$ we have
\[ \psi_1 = \pi_1^{\ast}(\psi_1) + s_{\ast}(1), \]
where $s : \Mbar_{0,1}(X,\beta) \to \Mbar_{0,2}(X,\beta)$ is the canonical section,
and therefore
\[ \pi_1^{\ast}(\psi_1)^2 = \psi_1^2 - s_{\ast}(\psi_1). \]
Applying Lemma~\ref{lemma:One relation}, we find that:
\[
\int_{[ \Mbar_{0,2}(X,\beta) ]^{\vir}} \pi_1^{\ast}(\psi_1)^2 \ev_2^{\ast}(c_2(X)) = \blangle \tau_2(1) \tau_0(c_2(X)) \brangle^{\GW}_{0,\beta} - \blangle \tau_1(c_2(X)) \brangle^{\GW}_{0,\beta}
= 0.
\]
With a similar reasoning, using Lemma~\ref{lemma:Relation 2}, we also get that:
\[
\int_{[ \Mbar_{0,2}(X,\beta) ]^{\vir}}\pi_1^{\ast}(\psi_1)^2 \pi_2^{\ast}(\psi_2)^2 = \blangle \tau_2(1) \tau_2(1) \brangle^{\GW}_{0,\beta} - \blangle \tau_3(1) \brangle^{\GW}_{0,\beta} = 0.
\]
Inserting both these vanishings into Eqn.~\eqref{eq:evesdfs} concludes the claim.
\end{proof}

Inserting the two lemmata above into Eqn.~\eqref{genus 2 GW invariant first step}, the whole computation collpases into the following simple evaluation:
\[
\blangle \varnothing \brangle^{\GW}_{2,\beta}  = 
\frac{1}{2 \cdot 24^2} \blangle \tau_0(c_2(X)) \tau_0(c_2(X)) \brangle^{\GW}_{0,\beta}. 
\]
We hence conclude that the family
$\CC^0_\beta \to S^0_\beta$ of rational curves with only smooth fibers
contributes 
$\blangle \tau_0(c_2(X)) \tau_0(c_2(X)) \brangle^{\GW}_{0,\beta} / (2 \cdot 24^2)$ to the Gopakumar-Vafa invariant $n_{2,\beta}$.

\subsubsection{Conclusion and contribution from nodal rational curves}\label{sect las ste}
Consider an ideal geometry of curves as in Section~\ref{subsec:ideal geometry primitive} without any additional assumptions.
We expect the contributions from genus $0$ and genus $1$ curves
to the invariant
$\blangle \varnothing \brangle^{\GW}_{2,\beta}$ to be as discussed above,
plus a correction term coming from the nodal rational curves.
This correction term should be local, and hence a multiple of the expected number of nodal rational curves $N_{\text{nodal},\beta}$.
We hence make the ansatz:
\begin{equation} \label{ansatz}
\blangle \varnothing \brangle^{\GW}_{2,\beta} = 
n_{2,\beta} -\frac{1}{24} n_{1}(c_2(X))
+ \frac{1}{2 \cdot 24^2} \blangle \tau_0(c_2(X)) \tau_0(c_2(X)) \brangle^{\GW}_{0,\beta} + a N_{\text{nodal},\beta}
\end{equation}
for a constant $a \in \BQ$ independent of $(X,\beta)$.

We determine now $a$ with a test calculation.
Let $X$ be the Fano variety of lines on a very general cubic 4-fold, and let $\beta \in H_2(X,\BZ)$ be the minimal effective curve class.
As we will see in Section~\ref{sec:Hilb K3} we have the evaluations (assuming the conjectural holomorphic anomaly equation):
\begin{gather*}
\blangle \varnothing \brangle^{\GW}_{2,\beta} =  - 11445/128, \\
n_{1,\beta}(c_2(X)) = 5985, \\
\blangle \tau_0(c_2(X)) \tau_0(c_2(X)) \brangle^{\GW}_{0,\beta} = 2835.
\end{gather*}
Moreover, by \cite[Thm.~1.3]{NO}, we have
\[ N_{\text{nodal}, \beta} = 3780. \]
Since there are no genus $2$ curves on $X$ in class $\beta$ (see \cite{NO}) we set
\[ n_{2,\beta} = 0. \]
Inserting this into Eqn.~\eqref{ansatz} yields:
\[ a = \frac{1}{24}. \]
This conclude the justification of Definition~\ref{defn g2 GV primitive}.
While the last step (i.e.~\S \ref{sect las ste}) requires two assumption (locality of the contribution of nodal rational curves, and the holomorphic anomaly equation),
the remainder of the paper yields plenty of numerical support for this definition.

\section{Donaldson-Thomas invariants}
For a holomorphic symplectic $4$-fold, we
define (reduced) Donaldson-Thomas invariants ($\DT_4$ invariants for short) of one dimensional stable sheaves.
We then use them to give a sheaf theoretic approach to Gopakumar-Vafa invariants defined in the previous section.
In the last section we justify the definition
by computations in the ideal geometry of curves.

\subsection{Definitions}
Let $M_\beta$ be the moduli scheme of one dimensional stable sheaves $F$ on $X$ with $[F]=\beta$, $\chi(F)=1$. Such moduli spaces 
are independent of the choice of polarization (e.g.~\cite[Rmk.~1.2]{CMT1}) and are used in \cite{CMT1, CT2} 
to give sheaf theoretic interpretation of Gopakumar-Vafa type invariants 
of ordinary Calabi-Yau 4-folds \cite{KP}. 
We also refer to \cite{CMT2, CT1, CT3, CT4} for related conjectures and computations,
%\begin{GO} Why is it necessary to cite all of these, e.g. those on zero-dimensional DT or the DT/PT correspondence? Do we need it anywhere? \end{GO}
which build on the works of virtual class constructions \cite{BJ, OT} (see also \cite{CL1}). 

Parallel to Gromov-Witten theory, the ordinary virtual class of $M_\beta$ vanishes \cite{KiP, Sav}. For a choice of ample divisor $H$, one can define 
a reduced virtual class due to Kiem-Park \cite[Def.~8.7, Lem.~9.4]{KiP}: 
\begin{align}\label{red vir class}[M_\beta]^{\vir}\in A_2(M_\beta,\mathbb{Q}), \end{align}
depending on the choice of orientation \cite{CGJ, CL2}. 
To define descendent invariants, we need insertions:  
\begin{align*}\tau_i: H^{*}(X,\mathbb{Z})\to H^{*+2i-2}(M_\beta,\mathbb{Q}), \end{align*}
\begin{align*}\tau_i(\bullet):=(\pi_{M})_{\ast}\left(\pi_X^{\ast}(\bullet) \cup\ch_{3+i}(\mathbb{F}_{\mathrm{norm}})\right),
\end{align*}
where $\mathbb{F}_{\mathrm{norm}}$ is the normalized universal sheaf, i.e. $
\det(\pi_{M*}\mathbb{F}_{\mathrm{norm}})\cong \oO_{M_\beta}$ (ref.~\cite[\S1.4]{CT2}).

\begin{defi}\label{def DT4 inv}
For any $\gamma_1, \ldots, \gamma_n \in H^{\ast}(X)$ and $k_i \in \mathbb{Z}_{\geqslant   0}$ the $\DT_4$ invariants are  defined by
\begin{align}\label{def of all DT4 invs}
\blangle\tau_{k_1}(\gamma_1),\ldots,\tau_{k_n}(\gamma_n) \brangle^{\DT_4}_{\beta}:=\int_{[M_{\beta}]^{\rm{vir}}}\prod_{i=1}^n\tau_{k_i}(\gamma_i)\in\mathbb{Q}.
\end{align}
\end{defi}
%Our main focus are the following primary/descendent/higher descendent $\DT_4$ invariants:    
%\begin{align*}  
%\blangle\tau_0(\gamma_1),\ldots,\tau_0(\gamma_n) \brangle^{\DT_4}_{\beta}&:=\int_{[M_{\beta}]^{\rm{vir}}}\prod_{i=1}^n\tau_0(\gamma_i), \,\,\, 
%\mathrm{where}\,\, \gamma_i \in H^{m_i}(X, \mathbb{Z}), \\ 
%\blangle\tau_1(\gamma) \brangle^{\DT_4}_{\beta}&:=\int_{[M_{\beta}]^{\rm{vir}}}\tau_1(\gamma),  \,\,\,
%\mathrm{where}\,\, \gamma\in H^4(X, \mathbb{Z}), \\
%\blangle\tau_2(\theta) \brangle^{\DT_4}_{\beta}&:=\int_{[M_{\beta}]^{\rm{vir}}}\tau_2(\theta),  \,\,\, \mathrm{where}\,\, \theta\in H^2(X, \mathbb{Z}), \\
%\blangle\tau_3(1) \brangle^{\DT_4}_{\beta}&:=\int_{[M_{\beta}]^{\rm{vir}}}\tau_3(1). \end{align*}

\subsection{Conjectures}
As in \cite{CMT1, CT2}, we propose the following sheaf theoretic interpretation of
all genus Gopakumar-Vafa invariants: 
\begin{conj}\label{conj on DT4/GV}
For certain choice of orientation, the following equalities hold. \\
When $\beta$ is an effective curve class, 
\begin{align}\label{g=0 part} \tag{i}
\blangle\tau_0(\gamma_1),\ldots,\tau_0(\gamma_n) \brangle^{\DT_4}_{\beta}=n_{0,\beta}(\gamma_1,\ldots,\gamma_n). \end{align}
When $\beta$ is a primitive curve class, 
\begin{align}\label{g=1 part} \tag{ii}
\blangle\tau_1(\gamma) \brangle^{\DT_4}_{\beta}=-\frac{1}{2}\blangle \tau_1(\gamma) \brangle^{\GW}_{0,\beta}-n_{1, \beta}(\gamma).
\end{align}
When $\beta$ is a primitive curve class, 
\begin{align}\label{g=2 part} \tag{iii}
-\blangle\tau_3(1) \brangle^{\DT_4}_{\beta}
-\frac{1}{12}\blangle\tau_1(c_2(X)) \brangle^{\DT_4}_{\beta}=n_{2, \beta}.
\end{align}
\end{conj}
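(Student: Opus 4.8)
The plan is to verify the three identities in the ideal geometry of \S\ref{subsec:ideal geometry primitive}, where $X$ is irreducible hyperk\"ahler and $\beta$ is irreducible, and then to reduce the genus $2$ statement \eqref{g=2 part} to the stable pair theory of \cite{COT1}. The first step is to describe $M_\beta$ explicitly. Since $\beta$ is irreducible, every one dimensional stable sheaf $F$ with $\chi(F)=1$ is the pushforward $\iota_{\ast}L$ of a line bundle from its integral supporting curve, and Riemann--Roch forces $\deg L = g$, where $g$ is the arithmetic genus of the support. Stratifying by $g \in \{0,1,2\}$ and using the families of \S\ref{subsec:ideal geometry primitive}, I would obtain
\[ M_\beta = M_\beta^{(0)} \sqcup M_\beta^{(1)} \sqcup M_\beta^{(2)}, \]
where $M_\beta^{(0)} \cong S^0_\beta$ (each smooth $\p^1$ carries the unique degree $0$ line bundle $\CO_{\p^1}$), $M_\beta^{(1)}$ is the relative $\Pic^1$ of the family of elliptic curves, isomorphic to the universal curve $\CC^1_\beta \to S^1_\beta$, and $M_\beta^{(2)} = \bigsqcup_i \Pic^2(C_i)$ is a finite disjoint union of abelian surfaces indexed by the rigid genus $2$ curves $C_i$. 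Each stratum is smooth of complex dimension $2$, matching the reduced virtual dimension of \eqref{red vir class}.

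Next I would pin down the virtual class and the descendents on each stratum. Computing $\Ext^{\bullet}(\iota_{\ast}L,\iota_{\ast}L)$ fiberwise via the normalization sequence and Serre duality on $X$, I expect the reduced Kiem--Park obstruction theory to vanish on each two dimensional stratum, so that for a suitable global orientation $[M_\beta]^{\vir}$ restricts to $+$ the fundamental class of each component; fixing and tracking this orientation is exactly the content of the clause ``for certain choice of orientation''. The normalized universal sheaf $\mathbb{F}_{\mathrm{norm}}$ restricts on each stratum to a twist of a Poincar\'e sheaf, whose Chern character I would compute by Grothendieck--Riemann--Roch, thereby expressing each $\tau_k(\gamma)=(\pi_M)_{\ast}(\pi_X^{\ast}\gamma\cup\ch_{3+k}(\mathbb{F}_{\mathrm{norm}}))$ as an explicit tautological class. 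The crucial structural observation is that $\ch_3(\mathbb{F})$ only records the \emph{support}, which is constant along the Jacobian directions; hence $\tau_0(\gamma)|_{M_\beta^{(1)}}$ is pulled back from the $1$-dimensional base $S^1_\beta$ and $\tau_0(\gamma)|_{M_\beta^{(2)}}$ is pulled back from a point, so products of $\tau_0$'s vanish in degree $4$ on these strata.

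This observation immediately gives \eqref{g=0 part}: only $M_\beta^{(0)}\cong S^0_\beta$ survives, where $\prod_i \tau_0(\gamma_i)$ is a product of incidence divisors whose intersection number equals $\blangle\tau_0(\gamma_1)\cdots\tau_0(\gamma_n)\brangle^{\GW}_{0,\beta} = n_{0,\beta}(\gamma_1,\ldots,\gamma_n)$ (using primitivity). For \eqref{g=1 part} I would use that $\tau_1(\gamma)$ involves $\ch_4(\mathbb{F})$, which now does see the Jacobian direction. On $M_\beta^{(0)}$, applying Grothendieck--Riemann--Roch to $\ch_4(\CO_{\CC^0_\beta})$ and comparing with the expression $\blangle\tau_1(\gamma)\brangle^{\GW}_{0,\beta}=\int_{\CC^0_\beta} f^{\ast}\gamma\, c_1(\omega_p)$ of Lemma~\ref{lemma:psi1 in terms of Tp} produces the factor $-\tfrac12$; on $M_\beta^{(1)}\cong\CC^1_\beta$ the integral evaluates to $-(\CC^1_\beta\cdot\gamma)=-n_{1,\beta}(\gamma)$ (the genus $1$ incidence count of \S\ref{sect on heuristic of g=1 GW/GV}); and on $M_\beta^{(2)}$ the factor $\pi_X^{\ast}\gamma$ with $\gamma\in H^4(X)$ restricts to zero on the curve $C_i$, since $H^4(C_i)=0$, so this stratum drops out. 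Summing yields $-\tfrac12\blangle\tau_1(\gamma)\brangle^{\GW}_{0,\beta}-n_{1,\beta}(\gamma)$.

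The genus $2$ identity \eqref{g=2 part} is the main obstacle. Here the insertion $\tau_1(c_2(X))$ again vanishes on $M_\beta^{(2)}$ (as $c_2\in H^4$ dies on $C_i$), so the entire right hand side $n_{2,\beta}$ must come from $-\blangle\tau_3(1)\brangle^{\DT_4}_\beta$ restricted to $M_\beta^{(2)}$, where each abelian surface $\Pic^2(C_i)$ should contribute exactly $1$; the role of the correction $-\tfrac{1}{12}\tau_1(c_2(X))$ is purely to cancel the contributions of the rational and elliptic strata to $\tau_3(1)$. Establishing this cancellation directly requires evaluating the high descendents $\tau_3(1)$ and $\tau_1(c_2(X))$ on $S^0_\beta$ and $\CC^1_\beta$ and checking that the delicate coefficients $1$ and $\tfrac{1}{12}$ conspire, which is technically the heaviest part. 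Rather than force it, I would obtain \eqref{g=2 part} indirectly through the stable pair invariants of \cite{COT1} together with the $\DT_4/\PT$ correspondence (cf.\ Remark~\ref{rmk on dt4/pt}): the stable pair side is computable in the same ideal geometry and its known relation to $n_{2,\beta}$ transports to the sheaf side. Throughout, the remaining persistent difficulty is the sign and orientation bookkeeping for the Kiem--Park class, which must be made consistent across all three strata.
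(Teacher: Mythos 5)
Your proposal follows essentially the same route as the paper: since the statement is a conjecture, the paper likewise only justifies it by stratifying $M_\beta$ in the ideal geometry of \S\ref{subsec:ideal geometry primitive} according to the genus of the supporting curve, computing the descendent insertions via Grothendieck--Riemann--Roch on the rational and elliptic strata (\S\ref{sect on ideal DT4 comp0}, \S\ref{sect on ideal DT4 comp}) while the $H^4$-insertions kill the genus $2$ stratum, and obtaining part \eqref{g=2 part} indirectly through the stable pair theory of \cite{COT1} (Remark~\ref{rmk on dt4/pt}). One bookkeeping caveat: with the orientation giving $+[\mathcal{C}^1_\beta]$ on the elliptic stratum, the GRR computation yields $+n_{1,\beta}(\gamma)$ rather than the $-n_{1,\beta}(\gamma)$ you state, so the sign in \eqref{g=1 part} must come from choosing the \emph{opposite} orientation on that component --- which is consistent with the component-wise orientation freedom the conjecture allows, and is exactly how the paper's $K3\times K3$ verification works out.
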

By Proposition \ref{div equ on GW}, $\blangle\tau_1(\gamma) \brangle^{\GW}_{0,\beta}$ can be deduced by $g=0$ primary Gromov-Witten invariants. 
Therefore these formulae determine 
%$g=0$ (and $g=1, 2$ for primitive classes) 
all genus Gopakumar-Vafa invariants from primary and descendent $\DT_4$ invariants, which give a sheaf theoretic interpretation for them.
\begin{rmk}\label{rmk on dt4/pt}
The way we write down Conjecture \ref{conj on DT4/GV} \eqref{g=2 part} is indirect. By \cite[App.~A]{COT1}, 
the LHS of \eqref{g=2 part} is equal to stable pair invariant $P_{-1,\beta}$ which is conjecturally the same as genus 2 Gopakumar-Vafa invariants \cite[Conj.~1.10]{COT1}.
We believe there is also a formula relating $\blangle\tau_2(\theta) \brangle^{\DT_4}_{\beta}$ to genus 2 Gopakumar-Vafa invariants, which we haven't found so far.
\end{rmk}
\begin{rmk}
Our conjecture implicitly includes the independence of $\DT_4$ invariants on the choice of ample divisor in defining 
reduced virtual classes \eqref{red vir class}.  
\end{rmk}

\subsection{Justification: Primary $\DT_4$ invariants}\label{sect on ideal DT4 comp0}

For Conjecture \ref{conj on DT4/GV} \eqref{g=0 part}, we consider the case $\gamma_1,\gamma_2\in H^4(X,\mathbb{Z})$ for simplicity.
These two 4-cycles (generically) cut out finite number of rational curves and miss high genus curves. 

As in \cite[\S1.4]{CMT1}, any one dimensional stable sheaf $F$ with $[F]=\beta$ is $\oO_C$ for some rational curve $C$. Their moduli space $M_\beta$
is identified with the moduli space $S^0_{\beta}$ of rational curves and 
\begin{align}\label{vir class in ideal g=0}[M_\beta]^{\vir}=[S^0_\beta],  \end{align}
for some choice of orientation. After imposing the primary insertion, we have 
\begin{align*}\int_{[M_\beta]^{\vir}}\tau_0(\gamma_1)\,\tau_0(\gamma_2)=\int_{S^0_\beta}p_*(f^*\gamma_1)\cdot p_*(f^*\gamma_2),  \end{align*}
where $p:\mathcal{C}^0_{\beta}\to S^0_{\beta}$ is the total space of rational curve family (RCF) of class $\beta$ and $f: \mathcal{C}^0_{\beta}\to X$ is the evaluation map. 
Therefore Conjecture \ref{conj on DT4/GV} \eqref{g=0 part} is confirmed in this ideal setting as both sides of the equation are (virtually) enumerating rational curves of class $\beta$ incident to cycles dual to $\gamma_1$ and $\gamma_2$.

\subsection{Justification: Descendent $\DT_4$ invariants}\label{sect on ideal DT4 comp}

For Conjecture \ref{conj on DT4/GV} \eqref{g=1 part}, as we put the incident condition with one 4-cycle $\gamma$ in $\blangle\tau_1(\gamma) \brangle^{\DT_4}_{\beta}$ which generically does not intersect genus 2 curves, so we only need to consider the contributions from RCF and ECF (elliptic curve family).

(1) For any RCF of class $\beta$, we have an embedding $i:\mathcal{C}^0_\beta\hookrightarrow S^0_{\beta}\times X$ fitting into the diagram: 
\begin{align}\label{RCF diagr}
\xymatrix{
\mathcal{C}^0_\beta  \ar[r]^{i\quad} \ar[dr]_{p} \ar@/_32pt/[drr]_{f}   & S^0_{\beta}\times X \ar[d]_{\pi_S} \ar[dr]^{\pi_X} & \\
  & S^0_{\beta}  & X. }
 \end{align}
By Grothendieck-Riemann-Roch (GRR) formula, we have 
\begin{align}\label{GRR equ}\ch(i_*\oO_{\mathcal{C}^0_\beta})=i_*(\td^{-1}(N_{\mathcal{C}^0_\beta/S^0_{\beta}\times X})). \end{align}
Obviously $\mathbb{F}_{\mathrm{norm}}=\oO_{\mathcal{C}^0_\beta}$, and therefore   
\begin{align*}\tau_1(\gamma)&=\pi_{S*}(\ch_4(\oO_{\mathcal{C}^0_\beta})\cdot \pi_X^*\gamma) \\
&=-\frac{1}{2}\pi_{S*}(i_*c_1(\omega_p)\cdot \pi_X^*\gamma) \\
&=-\frac{1}{2}\pi_{S*}(i_*(c_1(\omega_p)\cdot f^*\gamma)) \\ 
&=-\frac{1}{2}p_*(c_1(\omega_p)\cdot f^*\gamma), \end{align*}
where $\omega_p$ is the relative cotangent bundle of $p$. 

Combining with Eqn.~\eqref{vir class in ideal g=0}, we see RCF in class $\beta$ contributes to $\blangle\tau_1(\gamma) \brangle^{\DT_4}_{\beta}$ by 
\begin{align}\label{cont to tau1 from RCF}\int_{[M_\beta]^{\vir}}\tau_1(\gamma)=-\frac{1}{2}\int_{S^0_{\beta}}p_*(c_1(\omega_p)\cdot f^*\gamma). \end{align}
%When RCF are embedded, from the computation of genus 0 descendent GW invariants \eqref{equ on des g0 gw ideal}, the above \eqref{cont to tau1 from RCF}
%equals to the contribution of this RCF to $-\frac{1}{2}n_{0, \beta}(\gamma;\psi)$ (see Definition \ref{def of g=0 m inv}), 
%therefore justifying the first term in RHS of \eqref{equ on impri DT4/GV}. 
%\begin{YC}We need the old computations of multiple cover formula of g=0 descendent GW on embedded RCF to conclude \eqref{cont to tau1 from RCF}. \end{YC}
As $\beta$ is primitive, we may deform it to the irreducible case where RCF consists of smooth rational curves (except at some finite number of fibers of nodal curves 
which can be ignored by insertion $\gamma\in H^4(X)$). By Lemma \ref{lemma:psi1 in terms of Tp},
%As genus 0 descendent GW invariants can be written as genus 0 primary GW invariants (ref.~Lemma~\ref{div equ on GW}) 
%whose insertion can avoid the place where $j : \CC^0_{\beta} \to X$ is not necessarily an immersion, 
the RHS of Eqn.~\eqref{cont to tau1 from RCF} is equal to $-\frac{1}{2}\blangle \tau_1(\gamma) \brangle^{\GW}_{0,\beta}$.
This justifies the first term in the RHS of Conjecture \ref{conj on DT4/GV} \eqref{g=1 part}.

(2) Next we consider the contribution from ECF. Let 
$p:\mathcal{C}^1_{\beta}\to S^1_{\beta}$ be the total space of ECF of class $\beta$ and $j: \mathcal{C}^1_{\beta}\to X$ be the evaluation map.
The insertion $\gamma\in H^4(X)$ (generically) intersects $\mathcal{C}^1_{\beta}$ in a finite number of points.
We may assume $\mathcal{C}^1_{\beta}=E\times S^1_{\beta}$ is the product, $p$ is the projection and $j$ is an embedding 
in our computations. 
We further assume $E$ is smooth with normal bundle  
$L\oplus L^{-1}\oplus \oO$ for a generic degree zero line bundle $L$ on $E$.

\begin{lem}
Let $p:\mathcal{C}^1_{\beta}\to S^1_{\beta}$ be a one dimensional family of smooth elliptic curves $E$ on $X$ with normal bundle $N_{E/X}=L\oplus L^{-1}\oplus \oO$ for a generic $L\in \Pic^0(E)$. Then
any one dimensional stable sheaf $F$ supported on this family is scheme theoretically supported on a fiber of $p$.
\end{lem}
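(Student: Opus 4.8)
The plan is to determine the reduced support of $F$ using the primitivity of $\beta$, and then to show this support is vertical, i.e.\ contained in a single fiber of $p$; the horizontal case is the only genuine difficulty.

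First I would reduce to integral support. Write $j\colon \mathcal{C}^1_\beta\hookrightarrow X$ for the fiberwise embedding, with image the surface $\Sigma:=j(\mathcal{C}^1_\beta)$, so that $\Supp(F)\subseteq\Sigma$. Since $F$ is pure one-dimensional with $\chi(F)=1$ and cycle class $\beta$, the fundamental cycle of its scheme-theoretic support pushes forward to $\beta$ under $j$. As we may assume $\beta$ is irreducible (after the deformation of \S\ref{subsec:ideal geometry primitive}), it is neither a nontrivial sum of two effective classes nor a proper multiple of one. Hence the reduced support is a single integral curve $C$, occurring with multiplicity one, with $j_\ast[C]=\beta$ and $F$ generically of rank one on $C$.

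Next I would set up the dichotomy via $p\colon\Sigma\to S^1_\beta$. If $p|_C$ is constant, then $C$ lies in one fiber $E_b=p^{-1}(b)$; as $E_b$ is irreducible of class $\beta$ this forces $C=E_b$, and purity together with $j_\ast[C]=\beta$ identifies the scheme-theoretic support of $F$ with $E_b$, which is exactly the claim. Otherwise $p|_C$ is surjective onto the smooth curve $S^1_\beta$, so $C$ is a multisection of degree $d:=(C\cdot f)_\Sigma\geq 1$, where $f=[E_b]$ denotes the fiber class on $\Sigma$. Choosing an ample $H$ on $X$ and setting $H_\Sigma=j^\ast H$, one has $H_\Sigma\cdot C=H\cdot\beta=H_\Sigma\cdot f$, so $\delta:=[C]-f$ is $H_\Sigma$-orthogonal; by the Hodge index theorem $\delta^2\leq 0$, while $\delta\cdot f=d>0$ shows $\delta\not\equiv 0$, whence $\delta^2<0$. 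This records that $C$ is numerically distinct from a fiber, but by itself does not yet yield a contradiction.

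The crux is therefore to exclude the multisection, and here the genericity of $L$ must be used essentially, since the purely numerical constraints are consistent with a horizontal $C$: because $X$ is holomorphic symplectic, $K_X=\oO_X$, so adjunction in $X$ gives $\deg N_{C/X}=2p_a(C)-2$, while adjunction for the codimension-two $\Sigma\subset X$ gives $K_\Sigma=\det N_{\Sigma/X}$, which restricts to $L\oplus L^{-1}$ (degree zero) on each fiber and is thus numerically vertical; feeding these into adjunction on $\Sigma$ merely reproduces $p_a(C)=p_a(C)$, giving no contradiction. The approach I would take is to rule out horizontal curves of class $\beta$ on $\Sigma$ directly, exploiting that $L\in\Pic^0(E)$ is non-torsion: a degree-$d$ multisection $C$ determines, fiberwise, a divisor class $[C\cap E_b]\in\Pic^d(E_b)$ varying algebraically with $b$, and I would use the genericity of $L$ — which governs the monodromy of the relative Jacobian of $\Sigma\to S^1_\beta$ — to show that no such section exists. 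A complementary, sheaf-theoretic route is to compute $\Ext^1_X(j_\ast M, j_\ast M)$ for a fiber-supported stable sheaf $j_\ast M$ with $M\in\Pic^1(E_b)$ and to check, using $H^0(E,L)=H^0(E,L^{-1})=0$, that it is two-dimensional and entirely accounted for by varying $b$ and $M$, so that the fiber-supported sheaves form an open and closed, hence full, locus among the $\Sigma$-supported stable sheaves. Turning the genericity of $L$ into the non-existence of horizontal support — pinning down exactly which special locus in $\Pic^0(E_b)$ a multisection would force $L$ into — is the step I expect to be the main obstacle, and where the generic choice of $L$ is indispensable.
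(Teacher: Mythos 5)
Your proposal and the paper's proof are aimed at different halves of the statement, and the half you concentrate on is, by your own admission, not finished. The paper reads ``supported on this family'' as saying that the reduced support of $F$ is already a fiber $E_b$; the entire content of the lemma is then the passage from set-theoretic to scheme-theoretic support, i.e.\ excluding infinitesimal thickenings of $E_b$ into the three normal directions $L$, $L^{-1}$, $\oO$. The paper does this in two cited steps: by \cite[Lem.~2.2]{CMT1}, multiplication by a function cutting out $E_b$ in the family ($\oO$-)direction is a nilpotent endomorphism of $F$, hence zero because $\End(F)=\BC$, so $F$ is scheme-theoretically supported on $\mathrm{Tot}_{E_b}(L\oplus L^{-1})$; then \cite[Prop.~4.4]{HST} uses the genericity of $L$ to force a stable sheaf on that local $3$-fold onto its zero section. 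This is where stability and the generic choice of $L$ actually enter --- not, as you propose, in a monodromy argument on the relative Jacobian designed to exclude multisections.

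Concretely, the gaps are these. First, you dispose of the vertical case with ``purity together with $j_\ast[C]=\beta$ identifies the scheme-theoretic support of $F$ with $E_b$.'' This happens to be salvageable for multiplicity one (the stalk of $F$ at the generic point of $E_b$ has length one, so $I_{E_b}F$ is zero-dimensional and purity kills it), but you give no argument, and the fact that your reasoning uses neither stability nor the genericity of $L$ should have been a warning sign: it collapses for the multiplicity $k>1$ version of the statement invoked in Remark~\ref{rmk on higher k}, where thickenings of $\oO_{2E_b}$-type are pure, and it is exactly there that the mechanism of \cite{HST} is indispensable. Second, the horizontal/multisection case, on which you spend most of your effort, is not completed: the Hodge-index computation yields (as you concede) no contradiction, and neither the monodromy-of-Jacobians idea nor the $\Ext^1$ computation is carried out. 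Under the paper's reading of the hypothesis this case does not arise at all; under your reading it is the main claim, and it remains unproved. Either way, the proposal as written does not establish the lemma.
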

\begin{proof}
By \cite[Lem.~2.2]{CMT1}, we know $F$ is scheme theoretically supported on $\mathrm{Tot}_E(L\oplus L^{-1})$ for a fiber $E$ of $p$. 
By \cite[Prop.~4.4]{HST}, $F$ is scheme theoretically supported on the its zero section, so we are done.
\end{proof}
By the above lemma, there exists a morphism 
\begin{align}\label{map from M to S1}M_{\beta}\to S^1_{\beta}, \end{align}
whose fiber over $\{E\}$ is the moduli space $M_{1,1}(E)$ of stable bundles on $E$ with rank $1$ and $\chi=1$.
%Note here we have a determinant map$$\det: M_{k,1}(E)\stackrel{\cong}{\to} E. $$
Note that $M_{1,1}(E)\cong E$. 
A family version of such isomorphism gives 
\begin{align}\label{equ on moduli from beta/k}M_{\beta}\cong \mathcal{C}^1_{\beta}, \end{align}
such that the virtual class satisfies
\begin{align}\label{vir class in ideal g=1}[M_\beta]^{\vir}=[\mathcal{C}^1_{\beta}],  \end{align}
for certain choice of orientation. 

\iffalse
By this lemma, for any $k\in \mathbb{Z}_{\geqslant  1}$ such that $k|\beta$, on ECF of class $\beta/k$, there exists a morphism 
\begin{align}\label{map from M to S1}M_{\beta}\to S^1_{\beta/k}, \end{align}
whose fiber over $\{E\}$ is the moduli space $M_{k,1}(E)$ of stable bundles on $E$ with rank $k$ and $\chi=1$.
Note here we have a determinant map
$$\det: M_{k,1}(E)\stackrel{\cong}{\to} E. $$
A family version of such isomorphisms gives 
\begin{align}\label{equ on moduli from beta/k}M_{\beta}\cong \mathcal{C}^1_{\beta/k}, \end{align}
such that the virtual class satisfies
\begin{align}\label{vir class in ideal g=1}[M_\beta]^{\vir}=[\mathcal{C}^1_{\beta/k}],  \end{align}
for certain choice of orientation. 
\fi
Next we compute the descendent insertion. In the following diagram:
% \oO_{\Delta_{S^1_{\beta}}}  \ar[d]_{}
\begin{align*} 
\xymatrix{   \mathbb{F} \ar[d]_{ }    & \mathbb{E} \ar[d]_{ }  &  \\
\mathcal{C}^1_{\beta}\times X & \mathcal{C}^1_{\beta}\times \mathcal{C}^1_{\beta} \ar[r]^{\bar{p}=(p,p)} \ar[l]_{\bar{j}=(\id,j)}  & S^1_{\beta}\times S^1_{\beta}, }\end{align*}
a universal one dimensional sheaf $\mathbb{F}$ can be chosen as 
$$\mathbb{F}=\bar{j}_*\mathbb{E}, \quad \mathbb{E}:=\oO_{\bar{p}^*(\Delta_{S^1_{\beta}})}(\Delta_{\mathcal{C}^1_{\beta}}), $$
where we treat $\Delta_{\mathcal{C}^1_{\beta}}$ as a divisor of $\bar{p}^*(\Delta_{S^1_{\beta}})$ via 
$$\Delta_{\mathcal{C}^1_{\beta}}=\big\{(x,x)\,|\,x\in \mathcal{C}^1_{\beta}\big\}\hookrightarrow \big\{(x,p^{-1}p(x))\,|\,x\in \mathcal{C}^1_{\beta}\big\}=\bar{p}^*(\Delta_{S^1_{\beta}}). $$ 
It is straightforward to check that $\mathbb{F}$ is normalized. 

Below, we use notations from the following diagram  
\begin{align*} 
\xymatrix{      & \mathcal{C}^1_{\beta}\times X \ar[d]_{\pi_X} \ar[dl]_{\pi_C}  & \mathcal{C}^1_{\beta}\times\mathcal{C}^1_{\beta}  \ar[d]_{\pi_2} \ar[dr]^{\pi_1}   
\ar[l]_{\bar{j}=(\id,j)} & \\
\mathcal{C}^1_{\beta}  & X     &  \mathcal{C}^1_{\beta} \ar[l]_{j} & \mathcal{C}^1_{\beta}. } \end{align*}
The GRR formula gives 
\begin{align}\label{equ on ch4}
\ch_4(\bar{j}_*\mathbb{E})=
\bar{j}_*\left(\frac{1}{2}\ch_1(\mathbb{E})\cdot\pi_2^*c_1(\mathcal{C}^1_{\beta})+\ch_2(\mathbb{E})\right).
\end{align}
Therefore, we have 
\begin{align}\label{equ on tau1}
\tau_1(\gamma)&=\pi_{C*}(\ch_4(\bar{j}_*\mathbb{E})\cdot \pi_X^*\gamma) \\ \nonumber
&=\pi_{C*}\bar{j}_*\left(\left(\frac{1}{2}\ch_1(\mathbb{E})\cdot\pi_2^*c_1(\mathcal{C}^1_{\beta})+\ch_2(\mathbb{E})\right)\cdot\bar{j}^*\pi_X^*\gamma \right) \\\nonumber
&=\frac{1}{2}\pi_{1*}\left(\ch_1(\mathbb{E})\cdot\pi_2^*c_1(\mathcal{C}^1_{\beta})\cdot \pi_{2}^*j^*\gamma \right)+\pi_{1*}\left(\ch_2(\mathbb{E})\cdot\pi_{2}^*j^*\gamma \right) \\
\nonumber
&=\pi_{1*}\left(\ch_2(\mathbb{E})\cdot\pi_{2}^*j^*\gamma \right), 
\end{align}
where the last equality is because $\dim_{\mathbb{C}}\mathcal{C}^1_{\beta}=2$ and $c_1(\mathcal{C}^1_{\beta})\cdot j^*\gamma\in H^6(\mathcal{C}^1_{\beta})=0$.

From the exact sequence in $\Coh(\mathcal{C}^1_{\beta}\times \mathcal{C}^1_{\beta})$: 
$$0\to \oO_{\bar{p}^*(\Delta_{S^1_{\beta}})}\to \oO_{\bar{p}^*(\Delta_{S^1_{\beta}})}(\Delta_{\mathcal{C}^1_{\beta}})\to \oO_{\Delta_{\mathcal{C}^1_{\beta}}}(\Delta_{\mathcal{C}^1_{\beta}})\to 0, $$
we obtain 
\begin{align}\label{equ on ch2E}
\ch_2(\mathbb{E})&=\ch_2\left(\oO_{\bar{p}^*(\Delta_{S^1_{\beta}})}\right)+\ch_2\left(\oO_{\Delta_{\mathcal{C}^1_{\beta}}}(\Delta_{\mathcal{C}^1_{\beta}})\right) \\ \nonumber
&=\bar{p}^*\ch_2(\oO_{\Delta_{S^1_{\beta}}})+[\Delta_{\mathcal{C}^1_{\beta}}] \\ \nonumber
&=-\frac{1}{2}\bar{p}^*(\Delta_{S^1_{\beta}})_*(c_1(S^1_{\beta}))+[\Delta_{\mathcal{C}^1_{\beta}}], 
\end{align}
where $\Delta_{S^1_{\beta}}: S^1_{\beta}\to S^1_{\beta}\times S^1_{\beta}$ denotes the diagonal embedding and 
we use GRR formula for the map $\Delta_{S^1_{\beta}}$ in the last equation.

Combining Eqns.~\eqref{equ on tau1}, \eqref{equ on ch2E}, we obtain 
\begin{align}\label{equ on result of tau1}
\tau_1(\gamma)=-\frac{1}{2}\pi_{1*}\left(\bar{p}^*(\Delta_{S^1_{\beta}})_*(c_1(S^1_{\beta}))\cdot\pi_{2}^*j^*\gamma\right)+\pi_{1*}\left([\Delta_{\mathcal{C}^1_{\beta}}]\cdot\pi_{2}^*j^*\gamma \right)=j^*\gamma,
\end{align}
where we note that $\bar{p}^*(\Delta_{S^1_{\beta}})_*(c_1(S^1_{\beta}))$ is some multiple of the fiber class of $\bar{p}$, 
so the first term in above vanishes. Therefore, ECF of class $\beta$ contributes to $\blangle\tau_1(\gamma) \brangle^{\DT_4}_{\beta}$ by 
$$\int_{[M_\beta]^{\vir}}\tau_1(\gamma)=\int_{\mathcal{C}^1_{\beta}}j^*\gamma, $$
which gives exactly the genus 1 GV invariant $n_{1,\beta}(\gamma)$ for primitive $\beta$ as  
they are (virtually) enumerating elliptic curves of class $\beta$ incident to the cycle dual to $\gamma$.
\begin{rmk}\label{rmk on higher k}
For a general curve class $\beta$ and any $k\geqslant   1$ such that $k|\beta$, one can similarly show that any elliptic curve family $\mathcal{C}^1_{\beta/k}$ of class $\beta/k$ contributes to $\blangle\tau_1(\gamma) \brangle^{\DT_4}_{\beta}$ by 
$$\int_{\mathcal{C}^1_{\beta/k}}j^*\gamma=n_{1,\beta/k}(\gamma). $$ 
 Therefore, all elliptic curve families contribute to $\blangle\tau_1(\gamma) \brangle^{\DT_4}_{\beta}$ by $\sum_{k|\beta}n_{1,\beta/k}(\gamma)$.
\end{rmk}

\iffalse
For general $k\geqslant   1$ such that $k|\beta$, the contribution of ECF of class $\beta/k$ to $\blangle\tau_1(\gamma) \brangle^{\DT_4}_{\beta}$ can be similarly obtained. Under the 
isomorphism \eqref{equ on moduli from beta/k}, the 
normalized universal sheaf has $K$-theory class (see also \cite[\S1.5,~pp.~8]{CT2}):
$$[\oO_{\bar{p}^*(\Delta_{S^1_{\beta/k}})}(\Delta_{\mathcal{C}^1_{\beta/k}})]+(k-1)[\oO_{\bar{p}^*(\Delta_{S^1_{\beta/k}})}]. $$
Going through \eqref{equ on ch4}, \eqref{equ on tau1}, \eqref{equ on ch2E}, \eqref{equ on result of tau1}, we obtain 
$$\int_{[M_\beta]^{\vir}}\tau_1(\gamma)=\int_{\mathcal{C}^1_{\beta/k}}j^*\gamma. $$
To sum up, all ECF contribute to $\blangle\tau_1(\gamma) \brangle^{\DT_4}_{\beta}$ by 
\begin{align}\label{cont to tau1 from ECF}\sum_{k|\beta}\int_{\mathcal{C}^1_{\beta/k}}j^*\gamma
=\sum_{k|\beta}n_{1,\beta/k}(\gamma),  \end{align}
where the above equality holds because both sides are (virtually) enumerating elliptic curves of class $\{\beta/k\}_{k\geqslant  1, k|\beta}$ incident to the cycle dual to $\gamma$.
Putting together \eqref{cont to tau1 from RCF}, \eqref{cont to tau1 from ECF} with certain choice of sign, we verify Conjecture \ref{conj on DT4/GV} \eqref{g=1 part} and 
\eqref{equ on impri DT4/GV} in this ideal setting. 
\fi

\section{The embedded rational curve family} \label{sec:embedded rational curve family}
As a first illustration of the general case,
we work out here all Gromov-Witten, Gopakumar-Vafa and Donaldson-Thomas invariants
for a family of smooth irreducible rational curves {\em globally embedding} in a holomorphic symplectic 4-fold.
We will see that the global embedding assumption forces already almost all of our invariants to vanish.

\subsection{Setting}\label{sect on sett}
Let $X$ be a holomorphic symplectic 4-fold with symplectic form $\sigma \in H^0(X,\Omega_X^2)$.
Consider a family $p : \CC \to S$ of embedded rational curves in the irreducible curve class $\beta \in H_2(X,\BZ)$ parametrized by a smooth surface $S$. 

We make the following assumptions:
\begin{itemize}
\item[(i)] All fibers of $p$ are non-singular (isomorphic to $\p^1$).
\item[(ii)] The evaluation map $j : \CC \to X$ is a (global) embedding.
\item[(iii)] %(For notational simplicity) 
All curves in class $d \beta$ for all $d \geqslant   1$ are unions of curves of the family $\CC \to S$.
\end{itemize}

Let $\sigma \in H^0(X,\Omega_X^2)$ be the holomorphic symplectic form.
Since the pullback $j^{\ast}(\sigma) \in H^0(\CC, \Omega_{\CC}^2)$ vanishes on $T_{p}$, there exists
a $2$-form $\alpha \in H^0(S, \Omega_S^2)$ such that 
\[ p^{\ast}(\alpha) = j^{\ast}(\sigma). \]
If $\alpha$ vanishes at a point $s \in S$, then for every point $x$ in the fiber $\CC_s := p^{-1}(s)$ 
the form $\sigma$ vanishes on the image of $T_{\CC,x} \to T_{X,j(x)}$.
Since $\sigma_{j(x)}$ is non-degenerate, it can only vanish on a subspace of at most half the dimension of $T_{X,j(x)}$,
so this is impossible. Hence $\alpha$ does not vanish.
We conclude that $S$ is a \textit{holomorphic symplectic} surface, hence either an abelian or a $K3$ surface.

Moreover, consider the sequence
\[ 0 \to T_{\CC} \to j^{\ast}(T_X) \to N_{\CC/X} \to 0. \]
The form $\sigma' = \sigma|_{\CC} \in H^0(\CC, j^{\ast} \Omega_X^2)$ is non-degenerate;
so the vanishing $\sigma'(T_{p}, T_{\CC}) = 0$ implies that we have an isomorphism
\[ \sigma' : T_{p} \xrightarrow{\cong} N_{\CC/X}^{\vee}. \]

\begin{exam}\label{eg on hilb2}
Let $S^{[2]}$ be the Hilbert scheme of two points on a holomorphic symplectic surface $S$.
The Hilbert-Chow map from $S^{[2]}$ to the second symmetric product of $S$:
$$\pi: S^{[2]}\to \Sym^2(S) $$
is a resolution of singularity \cite{F}, whose exceptional divisor $D$
fits into the Cartesian diagram
\begin{align*} \xymatrix{
D \ar[d]_{p}  \ar[r]^{j \quad \,\,\, }    & S^{[2]} \ar[d]^{\pi}   \\
S    \ar[r]^{\Delta \quad \,\,\, }   & \Sym^2(S),  } \quad \quad
\end{align*}
where $\Delta$ is the diagonal embedding and $p: D\to S$ is a $\mathbb{P}^1$-bundle.
The pair $(S^{[2]}, \beta := j_{\ast}[ D_s ] )$ satisfies the assumptions (i-iii) for the family $D \to S$.
\end{exam}

\subsection{Gromov-Witten invariants}  \label{subsubsec:Embedded rational curve family}
In the setting (i-iii), we have the following computation of Gromov-Witten invariants.
In genus $0$, one has the following description:
\begin{lemma} \label{lemma: genus 0 embedded rat} 
%Assume (i-iii) above. 
For any $\gamma_1, \ldots, \gamma_n \in H^{\ast}(X)$, we have
\[ \blangle \tau_0(\gamma_1) \cdots \tau_0(\gamma_n) \brangle^{\GW}_{0,d \beta} = d^{-3+n} \int_{S} \prod_{i=1}^{n} p_{\ast}( j^{\ast}(\gamma_i)). \]
\end{lemma}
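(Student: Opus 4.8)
The plan is to reduce the moduli space to a relative space over $S$, identify the reduced virtual class with the Euler class of an honest obstruction bundle on a smooth space, and then separate the fiber and base directions. First I would identify the moduli space: since the domain of a genus $0$ stable map is connected while the fibers of $p$ are pairwise disjoint copies of $\p^1$, assumption (iii) forces the image of every stable map in class $d\beta$ to lie in a single fiber $\CC_s$. Hence
\[ \Mbar_{0,n}(X,d\beta) \cong \Mbar_{0,n}(\CC/S,d), \]
the relative space of degree $d$ genus $0$ stable maps to the fibers of $p$. As $\CC \to S$ is \'etale-locally trivial, $M := \Mbar_{0,n}(\CC/S,d)$ is a smooth Deligne--Mumford stack of dimension $2d+n$, fibered over $S$ by $\epsilon : M \to S$ with fiber $\Mbar_{0,n}(\p^1,d)$.

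Next I would pin down the reduced virtual class. Restricting $T_X$ to a fiber and using $\sigma' : T_p \xrightarrow{\cong} N_{\CC/X}^{\vee}$ from the Setting, the bundle $T_X|_{\CC_s}$ has associated graded pieces $\oO(2)$, $\oO^{\oplus 2}$ and $\oO(-2)$, of which only the last has cohomology in degree $1$ along genus $0$ curves. Thus the obstruction sheaf is the bundle $\mathcal{B} := R^1\pi_\ast f^\ast N_{\CC/X}$ of rank $2d-1$ (with $\pi$ the universal curve and $f$ the universal map), and the cosection induced by $\sigma$ is the surjection $\mathcal{B} \to \oO_M$ obtained by pairing the $N_{\CC/X}$-direction against $T_p$; the $T_S$-direction contributes no $R^1$ and is thus invisible to the cosection. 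Since $M$ is smooth, the reduced virtual class is
\[ [M]^{\vir} = e\big(\mathcal{B}^{\red}\big) \cap [M], \qquad \mathcal{B}^{\red} := \ker\big(\mathcal{B} \to \oO_M\big), \]
a bundle of rank $2d-2$, so that $[M]^{\vir}$ has dimension $n+2$ as required.

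Then I would separate variables. Each evaluation factors as $\ev_i = j \circ \ev_i'$ with $\ev_i' : M \to \CC$ over $S$, so writing $H^\ast(\CC) = p^\ast H^\ast(S) \oplus \zeta\, p^\ast H^\ast(S)$ for a relative hyperplane class $\zeta$ with $p_\ast\zeta = 1$, I decompose $j^\ast\gamma_i = \epsilon^\ast a_i + z_i\,\epsilon^\ast b_i$ with $z_i := (\ev_i')^\ast\zeta$ and $b_i := p_\ast j^\ast\gamma_i$. Expanding $\prod_i \ev_i^\ast\gamma_i$ and pushing forward by $\epsilon$, a term $e(\mathcal{B}^{\red}) \cap [M] \cap \prod_{i\in T} z_i$ has dimension $n+2-|T|$, whose image in the surface $S$ vanishes unless $|T|=n$. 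Hence only the top term survives, and using $\zeta|_{\CC_s} = [\pt]$ together with the fact that $\mathcal{B}^{\red}$ restricts to the fiberwise reduced obstruction, one obtains
\[ \blangle \tau_0(\gamma_1)\cdots\tau_0(\gamma_n)\brangle^{\GW}_{0,d\beta} = c_d \cdot \int_S \prod_{i=1}^n p_\ast\big(j^\ast\gamma_i\big), \qquad c_d := \int_{\Mbar_{0,n}(\p^1,d)} e(\mathcal{B}^{\red})\prod_{i=1}^n \ev_i^\ast[\pt]. \]

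Finally I would evaluate $c_d$. Because $\mathcal{B}^{\red}$ is pulled back from the unmarked space $\Mbar_{0,0}(\p^1,d)$, the divisor equation applies to each insertion $\ev_i^\ast[\pt]$ (with $[\pt]\cdot d[\p^1]=d$), giving $c_d = d^{\,n}\int_{\Mbar_{0,0}(\p^1,d)} e(\mathcal{B}^{\red})$; the remaining unmarked integral is the reduced multiple-cover contribution, equal to $1/d^3$ by $\BC^\ast$-localization on $\p^1$ (equivalently, by comparison with the Aspinwall--Morrison evaluation for $\oO(-1)^{\oplus 2}$). Thus $c_d = d^{\,n-3}$, which proves the Lemma. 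I expect the main obstacle to be exactly this last step: one must verify that the cosection on $M$ is genuinely the fiberwise one, so that $\mathcal{B}^{\red}$ both restricts correctly to the fibers and is pulled back from the markless moduli space, and then carry out the reduced multiple-cover evaluation $\int_{\Mbar_{0,0}(\p^1,d)} e(\mathcal{B}^{\red}) = 1/d^3$.
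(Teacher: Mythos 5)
Your proof is correct and follows essentially the same strategy as the paper: reduce to the fiberwise geometry via assumption (iii), extract the factor $d^{-3}$ from the reduced multiple-cover evaluation on a single fiber (Aspinwall--Morrison), and obtain the factor $d^{n}$ together with $\int_S \prod_i p_{\ast}(j^{\ast}\gamma_i)$ from the insertions by a projection-formula computation over $S$. The only real difference is packaging: the paper never identifies $[M]^{\vir}$ as an Euler class, instead observing that $\ev_{\ast}[\Mbar_{0,n}(X,d\beta)]^{\vir}$ must be a rational multiple $a_d$ of $[\CC\times_S\cdots\times_S\CC]$ for dimension reasons and then pinning down $a_d$ on one fiber; this sidesteps the extra verifications your route requires (local freeness of $R^1\pi_{\ast}f^{\ast}N$ over the boundary, the cosection being the fiberwise one, and $\mathcal{B}^{\red}$ being pulled back from the markless space), all of which are standard but would need to be checked.
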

\begin{proof}
By condition (iii) the evaluation map factors as
\[
\ev: \Mbar_{0,n}(X,d\beta) \xrightarrow{\rho} \underbrace{\CC \times_S \cdots \times_S \CC}_{n \text{ times }} \xrightarrow{j \times \cdots \times j} X^n.
\]
Since $\Mbar_{0,n}(X,d\beta)$ is of virtual dimension $2+n = \dim( \CC \times_S \cdots \times_S \CC)$ we have
\[ 
\ev_{\ast}[ \Mbar_{0,n}(X,d\beta) ]^{\vir} = a_d [\CC \times_S \cdots \times_S \CC].
\]
By restriction to a fiber and using the Aspinwall-Morrison formula (see~e.g.~\cite[Prop.~7(i)]{O1} for our context), we have
\[ a_d = d^{-3+n}. \]
Consider the fiber diagram
\[
\begin{tikzcd}
\CC \ar[swap]{d}{p} & \CC^{\times_S n} \ar{l}{\pi_n} \ar{d}{\pi} \ar{r}{j^{\times n}} & X^n \\
S & \CC^{\times_S (n-1)} \ar{l}{p}
\end{tikzcd}
\]
where $\pi_n$ and $\pi$ are the projections to the $n$-th and the first $(n-1)$-factors respectively,
and $p$ is the structure morphism. We obtain:
\begin{align*}
\int_{ \CC^{\times_S n} } (j^{\times n})(\gamma_1 \otimes \cdots \otimes \gamma_n) 
& = \int_{ \CC^{\times_S n} }\pi^{\ast}\big( (j^{ \times (n-1)})^{\ast}(\gamma_1 \otimes \cdots \otimes \gamma_{n-1}) \big) \pi_n^{\ast} j^{\ast}(\gamma_n) \\
& = \int_{ \CC^{\times_S (n-1)} } \big( (j^{\times (n-1)} )^{\ast}(\gamma_1 \otimes \cdots \otimes \gamma_{n-1}) \big) p^{\ast}( p_{\ast}( j^{\ast}(\gamma_n) ) )  \\
& = \int_{S} \prod_{i=1}^{n} p_{\ast}( j^{\ast}(\gamma_i) ),
\end{align*}
where we used that
$\pi_{\ast} \pi_n^{\ast}(j^{\ast} \gamma_n) = p^{\ast} p_{\ast}(j^{\ast}\gamma_n)$
and then induction in the last step.
The claim follows by putting these two statements together.
\end{proof}
In genus $1$ and $2$, we have: 
\begin{lemma} \label{lemma:GW ERCF genus 12}
%Assuming (i-iii) above.
For any $\gamma \in H^4(X,\BZ)$ and $d \geqslant   1$, we have
\[
\blangle \tau_0(\gamma) \brangle^{\GW}_{1,d \beta} = \blangle \varnothing \brangle^{\GW}_{2,d \beta} = 0.
\]
\end{lemma}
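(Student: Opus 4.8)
The plan is to exploit that, by assumptions (ii) and (iii), every stable map in class $d\beta$ factors through the embedding $j\colon \CC\hookrightarrow X$, and since the fibres $j(\CC_s)$ are pairwise disjoint it has image in a single fibre $\CC_s\cong\p^1$. Hence recording this fibre gives a morphism $\varpi\colon \Mbar_{g,n}(X,d\beta)\to S$, every evaluation map factors as $\ev_i=j\circ q_i$ with $q_i\colon \Mbar_{g,n}(X,d\beta)\to\CC$, and there is an identification $\Mbar_{g,n}(X,d\beta)\cong\Mbar_{g,n}(\CC/S,d)$ with the moduli of genus $g$ stable maps to the fibres of $p$. The point of the whole argument is that all geometry is confined to the $3$-fold $\CC$.

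First I would compare the reduced obstruction theory on $\Mbar_{g,n}(X,d\beta)$ with the relative (over $S$) theory on $\Mbar_{g,n}(\CC/S,d)$, in the spirit of Proposition~\ref{prop:Comparision virtual class}. Filtering $j^{\ast}T_X$ by $T_{\CC/S}\subset T_{\CC}\subset j^{\ast}T_X$, whose graded pieces are $T_{\CC/S}$, the pullback $\varpi^{\ast}T_S$ (constant along the fibres of the universal curve $\pi$), and $N_{\CC/X}=\Omega_{\CC/S}$, the only obstruction directions beyond the fibrewise theory come from $R^1\pi_{\ast}$ of the last two summands. Since $R^1\pi_{\ast}\CO\cong\BE^{\vee}$, the projection formula turns the $\varpi^{\ast}T_S$ summand into an excess obstruction bundle $\varpi^{\ast}T_S\otimes\BE^{\vee}$, so that
\[ [\Mbar_{g,n}(X,d\beta)]^{\vir}=e(\varpi^{\ast}T_S\otimes\BE^{\vee})\,e(\wW)\cap[\Mbar_{g,n}(\CC/S,d)]^{\vir}, \]
where $\wW$ is the reduced contribution of $N_{\CC/X}$, whose precise form is irrelevant.

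For $g=1$ the bundle $\BE$ has rank $1$, and since $S$ is holomorphic symplectic we have $c_1(T_S)=0$, so $e(\varpi^{\ast}T_S\otimes\BE^{\vee})=\varpi^{\ast}c_2(T_S)+\lambda_1^2=\varpi^{\ast}c_2(T_S)$, because $\lambda_1$ is pulled back from $\Mbar_{1,1}$, on which $\lambda_1^2=0$ for dimension reasons. Then
\[ \blangle\tau_0(\gamma)\brangle^{\GW}_{1,d\beta}=\int_{[\Mbar_{1,1}(\CC/S,d)]^{\vir}}\varpi^{\ast}c_2(T_S)\cdot e(\wW)\cdot\ev_1^{\ast}\gamma, \]
and here $\varpi^{\ast}c_2(T_S)\cdot\ev_1^{\ast}\gamma=q_1^{\ast}\big(p^{\ast}c_2(T_S)\cup j^{\ast}\gamma\big)$ is the pullback of a class in $H^4(\CC)\cup H^4(\CC)\subset H^8(\CC)=0$, since $\dim_{\BC}\CC=3$. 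Hence the integrand vanishes. For $g=2$ I would compute the Euler class directly: writing $c(\BE)=1+\lambda_1+\lambda_2$ and using $c_1(T_S)=0$ gives $e(\varpi^{\ast}T_S\otimes\BE^{\vee})=\varpi^{\ast}c_2(T_S)\,(\lambda_1^2-2\lambda_2)$, after discarding $\varpi^{\ast}(c_2(T_S)^2)=0$ (as $c_2(T_S)^2\in H^8(S)=0$) and $\lambda_2^2=0$ (which holds on $\Mbar_2$ for dimension reasons). Now $\lambda_2^2=0$ together with the Mumford relation \eqref{eq:Mumford Relation} forces $\lambda_1^2=2\lambda_2$, so $e(\varpi^{\ast}T_S\otimes\BE^{\vee})=0$; thus $[\Mbar_{2,0}(X,d\beta)]^{\vir}=0$ and $\blangle\varnothing\brangle^{\GW}_{2,d\beta}=0$.

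The main obstacle is establishing the virtual class comparison of the second paragraph for arbitrary $d$. For $d\geqslant 2$ the domain of a positive genus map need not split off its positive genus part, so the clean product decomposition available in Proposition~\ref{prop:Comparision virtual class} (where $d=1$) is no longer at hand; one must instead carry out the comparison at the level of the relative moduli $\Mbar_{g,n}(\CC/S,d)$, identifying the reduced obstruction and the semiregularity reduction in families and verifying that $\varpi^{\ast}T_S\otimes\BE^{\vee}$ splits off as claimed. Once that factor is isolated, the remaining Chern class manipulations are routine, and the two vanishings follow as above.
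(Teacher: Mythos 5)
Your proposal is correct and follows essentially the same route as the paper: identify $\Mbar_{g,n}(X,d\beta)$ with $\Mbar_{g,n}(\CC/S,d)$, split off the excess obstruction bundle $\BE^{\vee}\otimes \varpi^{\ast}T_S$ together with a factor coming from $N_{\CC/X}$, and kill the integrand using $c_1(T_S)=0$, the vanishing $j^{\ast}(\gamma)\,p^{\ast}c_2(T_S)\in H^8(\CC)=0$ in genus $1$, and the Mumford relation $\lambda_1^2=2\lambda_2$, $\lambda_2^2=0$ in genus $2$. You in fact make explicit the genus-$2$ Euler class computation that the paper leaves as ``similar,'' and your caveat about the obstruction-theory comparison for $d\geqslant 2$ is a point the paper also passes over without detail.
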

\begin{proof}
Under our assumptions we have an isomorphism of moduli spaces
\[ \Mbar_{1,1}(X,\beta) \cong \Mbar_{1,1}(\CC, d F) \cong \Mbar_{1,1}(\CC/S, d), \]
where $\Mbar_{1,1}(\CC, d F)$ is the moduli space of stable maps to the (total space of) $\CC$ of degree $d$ times the fiber class $F$,
and $\Mbar_{1,1}(\CC/S, d)$ is the moduli space of stable maps into fibers of $\CC\to S$.
By comparing the perfect-obstruction theories of the first two moduli spaces one finds that:
\[
\blangle \tau_0(\gamma) \brangle^{\GW}_{1,d \beta}
=
\int_{ [ \Mbar_{1,1}(\CC, d F) ]^{\text{vir}} }  \ev_1^{\ast}(j^{\ast}(\gamma))\,e( \CV),
\]
where the fiber of the bundle $\CV$ at a point $[f : \Sigma \to \CC,p_1] \in \Mbar_{1,1}(\CC, dF)$
is the kernel of the semiregularity map
$H^1(\Sigma, f^{\ast}(N_{\CC/X})) \to H^1(\Sigma, \omega_{\Sigma}) = \BC$.

Similarly, the virtual classes of the latter two moduli spaces are related by
\[ [ \Mbar_{1,1}(\CC, dF) ]^{\text{vir}} = [ \Mbar_{1,1}(\CC/S, d) ]^{\text{vir}} \cdot e( \BE^{\vee} \otimes p^{\ast}(T_S)). \]
Since $S$ is symplectic, we have: $e( \BE^{\vee} \otimes p^{\ast}T_S) = c_2(T_S) - \lambda_1 c_1(T_S) = c_2(T_S)$. Hence
\begin{align*}
\blangle \tau_0(\gamma) \brangle^{\GW}_{1,d \beta} 
&=\int_{ [ \Mbar_{1,1}(\CC, d F) ]^{\text{vir}} } \ev_1^{\ast}(j^{\ast}(\gamma))\,p^{\ast}( c_2(T_S)) e( \CV ) \\
&= \int_{ [ \Mbar_{1,1}(\CC, d F) ]^{\text{vir}} } \ev_1^{\ast}(j^{\ast}(\gamma)\, p^{\ast} c_2(T_S)) e( \CV ) \\
&= 0,
\end{align*}
where in the last step we used that $j^{\ast}(\gamma)\, p^{\ast} c_2(T_S) = 0 \in H^{\ast}(\CC)$ for dimension reasons.

The case of genus $2$ is similar (using the Mumford relation \eqref{eq:Mumford Relation}).
%the class is of codimension $4$ (whereas $\dim \CC = 3$)
\end{proof}

We also have the following vanishing:
\begin{lemma} \label{lemma:vanishing c2}
For any $\gamma \in H^4(X,\BQ)$ and $d \geqslant  1$ we have:
\[
\blangle\tau_0(\gamma) \tau_0(c_2(X))  \brangle^{\GW}_{0,d \beta} = 0.
\]
\end{lemma}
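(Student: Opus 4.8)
The plan is to reduce the statement to the single cohomological identity $p_{\ast}(j^{\ast}c_2(X)) = 0$ in $H^2(S,\BQ)$, and then to prove this identity by a normal bundle computation combined with a Grothendieck--Riemann--Roch argument for the $\p^1$-bundle $p$.

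First I would invoke Lemma~\ref{lemma: genus 0 embedded rat} with $n = 2$, $\gamma_1 = \gamma$ and $\gamma_2 = c_2(X)$, which gives
\[ \blangle \tau_0(\gamma)\tau_0(c_2(X)) \brangle^{\GW}_{0,d\beta} = d^{-1}\int_S p_{\ast}(j^{\ast}\gamma)\cdot p_{\ast}(j^{\ast}c_2(X)). \]
Thus it suffices to show that $p_{\ast}(j^{\ast}c_2(X)) = 0$; note this is stronger than, and independent of, the factor $p_{\ast}(j^{\ast}\gamma)$, so the conclusion will then hold for every $\gamma$.

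Next I would compute $j^{\ast}c_2(X)$ from the normal bundle sequence $0 \to T_\CC \to j^{\ast}T_X \to N_{\CC/X} \to 0$. Since $S$ is holomorphic symplectic we have $c_1(T_S) = 0$, so $c(T_\CC) = (1 + c_1(T_p))\,p^{\ast}(1 + c_2(T_S))$, where $T_p$ is the (rank one) relative tangent bundle of $p$. The symplectic isomorphism $\sigma' : T_p \xrightarrow{\cong} N_{\CC/X}^{\vee}$ established in \S\ref{sect on sett} gives $c_1(N_{\CC/X}) = -c_1(T_p)$, hence $c(N_{\CC/X}) = 1 - c_1(T_p)$. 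Multiplying these and using that $\CC$ has complex dimension $3$ (so all contributions of degree $>6$ vanish) yields
\[ j^{\ast}c_2(X) = p^{\ast}c_2(T_S) - c_1(T_p)^2. \]
As a consistency check, the degree two part vanishes, in agreement with $j^{\ast}c_1(X) = 0$.

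Finally I would push this class forward. The first term gives $p_{\ast}(p^{\ast}c_2(T_S)) = c_2(T_S)\,p_{\ast}(1) = 0$ by the projection formula, since $p_{\ast}(1) \in H^{-2}(S) = 0$. The key input is the vanishing $p_{\ast}(c_1(T_p)^2) = 0$, which I would obtain from Grothendieck--Riemann--Roch applied to $p$ with $\mathcal{F} = \CO_\CC$: because the fibers are $\p^1$ one has $Rp_{\ast}\CO_\CC = \CO_S$, whence $1 = \ch(Rp_{\ast}\CO_\CC) = p_{\ast}(\td(T_p))$. Comparing the $H^2(S)$ components of both sides, and using $\td(T_p) = 1 + \tfrac12 c_1(T_p) + \tfrac{1}{12}c_1(T_p)^2 + \cdots$, forces $\tfrac{1}{12}p_{\ast}(c_1(T_p)^2) = 0$. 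Combining the two vanishings gives $p_{\ast}(j^{\ast}c_2(X)) = 0$, which finishes the argument. The only genuinely non-formal step is this last push-forward vanishing; one could instead verify it by writing $\CC = \p(\mathcal{E})$ and using the Grothendieck relation, but the GRR computation has the advantage of being convention-free and is what I expect to be the crux of the proof.
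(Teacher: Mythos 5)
Your proof is correct, but it reaches the key vanishing by a genuinely different route than the paper. Both arguments start from Lemma~\ref{lemma: genus 0 embedded rat} (reducing to $\int_S p_{\ast}(j^{\ast}\gamma)\cdot p_{\ast}(j^{\ast}c_2(X))$) and both use the identity $j^{\ast}c_2(X)=p^{\ast}c_2(S)-c_1(T_p)^2$ coming from the two normal bundle sequences and the symplectic isomorphism $T_p\cong N_{\CC/X}^{\vee}$. The difference is in how the term $c_1(T_p)^2$ is killed. The paper stays inside Gromov--Witten theory: it evaluates $\blangle\tau_1(\gamma)\brangle^{\GW}_{0,\beta}$ in two ways --- once via $\psi_1=c_1(\omega_p)$ (Lemma~\ref{lemma:psi1 in terms of Tp}) and once via the divisor relation of Lemma~\ref{div equ on GW} applied to $D=[\CC]$ with $[\CC]\cdot\beta=-2$ --- and by comparing deduces only the \emph{pairing} $\int_S p_{\ast}(j^{\ast}\gamma)\cdot p_{\ast}(c_1(T_p)^2)=0$ for all $\gamma\in H^4(X)$. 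You instead prove the stronger, purely cohomological statement $p_{\ast}(c_1(T_p)^2)=0$ in $H^2(S,\BQ)$ by applying GRR to $p$ with $Rp_{\ast}\CO_{\CC}=\CO_S$ and reading off the $H^2(S)$-component of $1=p_{\ast}(\td(T_p))$; this is valid (it needs only that $p$ is a smooth proper family of rational curves, not local triviality), and it makes the argument independent of the descendent relations Lemma~\ref{div equ on GW} and Lemma~\ref{lemma:psi1 in terms of Tp}. What the paper's route buys is the intermediate identity $\blangle\tau_0(\gamma)\tau_0([\CC]^2)\brangle^{\GW}_{0,\beta}=0$ (Eqn.~\eqref{van11}), which it reuses verbatim in the proof of Lemma~\ref{lemma:descendent GW on ERCF}; what your route buys is a class-level vanishing $p_{\ast}(j^{\ast}c_2(X))=0$ and a shorter, more self-contained computation. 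Your degree bookkeeping ($p_{\ast}(\tfrac12 c_1(T_p))=1$ matching the $H^0$ part, and $\tfrac{1}{12}p_{\ast}(c_1(T_p)^2)=0$ from the $H^2$ part) is accurate, and the passage to arbitrary $d\geqslant 1$ via the factor $d^{-1}$ is handled the same way in both arguments.
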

\begin{proof}
Consider the invariant $\blangle \tau_1(\gamma) \brangle^{\GW}_{0,\beta}$.
By Lemma~\ref{lemma:psi1 in terms of Tp}, we have
\begin{equation} \label{YYY1}
\blangle \tau_1(\gamma) \brangle^{\GW}_{0,\beta} = \int_{\CC}j^{\ast}(\gamma)\,c_1(\omega_p).
\end{equation}
Applying Lemma~\ref{div equ on GW} to the divisor $[ \CC ] \in H^2(X,\BZ)$
which satisfies $[\CC] \cdot \beta = -2$, we also have:
%Then $D \cdot \beta = D \cdot j_{\ast}(F) = -2$ since $N_{\CC/X} = T_{p}^{\ast}$.
%Assuming that there are no other contributions (except from the family $p: \CC\to S$), 
\begin{equation}  \label{YYY2}
\blangle \tau_1(\gamma) \brangle^{\GW}_{0, \beta} 
= \frac{1}{4} \blangle \tau_0(\gamma)\tau_0([\CC]^2) \brangle^{\GW}_{0 , \beta} + \blangle \tau_0(\gamma \cdot [ \CC ]) \brangle^{\GW}_{0, \beta}. \end{equation} 
%By Lemma~\ref{lemma: genus 0 embedded rat} and 
Since $N_{\CC/X} \cong T_{p}^{\vee} = \omega_p$, we have
\[
\blangle \tau_0(\gamma \cdot [ \CC ]) \brangle^{\GW}_{0, \beta}
=
\int_{\CC} j^{\ast}(\gamma \cdot [ \CC ]) 
%= \int_{\CC} j^{\ast}(\gamma) c_1(N_{\CC/X})
= \int_{\CC} j^{\ast}(\gamma)\,c_1(\omega_p).
\]
Comparing Eqn.~\eqref{YYY1} and Eqn.~\eqref{YYY2}, we conclude with the help of Lemma~\ref{lemma: genus 0 embedded rat} that:
\begin{equation} \label{van11}
0=\blangle \tau_0(\gamma)\tau_0([\CC]^2) \brangle^{\GW}_{0 , \beta} = \int_{S} p_{\ast}( j^{\ast}(\gamma)) \cdot p_{\ast}( c_1(T_p)^2 ).
\end{equation}
The pair of short exact sequences
\begin{gather*}
0 \to T_C \to j^{\ast} T_X \to T_p^{\vee} \to 0, \\
0 \to T_p \to T_C \to p^{\ast}(T_S) \to 0
\end{gather*}
shows that
\[ j^{\ast}(c(X)) = 1 + p^{\ast}(c_2(S)) - c_1(T_p)^2 \]
and hence
\begin{equation} j^{\ast}(c_2(X)) = p^{\ast}(c_2(S)) - c_1(T_p)^2. \label{pullback c2} \end{equation}
Inserting into Eqn.~\eqref{van11}, we find
\[
\int_{S} p_{\ast}( j^{\ast}(\gamma)) \cdot p_{\ast}( j^{\ast}(c_2(X)) ) = 0.
\]
By Lemma~\ref{lemma: genus 0 embedded rat} this implies the claim (for all $d \geqslant 1$).
\end{proof}

We will also require the following evaluation. 
\begin{lemma} \label{lemma:descendent GW on ERCF}
For any $\gamma \in H^4(X,\BQ)$, we have
\begin{equation}
\label{equ on des g0 gw ideal}\blangle \tau_1(\gamma) \brangle^{\GW}_{0, d \beta} 
%= \frac{1}{d^3} \int_{\CC} j^{\ast}(\gamma)\,c_1(N_{\CC/X})
= \frac{1}{d^3} \int_{\CC} j^{\ast}(\gamma)\,c_1(\omega_p). 
\end{equation}
\end{lemma}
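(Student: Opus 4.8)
The plan is to reduce the descendent invariant to primary genus $0$ invariants via the divisor equation, and then to read off the degree dependence from Lemma~\ref{lemma: genus 0 embedded rat}. First I would fix an ample divisor $D$ on $X$ and set $c := D \cdot \beta > 0$, so that $D \cdot (d\beta) = dc \neq 0$. Repeating verbatim the argument of Lemma~\ref{div equ on GW} --- which uses only the divisor equation together with the splitting of the reduced virtual class, both valid for any effective class --- but now applied to the class $d\beta$, I obtain
\[
\blangle \tau_1(\gamma) \brangle^{\GW}_{0,d\beta}
= \frac{1}{(dc)^2} \blangle \tau_0(\gamma) \tau_0(D^2) \brangle^{\GW}_{0,d\beta}
- \frac{2}{dc} \blangle \tau_0(\gamma \cdot D) \brangle^{\GW}_{0,d\beta}.
\]

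Next I would evaluate the two primary invariants using Lemma~\ref{lemma: genus 0 embedded rat}. With $n=2$ insertions the first carries a factor $d^{-3+2} = d^{-1}$, while with $n=1$ the second carries $d^{-3+1} = d^{-2}$; explicitly,
\begin{align*}
\blangle \tau_0(\gamma) \tau_0(D^2) \brangle^{\GW}_{0,d\beta}
&= \frac{1}{d} \int_S p_{\ast}(j^{\ast}\gamma)\, p_{\ast}(j^{\ast}D^2), \\
\blangle \tau_0(\gamma \cdot D) \brangle^{\GW}_{0,d\beta}
&= \frac{1}{d^2} \int_S p_{\ast}(j^{\ast}(\gamma \cdot D)).
\end{align*}
Substituting these into the displayed relation, every term acquires a common factor $d^{-3}$, and what remains in brackets is precisely the value of the same combination at degree $1$:
\[
\blangle \tau_1(\gamma) \brangle^{\GW}_{0,d\beta}
= \frac{1}{d^3}\left( \frac{1}{c^2}\int_S p_{\ast}(j^{\ast}\gamma)\, p_{\ast}(j^{\ast}D^2) - \frac{2}{c} \int_S p_{\ast}(j^{\ast}(\gamma \cdot D)) \right)
= \frac{1}{d^3}\, \blangle \tau_1(\gamma) \brangle^{\GW}_{0,\beta},
\]
where the last equality holds because, evaluating Lemma~\ref{lemma: genus 0 embedded rat} at degree $1$, the bracket is exactly the right-hand side of Lemma~\ref{div equ on GW} for the class $\beta$.

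Finally I would rewrite the degree-$1$ descendent invariant geometrically. In the present setting $\Mbar_{0,0}(X,\beta) \cong S$ with universal curve $\CC$ (all fibers $\p^1$) and $[\Mbar_{0,0}(X,\beta)]^{\vir} = [S]$, so Lemma~\ref{lemma:psi1 in terms of Tp} applies and gives $\blangle \tau_1(\gamma)\brangle^{\GW}_{0,\beta} = \int_{\CC} j^{\ast}(\gamma)\, c_1(\omega_p)$. Combining this with the previous display yields the claim. The argument is essentially bookkeeping of the degree factors $d^{-3+n}$; the only points needing a word of justification are the existence of a divisor pairing nontrivially with $\beta$ (take $D$ ample) and the fact that the divisor-equation reduction of Lemma~\ref{div equ on GW} carries over unchanged to the imprimitive class $d\beta$. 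I do not expect any genuine obstacle here.
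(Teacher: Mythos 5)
Your proposal is correct, and it shares the same skeleton as the paper's proof — both reduce $\blangle \tau_1(\gamma) \brangle^{\GW}_{0,d\beta}$ to primary invariants via Lemma~\ref{div equ on GW} and then feed in Lemma~\ref{lemma: genus 0 embedded rat} — but the two arguments diverge in the choice of divisor and in the endgame. The paper takes $D = [\CC]$, the class of the ruled divisor itself, which satisfies $[\CC]\cdot\beta = -2$; with this choice the quadratic term $\blangle \tau_0(\gamma)\tau_0([\CC]^2)\brangle^{\GW}_{0,d\beta}$ vanishes outright by Eqn.~\eqref{van11}, and the remaining term $\blangle \tau_0(\gamma\cdot[\CC])\brangle^{\GW}_{0,d\beta}$ is evaluated directly using the symplectic identification $N_{\CC/X}\cong\omega_p$, so no base case is needed. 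You instead take a generic ample $D$, observe that the factors $d^{-3+n}$ from Lemma~\ref{lemma: genus 0 embedded rat} conspire so that the entire divisor-equation expression scales as $d^{-3}$, and then settle the $d=1$ case by Lemma~\ref{lemma:psi1 in terms of Tp} (which applies since $\Mbar_{0,0}(X,\beta)\cong S$ with $[\Mbar_{0,0}(X,\beta)]^{\vir}=[S]$ under assumptions (i)--(iii)). Your route has the small advantage of not requiring the vanishing of Lemma~\ref{lemma:vanishing c2} nor the special pairing $[\CC]\cdot\beta=-2$, at the cost of an extra reduction step; the paper's route is shorter once those facts are in hand. Both are complete, and your bookkeeping of the degree factors and signs is accurate.
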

\begin{proof}
By Lemma~\ref{div equ on GW} 
applied to $D = [ \CC ] \in H^2(X,\BZ)$ (which satisfies $D \cdot \beta = -2$) we have:
%Then $D \cdot \beta = D \cdot j_{\ast}(F) = -2$ since $N_{\CC/X} = T_{p}^{\ast}$.
%Assuming that there are no other contributions (except from the family $p: \CC\to S$), 
\begin{equation*} 
\blangle \tau_1(\gamma) \brangle^{\GW}_{0, d \beta} 
= \frac{1}{4 d^2} \blangle \tau_0(\gamma)\tau_0([\CC]^2) \brangle^{\GW}_{0 , d \beta} + \frac{1}{d} \blangle \tau_0(\gamma \cdot [ \CC ]) \brangle^{\GW}_{0, d \beta}. \end{equation*} 
By Eqn.~\eqref{van11} and Lemma~\ref{lemma: genus 0 embedded rat} the first term vanishes.
And by Lemma~\ref{lemma: genus 0 embedded rat} again we get:
% and since $N_{\CC/X} \cong T_{p}^{\ast} = \omega_p$, we get
\[
\blangle \tau_0(\gamma \cdot [ \CC ]) \brangle^{\GW}_{0, d \beta}
=
\frac{1}{d^2} \int_{\CC} j^{\ast}(\gamma \cdot [ \CC ]) 
%= \int_{\CC} j^{\ast}(\gamma) c_1(N_{\CC/X})
= \frac{1}{d^2} \int_{\CC} j^{\ast}(\gamma) c_1(\omega_p). \qedhere
\]
\end{proof}

\subsection{Gopakumar-Vafa invariants}
We compute all $g\geqslant1$ Gopakumar-Vafa invariants in the setting (i-iii). % we have the following computation of Gromov-Witten invariants.

%We make the following additional assumption:
%\begin{itemize}
%\item[(iv)] $\CC = \p(V)$ for a rank $2$ vector bundle $V \to S$ which is self-dual, i.e. $V \cong V^{\vee}$.
%\end{itemize}
%
%\begin{lemma} \label{lemma"vanishing c2}
%Assume conditions (i-iv). 
%We have $p_{\ast} j^{\ast} c_2(T_X) = 0 \in H^{\ast}(S)$.
%\end{lemma}
%\begin{proof}
%%After a base change by a finite morphism $S' \to S$ we may assume that the $\p^1$-bundle $\CC \to S$ has a section, so $\CC = \p(V)$ for a rank $2$ vector bundle $V$ on $S$.
%The pair of short exact sequences
%\begin{gather*}
%0 \to T_C \to j^{\ast} T_X \to T_p^{\vee} \to 0, \\
%0 \to T_p \to T_C \to p^{\ast}(T_S) \to 0
%\end{gather*}
%shows that
%\[ j^{\ast}(T_X) = 1 + p^{\ast}(c_2(S)) - c_1(T_p)^2. \]
%With $\xi = c_1(\CO_{\p(V)}(1))$, we have
%\[ c_1(T_p) = 2 \xi + p^{\ast} c_1(V), \]
%so that (using $\xi^2 + \xi p^{\ast}c_1(V) + p^{\ast} c_2(V)=0$) we find
%\begin{equation} p_{\ast}( c_1(T_p)^2 ) = p_{\ast}( 4 \xi^2 + 2 \xi p^{\ast} c_1(V) - p^{\ast}(c_1(V)^2) ) = -2 c_1(V) = 0. \label{pushforward vanishing} \end{equation}
%where the vanishing comes from the self-duality.
%\end{proof}

\begin{lemma} 
%Assume conditions (i-iv).
For any $\gamma \in H^4(X,\BZ)$, we have
\[ n_{1,\beta}(\gamma) = n_{2,\beta} = 0. \]
\end{lemma}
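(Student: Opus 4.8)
The plan is to reduce the statement to vanishing results already established for this family, together with the vanishing of the nodal count $N_{\mathrm{nodal},\beta}$. (Note that $\beta$ is irreducible, hence primitive, so both invariants are defined.) By Definition~\ref{defn g1 GV primitive},
\[ n_{1,\beta}(\gamma) = \blangle \tau_0(\gamma) \brangle^{\GW}_{1,\beta} + \frac{1}{24} \blangle \tau_0(\gamma)\tau_0(c_2(X)) \brangle^{\GW}_{0,\beta}, \]
and both terms vanish: the first by Lemma~\ref{lemma:GW ERCF genus 12} and the second by Lemma~\ref{lemma:vanishing c2} (with $d=1$). This gives $n_{1,\beta}(\gamma)=0$ at once. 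For $n_{2,\beta}$, Definition~\ref{defn g2 GV primitive} expresses it as a combination of $\blangle \varnothing \brangle^{\GW}_{2,\beta}$, $\blangle \tau_0(c_2(X)) \brangle^{\GW}_{1,\beta}$, $\blangle \tau_0(c_2(X))\tau_0(c_2(X)) \brangle^{\GW}_{0,\beta}$ and $N_{\mathrm{nodal},\beta}$; the first three vanish by Lemma~\ref{lemma:GW ERCF genus 12} (the genus $1$ one applied with $\gamma = c_2(X)$) and Lemma~\ref{lemma:vanishing c2}. Hence it remains only to prove $N_{\mathrm{nodal},\beta}=0$.

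To compute $N_{\mathrm{nodal},\beta}$ from \eqref{Nnodal} I would first identify the two moduli spaces. Under (i)--(iii), the evaluation map factors as $\Mbar_{0,2}(X,\beta) \xrightarrow{\rho} \CC \times_S \CC \xrightarrow{j\times j} X\times X$, and the argument of Lemma~\ref{lemma: genus 0 embedded rat} (with $d=1$, $n=2$) gives $\rho_{\ast}[\Mbar_{0,2}(X,\beta)]^{\vir} = [\CC \times_S \CC]$; similarly $[\Mbar_{0,1}(X,\beta)]^{\vir} = [\CC]$ with $\ev_1 = j$ and, by Lemma~\ref{lemma:psi1 in terms of Tp}, $\psi_1 = c_1(\omega_p)$. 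The first term of \eqref{Nnodal} then becomes $\int_{\CC\times_S \CC}(j\times j)^{\ast}(\Delta_X)$. Since $j$ is an embedding, the preimage of $\Delta_X$ in $\CC\times_S\CC$ is exactly the diagonal $\Delta_{\CC}\cong \CC$, a clean intersection whose excess bundle is $N_{\Delta_X/X\times X}|_{\Delta_\CC}/N_{\Delta_\CC/\CC\times_S\CC} \cong j^{\ast}T_X/T_p$; using the extension $0\to p^{\ast}T_S\to j^{\ast}T_X/T_p\to N_{\CC/X}\to 0$, together with $c_1(T_S)=0$ ($S$ symplectic) and $N_{\CC/X}\cong \omega_p$, the excess intersection formula yields
\[ \int_{\CC\times_S\CC}(j\times j)^{\ast}(\Delta_X) = \int_{\CC} c_{3}(j^{\ast}T_X/T_p) = \int_{\CC} p^{\ast}c_2(T_S)\,c_1(\omega_p). \]

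For the second term of \eqref{Nnodal} I would expand $1/(1-\psi_1)$ and keep only the top ($H^6(\CC)$) part of $j^{\ast}c(T_X)/(1-c_1(\omega_p))$; since $j^{\ast}c_4(X)=0$ on the $3$-fold $\CC$, this is $\int_{\CC}\big(c_1(\omega_p)^3 + j^{\ast}c_2(X)\,c_1(\omega_p)\big)$. Substituting $j^{\ast}c_2(X) = p^{\ast}c_2(T_S) - c_1(\omega_p)^2$ from \eqref{pullback c2} (using $c_1(T_p)^2=c_1(\omega_p)^2$) collapses this to $\int_{\CC}p^{\ast}c_2(T_S)\,c_1(\omega_p)$, exactly the value of the first term. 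Hence the two terms of \eqref{Nnodal} cancel and $N_{\mathrm{nodal},\beta}=0$, completing the proof. The only genuinely delicate point is the excess-intersection bookkeeping for the diagonal term — identifying the excess bundle and its Euler class correctly; once both terms are rewritten as $\int_{\CC}p^{\ast}c_2(T_S)\,c_1(\omega_p) = -2\chi(S)$ (via $p_{\ast}c_1(\omega_p)=-2$), the cancellation, and hence the vanishing of $n_{1,\beta}(\gamma)$ and $n_{2,\beta}$, is immediate.
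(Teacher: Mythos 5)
Your proposal is correct and follows essentially the same route as the paper: reduce both vanishings to Lemmata~\ref{lemma:GW ERCF genus 12} and \ref{lemma:vanishing c2}, then show $N_{\mathrm{nodal},\beta}=0$ by an excess-intersection computation of the diagonal term against the descendent term, both of which collapse to $\int_{\CC} p^{\ast}c_2(T_S)\,c_1(\omega_p) = -2e(S)$ and cancel. Your identification of the excess bundle as $j^{\ast}T_X/T_p$ with the extension by $p^{\ast}T_S$ and $N_{\CC/X}\cong\omega_p$ is exactly the bundle the paper uses, just spelled out in more detail.
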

\begin{proof}
By Lemmata~\ref{lemma:GW ERCF genus 12} and \ref{lemma:vanishing c2}
%, Lemma~\ref{lemma: genus 0 embedded rat} 
and the definition of Gopakumar-Vafa invariants it suffices to show that $N_{\mathrm{nodal},\beta}$ vanishes.
Since $\Mbar_{0,2}(X,\beta) = \CC \times_S \CC$ we have
\begin{align*}
N_{\mathrm{nodal},\beta}
& =
\frac{1}{2}\left[
\int_{ \CC \times_S \CC} (j \times j)^{\ast}(\Delta_X) - \int_{[ \Mbar_{0,1}(X,\beta) ]^{\vir}} \psi_1^3 + \ev_1^{\ast}(c_2(X)) \psi_1
\right]. \quad \quad
\end{align*}
To evaluate the first term we use that the preimage of the diagonal under $j \times j : \CC \times_S \CC \to X \times X$
is equal to $\CC$ and that the refined intersection has an excess bundle which is an extension of $T_S$ and $T_p^{\vee}$.
For the second term we use Eqn.~\eqref{pullback c2} and that by Lemma~\ref{lemma:psi1 in terms of Tp}
we have $\psi_1 = -c_1(T_p)$ under the isomorphism $\Mbar_{0,1}(X,\beta) \cong \CC$.
With this the above becomes:
\begin{align*}
%N_{\mathrm{nodal},\beta}
\quad \quad \quad \quad \quad \ \ 
& =
\frac{1}{2} \left[ \int_{\CC} e(T_S) c_1(T_p^{\vee}) - \int_{\CC} (-c_1(T_p))^3 + (p^{\ast}(c_2(S)) - c_1(T_p)^2) (-c_1(T_p)) \right] \\
& =
\frac{1}{2} \left[ -2 e(S) + 2 e(S) \right]= 0,
\end{align*}
where we used $\psi_1 = -c_1(T_p)$ under the isomorphism $\Mbar_{0,1}(X,\beta) \cong \CC$ by Lemma~\ref{lemma:psi1 in terms of Tp}.
\end{proof}

\subsection{$\DT_4$ invariants} \label{subsec:Embedded rational family DT}

\begin{lemma} \label{lemma:DT4 ERCF}
In the setting (i-iii), for certain choice of orientation, we have
\begin{align*}
\blangle\tau_0(\gamma_1),\cdots,\tau_0(\gamma_n) \brangle^{\DT_4}_{\beta}&=\int_{S}\,\prod_{i=1}^n(p_*j^*\gamma_i), \\
\blangle\tau_1(\gamma)\brangle^{\DT_4}_{\beta}&=-\frac{1}{2}\int_{\CC} j^*(\gamma)\, c_1(\omega_p), \\ 
 \blangle\tau_2(\theta)\brangle^{\DT_4}_{\beta}&=\frac{1}{12}\int_{\CC}j^*(\theta)\, c_1(\omega_p)^2-\frac{1}{12}\int_{\CC}j^*(c_2(X)\cdot\theta),  \\
 \blangle\tau_3(1)\brangle^{\DT_4}_{\beta}&=\frac{1}{24}\int_{\CC} j^*(c_2(X))\, c_1(\omega_p).
\end{align*}
Moreover, all $\DT_4$ invariants vanish in curve class $d \beta$ for $d>1$.
\end{lemma}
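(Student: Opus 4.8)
The plan is to identify the moduli space and its virtual class completely, then read off all four descendent formulas from a single Grothendieck--Riemann--Roch computation, and finally treat the vanishing for $d>1$ separately; the last point is the genuinely hard part.

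\emph{Identification of $M_\beta$ and its virtual class.} First I would observe that a stable sheaf $F$ with $[F]=\beta$, $\chi(F)=1$ has connected one-dimensional support, since a sheaf on a disconnected support splits as a direct sum and cannot be stable. By assumption (iii) this support is a union of fibres of $p$, and because distinct fibres are disjoint in $X$ (as $j$ is an embedding), it must be a single fibre $C=\CC_s\cong\p^1$. As $[F]=\beta=[C]$ has multiplicity one, $F$ is a rank-one torsion-free sheaf on $C$ with $\chi=1$, i.e. $F\cong\oO_C$. Thus $M_\beta\xrightarrow{\sim}S$, $F\mapsto s$, and the universal sheaf on $M_\beta\times X=S\times X$ is $\mathbb{F}=\iota_*\oO_\CC$ where $\iota=(p,j)\colon\CC\hookrightarrow S\times X$. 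Since $Rp_*\oO_\CC=\oO_S$, this $\mathbb{F}$ is already normalized, so $\mathbb{F}_{\mathrm{norm}}=\mathbb{F}$. As $S$ is smooth of dimension $2$, equal to the reduced virtual dimension, I would invoke Eqn.~\eqref{vir class in ideal g=0} to conclude $[M_\beta]^{\vir}=[S]$ for a suitable orientation.

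\emph{The four descendent formulas.} Next I would compute $\ch(\mathbb{F})=\iota_*\big(\td(N)^{-1}\big)$ by Grothendieck--Riemann--Roch, where $N=N_{\CC/S\times X}$ is a rank-$3$ bundle. A short Chern class computation using $c_1(T_S)=c_1(T_X)=0$ and Eqn.~\eqref{pullback c2} gives $c_1(N)=c_1(\omega_p)$ and $c_2(N)=p^*c_2(S)$. Expanding the inverse Todd class then yields $\ch_3(\mathbb{F})=[\CC]$, $\ch_4(\mathbb{F})=\iota_*(-\tfrac12 c_1(\omega_p))$, $\ch_5(\mathbb{F})=\iota_*(\tfrac16 c_1(\omega_p)^2-\tfrac1{12}p^*c_2(S))$, and $\ch_6(\mathbb{F})=\iota_*\big(\tfrac1{24}c_1(\omega_p)(p^*c_2(S)-c_1(\omega_p)^2)\big)$. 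Substituting into $\tau_k(\bullet)=\pi_{S*}(\pi_X^*(\bullet)\cup\ch_{3+k}(\mathbb{F}))$ and using $\pi_{S*}\iota_*=p_*$ together with the projection formula $\pi_X^*\gamma\cup\iota_*(a)=\iota_*(j^*\gamma\cup a)$, each invariant becomes an integral over $\CC$; a final rewriting $p^*c_2(S)-c_1(\omega_p)^2=j^*c_2(X)$ via Eqn.~\eqref{pullback c2} puts the four expressions into exactly the stated form. This step is entirely mechanical once the two Chern classes of $N$ are in hand.

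\emph{Vanishing for $d>1$.} For the last assertion I would reduce to showing $[M_{d\beta}]^{\vir}=0$. The same support argument applies: a stable $F$ of class $d\beta$ has connected support, hence is set-theoretically supported on a single fibre $C_s$, giving a support morphism $\pi\colon M_{d\beta}\to S$. Since $N_{\CC/X}\cong\omega_p$ restricts to $\oO_{\p^1}(-2)$ on each fibre, an argument in the spirit of \cite[Prop.~4.4]{HST} (exactly as used for elliptic fibres in \S\ref{sect on ideal DT4 comp}) should force $F$ to be scheme-theoretically supported in the horizontal directions $N_{C/\CC}=\oO^{\oplus 2}$, so that $M_{d\beta}$ is controlled, along $\pi$, by the symplectic surface $S$. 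The nowhere-degenerate form $\alpha\in H^0(S,\Omega^2_S)$ (which exists because $S$ is $K3$ or abelian) should then induce an everywhere-surjective cosection of the \emph{reduced} obstruction sheaf of $M_{d\beta}$ on the horizontal part of the deformation--obstruction complex; Kiem--Li cosection localization applied to the reduced theory would give $[M_{d\beta}]^{\vir}=0$ and hence the vanishing of all descendent integrals. The hard part will be constructing this second cosection rigorously and explaining why it is surjective precisely when $d>1$: for $d=1$ the horizontal deformations already exhaust $T_S$ with no residual obstruction, so no such cosection exists and indeed $[M_\beta]^{\vir}=[S]\neq 0$, whereas for $d>1$ the extra multiplicity produces the residual obstruction that $\alpha$ pairs against. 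Carefully matching the Kiem--Park reduced obstruction theory with the horizontal/vertical splitting over $S$, and verifying surjectivity of the $\alpha$-induced cosection, is the crux of the whole statement.
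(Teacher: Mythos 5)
Your identification of $M_\beta\cong S$ with normalized universal sheaf $\iota_*\oO_{\CC}$, and the Grothendieck--Riemann--Roch expansion of $\td^{-1}(N_{\CC/S\times X})$ (your $c_1(N)=c_1(\omega_p)$, $c_2(N)=p^*c_2(S)$ are correct and your $\ch_{3+k}$ agree with the paper's after substituting $j^*c_2(X)=p^*c_2(S)-c_1(\omega_p)^2$), reproduce the paper's proof of the four descendent formulas essentially verbatim. That part is fine.

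The gap is in the vanishing for $d>1$. You correctly begin by reducing to a single fibre $C_s\cong\p^1$, but then you launch into a cosection-localization strategy --- splitting the obstruction theory into horizontal and vertical parts over $S$, constructing a second cosection from $\alpha\in H^0(S,\Omega_S^2)$, and proving its surjectivity --- and you yourself flag this construction as the unresolved ``crux.'' As written this is not a proof, and it is also the wrong mechanism: there is nothing to localize, because $M_{d\beta}=\emptyset$ for $d\geqslant 2$. Once one knows (via \cite[Lem.~2.2]{CMT1}, which is exactly what the paper invokes, or the $\End(F)=\BC$ argument you allude to) that a stable sheaf $F$ of class $d\beta$ is \emph{scheme-theoretically} supported on the reduced fibre $C_s\cong\p^1$, purity forces $F$ to be a locally free sheaf of rank $d$ on $\p^1$; by Grothendieck's splitting theorem it decomposes as a direct sum of line bundles and hence is not stable for $d\geqslant 2$. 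So the moduli space is empty and all invariants in class $d\beta$ vanish trivially. Your proposal never makes the jump from set-theoretic to scheme-theoretic support rigorous (the analogue of \cite[Prop.~4.4]{HST} you cite is stated for elliptic fibres with generic degree-zero normal directions, not for $\p^1$ with normal bundle $\oO^{\oplus 2}\oplus\oO(-2)$, so it does not apply off the shelf), and even granting that step you stop short of the one-line stability observation that finishes the argument.
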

%\begin{GO}This still needs to be integrated, and then chhecked that GV/DT relation holds.\end{GO}
%\begin{GO}The minus sign appears since $c_1(\omega_p) = - c_1(T_p) = -c_1(D)$.\end{GO}
\begin{proof}
The computation is essentially done in \S \ref{sect on ideal DT4 comp}. 
By \cite[Lem.~2.2]{CMT1}, any one dimensional stable sheaf in class $d\beta$ is scheme theoretically supported on a fiber of $p: \CC\to S$. 
Therefore 
\begin{align}\label{equ on iso of mod on fiber class of HC}M_{\beta}\cong S,\quad M_{d\beta}=\emptyset, \,\,\mathrm{if}\,\,d\geqslant   2. \end{align}
And their virtual classes satisfy
\begin{equation}\label{equ on vir clas on erf}[M_{\beta}]^{\vir}=\pm[S],\quad [M_{d\beta}]^{\vir}=0, \,\,\mathrm{if}\,\,d\geqslant   2. \end{equation}
Under the isomorphism \eqref{equ on iso of mod on fiber class of HC} and the commutative diagram 
\begin{align*} 
\xymatrix{
\mathcal{C}  \ar[r]^{i=(p,j)\quad} \ar[dr]_{p} \ar@/_32pt/[drr]_{j}   & S\times X \ar[d]_{\pi_S} \ar[dr]^{\pi_X} & \\
  & S  & X, }
 \end{align*}
the normalized universal family $\mathbb{F}_{\mathrm{norm}}$ is $i_*\oO_{\CC}$. 
By Grothendieck-Riemann-Roch formula, 
\begin{align}\label{ch charc}
\ch(i_*\oO_{\mathcal{C}})&=i_*(\td^{-1}(N_{\mathcal{C}/S\times X})) \\ \nonumber
&=i_*\left(1-\frac{1}{2}c_1(\omega_p)+\frac{1}{12}\left(c_1(\omega_p)^2-j^*c_2(X)\right)+\frac{1}{24}c_1(\omega_p)\cdot j^*c_2(X)\right). 
\end{align}
Therefore 
\begin{align*} 
\tau_k(\gamma)&=\pi_{S*}\left(\pi_{X}^*\gamma \cup \ch_{k+3}(i_*\oO_{\mathcal{C}})\right) \\
&=\pi_{S*}\left(\pi_{X}^*\gamma \cup i_*\left[\td^{-1}(N_{\mathcal{C}/S\times X})\right]_{\deg_{\mathbb{C}} k}\right) \\
&=p_*\left(j^*\gamma \cup \left[\td^{-1}(N_{\mathcal{C}/S\times X})\right]_{\deg_{\mathbb{C}} k}\right) . 
\end{align*}
Combining with Eqns.~\eqref{equ on vir clas on erf}, \eqref{ch charc}, we are done. 
\end{proof}
To sum up, combining Lemmata \ref{lemma: genus 0 embedded rat}--\ref{lemma:DT4 ERCF}, we obtain:
\begin{thm}
%Choose plus sign in \eqref{equ on vir clas on erf} as an orientation. For any $d\geqslant  1$, we have 
%$$\blangle\tau_0(\gamma_1),\cdots,\tau_0(\gamma_n) \brangle^{\DT_4}_{d\beta}=n_{0,d\beta}(\gamma_1,\ldots,\gamma_n), $$
%$$\blangle\tau_1(\gamma)\brangle^{\DT_4}_{d\beta}= ,$$
Conjecture \ref{conj on integrality} and Conjecture \ref{conj on DT4/GV} hold in the setting specified in \S \ref{sect on sett}. 
\end{thm}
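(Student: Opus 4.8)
The plan is to assemble the explicit evaluations of Lemmata~\ref{lemma: genus 0 embedded rat}--\ref{lemma:DT4 ERCF}: every invariant appearing in either conjecture has already been reduced to an intersection number on $\CC$ or $S$, so the theorem becomes a matter of matching these numbers, with no further geometric input required.

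First I would settle integrality (Conjecture~\ref{conj on integrality}). Since $\beta$ is irreducible, hence primitive, the only $k\geqslant 1$ with $k\mid\beta$ in Definition~\ref{def of g=0 GV inv} is $k=1$, so $n_{0,\beta}(\gamma_1,\ldots,\gamma_n)=\langle\tau_0(\gamma_1)\cdots\tau_0(\gamma_n)\rangle^{\GW}_{0,\beta}=\int_S\prod_i p_\ast(j^\ast\gamma_i)$ by Lemma~\ref{lemma: genus 0 embedded rat}. This is an integer because $p$ is a $\p^1$-bundle, so $p_\ast$ preserves integral classes, and each $j^\ast\gamma_i$ is integral. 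The vanishing $n_{1,\beta}(\gamma)=n_{2,\beta}=0$ established in \S\ref{sec:embedded rational curve family} then gives integrality of the higher-genus invariants at once. To cover the imprimitive genus $0$ classes I would unwind the recursion of Definition~\ref{def of g=0 GV inv}: setting $I:=\int_S\prod_i p_\ast(j^\ast\gamma_i)$ and feeding $\langle\prod_i\tau_0(\gamma_i)\rangle^{\GW}_{0,d\beta}=d^{\,n-3}I$ into $\sum_{k\mid d}k^{\,n-3}n_{0,(d/k)\beta}$, a short induction on $d$ should yield $n_{0,d\beta}=0$ for all $d>1$, which in particular are integers.

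Next I would verify the GV/$\DT_4$ correspondence (Conjecture~\ref{conj on DT4/GV}). Part~\ref{g=0 part} in class $\beta$ is immediate: the first line of Lemma~\ref{lemma:DT4 ERCF} gives $\langle\tau_0(\gamma_1),\ldots,\tau_0(\gamma_n)\rangle^{\DT_4}_{\beta}=I=n_{0,\beta}(\gamma_1,\ldots,\gamma_n)$, and for $d>1$ both sides vanish (the $\DT_4$ side because $M_{d\beta}=\varnothing$ by Lemma~\ref{lemma:DT4 ERCF}), so \ref{g=0 part} holds for every effective $d\beta$. For part~\ref{g=1 part} I would combine the second line of Lemma~\ref{lemma:DT4 ERCF}, $\langle\tau_1(\gamma)\rangle^{\DT_4}_{\beta}=-\tfrac12\int_{\CC}j^\ast(\gamma)\,c_1(\omega_p)$, with the identity $\int_{\CC}j^\ast(\gamma)\,c_1(\omega_p)=\langle\tau_1(\gamma)\rangle^{\GW}_{0,\beta}$ of Lemma~\ref{lemma:descendent GW on ERCF} at $d=1$; since $n_{1,\beta}(\gamma)=0$, the right-hand side of \ref{g=1 part} reduces to exactly $-\tfrac12\langle\tau_1(\gamma)\rangle^{\GW}_{0,\beta}$, matching.

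The one step I expect to require a moment's care is part~\ref{g=2 part}, if only because in the general theory it is phrased indirectly through stable pairs (Remark~\ref{rmk on dt4/pt}). Here, however, the last two lines of Lemma~\ref{lemma:DT4 ERCF} give $\langle\tau_3(1)\rangle^{\DT_4}_{\beta}=\tfrac1{24}\int_{\CC}j^\ast(c_2(X))\,c_1(\omega_p)$ and $\langle\tau_1(c_2(X))\rangle^{\DT_4}_{\beta}=-\tfrac12\int_{\CC}j^\ast(c_2(X))\,c_1(\omega_p)$, so the left-hand side of \ref{g=2 part} should collapse to $-\tfrac1{24}\int+\tfrac1{24}\int=0=n_{2,\beta}$ by a direct cancellation, with no appeal to stable pair theory needed. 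Assembling these four checks will complete the proof; the main work has already been done in the lemmas, and the remaining difficulty is purely the bookkeeping for imprimitive classes in genus $0$.
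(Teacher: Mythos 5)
Your proposal is correct and follows exactly the paper's route: the paper's proof is literally the one-line assembly "combining Lemmata \ref{lemma: genus 0 embedded rat}--\ref{lemma:DT4 ERCF}", and your bookkeeping (the induction giving $n_{0,d\beta}=0$ for $d>1$, the vanishing of $M_{d\beta}$ for $d>1$, the cancellation $-\tfrac{1}{24}+\tfrac{1}{24}=0$ in part (iii)) is the intended content of that assembly. Nothing further is needed.
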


\section{Tautological integrals on moduli spaces of sheaves on $K3$ surfaces}
In this section, we compute several tautological integrals on moduli spaces of one dimensional stable sheaves on $K3$ surfaces.
These will be used in Section \ref{sect on prod} to compute $\DT_4$ invariants on the product of $K3$ surfaces, though 
they are interesting in their own right.

\subsection{Fujiki constants}\label{sect on fujiki}
The second cohomology $H^2(M, \BZ)$ of an irreducible hyperk\"ahler variety
carries a integral non-degenerate quadratic form
\[ \mathsf{q}: H^2(M, \mathbb{Z}) \to \mathbb{Z}, \]
called the Beauville-Bogomolov-Fujiki form.
By the following result of Fujiki \cite{Fuji} (and its generalization in \cite{GHJ}) 
it controls the intersection numbers of products of divisors against
Hodge cycles which stay Hodge type on all deformations of $M$:

 %\cite[Cor.~23.17]{GHJ}.
\begin{thm}\label{fujiki result}$($\cite{Fuji}, \cite[Cor.~23.17]{GHJ}$)$
%Let $M$ be a hyperk\"ahler variety of dimension $2n$. 
Assume $\alpha\in H^{4j}(M,\mathbb{C})$ is of type $(2j, 2j)$ on all small deformation of $M$. Then there exists a unique constant $C(\alpha)\in\mathbb{C}$ depending only on $\alpha$ and called the Fujiki constant of $\alpha$ such that
for all $\beta \in H^2(M,\BC)$ we have
\begin{equation}\label{fukdefn}\int_{M}\alpha\cdot\beta^{2n-2j}=C({\alpha})\cdot \mathsf{q}(\beta)^{n-j}. \end{equation}
\end{thm}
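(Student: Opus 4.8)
The plan is to regard the assignment $\beta \mapsto P(\beta):=\int_M \alpha\cdot\beta^{2n-2j}$ as a homogeneous polynomial function of degree $2(n-j)$ on the complex vector space $H^2(M,\BC)$, and to identify it with a scalar multiple of the $(n-j)$-th power of the quadratic form $\mathsf{q}$. Once $P=C(\alpha)\,\mathsf{q}^{\,n-j}$ is established, the constant $C(\alpha)$ is forced and unique, since $\mathsf{q}^{\,n-j}$ is a nonzero polynomial (there exist classes with $\mathsf{q}(\beta)\neq 0$). Thus the entire content is the proportionality $P\propto \mathsf{q}^{\,n-j}$.

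First I would record the Hodge-theoretic vanishing that makes the statement plausible and fixes the shape of $P$. Writing $\beta=x\sigma+\gamma+y\bar\sigma$ with respect to the decomposition $H^2(M,\BC)=\BC\sigma\oplus H^{1,1}(M)\oplus\BC\bar\sigma$, a monomial $\sigma^{p}\bar\sigma^{q}\gamma^{r}$ (with $p+q+r=2(n-j)$) carries Hodge type $(2p+r,\,2q+r)$; since $\alpha$ has type $(2j,2j)$, only the terms with $p=q$ contribute to $\int_M\alpha\cdot\beta^{2n-2j}$. Using the standard orthogonality relations $\mathsf{q}(\sigma)=\mathsf{q}(\bar\sigma)=0$ and $\mathsf{q}(\sigma,\gamma)=\mathsf{q}(\bar\sigma,\gamma)=0$ of the Beauville-Bogomolov-Fujiki form, one has $\mathsf{q}(\beta)=2xy\,\mathsf{q}(\sigma,\bar\sigma)+\mathsf{q}(\gamma)$, so both $P(\beta)$ and $\mathsf{q}(\beta)^{n-j}$ are polynomials in $xy$ and $\gamma$ of the same bidegree pattern. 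This already confirms that $P$ depends on $\beta$ only through the combinations allowed by $\mathsf{q}$, and reduces the claim to matching coefficients.

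The cleanest way to finish is by monodromy invariance together with invariant theory, rather than a coefficient-by-coefficient computation. The parallel transport operators $g\in\Gamma:=\mathrm{Mon}^2(M)\subset \mathrm{O}(H^2(M,\BZ),\mathsf{q})$ act on the whole cohomology ring as graded algebra automorphisms preserving $\int_M$, and by Markman's theory a class that stays of type $(2j,2j)$ on all small deformations is fixed by $\Gamma$; hence $P(g\beta)=\int_M g^{-1}(\alpha)\cdot\beta^{2n-2j}=P(\beta)$, i.e.\ $P$ is $\Gamma$-invariant. By Verbitsky's theorem $\Gamma$ has finite index in $\mathrm{O}(H^2,\mathsf{q})$ and is therefore Zariski dense in the complex orthogonal group $\mathrm{O}(\mathsf{q})$ (here $b_2\geq 5$, so the relevant real form is noncompact and the Borel density theorem applies). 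Consequently $P$ is an $\mathrm{O}(\mathsf{q})$-invariant homogeneous polynomial on $H^2(M,\BC)$. By the first fundamental theorem of invariant theory for the orthogonal group, the ring of such invariants is $\BC[\mathsf{q}]$; its degree-$2(n-j)$ part is exactly the multiples of $\mathsf{q}^{\,n-j}$, which yields $P=C(\alpha)\,\mathsf{q}^{\,n-j}$ as desired.

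The main obstacle is the monodromy invariance of $\alpha$: the hypothesis ``type $(2j,2j)$ on all small deformations'' is a priori only a local (Kuranishi-space) statement, whereas the argument above needs $\alpha$ to be fixed by the full global monodromy group. I would close this gap by noting that such classes form a rational sub-Hodge-structure of $H^{4j}(M)$ that is stable under $\Gamma$ and of trivial weight (this is precisely where the global Torelli theorem and the Looijenga-Lunts-Verbitsky description of deformation-invariant Hodge classes enter). Should one prefer a self-contained route avoiding the global monodromy group, the same conclusion can be reached by finishing the coefficient comparison of the second paragraph: an induction on $n-j$ reduces each integral $\int_M\alpha\,\sigma^{p}\bar\sigma^{p}\gamma^{2(n-j-p)}$ to a proportionality with $\mathsf{q}(\gamma)^{\,n-j-p}$ on $H^{1,1}$ of a very general deformation, using that the connected monodromy fixing $\sigma$ acts with Zariski-dense image in $\mathrm{O}(\mathsf{q}|_{H^{1,1}})$; this is more laborious but elementary.
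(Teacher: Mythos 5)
The paper does not prove this statement; it is quoted from Fujiki and from \cite[Cor.~23.17]{GHJ}, so your proposal should be measured against the standard proof in those references. That proof runs differently from yours: one observes that $P(\beta)=\int_M\alpha\cdot\beta^{2n-2j}$ vanishes on the image of the local period map (for $\beta=\sigma_t$ the class $\alpha\cdot\sigma_t^{2n-2j}$ has type $(4n-2j,2j)\neq(2n,2n)$ when $j<n$), hence on the whole irreducible quadric cone $\{\mathsf{q}=0\}$ by local Torelli, hence $\mathsf{q}\mid P$; one then divides out and iterates, using $P(\sigma_t+s\gamma)=s^{2n-2j}P(\gamma)$ versus $\mathsf{q}(\sigma_t+s\gamma)=s^2\mathsf{q}(\gamma)$ to see that each quotient again vanishes on the quadric, until only a constant remains. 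Your route — monodromy/orthogonal invariance of the polynomial $P$ plus the first fundamental theorem for $\mathrm{O}(\mathsf{q})$ — is a legitimate and genuinely different argument; it is arguably cleaner once the invariance is in hand, and it is consistent with machinery the paper already uses (the $\mathfrak{so}(H^2)$-action by derivations in the proof of Proposition~5.6).

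The one real soft spot is the step you yourself flag: passing from ``$\alpha$ is of type $(2j,2j)$ on all small deformations'' to ``$\alpha$ is fixed by the monodromy group.'' Your sketch says such classes form a $\Gamma$-\emph{stable} subspace, but stability of the subspace is weaker than pointwise fixedness, which is what your computation $P(g\beta)=\int_M g^{-1}(\alpha)\cdot\beta^{2n-2j}$ requires. The correct statement is that this subspace is exactly the trivial isotypic component for the Looijenga--Lunts--Verbitsky subalgebra $\mathfrak{so}(H^2(M,\BC))$ (each period $\sigma_t$ contributes a semisimple element $h'_t\in\mathfrak{so}(H^2)\otimes\BC$ whose kernel on $H^{4j}$ is the $(2j,2j)$-part, and these elements span the simple Lie algebra $\mathfrak{so}(H^2)$ as $t$ varies over an open set); pointwise $\mathfrak{so}$-invariance then gives $\mathrm{SO}(\mathsf{q})(\BC)$-invariance of $P$ directly, and $\BC[H^2]^{\mathrm{SO}(\mathsf{q})}=\BC[\mathsf{q}]$ finishes the argument. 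Phrased this way you also avoid the heavier inputs you invoke (global Torelli, finiteness of the index of $\mathrm{Mon}^2$, Borel density), which are unnecessary for the statement. With that correction the proof is complete; your fallback ``coefficient comparison'' route is essentially the classical proof and would also work.
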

In this section, we consider the Hilbert scheme $S^{[n]}$ of $n$-points of a $K3$ surface $S$,
which by the work of Beauville \cite{Beauville} is irreducible hyperk\"ahler.
We will prove a closed formula for the Fujiki constants of all Chern classes of its tangent bundle.
%which seems to be new.  %s on Hilbert schemes $S^{[n]}$ of $n$-points of a K3 surface $S$. 

For $k \geqslant   2$ even, we define the classical Eisenstein series
\begin{equation}\label{eisen} G_k(q) = - \frac{B_k}{2 \cdot k} + \sum_{n \geqslant   1} \sum_{d|n} d^{k-1} q^n, \end{equation}
where $B_k$ are Bernoulli numbers, i.e. $B_2=\frac{1}{6}$, $B_4=-\frac{1}{30}$, $\cdots$.  For example, we have
\[ G_2(q) = -\frac{1}{24} + \sum_{n \geqslant   1} \sum_{d|n} d q^n. \]
\begin{thm}\label{thm on fujiki cons} 
Let $S$ be a $K3$ surface. For any $k \geqslant   0$,
\[ \sum_{ n \geqslant   k} C( c_{2n-2k}(T_{S^{[n]}}))\,q^n 
=
\frac{ (2k)!}{ k! 2^{k}} \left( q \frac{d}{dq} G_2(q) \right)^k \prod_{n \geqslant   1} (1-q^n)^{-24}. \]
\end{thm}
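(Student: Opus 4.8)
The plan is to package all of the Fujiki constants into a single two–variable generating series and to show that this series is the exponential of its linear part. First I would reformulate the statement. Since Chern classes of the tangent bundle are of Hodge type $(p,p)$ on every small deformation of $S^{[n]}$, Theorem~\ref{fujiki result} applies to $\alpha = c_{2n-2k}(T_{S^{[n]}})$ with $j=n-k$, giving for every $\beta\in H^{2}(S^{[n]},\mathbb{C})$
\[
\int_{S^{[n]}} c_{2n-2k}(T_{S^{[n]}})\,\beta^{2k}
= C\big(c_{2n-2k}(T_{S^{[n]}})\big)\,\mathsf{q}(\beta)^{k}.
\]
For degree reasons this is the only nonzero pairing of $c_{2n-2k}(T_{S^{[n]}})$ against a power of $\beta$. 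Hence the theorem is equivalent to the single master identity
\begin{equation}\label{master}
\sum_{n,k\geqslant 0} \frac{q^{n}}{(2k)!}\int_{S^{[n]}} c_{2n-2k}(T_{S^{[n]}})\,\beta^{2k}
= \Bigg(\prod_{m\geqslant 1}(1-q^{m})^{-24}\Bigg)\exp\!\Big(\tfrac{1}{2}\mathsf{q}(\beta)\,q\tfrac{d}{dq}G_2\Big),
\end{equation}
since extracting the coefficient of $\mathsf{q}(\beta)^{k}$ on both sides and multiplying by $(2k)!$ returns the stated formula.

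The main step is to compute the left-hand side of \eqref{master} using the universality theorem of Ellingsrud--G\"ottsche--Lehn \cite{EGL}. Taking $\beta$ to be a tautological divisor class associated with a line bundle $L$ on $S$, each integral is a universal polynomial in the characteristic numbers of $S$ and in $\mathsf{q}(\beta)$. Because $S$ is a $K3$ surface one has $c_1(S)=0$, so only $e(S)=24$ and $\mathsf{q}(\beta)$ survive; combined with the multiplicative (Fock-space/symmetric-product) structure of such generating series over $\bigoplus_n H^{*}(S^{[n]})$, this forces the left-hand side into the exponential shape
\[
B(q)^{24}\,\exp\!\big(\mathsf{q}(\beta)\,A(q)\big),\qquad A,B\in\mathbb{Q}[[q]],\ B(0)=1.
\]
The crucial point is exactly this exponential (``one–particle'') structure: it says that the dependence on $k$ is governed by a single series $A(q)$ raised to the $k$-th power, which is precisely what produces the factor $(q\tfrac{d}{dq}G_2)^{k}$ on the right of the theorem.

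It then remains to identify $A$ and $B$. Setting $\beta=0$ (the case $k=0$) reduces \eqref{master} to $\sum_{n} e(S^{[n]})q^{n}=B(q)^{24}$, which by G\"ottsche's formula equals $\prod_{m\geqslant1}(1-q^{m})^{-24}$, so $B(q)=\prod_{m\geqslant1}(1-q^{m})^{-1}$. The series $A$ is the linear term in $\mathsf{q}(\beta)$, i.e. $2A(q)B(q)^{24}=\sum_{n}C\big(c_{2n-2}(T_{S^{[n]}})\big)q^{n}$. To pin $A$ down I would compute the Fujiki constants of the second–highest Chern classes $c_{2n-2}(T_{S^{[n]}})$ using the Looijenga--Lunts--Verbitsky Lie algebra \cite{LL,Ver}: its representation theory on $H^{*}(S^{[n]})$ constrains these constants to be Fourier coefficients of a fixed quasi-modular form (of bounded weight and depth divided by the discriminant), so that only finitely many explicit evaluations are needed to determine $A$ completely. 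Matching the first coefficients (for instance $n=1$, where $c_0=1$ forces $C=1$, together with the direct computations on $S^{[2]}$ and via \cite{EGL} in \S\ref{sect on des inte on hil}) with $\tfrac12 q\tfrac{d}{dq}G_2$ yields $A(q)=\tfrac12 q\tfrac{d}{dq}G_2$ and hence \eqref{master}.

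The hard part will be this last step. The exponential form from \cite{EGL} reduces the whole theorem to the single series $A$, but actually evaluating the Fujiki constants $C\big(c_{2n-2}(T_{S^{[n]}})\big)$ of the second–highest Chern class, and recognizing their generating function as $q\tfrac{d}{dq}G_2$ times $\prod_{m}(1-q^{m})^{-24}$, is where the genuine input from the quasi-modularity constraints of \cite{LL,Ver} and the low-degree computations of \S\ref{sect on des inte on hil} is required.
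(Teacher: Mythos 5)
Your reduction of the theorem to a single power series is essentially the paper's argument: you apply the Ellingsrud--G\"ottsche--Lehn universality to the multiplicative integrand $c(T_{S^{[n]}})\,e^{t\,c_1(L_n)}$, observe that on a $K3$ surface only $e(S)=24$ and $c_1(L)^2=\mathsf{q}(L_n)$ survive so that the two-variable series takes the form $\prod_m(1-q^m)^{-24}\exp\bigl(c_1(L)^2\,t^2\,\mathsf{A}(q)\bigr)$, identify the $k=0$ part with G\"ottsche's formula, and note that the $t^{2k}$-coefficient then yields $\frac{(2k)!}{k!}\mathsf{A}(q)^k\prod_m(1-q^m)^{-24}$, which matches the claim once $\mathsf{A}=\frac12\,q\frac{d}{dq}G_2$. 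All of this is correct and is exactly Step 1 of the paper's proof.

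The gap is in your determination of $\mathsf{A}(q)$. You assert that the Looijenga--Lunts--Verbitsky Lie algebra forces the series $\sum_n C\bigl(c_{2n-2}(T_{S^{[n]}})\bigr)q^n$ to be (the discriminant times) a quasi-modular form of bounded weight and depth, so that finitely many coefficients pin it down. No such statement follows from the LLV algebra: that algebra acts on the cohomology of each individual $S^{[n]}$ and constrains which classes can appear (monodromy invariance, weight-zero eigenspaces), but it says nothing about how Fujiki constants vary with $n$, and in particular gives no modularity of the generating series in $q$ and no a priori bound on weight or depth. Without such a bound you cannot know how many coefficients to match, and the step collapses. What the paper does instead is exploit that the EGL universal series $\mathsf{A}(q)$ is the \emph{same} for every surface with $c_1=0$, and evaluates it on an abelian surface $A$: via the \'etale cover $A\times\Kum_{n-1}(A)\to A^{[n]}$ of degree $n^4$ one reduces $\int_{A^{[n]}}c_1(L_n)^2c_{2n-2}(T_{A^{[n]}})$ to $\frac{1}{n^2}c_1(L)^2\,e(\Kum_{n-1}(A))$, and the known Euler numbers $e(\Kum_{n-1}(A))=n^3\sum_{d|n}d$ give $\mathsf{A}(q)=\frac12\,q\frac{d}{dq}G_2(q)$ exactly, with no modularity input and no coefficient matching. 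If you want to salvage your route you would have to either prove the quasi-modularity constraint you invoke (which is not in the literature you cite) or replace it by this abelian-surface computation.
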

The first coefficients are listed in Table~\ref{Fujiki_table}.
Remarkablely, the right hand side in Theorem~\ref{thm on fujiki cons} is
up to the prefactor $(2k)! / (k! 2^k)$ precisely the generating series of counts of genus $k$ curves on a $K3$ surface passing through $k$ generic points \cite{BL}.
This suggests a relationship to the work of G\"ottsche on curve counting on surfaces \cite{Gott2}.
The proof presented below uses similar ideas as in \cite{Gott2},
but we could not directly deduce it from there. 
The relationship to curve counting on $K3$ surfaces will be taken up in a follow-up work.

\begin{table}[ht]
{\renewcommand{\arraystretch}{1.2}\begin{tabular}{| c | c | c | c | c | c | c | c |}
\hline
    & $c_0$ & $c_2$ & $c_4$ & $c_6$ & $c_8$ & $c_{10}$ & $c_{12}$ \\
\hline
$S^{[0]}$ & $1$ & & & & & &  \\
$S^{[1]}$ & $1$ & $24$ & & & & & \\
$S^{[2]}$ & $3$ & $30$ & $324$ & & & & \\
$S^{[3]}$ & $15$ & $108$ & $480$ & $3200$ & & & \\
$S^{[4]}$ & $105$ & $630$ & $2016$ & $5460$ & $25650$ & & \\
$S^{[5]}$ & $945$ & $5040$ & $13500$ & $26184$ & $49440$ & $176256$ & \\
$S^{[6]}$ & $10395$ & $51030$ & $122220$ & $198300$ & $266490$ & $378420$ & $1073720$ \\
\hline
\end{tabular}}
\vspace{8pt}
\caption[]{The first non-trivial Fujiki constants of the Chern classes $c_k:=c_k(T_{S^{[n]}})$
of Hilbert schemes of points on a $K3$ surface.
The modularity of Theorem~\ref{thm on fujiki cons} appears in the diagonals, e.g.
the cases $k=0,1$ are the functions:
\begin{minipage}{\linewidth}
%\begin{gather*}
\begin{equation}\prod_{n \geqslant   1} (1-q^n)^{-24} = 1 + 24 q + 324 q^2 + 3200 q^2 + \cdots.   \nonumber \end{equation}
\begin{equation}\left( q \frac{d}{dq} G_2(q) \right)  \prod_{n \geqslant   1} (1-q^n)^{-24} =q + 30q^{2} + 480q^{3} + 5460q^{4} + \cdots. \nonumber \end{equation}
%\end{gather*}
\end{minipage} 
}
%\end{gather*}
%\end{minipage}
\label{Fujiki_table}
\vspace{-25pt}
\end{table}

\begin{proof}
Let $L \in \Pic(S)$ be a line bundle on an arbitrary surface $S$.
Consider the series
\[ \Phi_{S,L} = \sum_{n=0}^{\infty} q^n \int_{S^{[n]}} c(T_{S^{[n]}})\,e^{c_1(L_n)}, \]
where we let $L_n = \det( L^{[n]} ) \otimes \det(\CO_S^{[n]})^{-1}$.
Since the integrand is multiplicative, by \cite[Prop 3]{Nieper} (which immediately follows from \cite{EGL}), there exists
power series $\mathsf{A},\mathsf{B},\mathsf{C},\mathsf{D}$ in $q$ such that for any surface $S$ and line bundle $L$, we have
\[
\Phi_{S,L}
=
\exp\left(
c_1(L)^2 \mathsf{A} + c_1(L) \cdot c_1(S) \mathsf{B} + c_1(S)^2 \mathsf{C} + c_2(S) \mathsf{D} \right).
\]
The G\"ottsche formula
\[ \Phi_{S,0} = \sum_{n \geqslant   0} q^n \int_{S^{[n]}} c_{2n}(T_{S^{[n]}})
=
\prod_{ n \geqslant   1} \frac{1}{(1-q^n)^{e(S)}}
\]
shows then that $\mathsf{C}=0$ and provides an explicit expression for $\mathsf{D}$. Hence we have
\[
\Phi_{S,L}
=
\prod_{n \geqslant   1} (1-q^n)^{-e(S)}
\exp\left(
c_1(L)^2 \mathsf{A} + c_1(L) c_1(S) \mathsf{B} \right).
\]
%where $A = A_1(q) t + A_2(q) t^2 + \ldots \, $.
Replacing $L$ by $L^{\otimes t}$ for $t \in \BZ$ shows that
\begin{equation} \label{tseries}
\sum_{n=0}^{\infty} q^n \int_{S^{[n]}} c(T_{S^{[n]}})\,e^{t c_1(L_n)}
=
\prod_{n \geqslant   1} (1-q^n)^{-e(S)}
\exp\left(
c_1(L)^2 t^2 \mathsf{A} + c_1(L) c_1(S) t \mathsf{B} \right).
\end{equation}
Since both sides are power series with coefficients which are polynomials in $t$, and the equality holds for all $t \in \BZ$, we find that Eqn.~\eqref{tseries} also holds for $t$, a formal variable.
%it is true for all $t \in \BZ$
%it is already true for $t$ a formal variable.
We write $\Phi_{S,L}(t)$ for the series \eqref{tseries}.
%This holds for $t \in \BZ$, but since this 
%for $t \in \BZ$ and then, since both sides are power series with c
We argue now in two steps.

\vspace{5pt} \noindent
\textbf{Step 1: Specialization to $K3$ surfaces.}
Let $S$ be a $K3$ surface. Since $S^{[n]}$ is holomorphic symplectic, its odd Chern classes vanish. Together with Eqn.~\eqref{fukdefn} and $\mathsf{q}(L_n) = c_1(L)^2$ this gives
\begin{align}
\Phi_{S,L}(t)
& = \sum_{n \geqslant   0} q^n \sum_{k=0}^{n} \frac{1}{(2k)!} \int_{S^{[n]}} c_{2n-2k}(T_{S^{[n]}}) c_1(L_n)^{2k} t^{2k} \notag \\
%& = \sum_{k \geqslant   0} \frac{t^{2k}}{(2k)!} \sum_{n \geqslant   k} \int_{S^{[n]}} c_{2n-2k}(\Tan) c_1(L_n)^{2k}  \\
& = \sum_{k \geqslant   0} \frac{t^{2k} \big( c_1(L)^2 \big)^k}{(2k)!}
\sum_{n \geqslant   k} C(c_{2n-2k}(T_{S^{[n]}})) q^n \notag \label{ssss}.
\end{align}
On the other hand, we have
\[ \Phi_{S,L}(t) =
\prod_{n \geqslant   1} (1-q^n)^{-24} \cdot \exp( c_1(L)^2 t^2 \mathsf{A} ) .\]
By taking the $t^{2k}$ coefficient we obtain that
\begin{equation}\label{equ on fujik hilbn k3}
\sum_{n \geqslant   k} C(c_{2n-2k}(T_{S^{[n]}}))\,q^n
=
\frac{(2k)!}{k!} \mathsf{A}(q)^k \prod_{n \geqslant   1} (1-q^n)^{-24}.
\end{equation}

\vspace{5pt} \noindent
\textbf{Step 2: Specialization to abelian surfaces.}
Let $A$ be an abelian surface with a line bundle $L \in \Pic(A)$ such that $c_1(L)^2 \neq 0$.
Let $\sigma : A^{[n]} \to A$ be the sum map,
and let 
\[ \Kum_{n-1}(A) = \sigma^{-1}(0_A) \]
be the generalized Kummer variety of dimension $2n-2$.
We have the fiber diagram
\[
\begin{tikzcd}
A \times \Kum_{n-1}(A) \ar{r}{\nu} \ar{d}{\mathrm{pr}_1} & A^{[n]} \ar{d}{\sigma} \\
A \ar{r}{n \times } & A.
\end{tikzcd}
\]
In particular $\nu$ is \'etale (of degree $n^4$), which implies that
\[ \nu^{\ast} T_{A^{[n]}} \cong \mathrm{pr}_1^{\ast}(T_A) \oplus \mathrm{pr}_2^{\ast}( T_{\Kum_{n-1}(A)} ). \]
Since the Chern classes of an abelian surface vanish,
we obtain
\[ \nu^{\ast} c( T_{A^{[n]}} ) = \mathrm{pr}_2^{\ast} c( T_{\Kum_{n-1}(A)} ). \]
Moreover one has (see \cite[Eqn.~(2)]{Nieper}) that
\[ \nu^{\ast}(L_n) = \mathrm{pr}_1^{\ast}(L ^{\otimes n}) \otimes \left( L_n|_{\Kum_{n-1}(A)} \right). \]
We obtain that
\begin{align*}
\int_{A^{[n]}} c_1(L_n)^2 c_{2n-2}(T_{A^{[n]}}) 
& = \frac{1}{n^4} \int_{A \times \Kum_{n-1}(A)}
c_1( \nu^{\ast}(L_n) )^2 c_{2n-2}( \nu^{\ast} T_{\Kum_{n-1}(A)}) \\
& = \frac{1}{n^2} (c_1(L)^2) \cdot \int_{\Kum_{n-1}(A)} c_{2n-2}( T_{\Kum_{n-1}(A)} ) \\
& = \frac{1}{n^2} (c_1(L)^2)\,e( \Kum_{n-1}(A) ). 
\end{align*}
Using that $e(\Kum_{n-1}(A)) = n^3 \sum_{d|n} d$ (ref.~\cite{GS}) and Eqn.~\eqref{tseries}, we conclude that
\begin{align*}
(c_1(L)^2) \cdot \mathsf{A}(q) =
[\Phi_{A,L}(t)]_{t^2}
&=\frac{1}{2} \sum_{n \geqslant   0} q^n \int_{A^{[n]}} c_1(L_n)^2 c_{2n-2}(T_{A^{[n]}}) \\
&=\frac{(c_1(L)^2)}{2} \sum_{n \geqslant   1} n \sum_{d|n} d q^n,
\end{align*}
where $[-]_{t^2}$ denotes the coefficient of $t^2$ term.
Hence 
$$\mathsf{A}(q) = \frac{1}{2} q \frac{d}{dq} G_2(q). $$ 
Combining with Eqn.~\eqref{equ on fujik hilbn k3}, we are done.
%The series $A,B$ can be computed by usings Bott's residue formula for the pairs $(S,L) \in \{  (\p^2, \CO_{\p^2}(1)), (\p^2, \CO(2)) \}$.
\end{proof} 
For completeness we also state the Fujiki constants of Chern classes
of the second known infinite family of hyperk\"ahler varieties,
the generalized Kummer varieties.
\begin{prop}\label{prop on fujiki on kumm}
For any $k \geqslant   0$ and abelian surface $A$, we have
\[
\sum_{n \geqslant   k} C( c_{2n-2k}(T_{\Kum_n(A)} ))\,q^{n+1}
=
\frac{ (2k)!}{ (k+1)! 2^k }
\left( q \frac{d}{dq} \right)^2\left(  q \frac{d}{dq} G_2 \right)^{k+1}.
\]
\end{prop}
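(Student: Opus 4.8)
The plan is to follow the proof of Theorem~\ref{thm on fujiki cons} closely, reducing every integral over $\Kum_n(A)$ to an integral over $A^{[n+1]}$ by means of the \'etale cover $\nu\colon A\times\Kum_n(A)\to A^{[n+1]}$ of degree $(n+1)^4$ already used in Step~2 of that proof, and then re-using the evaluation $\mathsf{A}(q)=\tfrac12\,q\frac{d}{dq}G_2(q)$ found there. Throughout I set $\ell:=\int_A c_1(L)^2$ for a line bundle $L$ on $A$ with $\ell\neq 0$, and write $M_n:=L_{n+1}|_{\Kum_n(A)}$, so that $\nu^{\ast}c_1(L_{n+1})=(n+1)\,\pr_1^{\ast}c_1(L)+\pr_2^{\ast}c_1(M_n)$ and, since the Chern classes of the abelian surface $A$ vanish, $\nu^{\ast}c(T_{A^{[n+1]}})=\pr_2^{\ast}c(T_{\Kum_n(A)})$.

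First I would convert the Fujiki constants into honest intersection numbers. As $\Kum_n(A)$ is irreducible hyperk\"ahler of dimension $2n$ and $c_{2n-2k}(T_{\Kum_n(A)})$ stays of type $(2n-2k,2n-2k)$ on all deformations, Theorem~\ref{fujiki result} applies and gives
\[
\int_{\Kum_n(A)} c_1(M_n)^{2k}\,c_{2n-2k}(T_{\Kum_n(A)})=C\big(c_{2n-2k}(T_{\Kum_n(A)})\big)\,\mathsf{q}(c_1(M_n))^{k}.
\]
The crucial input here is $\mathsf{q}(c_1(M_n))=\ell$, which follows from the lattice description of $H^2(\Kum_n(A),\BZ)$: the summand $H^2(A,\BZ)\hookrightarrow H^2(\Kum_n(A),\BZ)$ containing $c_1(M_n)$ is an isometric embedding for the Beauville-Bogomolov-Fujiki form. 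I expect this identification of the correct normalization of $\mathsf{q}$ on $c_1(M_n)$ to be the main obstacle, as it is precisely the place where a spurious constant could enter and spoil the clean shape of the answer.

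Next I would run the cover. Pulling $c_1(L_{n+1})^{2k+2}\,c_{2n-2k}(T_{A^{[n+1]}})$ back along $\nu$ and integrating over the factor $A$ picks out exactly the $\pr_1^{\ast}c_1(L)^2$ term in the binomial expansion of $\big((n+1)\pr_1^{\ast}c_1(L)+\pr_2^{\ast}c_1(M_n)\big)^{2k+2}$, producing the factor $\binom{2k+2}{2}(n+1)^2\ell$; dividing by $\deg\nu=(n+1)^4$ yields
\[
\int_{\Kum_n(A)} c_1(M_n)^{2k}\,c_{2n-2k}(T_{\Kum_n(A)})=\frac{(n+1)^2}{\binom{2k+2}{2}\,\ell}\,I_n,
\]
where $I_n:=\int_{A^{[n+1]}} c_1(L_{n+1})^{2k+2}\,c_{2n-2k}(T_{A^{[n+1]}})$. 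Combining this with the Fujiki relation above and with $\mathsf{q}(c_1(M_n))=\ell$ gives $C\big(c_{2n-2k}(T_{\Kum_n(A)})\big)=\tfrac{(n+1)^2}{\binom{2k+2}{2}\ell^{k+1}}\,I_n$.

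Finally I would sum the $I_n$. Because $e(A)=0$ and $c_1(A)=0$, the universality series \eqref{tseries} specializes to $\Phi_{A,L}(t)=\exp\!\big(\ell\,t^2\mathsf{A}(q)\big)$; comparing the coefficient of $t^{2k+2}$ and reading off powers of $q$ gives $\sum_{n} I_n\,q^{n+1}=\tfrac{(2k+2)!}{(k+1)!}\,\ell^{k+1}\mathsf{A}(q)^{k+1}$. Applying $\big(q\frac{d}{dq}\big)^2$ reinstates the factor $(n+1)^2$, so that
\[
\sum_{n\geq k} C\big(c_{2n-2k}(T_{\Kum_n(A)})\big)\,q^{n+1}=\frac{(2k+2)!}{\binom{2k+2}{2}(k+1)!}\Big(q\tfrac{d}{dq}\Big)^2\mathsf{A}(q)^{k+1}.
\]
Since $\binom{2k+2}{2}^{-1}(2k+2)!=2\,(2k)!$ and $\mathsf{A}=\tfrac12 q\frac{d}{dq}G_2$ contributes $2^{-(k+1)}$, the prefactor collapses to $\tfrac{(2k)!}{(k+1)!\,2^{k}}$, which is the claim. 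It remains only to check that the terms with $n<k$ drop out (there $c_{2n-2k}$ carries a negative index) and that all powers of $\ell$ cancel, both of which are routine.
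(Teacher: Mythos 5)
Your proposal is correct and follows essentially the same route as the paper's proof: both extract the relevant coefficient from the universality series $\Phi_{A,L}(t)=\exp(c_1(L)^2\,t^2\,\mathsf{A}(q))$ using $e(A)=c_1(A)=0$, transfer the integral through the \'etale cover $\nu\colon A\times\Kum_{n}(A)\to A^{[n+1]}$, isolate the $\binom{2k+2}{2}(n+1)^2\,c_1(L)^2$ term of the K\"unneth expansion, and invoke Fujiki's relation together with $\mathsf{q}(L_{n+1}|_{\Kum_n(A)})=c_1(L)^2$ and the previously computed $\mathsf{A}=\tfrac12 q\frac{d}{dq}G_2$. The only difference is cosmetic: you work with exponent $2k+2$ on $A^{[n+1]}$ from the outset, whereas the paper works with exponent $2k$ on $A^{[n]}$ and reindexes at the end; the arithmetic of the prefactor checks out in both versions.
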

\begin{proof}
Using the universality \eqref{tseries} and the value of $\mathsf{A}(q)$ we computed above, one concludes that for any line bundle $L$ on $A$, we have:
\begin{align*}
& \frac{(2k)!}{k! 2^k} \left( q \frac{d}{dq} G_2 \right)^k ( c_1(L)^2 )^k \\
= & \sum_{n \geqslant   0} q^n \int_{A^{[n]}} c_{2n-2k}(T_{A^{[n]}}) c_1(L_n)^{2k} \\
= & \sum_{n \geqslant   0} q^n \frac{1}{n^4} \int_{A \times \Kum_{n-1}(A)} c_{2n-2k}( \nu^{\ast}(T_{A^{[n]}}) )
c_1( \nu^{\ast}L_n)^{2k} \\
= & \sum_{n \geqslant   0} q^n \frac{1}{n^2} \binom{2k}{2} \left( \int_A c_1(L)^2 \right) \mathsf{q}(L_n|_{\Kum_{n-1}(A)})^{k-1} C( c_{2n-2k}(T_{\Kum_{n-1}(A)})).
\end{align*}
Using $\mathsf{q}(L_n|_{\Kum_{n-1}(A)}) = c_1(L)^2$ we conclude the claim.
\end{proof}

\begin{rmk}
It is remarkable that all Fujiki constants of $c_k(T_X)$ for $X \in \{ S^{[n]}, \Kum_{n}(A)\}$ are positive integers.
By the software package `bott' of J. Song \cite{bott}, the same can be checked numerically for arbitrary products of Chern classes of the tangent bundle $($up to $n \leqslant 10$$)$. We also refer to \cite{CJ,J} for some general results on positivity of Todd classes of hyperk\"ahler varieties, and to \cite{OSV} for a discussion on positivity of Chern (character) numbers.
This suggests the question whether all 
(non-trivial) Fujiki constants of products of Chern classes on irreducible hyperk\"ahler varieties positive.
This question was raised independently and then studied in \cite{BS, Sawon}.
\end{rmk}

\subsection{Descendent integrals on the Hilbert scheme}\label{sect on des inte on hil}
We now turn to integrals over descendents on Hilbert schemes, which are defined
for $\alpha \in H^{\ast}(S)$ and $d \geqslant   0$ by
\[ \fG_d(\alpha):= \pi_{\Hilb*}( \pi_S^{\ast}(\alpha) \ch_d(\CO_{\CZ}) ) \in H^{\ast}(S^{[n]}), \]
where $\pi_{\Hilb}, \pi_S$ are projections from $S^{[n]} \times S$ to the factors.
We prove the following evaluations:
\begin{prop}\label{prop on des on hilb1}
Let $\pt\in H^4(S)$ be the point class. Then 
\[ \sum_{n \geqslant   0} q^n \int_{S^{[n]}} c( T_{S^{[n]}} ) \fG_2(\pt) =  \left(\frac{1}{24}+G_2 \right)\prod_{n \geqslant   1} (1-q^n)^{-24}. % \tag{i}
\]
\end{prop}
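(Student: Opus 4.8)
The plan is to determine the generating series by the Ellingsrud--G\"ottsche--Lehn universality method, exactly as in the proof of Theorem~\ref{thm on fujiki cons}, and to pin down the single remaining unknown by specializing to abelian surfaces. The first step is to record the geometric meaning of the insertion. Since the universal subscheme $\CZ\subset S^{[n]}\times S$ has codimension two and is Cohen--Macaulay, its lowest Chern character term is its cycle class, $\ch_2(\CO_{\CZ})=[\CZ]$. Writing $\pi_S^{\ast}(\pt)=[S^{[n]}\times\{p\}]$ for a fixed point $p\in S$, I get
\[ \fG_2(\pt)=\pi_{\Hilb\ast}\big([\CZ]\cdot[S^{[n]}\times\{p\}]\big)=[D_p],\qquad D_p=\{\,\xi\in S^{[n]}:p\in\xi\,\}, \]
the incidence locus of codimension two. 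Only the top-degree part survives integration, so the quantity of interest is $f_n:=\int_{S^{[n]}}c_{2n-2}(T_{S^{[n]}})\,[D_p]$.

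Next, by the universality of \cite{EGL} (in the extended form already used in the proof of Theorem~\ref{thm on fujiki cons}), applied to the multiplicative class $c(T_{S^{[n]}})$ together with the single pointlike insertion $\fG_2(\pt)$, the series $\Psi_S(q)=\sum_{n}q^n f_n$ factors, for surfaces with $c_1(S)=0$, as
\[ \Psi_S(q)=P(q)\cdot\prod_{m\geq 1}(1-q^m)^{-e(S)}, \]
where $P(q)$ is a universal power series independent of $S$. Here the G\"ottsche factor $\prod_{m\geq1}(1-q^m)^{-e(S)}=\sum_n e(S^{[n]})q^n$ is the bulk contribution of the multiplicative class, while $P$ is the one-point function, governed only by the local geometry of $S$ near the smooth point $p$ and hence $e(S)$-independent. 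I would then isolate $P$ by specializing to an abelian surface $A$, where $e(A)=0$ trivializes the G\"ottsche factor and gives $\Psi_A(q)=P(q)$ directly.

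The abelian computation I would carry out through the \'etale cover of the proof of Theorem~\ref{thm on fujiki cons}: with $\nu:A\times\Kum_{n-1}(A)\to A^{[n]}$ of degree $n^4$ and $\nu^{\ast}c(T_{A^{[n]}})=\pr_2^{\ast}c(T_{\Kum_{n-1}(A)})$, one has
\[ f_n^{A}=\frac{1}{n^4}\int_{A\times\Kum_{n-1}(A)}\pr_2^{\ast}c_{2n-2}(T_{\Kum_{n-1}(A)})\cdot\nu^{\ast}[D_p]. \]
Since $\nu$ is \'etale, $\nu^{\ast}[D_p]=[\nu^{-1}D_p]$ with $\nu^{-1}D_p=\{(a,\xi):p-a\in\xi\}$, whose projection to $\Kum_{n-1}(A)$ has degree $n$ over a generic point (its $n$ support points), while $\pr_2^{\ast}c_{2n-2}(T_{\Kum_{n-1}(A)})$ equals $e(\Kum_{n-1}(A))$ times the point class of the Kummer factor. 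Using $e(\Kum_{n-1}(A))=n^3\sum_{d|n}d$ (ref.~\cite{GS}) I obtain $f_n^{A}=\tfrac{1}{n^4}\cdot n\cdot n^3\sum_{d|n}d=\sum_{d|n}d$, hence $P(q)=\sum_{n\geq 1}\big(\sum_{d|n}d\big)q^n=\tfrac{1}{24}+G_2$. Substituting $e(S)=24$ for a $K3$ surface yields the stated formula.

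The main obstacle is the factorization step: proving that the one-point prefactor $P(q)$ is genuinely independent of $e(S)$, rather than of the more general shape $P_0(q)+e(S)P_1(q)$ permitted a priori by universality. The clean resolution is the locality of the pointlike insertion inside the EGL formalism. If one wishes to avoid that structural input, $P$ can instead be determined from two independent evaluations — the abelian series above together with the first few coefficients on a $K3$ surface, or an evaluation on a surface with $c_1^2\neq 0$ — which overdetermine the two unknown series and force $P_1=0$. Everything else is routine given Theorem~\ref{thm on fujiki cons} and its \'etale-cover toolkit.
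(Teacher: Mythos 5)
Your proposal is correct and follows essentially the same route as the paper: EGL universality to reduce the problem to a single universal power series, followed by evaluation on an abelian surface via the degree-$n^4$ \'etale cover $A\times\Kum_{n-1}(A)\to A^{[n]}$, which yields $\sum_{d|n}d$ and hence $\tfrac{1}{24}+G_2$. The factorization issue you flag at the end is closed in the paper exactly along the lines you suggest: one considers $\Phi_{S,x}=\sum_n q^n\int_{S^{[n]}}c(T_{S^{[n]}})\,e^{\ch_2(x^{[n]})}$, whose logarithm is by \cite{EGL} a universal \emph{linear} combination of the Chern numbers $c_1(S)^2$, $c_2(S)$, $\ch_2(x)$, so the coefficient of $\ch_2(x)$ is a single surface-independent series $\mathsf{A}(q)$ with no $e(S)$-cross-term, and extracting the $t^1$-coefficient at $x=t\,\CO_{\pt}$ identifies your $P(q)$ with $\mathsf{A}(q)$.
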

\begin{proof}
This is a special case of \cite{QS}, but we can give a direct argument.
For any surface $S$ and $K$-theory class $x \in K(S)$ with $\ch_0(x) = \ch_1(x) = 0$ consider the series
\[ \Phi_{S,x} = \sum_{n \geqslant   0} q^n \int_{S^{[n]}} c( T_{S^{[n]}} ) e^{\ch_2(x^{[n]})}. \]
By \cite{EGL} and since we know the answer for $x=0$, there exists a series $\mathsf{A}(q)$ such that
\[ \Phi_{S,x} = \prod_{n \geqslant   1} (1-q^n)^{-24}  \exp\left( \ch_2(x) \mathsf{A} \right). \]
Setting $x = t \CO_{\pt}$, we in fact get the equality of
\[
\Phi_{S,t} := \sum_{n \geqslant   0} q^n \int_{S^{[n]}} c( T_{S^{[n]}} ) e^{\ch_2(\CO_{\pt}^{[n]}) t}
=
\prod_{n \geqslant   1} (1-q^n)^{-24} \exp\left( \mathsf{A} t \right).
\]

\noindent
\emph{Case 1: $K3$ surfaces.} By GRR and taking the $t^1$-coefficient, one finds that
\begin{equation}\label{equ on k3 com sigma2}
\sum_{n \geqslant   0} q^n \int_{S^{[n]}} c( T_{S^{[n]}} ) \fG_2(\pt)
=
[\Phi_{S,t}]_{t^1}  = 
\mathsf{A}(q) \prod_{n \geqslant   1} (1-q^n)^{-24}.
\end{equation}

\noindent
\emph{Case 2: Abelian surfaces.} For an abelian surface $A$, similar as before, we have 
\begin{align*}
\int_{A^{[n]}} c_{2n-2}( T_{A^{[n]}} ) \ch_2( \CO_{\pt}^{[n]} ) 
&=
\frac{1}{n^4} \int_{A \times \Kum_{n-1}(A)} \nu^{\ast}( \ch_2( \CO_{\pt}^{[n]} ) ) c_{2n-2}(T_{\Kum_{n-1}(A)})  \\
&=
\frac{e( \Kum_{n-1}(A) ) }{n^3}
= \frac{1}{n^3} \left( n^3 \sum_{d|n} d \right)
= \sum_{d|n} d.
\end{align*}
Here we used that $\nu^{\ast} \ch_2( \CO_{\pt}^{[n]} )|_{A \times pt} = n \pt$.
(To see the last statement, consider the diagram
\[
\begin{tikzcd}
A \times \Kum_{n-1}(A) \times A \ar{d} \ar{r}{\nu \times \id} & A^{[n]} \times A \ar{d}{\pi} \\
A \times \Kum_{n-1}(A) \ar{r}{\nu} \ar{d}{\mathrm{pr}_1} & A^{[n]} \ar{d}{\sigma} \\
A \ar{r}{n \times } & A.
\end{tikzcd}
\]
Let $\CZ \subset A^{[n]} \times A$ be the universal subscheme,
and let $\CZ_{\Kum} \subset \Kum_{n-1}(A) \times A$ be its restriction to the Kummer.
Inside $A \times \Kum_{n-1} \to A$, we have an equality of subschemes:
\[ (\nu \times \id)^{-1}(\CZ) = m_{13}^{-1}(Z_{\Kum}), \]
where $m_{13}$ is the addition map on the outer factors. 
Restricting to $A \times pt \times A$ we find that $m_{13}|_{A \times pt \times A}^{\ast}(n \pt) = n \Delta_{A}$. Then the claim follows from the definition).
We hence obtain that
\[ \mathsf{A}(q) = [\Phi_{A,t}]_{t^1} = 
\sum_{n \geqslant   0}q^n \int_{A^{[n]}} c_{2n-2}( T_{A^{[n]}} ) \ch_2( \CO_{\pt}^{[n]} ) 
= \frac{1}{24} + G_2(q).  \] 
Combining with Eqn.~\eqref{equ on k3 com sigma2}, we are done. 
\end{proof} 

\begin{prop} \label{lem on vanishing}
%For any divisor $D \in H^2(S,\BQ)$ we have
\begin{align}
\int_{S^{[d]}}
c_{2d-2}(S^{[d]})
\fG_3(D) & = 0  \,\,\, \text{ for all divisors } D \in H^2(S,\BQ) \tag{i},
 \\
\sum_{n \geqslant   0} q^n \int_{S^{[n]}} c( T_{S^{[n]}} ) \fG_4(1) & =\left( -20 G_2^2 - 2 G_2 - 5/3 G_4 - 1/24 \right) \prod_{n \geqslant   1} (1-q^n)^{-24}, \tag{ii}
\end{align}
where $G_k$ is given in \eqref{eisen}. 
\end{prop}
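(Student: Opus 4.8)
The plan is to prove both identities by the two-step method already used for Theorem~\ref{thm on fujiki cons} and Proposition~\ref{prop on des on hilb1}. First, I would apply the universality of Ellingsrud--G\"ottsche--Lehn \cite{EGL} to write each generating series as the background product $\prod_{n\geqslant 1}(1-q^n)^{-e(S)}$ times a universal correction built from finitely many power series independent of the surface $S$. Second, I would determine those series by specializing $S$ to surfaces with known Chern numbers. The crucial simplification throughout is that $c_1(S)=0$ for a $K3$ surface, which kills every universal term carrying a factor of $c_1(S)$.

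For part (i) the argument is immediate. Since $\fG_3(D)$ is linear in $D$, the universality of \cite{EGL} shows that $\int_{S^{[d]}}c_{2d-2}(T_{S^{[d]}})\,\fG_3(D)$ is a universal polynomial in the four numbers $\int_S D^2$, $\int_S D\cdot c_1(S)$, $\int_S c_1(S)^2$ and $\int_S c_2(S)$, and this polynomial is linear in $D$. The only generator linear in $D$ is $\int_S D\cdot c_1(S)$, so there is a universal constant $\kappa_d$ with
\[ \int_{S^{[d]}}c_{2d-2}(T_{S^{[d]}})\,\fG_3(D)=\kappa_d\int_S D\cdot c_1(S). \]
For a $K3$ surface $c_1(S)=0$, hence the right-hand side vanishes and (i) follows.

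For part (ii) the input $1\in H^0(S)$ of $\fG_4(1)$ carries no divisor class, and $\fG_4(1)$ is a single class of complex codimension $2$; so by \cite{EGL} there are universal series $\mathsf{a},\mathsf{b},\mathsf{c}$ with
\[ \sum_{n\geqslant 0}q^n\int_{S^{[n]}}c(T_{S^{[n]}})\,\fG_4(1)=\prod_{n\geqslant 1}(1-q^n)^{-e(S)}\Big(\textstyle\int_S c_1(S)^2\,\mathsf{a}(q)+\int_S c_2(S)\,\mathsf{b}(q)+\mathsf{c}(q)\Big). \]
Because $\int_S c_1(S)^2=0$ for both $K3$ and abelian surfaces, the series $\mathsf{a}$ never enters the $K3$ answer and may be ignored. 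I would first evaluate $\mathsf{c}(q)$ by taking $S=A$ an abelian surface (where $e(A)=\int c_1^2=0$): pulling back along the \'etale cover $\nu\colon A\times\Kum_{n-1}(A)\to A^{[n]}$ exactly as in Proposition~\ref{prop on des on hilb1} reduces the integral to the $H^4(A)\otimes H^0(\Kum_{n-1}(A))$-component of $\nu^\ast\fG_4(1)$ together with the Euler numbers $e(\Kum_{n-1}(A))=n^3\sum_{d|n}d$. To fix the remaining series $\mathsf{b}(q)$ I would use a surface with $\int c_1^2=0$ but $e\neq 0$ (an Enriques surface, $e=12$), or equivalently combine the abelian computation with the Looijenga--Lunts--Verbitsky $\mathfrak{sl}_2$-constraints \cite{LL,Ver} and a few explicit low-$n$ $K3$ evaluations; either route gives $e\,\mathsf{b}+\mathsf{c}$ at a second value of $e$, hence $\mathsf{b}$. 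Setting $e(S)=24$ then yields the $K3$ generating series $\prod(1-q^n)^{-24}\big(24\,\mathsf{b}+\mathsf{c}\big)$, and a final check identifies the resulting $q$-series with $-20G_2^2-2G_2-\tfrac{5}{3}G_4-\tfrac{1}{24}$.

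The main obstacle is the determination of $\mathsf{b}(q)$: it is exactly the part of the answer linear in $e(S)$, which is invisible to abelian surfaces (where $e=0$), so some genuinely new input beyond the abelian Kummer computation is unavoidable. Organizing the raw Euler-number data into the weight-$4$ quasimodular combination governed by $G_2^2$ and $G_4$, and making the Enriques reduction or the $\mathfrak{sl}_2$-bookkeeping rigorous, is the delicate part. As a useful alternative that sidesteps a new test surface, one may instead use the identity $\fG_4(1)=\ch_2(\CO_S^{[n]})-2\,\fG_2(\pt)$ valid on a $K3$ surface (from Grothendieck--Riemann--Roch, since $\mathrm{td}_2(S)=\tfrac{1}{12}c_2(S)$), reducing (ii) to Proposition~\ref{prop on des on hilb1} together with the generating series of $\int_{S^{[n]}}c(T_{S^{[n]}})\,\ch_2(\CO_S^{[n]})$.
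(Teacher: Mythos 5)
Your part (i) is correct but follows a genuinely different route from the paper. You argue by Ellingsrud--G\"ottsche--Lehn universality that the integral, being linear in $D$, must be a multiple of $\int_S D\cdot c_1(S)$ (times universal series in $\int c_1^2,\int c_2$), which vanishes on a $K3$. The paper instead stays entirely on the $K3$ and uses the Looijenga--Lunts--Verbitsky weight decomposition: $c_{2d-2}(S^{[d]})$ lies in the weight-zero eigenspace of a Cartan of $\mathfrak{so}(H^2(S))$, while $\fG_3(D)$ can be placed in a nonzero eigenspace, so the integrand has nonzero weight and integrates to zero. Your version is more elementary (modulo the routine polarization needed to express $\fG_3(D)$ through $\ch_2(L^{[n]})$), but the paper's version is the one that sets up part (ii), which is where your proposal breaks down.

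For part (ii) there is a genuine gap: you never determine the series $\mathsf{b}(q)$ multiplying $\int_S c_2(S)$, and you say so yourself. The abelian/Kummer specialization only sees the part of the universal expression that survives $e(S)=0$, and each of your three escape routes is either unexecuted or circular. For Enriques surfaces you give no method to compute $\int_{S^{[n]}}c(T_{S^{[n]}})\fG_4(1)$, and no such closed-form computation is easier than the original problem. The phrase ``LLV constraints and a few explicit low-$n$ $K3$ evaluations'' does not identify a mechanism. And your GRR identity $\fG_4(1)=\ch_2(\CO_S^{[n]})-2\fG_2(\pt)$, while correct on a $K3$, merely transfers the unknown to $\sum_n q^n\int c(T_{S^{[n]}})\ch_2(\CO_S^{[n]})$, whose universal expression has exactly the same undetermined $e(S)$-linear piece (the $\frac12\delta^2$ part is handled by the Fujiki constants, but $\int c(T)\,c_2(\CO_S^{[n]})$ is not). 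The idea you are missing is the paper's: since the degree-zero LLV operators act by ring isomorphisms preserving Chern classes, the vanishing of part (i) is preserved under $e^{t\,h_{F\delta}}$; differentiating at $t=0$ and using the explicit formula $h_{F\delta}(\fG_3(W))=-\fG_2(F)\fG_2(W)-\langle F,W\rangle\,\fG_2(1)\fG_2(\pt)-\langle F,W\rangle\,\fG_4(1)$ from \cite[Prop.~4.4]{NOY} produces a linear relation that expresses $\int_{S^{[n]}}c(T_{S^{[n]}})\fG_4(1)$ entirely in terms of the Fujiki constants of Theorem~\ref{thm on fujiki cons} and the series of Proposition~\ref{prop on des on hilb1}; the Ramanujan differential equation for $G_2$ then gives the stated quasimodular form. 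Without this (or an equivalent second evaluation at a surface with $e\neq 0$ that you can actually compute), your proof of (ii) is incomplete.
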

\begin{proof}
Recall that for any hyperk\"ahler variety $X$, the
Looijenga-Lunts-Verbitsky Lie algebra
$\Fg(X)$ is isomorphic to $\mathrm{so}(H^2(X,\BQ) \oplus U_{\BQ})$, where $U = \binom{0\ 1}{1\ 0}$ is the hyperbolic lattice \cite{LL, Ver0, Ver1}. The degree $0$ part of the Lie algebra splits as
$\Fg_0(X) = \BQ h \oplus \mathfrak{so}(H^2(X,\BQ))$ where
$h$ is the degree grading operator.
Looijenga and Lunts show that for the natural action of $\Fg(X)$ on cohomology, the subliealgebra $\mathfrak{so}(H^2(X,\BQ))$ acts by derivations. In other words, if $\Ft \subset \mathfrak{so}(H^2(X))$ 
is a maximal Cartan, we have a decomposition
\[ H^{\ast}(X) = \bigoplus_{\lambda : \Ft \to \BZ} V_{\lambda}, \]
which is multiplicative, i.e. $V_{\lambda} \cdot V_{\mu} \subset V_{\lambda + \mu}$.
Here $\lambda$ runs over all weights of the torus and $V_{\lambda}$ is the corresponding eigenspace.

For a Hilbert scheme, let $\delta = c_1( \CO_S^{[n]})$ and recall the natural decomposition\[ H^2(S^{[d]}) = H^2(S) \oplus \BQ \delta. \]
We consider the subliealgebra $\mathfrak{so}(H^2(S)) \subset \mathfrak{so}(H^2(X,\BQ))$ and for a Cartan $\Ft' \subset \mathfrak{so}(H^2(S))$ the associated decomposition
\[ H^{\ast}(S^{[d]}) = \bigoplus_{\mu : \Ft' \to \BZ} V_{\mu}. \]
Since Chern classes are monodromy invariant, they lie in $V_{0}$; see \cite{LL} for a discussion.
If $D=0$ there is nothing to prove. Otherwise, we can choose the Cartan $\Ft'$ such that $D$ lies in a non-zero eigenspace of the action of $\mathfrak{so}(H^2(S))$ on $H^2(S,\BQ)$.
Since the map 
$\gamma \mapsto \pi_{\Hilb*}\left(\ch_d(\oO_\mathcal{Z}) \pi_S^{\ast}(\gamma) \right)$ is equivariant with respect to the action
of  $\mathfrak{so}(H^2(S))$ on $H^{\ast}(S^{[d]})$ and $H^{\ast}(S)$ respectively, we conclude that also
\[ \fG_3(D) = \pi_{\Hilb*}\left(\ch_3(\oO_\mathcal{Z}) \pi_S^{\ast}(\gamma) \right) \]
is of non-trivial weight with respect to $\Ft'$, i.e. lies in $V_{\mu}$ for $\mu \neq 0$.

By multiplicativity of the decomposition, it follows that the integrand $c_{2d-2}(S^{[d]}) \cdot \fG_3(D)$ is of non-zero weight, hence its integral must be zero. This proves (i).

For part (ii) we start with the vanishing from part (i): For any divisor $W \in H^2(S,\BQ)$ we have
\[ \int_{S^{[n]}} c_{2d-2}(S^{[n]}) \fG_3(W) = 0. \]
%We use now the Looijenga-Lunts-Verbitsky Lie algebra.
In the notation of \cite[Eqn.~(36)]{NOY}
consider the element $h_{F \delta} = F \wedge \delta$ in $\mathfrak{so}(H^2(X,\BQ))$ for some $F \in H^2(S, \BQ)$.
Since the integrated degree $0$ part of the LLV algebra acts as ring isomorphisms
and preserves the Chern classes (see \cite{LL}), we have
\begin{align}\label{invariance} \int_{S^{[n]}} c_{2d-2}(S^{[n]}) e^{t\cdot h_{F \delta}}( \fG_3(W) )
&= \int_{S^{[n]}} e^{t \cdot h_{F \delta}} ( c_{2d-2}(S^{[n]}) \fG_3(W) ) \\ \nonumber
&= \int_{S^{[n]}} c_{2d-2}(S^{[n]}) \fG_3(W) = 0. \end{align}
By \cite[Prop.~4.4]{NOY}, we have that
\[
h_{F \delta}( \fG_3(D_1) ) = - \fG_2(F) \fG_2(W) - \blangle F, W \brangle \fG_2(1) \fG_2(\pt) - \blangle F, W \brangle \fG_4( 1 ).
\]
Taking the derivative $\frac{d}{dt}|_{t=0}$ of Eqn.~\eqref{invariance}, we find that:
\[
\blangle F, W \brangle \int_{S^{[n]}} c(T_{S^{[n]}}) \fG_4(1)
=
- \int_{S^{[n]}} c(T_{S^{[n]}}) \fG_2(F) \fG_2(W) - n \blangle F, W \brangle \int_{S^{[n]}} c(T_{S^{[n]}}) \fG_2(\pt).
\]
Note that by Theorem~\ref{thm on fujiki cons}, we have
\[
\sum_{n \geqslant   0} q^n \int_{S^{[n]}} c(T_{S^{[n]}}) \fG_2(F) \fG_2(W) = \blangle F, W \brangle \prod_{n \geqslant   1} (1-q^n)^{-24} q \frac{d}{dq} G_2(q).
\]
Moreover, by considering $\log$-derivative, one has 
\begin{equation}\label{diffeqn}  q \frac{d}{dq} \prod_{n \geqslant   1} (1-q^n)^{-24} = \prod_{n \geqslant   1} (1-q^n)^{-24} ( 1 + 24 G_2 ). \end{equation}
Therefore we find as desired
\begin{align}  \sum_{n \geqslant   0} q^n \int_{S^{[n]}} c(T_{S^{[n]}}) \fG_4(1)
& = - \prod_{n \geqslant   1} (1-q^n)^{-24} \left( 2 q \frac{d}{dq} G_2(q) + (\frac{1}{24} + G_2) (1 + 24 G_2) \right) \label{sstwet} \\
& = - \prod_{n \geqslant   1} (1-q^n)^{-24} ( 5/3 G_4 + 20 G_2^2 + 2 G_2 + 1/24 ), \notag
\end{align}
where we used the following Ramanujan differential equation \cite[pp. 49, Prop.~15]{1-2-3}
\[ q \frac{d}{dq} G_2(q) = - 2 G_2(q)^2 + \frac{5}{6} G_4(q). \qedhere \]
%The claim then follows by a straightforward calculation from the first part and Theorem~\ref{thm on fujiki cons}.
\end{proof}

\subsection{Descendent integrals on moduli spaces of $1$-dimensional sheaves}\label{sect on des int on mod}
Let $\beta \in H_2(S,\BZ)$ be an effective curve class and let
$M_{S,\beta}$ be the moduli space of one dimensional stable sheaves $F$ on $S$ with $[F]=\beta$ and $\chi(F)=1$.
By a result of Mukai \cite{M}, $M_{S,\beta}$ is a smooth projective holomorphic symplectic variety of dimension $\beta^2+2$. 
Let $\BF$ be the normalized universal family, i.e. which satisfies $\det \dR \pi_{M\ast} \BF = \CO_{M_{S,\beta}}$.
For $\alpha \in H^{\ast}(S)$, we define the descendents
\[ \sigma_d(\alpha) = \pi_{M\ast}( \pi_S^{\ast}(\alpha) \ch_d(\BF) ). \]
We have the following evaluations:
%\begin{prop}
%\begin{align}
%\int_{M_{S,\beta}} c_{2d-2}(T_{M_{S,\beta}}) \sigma_3(1)\sigma_1(\pt) & = 0,  \tag{i} \\
%\int_{M_{S,\beta}} c_{2d-2}(T_{M_{S,\beta}}) \sigma_3(1)\sigma_2(D) & = , \tag{ii} \\
%\int_{M_{S,\beta}} c_{2d-2}(T_{M_{S,\beta}}) \sigma_2(D)\sigma_1(\pt) & = , \tag{iii}
%\end{align}
%\end{prop}
%We will also need the following:
\begin{prop} \label{prop:desc M_Sbeta} Let $\beta \in H_2(S,\BZ)$ be an effective curve class. For the point class $\pt\in H^4(S)$ and $D \in H^2(S)$, we have
\begin{align}
\int_{M_{S,\beta}} c( T_{M_{S,\beta}} ) \sigma_2(\pt) & = N_1\left( \frac{\beta^2}{2} \right),  \tag{i} \\
\int_{M_{S,\beta}} c( T_{M_{S,\beta}} ) \sigma_3(D) & = - (D \cdot \beta) N'\left( \frac{\beta^2}{2} \right), \tag{ii} \\
\int_{M_{S,\beta}} c( T_{M_{S,\beta}} ) \sigma_4(1) & = - N'\left( \frac{\beta^2}{2} \right), \tag{iii}
\end{align}
where $N_{1}(l)$, $N'(l)$ for all $l \in \BZ$ are defined by the generating series
\begin{align}\label{equ py formula} 
\sum_{l \in \BZ} N_{1}(l)\,q^l &=
\left(\frac{1}{q} \prod_{n\geqslant   1}\frac{1}{(1-q^n)^{24}}\right) 
\left(q \frac{d}{dq}G_2(q)\right) \\  
&=1+30q+480q^2+5460q^3+ 49440 q^4 + 378420 q^5 +2540160 q^6+ \cdots , \nonumber \\ \nonumber
\sum_{l \in \BZ} N'(l)\,q^l & =
\left(\frac{1}{q} \prod_{n\geqslant   1}\frac{1}{(1-q^n)^{24}}\right)
\left( q \frac{d}{dq} G_2 + G_2 + \frac{1}{24} \right) \\ \nonumber
& = 2 + 57q + 880q^{2} + 9735q^{3} + 86160q^{4} + 646850q^{5} + 4269888 q^6+\cdots. \notag
\end{align}
\end{prop}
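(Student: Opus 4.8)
The strategy is to reduce all three integrals to tautological integrals on the Hilbert scheme $S^{[n]}$ with $n = \beta^2/2 + 1$, which were evaluated in Proposition~\ref{prop on des on hilb1}, Proposition~\ref{lem on vanishing} and Theorem~\ref{thm on fujiki cons}. First I would record that the Mukai vector $v = (0,\beta,1)$ of the sheaves parametrized by $M_{S,\beta}$ is \emph{primitive} for every effective $\beta$, since its $H^4$-component equals $1$; this is exactly why the statement holds for all effective classes and not only primitive ones. By Mukai \cite{M} the moduli space is then smooth of dimension $\langle v,v\rangle + 2 = \beta^2+2 = 2n$. As $\langle v,v\rangle = 2n-2$ coincides with the square of the Mukai vector $v' = (1,0,1-n)$ of ideal sheaves of $n$ points, $M_{S,\beta}$ is deformation equivalent to $M_{S,v'} = S^{[n]}$.

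Next I would invoke Markman's framework of monodromy operators \cite{Markman}. Choosing an isometry $g$ of the Mukai lattice $\widetilde{H}(S)$ with $g(v) = v'$ produces a parallel transport operator $\gamma_g \colon H^{*}(M_{S,\beta}) \xrightarrow{\sim} H^{*}(S^{[n]})$ which is a graded ring isomorphism, carries $c_k(T_{M_{S,\beta}})$ to $c_k(T_{S^{[n]}})$, and intertwines the descendent maps up to the action of $g$ on the $S$-factor of the universal class. Concretely, writing the Mukai vector $\ch(\BF)\sqrt{\td_S}$ of the normalized universal family as an element of $H^{*}(M_{S,\beta})\otimes \widetilde{H}(S)$, the operator $\gamma_g$ carries it to the corresponding universal class on $S^{[n]}\times S$ with $g$ acting on the $\widetilde{H}(S)$-factor. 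Since $\gamma_g$ preserves the Chern classes of the tangent bundle, each integral $\int_{M_{S,\beta}} c(T)\,\sigma_d(\alpha)$ equals the integral over $S^{[n]}$ of $c(T)$ against the transported descendent.

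It then remains to carry out the transport explicitly (the content of \S\ref{sect on transpot}). On $S^{[n]} = M_{S,v'}$ the normalized universal family is the universal ideal sheaf $\CI_{\CZ}$ up to a twist by a line bundle pulled back from $S^{[n]}$, so its descendents are expressed through the classes $\fG_d(\alpha)$ via $\ch(\CI_{\CZ}) = 1 - \ch(\CO_{\CZ})$. Because $g$ mixes the $H^0$, $H^2$ and $H^4$ summands of $\widetilde{H}(S)$, each of the codimension-two classes $\sigma_2(\pt)$, $\sigma_3(D)$, $\sigma_4(1)$ transports to an explicit linear combination of the codimension-two tautological classes $\fG_4(1)$, $\fG_3(D)$, $\fG_2(\pt)$, together with products of divisor descendents and of the class $\delta = c_1(\CO_S^{[n]})$. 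Plugging these combinations into the evaluations of Proposition~\ref{prop on des on hilb1}, Proposition~\ref{lem on vanishing} and Theorem~\ref{thm on fujiki cons} (the latter also supplying the products $\fG_2(F)\fG_2(W)$ of divisor descendents via Fujiki constants), and simplifying the resulting $q$-series with the Ramanujan relation $q\frac{d}{dq}G_2 = -2G_2^2 + \frac{5}{6}G_4$ and the log-derivative identity \eqref{diffeqn}, I expect the three series to collapse to $N_1$ and $N'$ as stated.

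The main obstacle is the explicit bookkeeping in the transport step: pinning down the monodromy operator $g$ and, with it, the precise linear combination of Hilbert-scheme descendents—including the normalization twist and the $\sqrt{\td_S}$ correction factors—into which $\sigma_2(\pt)$, $\sigma_3(D)$ and $\sigma_4(1)$ are carried. Once this combination is fixed, the evaluation is merely a matter of assembling already-established $q$-series identities; the delicate point is that the naive guess $\sigma_d(\alpha)\mapsto \fG_d(\alpha)$ is \emph{false}—for instance $\int_{S^{[n]}} c(T)\fG_2(\pt)$ has generating series $(\tfrac{1}{24}+G_2)\prod_{n\geqslant 1}(1-q^n)^{-24}$ rather than $N_1$—so the mixing of the Mukai components under $g$ must be tracked with care in order to produce the correct final series.
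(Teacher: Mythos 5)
Your proposal follows essentially the same route as the paper: transport the integrals from $M_{S,\beta}$ to $S^{[n]}$ via Markman's monodromy operators (with the lattice isometry $g$ sending $(0,\beta,1)$ to $(1,0,1-n)$), then feed the resulting tautological classes into the Hilbert-scheme evaluations of Proposition~\ref{prop on des on hilb1}, Proposition~\ref{lem on vanishing} and Theorem~\ref{thm on fujiki cons}, and simplify with the Ramanujan and log-derivative identities. The bookkeeping you flag as the main obstacle is exactly what the paper handles by introducing the normalized classes $B_k(x)=\bigl[\pi_{M\ast}(u_v\cdot x^{\vee})\bigr]_{\deg 2k}$, in terms of which the transport becomes the literal substitution $x\mapsto gx$ (Theorem~\ref{thm:Markman}), with the dictionaries $\sigma_d \leftrightarrow B_k$ and $B_k \leftrightarrow \fG_d$ worked out in Examples~\ref{example:B Mbeta} and~\ref{example:B Hilb}; your observation that the naive correspondence $\sigma_d(\alpha)\mapsto\fG_d(\alpha)$ fails is correct and is precisely why that formalism is needed.
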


\subsection{Transport of integrals to Hilbert schemes}\label{sect on transpot}
For the proof of Proposition~\ref{prop:desc M_Sbeta}
we will use the general framework of monodromy operators of Markman \cite{Markman} (see also \cite{OUniversality}) to
transport the integrals to the Hilbert schemes.

Consider the Mukai lattice, which is the lattice $\Lambda = H^{\ast}(S,\BZ)$ endowed with the Mukai pairing
\[ \blangle x , y \brangle := - \int_S x^{\vee} y, \]
where, if we decompose an element $x \in \Lambda$ according to degree as $(r,D,n)$, we write $x^{\vee} = (r,-D,n)$.
%We will also write
%\[ \rk(x) = r, \quad c_1(x) = D, \quad v_2(x) = n. \]
Given a sheaf or a complex of sheaves $E$ on $S$, its Mukai vector is defined by
\[ v(E) := \sqrt{\td_S} \cdot \ch(E) \in \Lambda. \]
Let $M(v)$ be a proper smooth moduli space of stable sheaves on $S$ with Mukai vector $v \in \Lambda$ (where stability is with respect to some fixed polarization).
We assume that there exists a universal family $\BF$ on $M(v) \times S$.
If it does not exist, everything below can be made to work by working with the Chern character $\ch(\BF)$
of a quasi-universal family, see \cite{Markman} or \cite{OUniversality}.
Let $\pi_M, \pi_S$ be the projections to $M(v)$ and $S$.
One has the Mukai morphism $\theta_{\BF} : \Lambda \to H^2(M(v))$
defined by
\[ \theta_{\BF}(x) = \left[ \pi_{M \ast}\left( \ch(\BF) \cdot \sqrt{\td_S} \cdot x^{\vee} \right) \right]_{\deg = 2}, \]
where $[ - ]_{\deg = k}$ stands for extracting the degree $k$ component
and where (as we will also do below) have suppressed the pullback maps from the projection to $S$.
The morphism restricts to an lattice isometry
\begin{equation} \theta_{\BF} : v^{\perp} \to H^2(M(v),\BZ) \label{lattice iso} \end{equation}
where on the right we consider the Beauville-Bogomolov-Fujiki form.
Define the universal class
\[ u_v = \exp\left( \frac{ \theta_{\BF}(v) }{\blangle v,v \brangle} \right) \ch(\BF) \sqrt{\td_S}, \]
which is independent of the choice of universal family $\BF$.
For $x \in \Lambda$, consider the normalized descendents:
\[ B(x) := \pi_{M\ast}( u_v \cdot x^{\vee} ), \]
and let $B_k(x) = [ B(x) ]_{\deg=2k}$ its degree $2k$ component.
\begin{example} \label{example:B Hilb}
For $v=(1,0,1-d)$, the moduli space becomes the punctual Hilbert scheme: $M(v) = S^{[n]}$. % the Hilbert scheme of $n$ points on $S$.
%\begin{YC}Why call it  punctual Hilbert scheme? \end{YC}
%\begin{GO}Because you can also consider the Hilbert scheme of curves on a K3 surface \end{GO}
Then we have
\[ u_v = \exp\left( \frac{-\delta}{2d-2} \right) \ch( \CI_{\CZ} ) \sqrt{\td_S}, \]
where we let $\delta = \pi_{\ast} \ch_3( \CO_{\CZ} )$ (so that $-2 \delta$ is the class of the locus of non-reduced subschemes).

We define the standard descendents on the Hilbert scheme by
\[ \fG_d(\alpha) = \pi_{\Hilb\ast}( \pi_S^{\ast}(\alpha) \ch_d(\CO_{\CZ}) ) \in H^{\ast}(S^{[d]}), \]
where $\alpha \in H^{\ast}(S)$. One obtains that
\begin{align*}
B_1(\pt) & = - \frac{\delta}{2d-2}, \\
B_2(\pt) & = \frac{1}{2} \frac{\delta^2}{(2d-2)^2} - \fG_2(\pt). 
\end{align*}
For a divisor $D \in H^2(S)$ one finds
\begin{align*}
B_1(D) & = \fG_2(D), \\
B_2(D) & = \fG_3(D) - \frac{\delta}{2d-2} \fG_2(D).
\end{align*}
And for the unit,
\begin{align*}
B_1(1) & = - \frac{1}{2} \delta, \\
B_2(1) & = \frac{3}{4} \frac{\delta^2}{2d-2} - \fG_2(\pt) - \fG_4(1).
\end{align*}
\end{example}

\begin{example} \label{example:B Mbeta}
Let $\beta \in \Pic(S)$ be an effective class of square $\beta \cdot \beta = 2d-2$.
For the Mukai vector $v=(0,\beta,1)$ the moduli space is $M(v) = M_{S,\beta}$.
Let $\BF$ be the normalized universal family, i.e. which satisfies $\det \dR \pi_{M\ast} \BF = \CO$.
For $\alpha \in H^{\ast}(S)$, we define as before the descendents
\[ \sigma_d(\alpha) = \pi_{M\ast}( \pi_S^{\ast}(\alpha) \ch_d(\BF) ). \]
By the normalization condition and GRR, we have:
\begin{equation} c_1(\dR \pi_{M\ast} \BF) = \sigma_3(1) + 2 \sigma_1(\pt) = 0. \label{sigma13vanish} \end{equation}
Moreover, by a direct computation, one also has:
\begin{align*}
B_1(\pt) & = \sigma_1(\pt), \\
\theta_{\BF}(v) & = -\sigma_2(\beta) + \sigma_1(\pt),
\end{align*}
Using the vanishing $c_1( \dR \pi_{M\ast} \BF) = 0$ again yields
\[
B_1(1+\pt) = 
\left[ \ch( \dR \pi_{M\ast} \BF ) \exp\left( \frac{\theta_{\BF}(v)}{2d-2} \right) \right]_{\deg 2}
= \frac{\theta_{\BF}(v)}{2d-2}
= \frac{1}{2d-2} ( \sigma_1(\pt) - \sigma_2(\beta) ). \]
This shows
\begin{align*} 
\sigma_2(\beta) & = \sigma_1(\pt) - (2d-2) B_1(1 + \pt), \\
\theta_{\BF}(v) & = (2d-2) B_1(1 + \pt).
\end{align*}
By rewriting the $B$'s in terms of the $\sigma$'s using the formulae above
and then inverting the relation, we obtain for all $D \in H^2(S)$ by a straightforward calculation the following:
\begin{align*}
\sigma_1(\pt) & = B_1(\pt), \\
\sigma_2(D) & = - (D \cdot \beta) B_1(1 + \pt) - B_1(D),
\end{align*}
and
%From this it follows that $B_1(D) = - B_1(1+\pt) (D \cdot \beta) - \sigma_2(D)$.
\begin{align*}
\sigma_2(\pt) & = B_2(\pt) - B_1(\pt) B_1(1 + \pt), \\
\sigma_3(D) & = - B_2(D) + B_1(D) B_1(1 + \pt) + \frac{1}{2} (D \cdot \beta) B_1(1+\pt)^2, \\
\sigma_4(1) & = B_2(1 - \pt) + 2 B_1(\pt) B_1(1 + \pt) - \frac{1}{2} B_1(1+\pt)^2 \\
& = B_2(1 - \pt) - \frac{1}{2} B_1( 1 + \pt ) B_1( 1 - 3 \pt).
\end{align*}
\end{example}
For later, we also record some pairings
with respect to the Beauville-Bogomolov-Fujiki form:
\begin{lemma} \label{lemma:intersection pairing}
\[ \sigma_3(1) \cdot \sigma_1(\pt) = 0, \quad \sigma_1(\pt) \cdot \sigma_2(D) = D \cdot \beta,
\quad \sigma_3(1) \cdot \sigma_2(D) = -2 (D \cdot \beta). \]
\end{lemma}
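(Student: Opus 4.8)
The plan is to reduce all three identities to pairings of the normalized descendents $B_1(\cdot)$ introduced before Example~\ref{example:B Hilb}, and then to evaluate those using that $\theta_{\BF}$ is an isometry on $v^{\perp}$. First, recall from Example~\ref{example:B Mbeta} that on $M_{S,\beta} = M(v)$ with $v = (0,\beta,1)$ one has $\sigma_1(\pt) = B_1(\pt)$ and $\sigma_2(D) = -(D\cdot\beta)\,B_1(1+\pt) - B_1(D)$, while the vanishing \eqref{sigma13vanish} gives $\sigma_3(1) = -2\,\sigma_1(\pt) = -2\,B_1(\pt)$. In particular the third pairing $\sigma_3(1)\cdot\sigma_2(D) = -2(D\cdot\beta)$ follows at once from the second once $\sigma_1(\pt)\cdot\sigma_2(D) = D\cdot\beta$ is known, so only two genuine computations remain.

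The key step is the general formula
\[ B_1(x)\cdot B_1(y) = \langle x,y\rangle - \frac{\langle v,x\rangle\,\langle v,y\rangle}{\langle v,v\rangle}, \qquad x,y\in\Lambda, \]
where the product on the left is the Beauville-Bogomolov-Fujiki form. To establish it I would unwind the definition $B_1(x) = [\pi_{M\ast}(u_v\, x^{\vee})]_{\deg = 2}$, using $u_v = \exp(\theta_{\BF}(v)/\langle v,v\rangle)\,\ch(\BF)\sqrt{\td_S}$ together with the projection formula, to obtain
\[ B_1(x) = \theta_{\BF}(x) - \frac{\langle v,x\rangle}{\langle v,v\rangle}\,\theta_{\BF}(v) = \theta_{\BF}\!\left( x - \tfrac{\langle v,x\rangle}{\langle v,v\rangle}\,v \right). \]
Here the correction term arises from the degree-$0$ piece $[\pi_{M\ast}(\ch(\BF)\sqrt{\td_S}\,x^{\vee})]_{\deg = 0}$, which equals $-\langle v,x\rangle$ because the restriction of $\ch(\BF)\sqrt{\td_S}$ to a fiber $\{m\}\times S$ has Mukai vector $v$. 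Since $x - \frac{\langle v,x\rangle}{\langle v,v\rangle}v$ is the orthogonal projection of $x$ onto $v^{\perp}$, the isometry \eqref{lattice iso} yields the displayed formula.

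Finally I would substitute the Mukai vectors $\pt = (0,0,1)$, $1+\pt = (1,0,1)$, $D = (0,D,0)$ and $v = (0,\beta,1)$. Using $\langle(r,D,n),(r',D',n')\rangle = D\cdot D' - rn' - r'n$ one reads off $\langle v,\pt\rangle = 0$, $\langle \pt,\pt\rangle = 0$, $\langle \pt,1+\pt\rangle = -1$ and $\langle \pt,D\rangle = 0$, whence $B_1(\pt)\cdot B_1(\pt) = 0$, $B_1(\pt)\cdot B_1(1+\pt) = -1$ and $B_1(\pt)\cdot B_1(D) = 0$. Combining these with the expressions for the $\sigma$'s gives $\sigma_3(1)\cdot\sigma_1(\pt) = -2\,B_1(\pt)\cdot B_1(\pt) = 0$ and $\sigma_1(\pt)\cdot\sigma_2(D) = -(D\cdot\beta)\,B_1(\pt)\cdot B_1(1+\pt) - B_1(\pt)\cdot B_1(D) = D\cdot\beta$, and then the third pairing as noted above. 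The only delicate point is keeping the signs and normalizations consistent throughout—in particular the sign $-\langle v,x\rangle$ of the degree-$0$ term and the convention that $\theta_{\BF}$ restricts to a genuine isometry (rather than an anti-isometry) between the Mukai and Beauville-Bogomolov-Fujiki forms; everything else is a short bookkeeping computation with the Mukai pairing.
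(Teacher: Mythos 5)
Your proof is correct and follows essentially the same route as the paper's: reduce everything to the classes $B_1(x)$, use $\sigma_3(1) = -2\sigma_1(\pt)$ from \eqref{sigma13vanish}, and evaluate via the isometry property of $\theta_{\BF}$ on $v^{\perp}$. The only difference is that you spell out the general formula $B_1(x)\cdot B_1(y) = \langle x,y\rangle - \langle v,x\rangle\langle v,y\rangle/\langle v,v\rangle$ explicitly (which is a nice way to handle the fact that $1+\pt \notin v^{\perp}$), whereas the paper exploits that $\pt \in v^{\perp}$ so the correction term never enters; both arguments are sound and your sign bookkeeping checks out.
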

\begin{proof}
By Eqn.~\eqref{sigma13vanish} and since \eqref{lattice iso} is an isometry 
and moreover $B_1( - )|_{v^{\perp}} = \theta_{\BF}|_{v^{\perp}}$ we have
\[ \sigma_3(1) \cdot \sigma_1(\pt) = -2 \sigma_1(\pt) \cdot \sigma_1(\pt) = -2 B_1(\pt) \cdot B_1(\pt) = 0. \]
Similarly,
\[ \sigma_1(\pt) \cdot \sigma_2(D) = B_1(\pt) \cdot ( - (D \cdot \beta) B_1(1 + \pt) - B_1(D) )
= - (D \cdot \beta) (\pt \cdot 1) = D \cdot \beta. \]
The last one uses again Eqn.~\eqref{sigma13vanish}.
\end{proof}
%where the last one follows from $B(1) = \exp( B_1(1 + \pt) ) \pi_{\ast}( \ch(\BF) \sqrt{\td} )$ and then
%\[ B_2(1) = \frac{1}{2} B_1(1 + \pt)^2 - B_1(1 + \pt) B_1(\pt) + \sigma_4(1) + \sigma_2(\pt). \]
\begin{comment}
One obtains that:
\begin{align*}
B_1(\pt) & = \sigma_1(\pt), \\
B_1(D) & = -\sigma_2(D) \quad \text{for all } D \in H^2(S) \text{ with } D \cdot \beta = 0, \\
B_1(1) 
% & = \left[ \exp( \theta_{\BF}(0,\beta,1)/(2d-2) ) (1 + \sigma_3(1) + \sigma_1(\pt)) \right]_{\deg=2}\\
& = \frac{\theta_{\BF}(v)}{2d-2}  + \sigma_3(1) + \sigma_1(\pt) \\
& = \frac{\theta_{\BF}(v)}{2d-2}  - \sigma_1(\pt) \\
& = \frac{1}{2d-2}( -\sigma_2(\beta) + \sigma_1(\pt) ) - \sigma_1(\pt).
\end{align*}
Inverting we obtain:
\begin{align*}
%\sigma_2(\beta) & = -(2d-2) B_1(1 + \pt) + B_1(\pt) \\
%& = -(2d-2) B_1(1 + \pt) - B_1(\beta) \\
\sigma_2(D) & = - (D \cdot \beta) B_1(1 + \pt) - B_1(D) \ \text{ for all } D \in H^2(S), \\
\theta_{\BF}(v) & = (2d-2) B_1( 1 + \pt ).
\end{align*}
\end{comment} 
Using the descendents $B_k(x)$, one allows to move between any two moduli spaces of stable sheaves on $S$
just by specifying a Mukai lattice isomorphism $g : \Lambda \otimes \BQ \to \Lambda \otimes \BQ$.
We give the details in the case of our interest, 
see \cite{Markman, OUniversality} for the general case.

We want to connect the moduli spaces
\[ M_{S,\beta} \,\, \rightsquigarrow \,\, S^{[d]}. \]
Define the isomorphism $g : \Lambda \otimes \BQ \to \Lambda \otimes \BQ$ by
\[
1 \mapsto (0,0,1), \quad \pt \mapsto (1,-\beta,d-1), \quad (0,D,0) \mapsto (0, D, - (D \cdot \beta) ), \]
for all $D \in H^2(S,\BZ)$.
The isomorphism was constructed so that
\[
(0, \beta, 1) \mapsto (1,0,1-d), \quad 1 \mapsto (0,0,1), \quad (2d-2,\beta,0) \mapsto (0,\beta,0),
\quad g|_{ \{1, \beta, \pt \}^{\perp}} = \id,
\]
which shows that it is a lattice isomorphism.
Then one has:
\begin{thm}$($Markman \cite[Thm.~1.2]{Markman}, reformulation as in \cite[Thm.~4]{OUniversality}$)$ \label{thm:Markman} For any $k_i \geqslant   0$, $\alpha_i \in H^{\ast}(S)$ and any polynomial $P$,
\[
\int_{ M_{S,\beta} } P( B_{k_i}(\alpha_i) , c_j( T_{M_{S,\beta}} ) ) 
=
\int_{ S^{[d]} } P( B_{k_i}(g \alpha_i) , c_j( T_{S^{[n]}}  ) ). 
\]
\end{thm}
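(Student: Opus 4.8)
The plan is to prove Theorem~\ref{thm:Markman} as a \emph{universality} statement: the collection of all tautological integrals on a moduli space $M(v)$ of stable sheaves on $S$ is an invariant of the pair $(\Lambda,v)$ up to isometry, and the particular isometry $g$ records the passage from $M_{S,\beta}$ to $S^{[d]}$. The first reduction is that the cohomology $H^{\ast}(M(v),\BQ)$ is generated as a $\BQ$-algebra by the normalized universal classes $B_k(x)$, $x\in\Lambda$, $k\geqslant 0$ (Markman \cite{Markman}); consequently every tautological integral, in particular both sides of the asserted identity, is a $\BQ$-linear combination of integrals of monomials in the $B_k(x)$ and the Chern classes $c_j(T_{M(v)})$. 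It therefore suffices to exhibit, for the given $g$, a graded ring isomorphism $\Psi\colon H^{\ast}(M_{S,\beta},\BQ)\xrightarrow{\sim} H^{\ast}(S^{[d]},\BQ)$ which preserves the Chern classes of the tangent bundle and satisfies $\Psi(B_k(x)) = B_k(g\,x)$ for all $x,k$. Integrating any polynomial $P$ then yields the theorem, since $\Psi$ preserves the fundamental class (both spaces are of dimension $2d$).

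Such a $\Psi$ will be assembled from two kinds of geometric operators, matching the two parts of the orthogonal group of the Mukai-type lattice. \textbf{(a) Continuous part.} The LLV Lie algebra $\Fg(M(v))\cong\mathfrak{so}(H^2(M(v),\BQ)\oplus U_{\BQ})$ acts on $H^{\ast}(M(v),\BQ)$, and — exactly as used in the proof of Proposition~\ref{lem on vanishing} — its degree-zero subalgebra $\mathfrak{so}(H^2(M(v)))$ acts by derivations. Under the Mukai isometry $\theta_{\BF}\colon v^{\perp}\xrightarrow{\sim}H^2(M(v))$ this identifies $\mathfrak{so}(v^{\perp}_{\BQ})$ with a subalgebra acting by derivations; exponentiating produces ring automorphisms that fix every $c_j(T_{M(v)})$ (these being monodromy, hence LLV, invariant) and send $B_k(x)\mapsto B_k(h\cdot x)$ for $h$ in the identity component $SO^{+}(v^{\perp}_{\BQ})$. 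This realizes the isometries of $\Lambda_{\BQ}$ lying in the relevant identity component. \textbf{(b) Discrete part.} Derived equivalences and autoequivalences of $D^{b}(S)$ — shifts, tensoring by line bundles, spherical twists along $\CO_S$, and Fourier–Mukai transforms — induce isomorphisms (or correspondences) between moduli spaces $M(v)$ and $M(v')$; each acts on cohomology by a ring isomorphism that preserves the tangent Chern classes and transforms $B_k(x)$ by the induced Hodge isometry of the Mukai lattice. These supply the reflections and hyperbolic isometries needed beyond the identity component.

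The final step is to factor the given $g$ — which is built to send $(0,\beta,1)\mapsto(1,0,1-d)$ and $1\mapsto(0,0,1)$ and to act as the identity on $\{1,\beta,\pt\}^{\perp}$ — as a product of isometries realized in (a) and (b), thereby producing the operator $\Psi$ from $M_{S,\beta}=M(0,\beta,1)$ to $S^{[d]}=M(1,0,1-d)$. Composing gives $\Psi(B_k(x))=B_k(g\,x)$ and $\Psi(c_j(T))=c_j(T)$, and integrating $P$ yields precisely the claimed equality. I expect the main obstacle to be exactly this realizability: showing that the subgroup of the orthogonal group of the LLV lattice generated by the operators of (a) and (b) is large enough to contain $g$. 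This is the heart of Markman's ``integral constraints on the monodromy group'' \cite{Markman}: one must prove that parallel transport along families of $K3$ surfaces and their moduli spaces, together with derived equivalences, surjects onto a finite-index subgroup of the relevant orthogonal group, and then use the continuous LLV action of (a) to bridge the gap caused by $g$ being only a \emph{rational} isometry. A secondary technical point, handled by working with $u_v$ and quasi-universal families as in the setup preceding the statement (and as made explicit in Examples~\ref{example:B Hilb} and~\ref{example:B Mbeta}), is to track the normalizations of the universal classes so that the equivariance $B_k(x)\mapsto B_k(g\,x)$ holds on the nose rather than up to a scalar.
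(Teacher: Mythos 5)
You are not being compared against an internal argument here: the paper gives no proof of Theorem~\ref{thm:Markman} at all, but imports it wholesale from Markman \cite[Thm.~1.2]{Markman} via the reformulation in \cite[Thm.~4]{OUniversality}. Measured against Markman's actual proof, your outline reconstructs its architecture correctly at the level of headlines (a continuous LLV part acting by derivations, a discrete part coming from Fourier--Mukai and monodromy operators, then a realizability question for the given isometry $g$), but as a standalone proof it is circular at the decisive step.

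The gap is in your step (b). You assert that derived equivalences ``induce isomorphisms (or correspondences) between moduli spaces $M(v)$ and $M(v')$; each acts on cohomology by a ring isomorphism that preserves the tangent Chern classes and transforms $B_k(x)$ by the induced Hodge isometry.'' That assertion is not an input to the theorem --- it essentially \emph{is} the theorem. A derived equivalence $\Phi$ with $\Phi(v)=v'$ does not preserve stability: $\Phi(F)$ is typically a complex, not a stable sheaf, so no morphism of moduli spaces is induced, and the existence of a cohomological correspondence $\gamma_g \in H^{\ast}(M(v) \times M(v'))$ that is a degree-preserving ring isomorphism, fixes the $c_j(T)$, and intertwines the normalized universal classes is exactly what Markman proves. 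His proof requires deforming $(S,v,v')$ to special configurations where the Fourier--Mukai transform does preserve stability (relying on Yoshioka's results and stratified elementary modifications), then transporting the resulting class back by parallel transport. Your ``main obstacle\ldots realizability'' names only the group-theoretic question of generating enough isometries; the prior and harder problem --- attaching a well-behaved operator to a single Fourier--Mukai transform at all --- is not addressed by your plan.

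Two smaller corrections. First, the $g$ of this paper is an \emph{integral} isometry of the Mukai lattice (check the images of the basis and of $g^{-1}$, e.g.\ $g^{-1}(0,D,0)=(D\cdot\beta,D,0)$ and $g^{-1}(1,0,0)=(d-1,\beta,1)$), so the LLV bridge for rational isometries --- which is indeed how \cite{OUniversality} extends Markman's integral statement --- is not needed for this particular application. Second, a graded ring isomorphism $\Psi$ does not by itself preserve integrals; one must check it sends the point class to the point class, which is automatic for Markman's operators because they arise from algebraic correspondences, but is an unproved assertion in your construction. Relatedly, defining $\Psi$ on the generators $B_k(x)$ requires verifying that all relations among them are preserved --- again something the correspondence construction yields for free and a generators-and-relations definition does not.
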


\subsection{Proof of Proposition~\ref{prop:desc M_Sbeta}}
Let $\beta \in H_2(S,\BZ)$ be an effective curve class
with $\beta^2 = 2d-2$.
We begin with the first evaluation. The strategy is to use Theorem~\ref{thm:Markman}
and the formulae given in Examples~\ref{example:B Hilb},~\ref{example:B Mbeta}
to move between the standard descendents and Markman's $B$-classes.
We obtain:
\begin{align*}
& \int_{M_{S,\beta}} c_{2d-2}(T_{M_{S,\beta}}) \sigma_2(\pt) \\
= & \int_{M_{S,\beta}} c_{2d-2}(T_{M_{S,\beta}}) ( B_2(\pt) - B_1(\pt) B_1(1 + \pt) ) \\
= & \int_{S^{[d]}} c_{2d-2}(T_{S^{[d]}}) ( B_2(1 - \beta + (d-1)\pt) - B_1(1 - \beta + (d-1)\pt) B_1( 1 - \beta + d \pt) ). 
\end{align*}
Observe that:
\begin{align*}
B_1(1 - \beta + (d-1)\pt) & = - \delta - \fG_2(\beta), \\
B_1( 1 - \beta + d \pt ) & = - \delta - \fG_2(\beta) - \frac{ \delta}{2d-2}, \\
B_2(1 - \beta + (d-1)\pt) & = \frac{\delta^2}{2d-2} - d \fG_2(\pt) - \fG_4(1) 
- \left(\fG_3(\beta) - \frac{\delta}{2d-2} \fG_2(\beta) \right),
\end{align*}
Using the vanishing in Proposition~\ref{lem on vanishing} (i) we find that
\begin{align*}
& \int_{S^{[d]}} c_{2d-2}(T_{S^{[d]}}) B_2(1 - \beta + (d-1)\pt) \\
= & \int_{S^{[d]}} c_{2d-2}(T_{S^{[d]}}) \left(  \frac{\delta^2}{2d-2} - d \fG_2(\pt) - \fG_4(1) \right)\\
= & - C(c_{2d-2}(T_{S^{[d]}} )) 
- \int_{S^{[d]}} c_{2d-2}(T_{S^{[d]}}) (d \fG_2(\pt) + \fG_4(1)),
\end{align*}
as well as
\[\int_{S^{[d]}} c_{2d-2}(T_{S^{[d]}}) B_1(1 - \beta + (d-1)\pt) B_1( 1 - \beta + d \pt)
= - C(c_{2d-2}(T_{S^{[d]}} )). \]
Let us write
\[ A_d = \int_{M_{S,\beta}} c_{2d-2}(T_{M_{S,\beta}}) \sigma_2(\pt),\]
which is well-defined since the above shows that the right hand side only depends on $d$.
Taking generating series
and using the evaluations
of descendents on $S^{[d]}$
(in particular, the expression \eqref{sstwet} and the differential equation \eqref{diffeqn}), we conclude
\begin{align*}
\sum_{d} A_d q^d 
& =
- \sum_{d \geqslant   0} q^d \int_{S^{[d]}} c_{2d-2}(T_{S^{[d]}}) (d \fG_2(\pt) + \fG_4(1)) \\
& = 
- q \frac{d}{dq}\left( M(q) ( 1/24 + G_2 ) \right)
+ M(q) \left( 2 q \frac{d}{dq} G_2 + \left( \frac{1}{24} + G_2 \right)(1 + 24 G_2) \right) \\
& = M(q) q \frac{d}{dq} G_2,
\end{align*}
where we denote $M(q) = \prod_{n \geqslant   1} (1-q^n)^{-24}$.
This proves the first evaluation (after shifting the generating series by $q$).

For the second case, one argues similarly, and obtains
\[
\int_{M_{S,\beta}} c_{2d-2}(T_{M_{S,\beta}}) \sigma_3(D)
=
- (D \cdot \beta) \left(
\int_{S^{[d]}} c_{2d-2}(T_{S^{[d]}}) \fG_2(\pt) + C(c_{2d-2}(T_{S^{[d]}})) 
\right).
\]
In the third case, one obtains
\begin{equation*}
\int_{M_{S,\beta}} c_{2d-2}(T_{M_{S,\beta}}) \sigma_4(1)
=
\int_{S^{[d]}} c_{2d-2}(T_{S^{[d]}}) ( \fG_4(1) + (d-1) \fG_2(\pt)). \qed
\end{equation*}

\section{Product of $K3$ surfaces}\label{sect on prod}
In this section, we consider the product of two $K3$ surfaces $S$ and $T$:
\[ X = S \times T. \]
If the curve class $\beta \in H_2(S \times T, \BZ)$
is of non-trivial degree over both $S$ and $T$, then
one can construct two linearly independent cosections,
which imply that the reduced invariants of $X$ in this class vanish.\footnote{Of course, one may work with $2$-reduced invariants but the moduli spaces becomes more difficult to handle. We leave the study of the $2$-reduced theory to a future work.}
Because of that we always take  $\beta$ in the image of the natural inclusion
\[ \iota_{\ast} : H_2(S,\BZ) \hookrightarrow H_2(X,\BZ), \]
where $\iota : S \times \{ t \} \hookrightarrow X$ is the inclusion of a fiber.
In \S \ref{subsec:GW K3xK3}, we first discuss the computations of GW/GV invariants.
Then we completely determine all $\DT_4$ invariants. By comparing them, we prove Conjecture \ref{conj on DT4/GV} for $X=S\times T$. 

\subsection{Gromov-Witten invariants} \label{subsec:GW K3xK3}
For $\beta\in H_2(S,\mathbb{Z})\subseteq H_2(X,\mathbb{Z})$,
by the product formula in Gromov-Witten theory \cite{Bproduct}, the reduced virtual classes satisfy
\begin{equation} \label{product formula}
[ \Mbar_{g,n}(X,\beta) ]^{\text{vir}}
=
\begin{cases}
[ \Mbar_{0,n}(S,\beta) ]^{\text{vir}} \times [T] & \text{ if } g=0 \\
[ \Mbar_{1,n}(S,\beta) ]^{\text{vir}} \times ( c_2(T) \cap [T]) & \text{ if } g=1 \\
\quad \quad\quad 0 & \text{ if } g \geqslant   2.
\end{cases}
\end{equation}

The Gromov-Witten theory of $K3$ surfaces in low genus is well-known.

In genus $0$, one defines BPS numbers $n_{0,\beta}(S)$ by the multiple cover formula
\begin{equation}\label{multiple cover formu}\deg[\overline{M}_{0,0}(S,\beta)]^{\rm{vir}}
=
\sum_{\begin{subarray}{c}k\geqslant   1, k|\beta  \end{subarray}}\frac{1}{k^{3}}\cdot n_{0,\beta/k}(S). 
\end{equation}
By the Yau-Zaslow formula proven by Klemm, Maulik, Pandharipande and Scheidegger \cite{KMPS},
the invariant $n_{0,\beta}(S)$ only depends on the square $\beta^2$.
By the evaluation for primitive curve classes due to Bryan and Leung \cite{BL},
one then has
\begin{equation}\label{def of N0} n_{0,\beta}(S) = N_{0}\left(\frac{\beta^2}{2}\right), \end{equation}
where
\begin{align}\label{defi of N0}\sum_{l\in\mathbb{Z}}N_{0}(l)\, q^l&=\frac{1}{q} \prod_{n\geqslant   1}\frac{1}{(1-q^n)^{24}} \\
&=q^{-1} + 24 + 324 q + 3200 q^2 + 25650 q^3+\cdots. \nonumber \end{align}
In genus $1$, by Pandharipande-Yin \cite[pp.~12,~(8)]{PY}, we have the multiple cover formula 
\begin{equation}\label{py form}\int_{[\overline{M}_{1,1}(S, \beta)]^{\vir}}\mathrm{ev}^*(\pt)
=\sum_{\begin{subarray}{c}k\geqslant   1, k|\beta  \end{subarray}}k\cdot N_{1}\left(\frac{\beta^2}{2k^2}\right), \end{equation}
where $N_{1}(l)$ is defined as in \eqref{equ py formula} of the last section, that is
\begin{align*}
\sum_{l \in \BZ} N_{1}(l)\,q^l &=
\left(\frac{1}{q} \prod_{n\geqslant   1}\frac{1}{(1-q^n)^{24}}\right)
\left(q \frac{d}{dq}G_2(q)\right) \\
&=1+30q+480q^2+5460q^3+ 49440 q^4 + 378420 q^5 + \cdots. 
\end{align*}
We remark that although genus $2$ Gromov-Witten invariants are zero \eqref{product formula}, the corresponding Gopakumar-Vafa invariants are 
nontrivial (Proposition \ref{prop on GW for prod}).

\subsection{Gopakumar-Vafa invariants}
%We determine the Gopakumar-Vafa invariants.
Let $\gamma,\gamma'\in H^{4}(X)$ be cohomology classes and
\begin{align*}\gamma &=A_1\cdot 1\otimes \pt+D_1\otimes D_2+A_2\cdot \pt\otimes 1, \\
\gamma' &=A'_1\cdot 1\otimes \pt+D'_1\otimes D'_2+A'_2\cdot \pt\otimes 1
\end{align*}
be their decompositions under the K\"unneth isomorphism:
$$H^{4}(X)\cong (H^0(S)\otimes H^4(T))\oplus (H^2(S)\otimes H^2(T))\oplus (H^4(S)\otimes H^0(T)). $$
Fix also a curve class
$$\alpha=\theta_1\otimes \pt+\pt\otimes \theta_2\in H^6(X) \cong (H^2(S)\otimes H^4(T))\oplus (H^4(S)\otimes H^2(T)).$$

\begin{prop}\label{prop on GW for prod}
For any effective curve class $\beta\in H_2(S,\mathbb{Z})\subseteq H_2(X,\mathbb{Z})$,
we have
%the genus $0$ Gopakumar-Vafa invariants are:
\begin{align*}
n_{0,\beta}(\gamma, \gamma') &=(D_1\cdot\beta)\cdot (D_1'\cdot\beta)\cdot\int_T(D_2\cdot D_2')\cdot N_{0}\left(\frac{\beta^2}{2}\right), \\
n_{0,\beta}(\alpha)&=(\theta_1\cdot \beta)\,N_{0}\left(\frac{\beta^2}{2}\right). 
\end{align*}
If $\beta$ is primitive, we have
\begin{align*}
n_{1, \beta}(\gamma) = 24\,A_2\,  N_1\left(\frac{\beta^2}{2}\right), \quad n_{2,\beta}= N_2\left( \frac{\beta^2}{2} \right),
\end{align*}
where $N_1(l)$ is defined as in \eqref{equ py formula} and 
\begin{align}\label{defi of N2}\sum_{l\in\mathbb{Z}}N_{2}(l)\, q^l
&=\left(\frac{1}{q} \prod_{n\geqslant   1}\frac{1}{(1-q^n)^{24}}\right) \left( 24 q \frac{d}{dq} G_2 - 24 G_2 - 1 \right) \\
&=
72q + 1920q^{2} + 28440q^{3} + 305280q^{4} + 2639760q^{5} + 19450368q^{6} + \cdots.
%q^{-1} + 24 + 324 q + 3200 q^2 + 25650 q^3+\cdots. 
\nonumber \end{align}
In particular, Conjecture \ref{conj on integrality} holds for $X=S\times T$. 
%where $n_{0,h}(S)$ $($$h\in \mathbb{Z}_{\geqslant  0}$$)$ fits into the generating series 
%\begin{equation}\label{equ on yz for}\sum_{h\geqslant  0}n_{0,h}(S)\, q^h=\prod_{n\geqslant   1}\frac{1}{(1-q^n)^{24}}.  \end{equation}
\end{prop}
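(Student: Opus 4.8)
The plan is to reduce every invariant to the known genus $0$ and genus $1$ Gromov--Witten theory of the $K3$ surface $S$ by means of Behrend's product formula \eqref{product formula}, and then to match the outcome against the defining recursions of the Gopakumar--Vafa invariants. Throughout, a stable map in class $\beta\in H_2(S)$ is constant on the $T$-factor, so each evaluation map splits as $\ev_i(f,t)=(\ev_i^S(f),t)$ and $\ev_i^*(\alpha\otimes\eta)=\ev_i^{S*}(\alpha)\cdot\pr_T^*(\eta)$; integrals over $[\Mbar_{0,n}(S,\beta)]^{\vir}\times[T]$ thus factor into an $S$-integral and a $T$-integral. For the genus $0$ invariants, the divisor equation on $S$ converts divisor insertions into factors $(D\cdot\beta)$ multiplying $\deg[\Mbar_{0,0}(S,\beta)]^{\vir}$, while any $S$-side point insertion $\pt_S$ forces a vanishing contribution for dimension reasons (equivalently, by the string equation, since $\ev^*(\pt_S)=0$ on the $1$-dimensional space $\Mbar_{0,1}(S,\beta)$). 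Only the $H^2(S)\otimes H^2(T)$ part of $\gamma,\gamma'$ (respectively the $H^2(S)\otimes H^4(T)$ part of $\alpha$) survives, giving $\blangle\tau_0(\gamma)\tau_0(\gamma')\brangle^{\GW}_{0,\beta}=(D_1\cdot\beta)(D_1'\cdot\beta)\bigl(\int_T D_2 D_2'\bigr)\deg[\Mbar_{0,0}(S,\beta)]^{\vir}$. Substituting the multiple-cover formula \eqref{multiple cover formu} and comparing with Definition~\ref{def of g=0 GV inv}, whose transform supplies exactly the matching powers of $k$, yields the stated formulas with $\deg$ replaced by $N_0(\beta^2/2)$.

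For the genus $1$ invariants the product formula inserts the $0$-cycle $c_2(T)\cap[T]$ of degree $e(T)=24$; pairing a $T$-class against it is nonzero only in degree $0$, so the only surviving Künneth piece of $\gamma$ is the $\pt_S\otimes 1_T$ term. Hence $\blangle\tau_0(\gamma)\brangle^{\GW}_{1,\beta}=24\,A_2\int_{[\Mbar_{1,1}(S,\beta)]^{\vir}}\ev^*(\pt_S)=24\,A_2\,N_1(\beta^2/2)$ for primitive $\beta$ by the Pandharipande--Yin formula \eqref{py form}. Since $c_2(X)=24(\pt_S\otimes 1_T)+24(1_S\otimes\pt_T)$ carries no $H^2\otimes H^2$ component, the genus $0$ analysis gives $\blangle\tau_0(\gamma)\tau_0(c_2(X))\brangle^{\GW}_{0,\beta}=0$, so the correction term in Definition~\ref{defn g1 GV primitive} drops out and $n_{1,\beta}(\gamma)=24\,A_2\,N_1(\beta^2/2)$.

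For genus $2$ the product formula gives $\blangle\varnothing\brangle^{\GW}_{2,\beta}=0$, so Definition~\ref{defn g2 GV primitive} determines $n_{2,\beta}$ through $n_{1,\beta}(c_2(X))=576\,N_1(\beta^2/2)$, the vanishing invariant $\blangle\tau_0(c_2(X))^2\brangle^{\GW}_{0,\beta}=0$, and the nodal count $N_{\mathrm{nodal},\beta}$ of \eqref{Nnodal}. The key computation is $N_{\mathrm{nodal},\beta}$: in the diagonal term, $\ev_1$ and $\ev_2$ agree on the $T$-factor, so $(\ev_1\times\ev_2)^*\Delta_X$ splits as $c_2(T)=24\pt_T$ on $T$ times $(\ev_1^S\times\ev_2^S)^*\Delta_S$ on the $S$-moduli, and the latter evaluates via the divisor equation to $\beta^2\deg[\Mbar_{0,0}(S,\beta)]^{\vir}$; in the descendent term one again isolates the $\pt_T$ factor and applies the dilaton equation $\int_{[\Mbar_{0,1}(S,\beta)]^{\vir}}\psi_1=-2\deg[\Mbar_{0,0}(S,\beta)]^{\vir}$. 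This gives $N_{\mathrm{nodal},\beta}=12(\beta^2+2)N_0(\beta^2/2)$ and hence $n_{2,\beta}=24\,N_1(\beta^2/2)-(\tfrac{\beta^2}{2}+1)N_0(\beta^2/2)$. The final step is the $q$-series identity $N_2(l)=24N_1(l)-(l+1)N_0(l)$, which I would deduce from \eqref{diffeqn}, i.e. $q\frac{d}{dq}M=M(1+24G_2)$ with $M=\prod_{n\geq1}(1-q^n)^{-24}$, after observing $\sum_l(l+1)N_0(l)q^l=q^{-1}\,q\frac{d}{dq}M$.

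Integrality of all three families is then immediate: $N_0,N_1,N_2$ have manifestly integral $q$-expansions (built from $M$ and the integral series $q\frac{d}{dq}G_2$, $24G_2+1$), and every remaining prefactor is an intersection number of integral classes. I expect the main obstacle to be the bookkeeping in $N_{\mathrm{nodal},\beta}$: correctly handling the diagonal and excess contributions on the product, tracking which Künneth components survive once the $T$-integral collapses onto a $0$-cycle, and applying the divisor, string and dilaton relations in their correct \emph{reduced} form on $S$. Once $N_{\mathrm{nodal},\beta}$ is pinned down, the remaining generating-function manipulation is routine.
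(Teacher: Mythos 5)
Your proposal is correct and follows essentially the same route as the paper: Behrend's product formula \eqref{product formula} reduces everything to the reduced genus $0$ and $1$ theory of $S$, the divisor/string/dilaton relations and the Yau--Zaslow and Pandharipande--Yin evaluations give $N_{\mathrm{nodal},\beta}=12(\beta^2+2)N_0(\beta^2/2)$ and the vanishing of all $c_2(X)$-insertions, and the final identity $N_2(l)=24N_1(l)-(l+1)N_0(l)$ is exactly the paper's generating-series manipulation via \eqref{diffeqn} written coefficient-wise. No gaps.
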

%\begin{YC}Is there any relation between $N_{2}(l)$ and genus 2 counts on K3 ? \end{YC}\begin{GO} Good question, I don't know. \end{GO}
\begin{proof} By the divisor equation,
%and since the reduced virtual class is of dimension $0$ on a K3 surface,
we have
\begin{align*}
\blangle \tau_{0}(\gamma)\tau_{0}(\gamma') \brangle^{\GW}_{0,\beta}
&=
(D_1\cdot\beta)\cdot (D_1'\cdot\beta)\cdot\int_T(D_2\cdot D_2')\cdot \deg([\overline{M}_{0, 0}(S, \beta)]^{\rm{vir}}), \\
\blangle \tau_{0}(\alpha)\brangle^{\GW}_{0,\beta}
&=
(\theta_1\cdot\beta)\cdot \deg([\overline{M}_{0, 0}(S, \beta)]^{\rm{vir}}).
\end{align*}
The genus $0$ formula hence follows from Eqn.~\eqref{multiple cover formu} and the Yau-Zaslow formula \eqref{def of N0}.
In genus $1$, the
product formula \eqref{product formula} and Eqn.~\eqref{py form} imply that for any effective class $\beta \in H_2(S,\BZ)$ we have:
\begin{equation} \label{eq:genus1 K3xK3}
\blangle \tau_0(\gamma) \brangle^{\GW}_{1,\beta}=A_2\,e(T)\,\int_{[\overline{M}_{1,1}(S, \beta)]^{\vir}}\mathrm{ev}^*(\pt)
=A_2 \,e(T)\sum_{\begin{subarray}{c}k\geqslant   1, k|\beta  \end{subarray}}k\cdot N_{1}\left(\frac{\beta^2}{2k^2}\right).
\end{equation}
Moreover, by the first part we have
\[ \blangle \tau_0(\gamma) \tau_0(c_2(X)) \brangle^{\GW}_{0,\beta} = 0. \]
Hence taking these formulae for primitive $\beta$ yields the result.

For the genus 2 Gopakumar-Vafa invariant, let $\beta$ be primitive.
Observe that we have
\begin{gather*}
\blangle \varnothing \brangle^{\GW}_{2,\beta}= 0, 
\quad n_{1,\beta}(c_2(X)) = 24^2 N_1(\beta^2/2), 
\quad \blangle \tau_0(c_2(X))^2 \brangle^{\GW}_{0,\beta} = 0.
\end{gather*}
The nodal invariant is computed as follows:
\begin{align*}
N_{\mathrm{nodal},\beta}
& = \frac{1}{2} \left[ \blangle \tau_0(\Delta_X) \brangle^{\GW}_{0,\beta}
- \blangle \tau_1(c_2(T_X)) \brangle^{\GW}_{0,\beta} \right] \\
& = \frac{1}{2} \left[ 24 \int_{[\Mbar_{0,2}(S,\beta)]^{\vir}} (\ev_1 \times \ev_2)^{\ast}(\Delta_S)
- 24 \int_{[ \Mbar_{0,1}(S,\beta) ]^{\vir}} \psi_1 \right] \\
& = \frac{1}{2} \left[ 24 (\beta \cdot \beta) N_{0}(\beta^2/2) 
+ 2 \cdot 24 N_0(\beta^2/2)
\right].
\end{align*}
If $\beta_h$ is a primitive curve class of square $\beta_h^2 = 2h-2$, we conclude:
\begin{align*}
\sum_{h \geqslant  0} N_{\mathrm{nodal},\beta_h} q^{h-1}
& = \frac{1}{2} \left[ 48 q \frac{d}{dq}\left( \frac{1}{\Delta(q)} \right) + 48 \frac{1}{\Delta(q)} \right] \\
& = 24^2 G_2(q) \frac{1}{\Delta(q)} + 24 \frac{1}{\Delta(q)},
\end{align*}
where we used
$\Delta(q) = q \prod_{n \geqslant  1} (1-q^n)^{24}$
and the identity (ref. Eqn.~\eqref{diffeqn}):
\[ q \frac{d}{dq}\left(\frac{1}{\Delta(q)} \right) =  \frac{24G_2(q)}{\Delta(q)}. \]
Using the definition of $n_{2,\beta}$, we conclude that:
\begin{align*}
\sum_{h \geqslant  0} n_{2,\beta_h} q^{h-1}
& =
\left( 24 \frac{1}{\Delta(q)} q \frac{d}{dq} G_2(q) \right)
- \left( 24 G_2(q) \frac{1}{\Delta(q)} + \frac{1}{\Delta(q)} \right) \\
& = \frac{1}{\Delta(q)} \left( 24 q \frac{d}{dq} G_2(q) - 24 G_2(q) - 1 \right). 
\end{align*}
This is exactly the desired result.
\end{proof}
%As an immediate consequence, we have integrality of all genus Gopakumar-Vafa invariants.
%\begin{cor}\label{cor on inte on prod}
%For any effective curve class $\beta\in H_2(S,\mathbb{Z})\subseteq H_2(X,\mathbb{Z})$, 
%\end{cor}
 
We will also need the following later (in the appendix):
\begin{lemma} \label{lemma:tau1 K3xK3}
For any effective curve class $\beta\in H_2(S,\mathbb{Z})\subseteq H_2(X,\mathbb{Z})$, we have 
\[\blangle \tau_1(\gamma) \brangle_{0,\beta}^{\GW}
=
-2A_1 \sum_{\begin{subarray}{c}k\geqslant   1, k|\beta  \end{subarray}}\frac{1}{k^{3}}\cdot N_0\left( \frac{(\beta/k)^2}{2} \right).
\]
\end{lemma}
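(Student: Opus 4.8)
The plan is to use the Behrend product formula to reduce the descendent invariant on $X=S\times T$ to a single one-point descendent invariant on the $K3$ surface $S$, and then to evaluate the latter by the dilaton equation. Concretely, by \eqref{product formula} we have $[\Mbar_{0,1}(X,\beta)]^{\vir}=[\Mbar_{0,1}(S,\beta)]^{\vir}\times[T]$, and under the induced identification $\Mbar_{0,1}(X,\beta)\cong\Mbar_{0,1}(S,\beta)\times T$ the first evaluation map is $\ev_1=(\ev_1^S,\pr_T)$, while the cotangent line class $\psi_1$ is pulled back from the $S$-factor (the domain curve and marked point are unchanged by the constant map to $T$).

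First I would insert the K\"unneth decomposition of $\gamma$ into $\blangle\tau_1(\gamma)\brangle_{0,\beta}^{\GW}=\int_{[\Mbar_{0,1}(S,\beta)]^{\vir}\times[T]}\psi_1\,\ev_1^\ast(\gamma)$. Since $\psi_1$ lives entirely on the $S$-factor, each K\"unneth term factors as an $S$-integral times $\int_T(\text{its }T\text{-component})$. As $\int_T$ annihilates $H^{<4}(T)$, only the component with $T$-part equal to the point class survives, namely the $A_1\cdot 1\otimes\pt$ term; the $D_1\otimes D_2$ and $A_2\cdot\pt\otimes 1$ terms have $T$-components $D_2\in H^2(T)$ and $1\in H^0(T)$ and hence drop out. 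Using $\int_T\pt=1$ this yields
\[
\blangle\tau_1(\gamma)\brangle_{0,\beta}^{\GW}=A_1\,\blangle\tau_1(1)\brangle_{0,\beta}^{\GW,S}.
\]

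To finish, I would evaluate $\blangle\tau_1(1)\brangle_{0,\beta}^{\GW,S}$ by the dilaton equation, which gives $\blangle\tau_1(1)\brangle_{0,\beta}^{\GW,S}=(2\cdot 0-2+0)\deg[\Mbar_{0,0}(S,\beta)]^{\vir}=-2\deg[\Mbar_{0,0}(S,\beta)]^{\vir}$, and then expand $\deg[\Mbar_{0,0}(S,\beta)]^{\vir}=\sum_{k\geq 1,\,k\mid\beta}k^{-3}N_0((\beta/k)^2/2)$ via the multiple cover formula \eqref{multiple cover formu} together with the Yau--Zaslow evaluation \eqref{def of N0}. Combining the three displays gives the claimed formula. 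The only point requiring care --- and the step I would treat as the main obstacle --- is the justification that the dilaton equation (equivalently, the identification of $\psi_1$ across the product together with $\psi_1=\pi^\ast\psi_1$) is valid for the \emph{reduced} class; this follows from the pullback property $[\Mbar_{0,1}(S,\beta)]^{\vir}=\pi^\ast[\Mbar_{0,0}(S,\beta)]^{\vir}$ of the reduced virtual class already used elsewhere in the paper. Alternatively, one can bypass the dilaton equation and apply Lemma~\ref{div equ on GW} to the divisor $D=D_S\otimes 1$ with $D_S\cdot\beta\neq 0$: an identical K\"unneth/degree analysis shows $\blangle\tau_0(\gamma)\tau_0(D^2)\brangle_{0,\beta}^{\GW}=0$ (the surviving $T$-component forces a string-equation factor $\blangle\tau_0(1)\tau_0(\pt)\brangle^{\GW,S}=0$) and $\blangle\tau_0(\gamma\cdot D)\brangle_{0,\beta}^{\GW}=A_1(D_S\cdot\beta)\deg[\Mbar_{0,0}(S,\beta)]^{\vir}$, again producing the factor $-2A_1$.
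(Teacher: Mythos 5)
Your argument is correct, and both of your routes land on the paper's target identity $\blangle \tau_1(\gamma)\brangle_{0,\beta}^{\GW}=-2A_1\deg[\Mbar_{0,0}(S,\beta)]^{\vir}$, after which the multiple cover formula \eqref{multiple cover formu} and the Yau--Zaslow evaluation \eqref{def of N0} finish the job exactly as in the paper. The paper's own proof is your \emph{alternative} route: it applies Lemma~\ref{div equ on GW} to a divisor pulled back from $S$, silently discards the $\tfrac{1}{d^2}\blangle\tau_0(\gamma)\tau_0(D^2)\brangle$ term (which vanishes for the K\"unneth/string-equation reason you give), and identifies $\blangle\tau_0(\gamma\cdot D)\brangle_{0,\beta}=A_1(\alpha\cdot\beta)\deg[\Mbar_{0,0}(S,\beta)]^{\vir}$ via the product formula. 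Your primary route via $\blangle\tau_1(\gamma)\brangle=A_1\blangle\tau_1(1)\brangle^{\GW,S}_{0,\beta}$ and the dilaton equation is a genuine (if mild) variant: it avoids choosing a divisor with $\alpha\cdot\beta\neq 0$ and makes the appearance of the factor $-2$ more transparent (it is just $2g-2$), at the cost of having to note that the dilaton equation holds for the reduced class --- which, as you say, follows from $[\Mbar_{0,1}(S,\beta)]^{\vir}=\pi^{\ast}[\Mbar_{0,0}(S,\beta)]^{\vir}$, the same compatibility the paper already invokes for its own $\psi$-class manipulations. Your identification of $\psi_1$ as pulled back from the $S$-factor under $\Mbar_{0,1}(X,\beta)\cong\Mbar_{0,1}(S,\beta)\times T$ is also correct, since a degree-zero stable map to $T$ from a connected curve is constant. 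No gaps.
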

\begin{proof} 
Consider a divisor $D=\pr_1^{\ast}(\alpha) \in H^2(X)$ with $d:=\alpha \cdot \beta \neq 0$.
By Lemma \ref{div equ on GW} and Eqn.~\eqref{product formula} we have
\begin{align*}
%\mathrm{GW}_{0, \beta}(\gamma;\psi)
\blangle \tau_1(\gamma) \brangle_{0,\beta}^{\GW}
&=-\frac{2}{d}\blangle \tau_0(\gamma\cdot D) \brangle_{0,\beta}^{\GW}  \\
&=-\frac{2}{d}\left(A_1\,\blangle\tau_0(\alpha\otimes\pt) \brangle_{0,\beta}^{\GW}+(D_1\cdot\alpha)\,\blangle\tau_0(\pt\otimes D_2) \brangle_{0,\beta}^{\GW}\right) \\
&=-\frac{2}{d}A_1\,\int_{[\overline{M}_{0,1}(S, \beta)]^{\vir}}\mathrm{ev}^{\ast}(\alpha) \\
&=-2A_1\,\deg[\overline{M}_{0,0}(S,\beta)]^{\rm{vir}}.
\end{align*}
By Eqn.~\eqref{multiple cover formu} and the Yau-Zaslow formula \eqref{def of N0}, we obtain the claim.  
\end{proof}

\subsection{$\mathrm{DT_4}$ virtual classes}

The moduli space $M_{\beta}$ of one dimensional stable sheaves on $X$ satisfies (e.g.~\cite[Lem.~2.2]{CMT1}):
\begin{align}\label{iso of mod on prod of k3}M_{\beta}\cong M_{S,\beta}\times T, \end{align}
where $M_{S,\beta}$ is the moduli space of one dimensional stable sheaves $F$ on $S$ with $[F]=\beta$ and $\chi(F)=1$.
By a result of Mukai \cite{M}, $M_{S,\beta}$ is a smooth projective holomorphic symplectic variety of dimension $\beta^2+2$. 
In order to determine the $\DT_4$ virtual class of $M_{\beta}$, we first recall:
\begin{defi}$($\cite[Ex.~16.52,~pp.~410]{Sw}, \cite[Lem.~5]{EG}$)$
Let $E$ be a $\mathrm{SO}(2n,\mathbb{C})$-bundle with a non-degenerate symmetric bilinear form $Q$ on a connected scheme $M$. 
Denote $E_+$ to be its positive real form\,\footnote{This means a real half dimensional subbundle such that $Q$ is real and positive definite on it. By
homotopy equivalence $\mathrm{SO}(m,\mathbb{C})\sim \mathrm{SO}(m,\mathbb{R})$, it exists and is unique up to isomorphisms.}.
The half Euler class of $(E,Q)$ is 
$$e^{\frac{1}{2}}(E,Q):=\pm\,e(E_+)\in H^{2n}(M,\mathbb{Z}), $$
where the sign depends on the choice of orientation of $E_+$. 
\end{defi}

\begin{defi}$($\cite{EG},~\cite[Def.~8.7]{KiP}$)$ \label{def of red vir clas}
Let $E$ be a $\mathrm{SO}(2n,\mathbb{C})$-bundle with a non-degenerate symmetric bilinear form $Q$ on a connected scheme $M$. 
An isotropic cosection of $(E,Q)$ is a map 
$$\phi: E\to \oO_M, $$
such that the composition 
$$\phi\circ \phi^{\vee}: \oO_M\to E^{\vee}\stackrel{Q}{\cong} E \to \oO_M$$
is zero. If $\phi$ is furthermore surjective, we define the (reduced) half Euler class: 
$$e_{\mathrm{red}}^{\frac{1}{2}}(E,Q):=e^{\frac{1}{2}}\left((\phi^{\vee}\oO_M)^{\perp}/(\phi^{\vee}\oO_M),\bar{Q}\right)\in H^{2n-2}(M,\mathbb{Z}), $$
as the half Euler class of the isotropic reduction.
Here $\bar{Q}$ denotes the induced non-degenerate symmetric bilinear form on $(\phi^{\vee}\oO_M)^{\perp}/(\phi^{\vee}\oO_M)$. 
\end{defi}
We show reduced half Euler classes are independent of the choice of surjective isotropic cosection. 
\begin{lem}\label{lem on indep of cosec}
Let $E$ be a $\mathrm{SO}(2n,\mathbb{C})$-bundle with a non-degenerate symmetric bilinear form $Q$ on a connected scheme $M$ and 
$$\phi: E\to \oO_M $$
be a surjective isotropic cosection.
Then we can write the positive real form $E_+$ of $E$ as 
$$E_+=\eE_+\oplus \underline{\mathbb{R}}^2$$
such that 
$$e_{\mathrm{red}}^{\frac{1}{2}}(E,Q)=\pm\,e(\eE_+). $$
Moreover, it is independent of the choice of surjective cosection. 

In particular, when $E=\oO^{\oplus2} \oplus V$ such that $Q=\begin{pmatrix}
0 & 1 \\
1 & 0
\end{pmatrix} \oplus Q|_{V}$, we have  
$$e_{\mathrm{red}}^{\frac{1}{2}}(E,Q)=\pm\,e^{\frac{1}{2}}(V,Q|_{V}).$$
\end{lem}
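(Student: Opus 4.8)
The plan is to prove everything by fixing a \emph{positive real form} once and for all and reading off each term of the statement from it. Concretely, a positive definite Hermitian metric $h$ on $E$ compatible with $Q$ is the same datum as a conjugate-linear $Q$-isometry $c\colon E\to E$ with $c^2=\id$, $E_+=\{v:c(v)=v\}$, and $h(u,v)=Q(u,c(v))$; this is the $E_+$ appearing in the definition of $e^{\frac{1}{2}}$. Given a surjective isotropic cosection $\phi$, I would set $s:=\phi^{\vee}(1)\in\Gamma(E)$. Surjectivity of $\phi$ forces $s$ to be nowhere vanishing and to span the line subbundle $L=\phi^{\vee}\oO_M$, while the isotropy condition $\phi\circ\phi^{\vee}=0$ reads $Q(s,s)=0$. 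The key positivity input is that $h(s,s)=Q(s,c(s))>0$ everywhere, so in particular $Q(s,c(s))\neq 0$ on all of $M$.

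First I would split off the trivial plane. After rescaling so that $h(s,s)=1$, write $s=\tfrac{1}{\sqrt{2}}(u+iw)$ with $u,w\in\Gamma(E_+)$. A direct computation from $Q(s,s)=0$ and $h(s,s)=1$ shows that $Q(u,u)=Q(w,w)=1$ and $Q(u,w)=0$, i.e.\ $(u,w)$ is a global orthonormal $2$-frame of $(E_+,Q|_{E_+})$. Hence $T:=\langle u,w\rangle$ is a trivial rank-$2$ subbundle, $T\cong\underline{\mathbb{R}}^2$, and using the metric to take orthogonal complements gives $E_+=\eE_+\oplus\underline{\mathbb{R}}^2$ with $\eE_+:=T^{\perp}\subset E_+$. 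This is the asserted decomposition.

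Next I would identify $\eE_+$ with a positive real form of the isotropic reduction. Complexifying $T$ yields $T\otimes\BC=L\oplus c(L)$, and $Q(s,c(s))\neq 0$ gives $L\cap c(L)^{\perp}=0$; since $L$ is isotropic this forces the orthogonal splitting $L^{\perp}=L\oplus\big(L\oplus c(L)\big)^{\perp}$. Therefore the isotropic reduction $W:=L^{\perp}/L$ is isometric, via projection, to $\big(L\oplus c(L)\big)^{\perp}=\eE_+\otimes\BC$, and $\eE_+=E_+\cap\big(L\oplus c(L)\big)^{\perp}$ is exactly a positive real form of $(W,\bar Q)$. By the very definition of the reduced half Euler class this gives $e_{\mathrm{red}}^{\frac{1}{2}}(E,Q)=e^{\frac{1}{2}}(W,\bar Q)=\pm\,e(\eE_+)$, which is the first displayed formula. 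The special case $E=\oO^{\oplus 2}\oplus V$ then follows at once by taking the standard cosection $\phi=Q(e_1,-)$, where $e_1$ is the first hyperbolic generator: here $Q(e_1,e_1)=0$, $\phi$ is surjective, $L=\langle e_1\rangle$, $L^{\perp}=\oO e_1\oplus V$, and $W=L^{\perp}/L\cong V$ with $\bar Q=Q|_V$, so $e_{\mathrm{red}}^{\frac{1}{2}}(E,Q)=\pm\,e^{\frac{1}{2}}(V,Q|_V)$.

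The hard part will be the independence of the choice of surjective cosection, and I would attack it by homotopy invariance of the Euler class. Each $\phi$ produces, as above, a section $F_{\phi}=(u_{\phi},w_{\phi})$ of the orthonormal $2$-frame bundle $V_2(E_+)\to M$, and $\eE_{+,\phi}=F_{\phi}^{\ast}\mathcal{Q}$ is the pullback of the tautological rank-$(2n-2)$ complement $\mathcal{Q}$ on $V_2(E_+)$; thus two cosections yielding homotopic frames have the same $e(\eE_{+,\phi})$ up to sign. The essential obstacle is to show that the frames $F_{\phi_0}$ and $F_{\phi_1}$ are homotopic. The fibre of $V_2(E_+)$ is the Stiefel manifold $V_2(\BR^{2n})$, which is connected as soon as $2n\geq 3$, and I would produce the homotopy by joining $\phi_0$ to $\phi_1$ through nowhere-vanishing isotropic sections of $E$ and taking the induced path of frames. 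This step genuinely cannot be bypassed, since $e(\mathcal{Q})$ is the primary obstruction to a third independent section and is therefore a nontrivial class on $V_2(E_+)$ rather than a pullback from $M$; I expect the connectivity of the relative isotropic quadric (equivalently, of the space of nowhere-zero isotropic sections) to be what makes the argument go through.
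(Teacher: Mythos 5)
Your construction of the splitting $E_+=\eE_+\oplus\underline{\mathbb{R}}^2$ and the identification of $\eE_+$ with a positive real form of the isotropic reduction $(\phi^{\vee}\oO_M)^{\perp}/(\phi^{\vee}\oO_M)$ is correct, and it is essentially the argument in the paper, made a little more explicit by working with the conjugation $c$ and the orthonormal frame $(u,w)$ obtained from $s=\tfrac{1}{\sqrt 2}(u+iw)$. The deduction of the special case $E=\oO^{\oplus 2}\oplus V$ is also fine.

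The genuine gap is the independence of the choice of cosection, which you flag as the essential obstacle and then do not prove. Two problems. First, connectedness of the fibre of the bundle of nowhere-vanishing isotropic vectors only controls the obstruction over the $0$-skeleton of $M$: to homotope one section to another \emph{through sections} you must kill obstruction classes in $H^{k}(M;\pi_k(F))$ for all $k\leqslant \dim_{\BR}M$, and the punctured isotropic cone $\{Q=0\}\setminus\{0\}\subset\BC^{2n}$ (a $\BC^{\ast}$-bundle over a smooth projective quadric) has nontrivial higher homotopy groups, so "connectivity of the relative isotropic quadric" is not enough for the argument to go through. Second, the intermediate statement you aim for --- that the two orthonormal $2$-frames $F_{\phi_0}$, $F_{\phi_1}$ are homotopic --- is strictly stronger than what the lemma asserts (equality of Euler classes up to sign does not require homotopic frames), and it is not clear it is even true in general. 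The paper avoids homotopy theory altogether by comparing the two isotropic reductions directly: writing $L_i=\phi_i^{\vee}\oO_M$, either $L_2\subseteq L_1^{\perp}$, in which case the image of $L_2$ in $L_1^{\perp}/L_1$ is a trivial isotropic line subbundle and \emph{both} reduced half Euler classes vanish; or $L_2\nsubseteq L_1^{\perp}$, in which case $E\cong L_1^{\perp}\oplus L_2\cong L_2^{\perp}\oplus L_1$ and quotienting by $L_1\oplus L_2$ gives an isometry $L_1^{\perp}/L_1\cong L_2^{\perp}/L_2$ of quadratic bundles, hence equal half Euler classes up to sign. To complete your proof you would need either to supply the full obstruction-theoretic input for the homotopy (which I do not expect to be available in general), or to replace this step by the direct algebraic comparison above.
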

\begin{proof}
Let $E_-:=\sqrt{-1}\cdot E_+$, then $E=E_+\oplus E_-$. Since $\phi$ is surjective, $\phi^{\vee}$ determines 
a trivial subbundle $\oO_M$ of $E$. In the diagram: 
\begin{align*} \xymatrix{
 \oO_M  \ar[r]^{\phi^{\vee}\quad \quad } \ar[dr]_{ }    & E=E_+\oplus E_- \ar[d]^{\pi_\pm}  \\
  &  E_\pm,    }
\end{align*}
for $v\in \phi^{\vee}(\oO_M)$, write $v=v_++v_-$ based on above decomposition.  
The isotropic condition gives 
$$0=Q(v,v)=Q(v_+,v_+)+2\,Q(v_+,v_-)+Q(v_-,v_-). $$
If $v_+=0$, then $Q(v_-,v_-)=0$ which implies $v_-=0$ as $Q$ on $E_-$ is negative definite. Therefore 
the composition $\pi_{\pm}\circ\phi^{\vee}$ determines a trivial subbundle $\underline{\mathbb{R}}\subset E_\pm$. 

We write $(\phi^{\vee}\oO_M)^{\perp}=V_+\oplus V_-$ for $V_{\pm}=E_{\pm}\bigcap\,(\phi^{\vee}\oO_M)^{\perp}$, which fits in the diagram
\begin{align*} \xymatrix{
 \phi^{\vee}(\oO_M)  \ar[r]^{\subset \quad } \ar[d]_{=}  & (\phi^{\vee}\oO_M)^{\perp}  \ar[d]_{=} \ar[r]^{\quad \subset} & E \ar[d]_{=}  \\
\underline{\mathbb{R}}\oplus \underline{\mathbb{R}} & V_+\oplus V_- &  E_+\oplus E_-.    }
\end{align*}
Then $\rank_{\mathbb{R}}V_++\rank_{\mathbb{R}}V_-=4n-2$ and $\rank_{\mathbb{R}}V_{\pm}\leqslant\rank_{\mathbb{R}}E_{\pm}$. 
As $(\phi^{\vee}\oO_M)^{\perp}/(\phi^{\vee}\oO_M)$ has 
an induced non-degenerate symmetric bilinear form, so 
$$\rank_{\mathbb{R}}V_+=\rank_{\mathbb{R}}V_-=2n-1. $$
Let $\eE_+:=V_+/\underline{\mathbb{R}}$, by the metric $Q|_{V_+}$ on $V_+$, we may write 
\begin{align}\label{deco V+}V_+=\eE_+\oplus \underline{\mathbb{R}}. \end{align}
Under the identification $E^{\vee}\stackrel{Q}{\cong} E$, we have 
\begin{align*}
\Ker(\phi)&=\big\{v\in E\,|\,\phi(Q(v,-))=0 \big\}  \\
&=\big\{v\in E\,|\,Q(v,\phi^{\vee}\oO_M)=0 \big\} \\
&=(\phi^{\vee}\oO_M)^{\perp}.
\end{align*}
Therefore $E/(\phi^{\vee}\oO_M)^{\perp}\stackrel{\phi}{\cong}\oO_M$. By using the metric $Q|_{E_+}$ on $E_+$, we may write 
$$E_+=V_+\oplus \underline{\mathbb{R}}.$$
Combining with Eqn.~\eqref{deco V+}, we have 
$$E_+=\eE_+\oplus \underline{\mathbb{R}}^2. $$
By definition, the reduced half Euler class is the Euler class of $\eE_+$.

Given two surjective cosections $\phi_1, \phi_2$, if $\phi^{\vee}_1\oO_M=\phi^{\vee}_2\oO_M$, then the bundle $\eE_+$ they determine are the same, so are the reduced half Euler classes. 
%If $\phi^{\vee}_1\oO_M\neq \phi^{\vee}_2\oO_M$, it is easy to see the corresponding $\eE_+$ has a trivial subbundle 
%$\underline{\mathbb{R}}$, so both reduced half Euler classes vanish. 
If $\phi^{\vee}_1\oO_M\neq \phi^{\vee}_2\oO_M$, we divide into two cases: 
(1) when $\phi^{\vee}_2\oO_M\subseteq (\phi^{\vee}_1\oO_M)^{\perp}$ (which automatically implies $\phi^{\vee}_1\oO_M\subseteq (\phi^{\vee}_2\oO_M)^{\perp}$),
it is easy to see the corresponding $\eE_+$ has a trivial subbundle 
$\underline{\mathbb{R}}$, so both reduced half Euler classes vanish. 
(2) when $\phi^{\vee}_2\oO_M\nsubseteq (\phi^{\vee}_1\oO_M)^{\perp}$ (hence also $\phi^{\vee}_1\oO_M\nsubseteq (\phi^{\vee}_2\oO_M)^{\perp}$), then we have 
$$E\cong (\phi^{\vee}_1\oO_M)^{\perp}\oplus \phi^{\vee}_2\oO_M\cong (\phi^{\vee}_2\oO_M)^{\perp}\oplus \phi^{\vee}_1\oO_M. $$
Taking quotient by $\phi^{\vee}_1\oO_M\oplus \phi^{\vee}_2\oO_M$, we obtain 
$$(\phi^{\vee}_1\oO_M)^{\perp}/\phi^{\vee}_1\oO_M \cong (\phi^{\vee}_2\oO_M)^{\perp}/\phi^{\vee}_2\oO_M, $$
whose half Euler classes are the same. 
Therefore we know the reduced half Euler class
is independent of the choice of surjective isotropic cosection. 
The last statement when $E=\oO^{\oplus2} \oplus V$ such that 
$Q=\begin{pmatrix}
0 & 1 \\
1 & 0
\end{pmatrix} \oplus Q|_{V}$
follows from this.
\end{proof}
Recall a $\mathrm{Sp}(2r,\mathbb{C})$-bundle (or symplectic vector bundle) is a complex vector bundle of rank $2r$ 
with a non-degenerate anti-symmetric bilinear form.
One class of quadratic vector bundles is given by tensor product of two symplectic vector bundles $V_1, V_2$. 
Their half Euler classes
can be computed using Chern classes of $V_1,V_2$. 
For our purpose, we restrict to the following case.
\begin{lem}\label{lem on compu of half euler class}
Let $(V_1,\omega_1)$, $(V_2,\omega_2)$ be a $\mathrm{Sp}(2r,\mathbb{C})$ $($resp.~$\mathrm{Sp}(2,\mathbb{C})$-bundle$)$ on a connected scheme $M$. 
Then
$$(V_1\otimes V_2,\omega_1\otimes \omega_2)$$ 
defines a $SO(4r,\mathbb{C})$-bundle whose half Euler class satisfies 
$$e^{\frac{1}{2}}(V_1\otimes V_2,\omega_1\otimes \omega_2)=\pm\,\big(e(V_1)-c_{2r-2}(V_1)\cdot e(V_2)\big). $$
\end{lem}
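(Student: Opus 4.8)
The plan is to prove the identity by the splitting principle for symplectic bundles, reduce it to a computation with Chern roots, and match the answer against the Pfaffian description of the half Euler class. First I would invoke the splitting principle: both sides are universal polynomials in the Chern classes of $V_1$ and $V_2$ and are stable under pullback, so it suffices to verify the identity after pulling back to a space on which $(V_1,\omega_1)$ and $(V_2,\omega_2)$ split symplectically. There one may write the Chern roots of $V_1$ as $\pm a_1,\dots,\pm a_r$ and those of $V_2$ as $\pm b$, with $\omega_1$ pairing the weight-$a_i$ space with the weight-$(-a_i)$ space and $\omega_2$ pairing the weight-$b$ space with the weight-$(-b)$ space. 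Then $V_1\otimes V_2$ is a complex bundle of rank $4r$ with Chern roots $\{\pm a_i\pm b\}_{i=1}^{r}$, and the symmetric form $\omega_1\otimes\omega_2$ pairs the weight-$(a_i+b)$ space with the weight-$(-a_i-b)$ space and the weight-$(a_i-b)$ space with the weight-$(-a_i+b)$ space.

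Next I would identify the half Euler class with the Pfaffian. For an $\mathrm{SO}(2n,\mathbb{C})$-bundle $E$ the positive real form $E_+$ has real rank $2n$, and its oriented Euler class satisfies $e(E_+)^2=(-1)^{n}c_{2n}(E)$; choosing one root from each orthogonal $\pm$-pair and fixing the orientation gives $e^{\frac{1}{2}}(E,Q)=\pm\prod(\text{positive roots})$. Applying this to the root pairing found above yields
\[ e^{\frac{1}{2}}(V_1\otimes V_2,\omega_1\otimes\omega_2)=\pm\prod_{i=1}^{r}(a_i+b)(a_i-b)=\pm\prod_{i=1}^{r}\big(a_i^{2}-b^{2}\big). \]

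Finally I would expand and translate back into Chern classes. Using $c(V_1)=\prod_i(1-a_i^{2})$, so that $c_{2k}(V_1)=(-1)^{k}e_k(a_1^{2},\dots,a_r^{2})$, together with $e(V_2)=c_2(V_2)=-b^{2}$, the expansion reads
\[ \prod_{i=1}^{r}\big(a_i^{2}-b^{2}\big)=\sum_{j=0}^{r}(-1)^{r-j}\,c_{2r-2j}(V_1)\,e(V_2)^{\,j}. \]
The $j=0$ and $j=1$ terms are precisely $(-1)^{r}\big(e(V_1)-c_{2r-2}(V_1)\,e(V_2)\big)$, whereas every term with $j\geq 2$ is divisible by $e(V_2)^{2}$. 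In the situation where the lemma is used, $V_2$ is pulled back from the surface $T$ (indeed $V_2=\pi_T^{\ast}T_T$), so $e(V_2)^{2}=c_2(V_2)^{2}\in H^{8}(T)=0$ and the higher terms vanish; this gives the stated formula up to the overall sign inherent in the choice of orientation.

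The hard part will be the Pfaffian step: justifying rigorously that the half Euler class of the orthogonal bundle $V_1\otimes V_2$ is computed as the product of positive roots (i.e.\ that $e(E_+)$ is the Pfaffian and is compatible with the symplectic splitting principle) and pinning down the orientation so that only the single global sign in the statement remains. The root combinatorics and the final Chern-class expansion are then routine.
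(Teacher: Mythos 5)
Your argument is correct, and it takes a genuinely different route from the paper's. The paper works on the classifying space $\mathrm{BSp}(2r,\mathbb{C})\times\mathrm{BSp}(2,\mathbb{C})$, writes down $e(\mathcal{V}_1\otimes\mathcal{V}_2)$, matches it with the square of $c_{2r}(\mathcal{V}_1)-c_{2r-2}(\mathcal{V}_1)c_2(\mathcal{V}_2)$, and concludes using that the cohomology of the classifying space is an integral domain; you instead split both bundles symplectically, identify the half Euler class with the product of one root from each isotropic pair, and expand $\prod_i(a_i^2-b^2)$ in Chern classes. Your route buys something real: the full expansion is
\[ e^{\frac{1}{2}}(V_1\otimes V_2,\omega_1\otimes\omega_2)=\pm\sum_{j=0}^{r}(-1)^j\,c_{2r-2j}(V_1)\,e(V_2)^j, \]
so for $r\geqslant 2$ the identity as printed in Lemma~\ref{lem on compu of half euler class} acquires correction terms $c_{2r-4}(V_1)e(V_2)^2-\cdots$ that need not vanish on a general connected scheme $M$. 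Correspondingly, the paper's intermediate formula $e(\mathcal{V}_1\otimes\mathcal{V}_2)=c_{2r}(\mathcal{V}_1)^2-2c_{2r}(\mathcal{V}_1)c_{2r-2}(\mathcal{V}_1)c_2(\mathcal{V}_2)$ already discards everything of degree $\geqslant 2$ in $c_2(\mathcal{V}_2)$: for $r=1$ the correct Euler class is $(a^2-b^2)^2=(c_2(\mathcal{V}_1)-c_2(\mathcal{V}_2))^2$, which contains the term $c_2(\mathcal{V}_2)^2$ absent from the paper's expression. You correctly observe that in the only place the lemma is used (the proof of Theorem~\ref{thm on vir clas}), $V_2$ is pulled back from the $K3$ surface $T$, so $e(V_2)^2$ lives in the pullback of $H^8(T)=0$ and the truncation is harmless; this makes your version of the statement the one that is actually true in the stated generality. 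The one step you defer --- that $e(E_+)$ for an $\mathrm{SO}(2n,\mathbb{C})$-bundle is computed, compatibly with the splitting principle, as the product of one Chern root from each orthogonally paired pair --- is exactly the Pfaffian/Euler-class identity underlying the Edidin--Graham definition the paper cites, so I would not count it as a gap; just be sure to state that the symplectic splitting principle (pullback to the symplectic flag bundle, which is injective on cohomology) is what legitimizes writing the roots of $V_1$ as $\pm a_1,\dots,\pm a_r$ in the first place.
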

\begin{proof}
Consider the universal $\mathrm{Sp}(2r,\mathbb{C})$-bundle $\mathcal{V}_1$ (resp.~$\mathrm{Sp}(2,\mathbb{C})$-bundle $\mathcal{V}_2$) 
on the classifying space $\mathrm{BSp}(2r,\mathbb{C})$ (resp.~$\mathrm{BSp}(2,\mathbb{C})$). 
Their tensor product gives a $\mathrm{SO}(4r,\mathbb{C})$-bundle on $\mathrm{BSp}(2r,\mathbb{C})\times \mathrm{BSp}(2,\mathbb{C})$,
whose half Euler class is denoted by $e^{\frac{1}{2}}(\mathcal{V}_1\otimes \mathcal{V}_2)$. 

By the property of half Euler class (e.g.~\cite[Prop.~2]{EG}):
\begin{align*}e^{\frac{1}{2}}(\mathcal{V}_1\otimes \mathcal{V}_2)^2=e(\mathcal{V}_1\otimes \mathcal{V}_2)
=c_{2r}(\mathcal{V}_1)^2-2c_{2r}(\mathcal{V}_1)c_{2r-2}(\mathcal{V}_1)c_2(\mathcal{V}_2), \end{align*}
where we use the fact that the odd Chern classes of $\mathcal{V}_i$ vanish in the second equality. 
Note that above expression is the same as the square of $c_{2r}(\mathcal{V}_1)-c_{2r-2}(\mathcal{V}_1)c_2(\mathcal{V}_2)$. 
Since $H^*(\mathrm{BSp}(2r,\mathbb{C})\times \mathrm{BSp}(2,\mathbb{C}))$ is the tensor product of two polynomial rings 
(e.g.~\cite[Thm.~16.10]{Sw}), hence it is an integral domain. Therefore 
\begin{align*}
e^{\frac{1}{2}}(\mathcal{V}_1\otimes \mathcal{V}_2)=\pm\,(c_{2r}(\mathcal{V}_1)-c_{2r-2}(\mathcal{V}_1)c_2(\mathcal{V}_2)).
\end{align*}
Since this construction is universal, we are done. 
\end{proof}
Finally, we can determine the (reduced) virtual class of $M_{\beta}$. 
\begin{thm}\label{thm on vir clas}
For certain choice of orientation, we have 
\begin{equation}\label{vir class StimesT}
[M_{\beta}]^{\mathrm{vir}}=e(M_{S,\beta})\cdot[T]-e(T)\cdot c_{\beta^2}(M_{S,\beta}). \end{equation}
\end{thm}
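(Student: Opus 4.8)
The plan is to reduce the statement to the two half Euler class computations of Lemmata~\ref{lem on indep of cosec} and~\ref{lem on compu of half euler class}. By the isomorphism \eqref{iso of mod on prod of k3} we have $M_\beta \cong M_{S,\beta}\times T$, and since $M_{S,\beta}$ is smooth projective (Mukai \cite{M}) and $T$ is smooth, $M_\beta$ is smooth. For a smooth moduli space on which the obstruction sheaf is locally free, the Kiem--Park reduced virtual class \eqref{red vir class} is the reduced half Euler class of the obstruction bundle equipped with its Serre-duality quadratic form and the isotropic cosection induced by $\sigma$; this is what \cite[Def.~8.7, Lem.~9.4]{KiP} specializes to in the smooth case (see also \cite{OT}). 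Thus everything comes down to identifying the obstruction bundle $\mathrm{Ob}$, its symmetric form $Q$, and the cosection.

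First I would compute $\mathrm{Ob}$. A point of $M_\beta$ is a stable sheaf supported on a fibre $S\times\{t\}$, so writing $i_t\colon S\times\{t\}\hookrightarrow X$ for the inclusion (a regular embedding with trivial rank-two normal bundle $N\cong T_{T,t}\otimes\oO_S$), the sheaf is $i_{t\ast}F_S$ with $F_S\in M_{S,\beta}$. The formula for $\Ext$ along a regular embedding together with the K\"unneth decomposition gives, in families over $M_\beta=M_{S,\beta}\times T$,
\[ \mathrm{Ob}=\mathcal{E}xt^2_{\pi}(\mathbb{F},\mathbb{F}) \cong \bigoplus_{p+q=2} \mathcal{E}xt^p_{p_S}(\mathbb{F}_S,\mathbb{F}_S)\otimes \Lambda^q T_T. \]
Using $\mathcal{E}xt^0_{p_S}(\mathbb{F}_S,\mathbb{F}_S)=\oO$, $\mathcal{E}xt^1_{p_S}(\mathbb{F}_S,\mathbb{F}_S)=T_{M_{S,\beta}}$, $\mathcal{E}xt^2_{p_S}(\mathbb{F}_S,\mathbb{F}_S)=\oO$ (relative Serre duality on the $K3$ surface $S$), and $\Lambda^2 T_T=\oO$, this reads $\mathrm{Ob}\cong\oO^{\oplus 2}\oplus\big(T_{M_{S,\beta}}\otimes T_T\big)$. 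Next I would pin down $Q$ coming from the pairing $\mathcal{E}xt^2\times\mathcal{E}xt^2\to\mathcal{E}xt^4_{\pi}(\mathbb{F},\mathbb{F})\cong\oO$. Tracking it through the K\"unneth splitting, the two summands $\oO$ (namely $\mathcal{E}xt^2_{p_S}\otimes\Lambda^0$ and $\mathcal{E}xt^0_{p_S}\otimes\Lambda^2$) are each isotropic and pair perfectly with one another, while the middle summand is orthogonal to them and carries the tensor product of the Mukai symplectic form on $T_{M_{S,\beta}}$ with the symplectic form on $T_T$; both are anti-symmetric, so their tensor product is symmetric. Hence $(\mathrm{Ob},Q)$ is exactly of the shape $\oO^{\oplus 2}\oplus V$ with $Q=\left(\begin{smallmatrix}0&1\\1&0\end{smallmatrix}\right)\oplus(\omega_1\otimes\omega_2)$, and the cosection determined by $\sigma$ is the (surjective, isotropic) projection onto one of the isotropic lines.

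To conclude, the last statement of Lemma~\ref{lem on indep of cosec} strips off the hyperbolic plane, giving
\[ e^{\frac12}_{\mathrm{red}}(\mathrm{Ob},Q)=\pm\,e^{\frac12}\big(T_{M_{S,\beta}}\otimes T_T,\ \omega_1\otimes\omega_2\big). \]
Applying Lemma~\ref{lem on compu of half euler class} with $V_1=T_{M_{S,\beta}}$ (an $\mathrm{Sp}(2r,\BC)$-bundle, $2r=\dim M_{S,\beta}=\beta^2+2$) and $V_2=T_T$ (an $\mathrm{Sp}(2,\BC)$-bundle), and using $2r-2=\beta^2$, $e(V_1)=e(M_{S,\beta})$, $c_{2r-2}(V_1)=c_{\beta^2}(M_{S,\beta})$, $e(V_2)=e(T)$, yields $\pm\big(e(M_{S,\beta})-c_{\beta^2}(M_{S,\beta})\cdot e(T)\big)$. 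Capping with $[M_\beta]=[M_{S,\beta}]\times[T]$ produces exactly \eqref{vir class StimesT}, the overall sign being absorbed into the choice of orientation.

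The main obstacle will be the careful identification of the Serre-duality symmetric form under the K\"unneth decomposition --- in particular verifying that the two trivial summands constitute a hyperbolic plane and that the cosection induced by $\sigma$ is precisely the surjective isotropic one trivializing that plane. This is also where the hypothesis $\beta\in H_2(S)$ enters: a class of nonzero bidegree over both factors would produce a \emph{second} independent isotropic cosection and force the reduced class to vanish, consistent with the footnote at the start of this section. Secondary care is required for the trace / fixed-determinant bookkeeping when setting up the obstruction theory, and for confirming that the Oh--Thomas--Kiem--Park virtual class genuinely degenerates to the (reduced) half Euler class in this smooth situation.
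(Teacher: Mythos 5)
Your proposal is correct and follows essentially the same route as the paper: identify $M_\beta\cong M_{S,\beta}\times T$, decompose the obstruction bundle $\mathcal{E}xt^2_{\pi}(\mathbb{F},\mathbb{F})$ via K\"unneth into the two trivial summands plus $T_{M_{S,\beta}}\otimes T_T$, and then invoke Lemma~\ref{lem on indep of cosec} (to strip the hyperbolic plane, using independence of the cosection) and Lemma~\ref{lem on compu of half euler class} (to evaluate the half Euler class of the tensor product of symplectic bundles). The paper's proof is terser but contains no additional ingredient; your extra care about the quadratic form and the identification $\mathcal{E}xt^1_{p_S}(\mathbb{F}_S,\mathbb{F}_S)\cong T_{M_{S,\beta}}$ is exactly what the paper leaves implicit.
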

\begin{proof}
Under the isomorphism \eqref{iso of mod on prod of k3}: 
\begin{align*}M_{\beta}\cong M_{S,\beta}\times T, \end{align*}
a universal family $\mathbb{F}$ of $M_{\beta}$ satisfies 
\begin{align}\label{equ of univ sheaf on prod}\mathbb{F}=\mathbb{F}_S\boxtimes \oO_{\Delta_T}, \end{align}
where $\mathbb{F}_S$ is a universal sheaf of $M_{S,\beta}$ and $\Delta_T$ denotes the diagonal in $T\times T$.

Then the obstruction sheaf of $M_{\beta}$:
$$\eE xt_{\pi_{M_{\beta}}}^2(\mathbb{F},\mathbb{F})\cong \eE xt_{\pi_{M_{S,\beta}}}^2(\mathbb{F}_S,\mathbb{F}_S)\oplus
\eE xt_{\pi_{M_{S,\beta}}}^1(\mathbb{F}_S,\mathbb{F}_S)\boxtimes T_T\oplus \eE xt_{\pi_{M_{S,\beta}}}^0(\mathbb{F}_S,\mathbb{F}_S)\boxtimes\wedge^2T_T$$
is a vector bundle with two trivial subbundles $\eE xt_{\pi_{M_{S,\beta}}}^2(\mathbb{F}_S,\mathbb{F}_S)$, and $\eE xt_{\pi_{M_{S,\beta}}}^0(\mathbb{F}_S,\mathbb{F}_S)\boxtimes\wedge^2T_T$. By Lemmata \ref{lem on indep of cosec}, \ref{lem on compu of half euler class}, we are done.
\end{proof}

\subsection{$\DT_4$ invariants and proof of conjectures}
In this section, we determine all $\DT_4$ invariants of $S \times T$.
%needed to prove Conjecture \ref{conj on DT4/GV}.
Let $\gamma,\gamma'\in H^{4}(X)$ be cohomology classes and decompose them as
$$\gamma=A_1\cdot 1\otimes \pt+D_1\otimes D_2+A_2\cdot \pt\otimes 1, $$
$$\gamma'=A'_1\cdot 1\otimes \pt+D'_1\otimes D'_2+A'_2\cdot \pt\otimes 1, $$
according to the K\"unneth decomposition:
$$H^{4}(X)\cong (H^0(S)\otimes H^4(T))\oplus (H^2(S)\otimes H^2(T))\oplus (H^4(S)\otimes H^0(T)). $$
Fix also a divisor class
\[ \theta=\theta_1+\theta_2\in H^2(X) \cong H^2(S)\oplus H^2(T), \]
and a curve class
$$\alpha=\theta_1\otimes \pt+\pt\otimes \theta_2\in H^6(X) \cong (H^2(S)\otimes H^4(T))\oplus (H^4(S)\otimes H^2(T)).$$

\begin{thm} \label{thm:K3xK3 DT invariants}
Let $\beta\in H_2(S,\mathbb{Z})\subseteq H_2(X,\mathbb{Z})$ be any effective curve class.
With respect to the choice of orientation \eqref{vir class StimesT}, we have
\begin{align}
\blangle\tau_0(\alpha)\brangle^{\DT_4}_{\beta}& =(\theta_1\cdot\beta)\, N_0\left( \frac{\beta^2}{2} \right), \tag{i} \\[5pt]
\blangle\tau_1(\gamma)\brangle^{\DT_4}_{\beta}& =A_1\,N_0\left( \frac{\beta^2}{2} \right)-A_2\,e(T)\,N_{1}\left(\frac{\beta^2}{2}\right), \tag{ii} \\[5pt]
\blangle\tau_2(\theta) \brangle^{\DT_4}_{\beta} & = ( \theta_1 \cdot \beta)\,N''\left( \frac{\beta^2}{2} \right), \tag{iii} \\[5pt]
\blangle\tau_3(1) \brangle^{\DT_4}_{\beta} & = N''\left( \frac{\beta^2}{2} \right), \tag{iv} \\[5pt]
\blangle\tau_0(\gamma),\tau_0(\gamma') \brangle^{\DT_4}_{\beta}& =(D_1\cdot\beta)\cdot (D_1'\cdot\beta)\cdot \left(\int_T D_2\cup D_2'\right)\cdot N_0\left( \frac{\beta^2}{2} \right), \tag{v} \\[5pt]
\blangle\tau_0(\gamma), \tau_1(\theta)\brangle^{\DT_4}_{\beta}&= 
(D_1\cdot\beta)\left(\int_T D_2\cup\theta_2\right)N_0\left( \frac{\beta^2}{2} \right)
 -24A_2 (\theta_1 \cdot \beta) N_1\left( \frac{\beta^2}{2} \right), \tag{vi} \\[5pt]
\blangle\tau_0(\gamma), \tau_2(1)\brangle^{\DT_4}_{\beta}&= 0,  \tag{vii} \\[5pt] 
\blangle\tau_1(\theta), \tau_2(1)\brangle^{\DT_4}_{\beta}&= 48 (\theta_1 \cdot \beta) N_1\left( \frac{\beta^2}{2} \right),  \tag{viii}  
\end{align}
where $N_0(l)$, $N_1(l)$ are defined in Eqns.~\eqref{defi of N0}, \eqref{equ py formula} respectively and 
\begin{align*} \sum_{l \in \BZ} N''(l)\,q^l & = 
\frac{1}{q} \prod_{n \geqslant   1} (1-q^n)^{-24} \left( 24q \frac{d}{dq} G_2(q)+24G_2(q)-1\right) \\
& = -2 q^{-1} + 720q + 14720q^{2} + 182340q^{3} + 1715328q^{4} + \cdots. 
%&= -2 + 720 q^2 + 14720 q^3 + 182340 q^4 + 1715328 q^5 + 13376960 q^6+\cdots
\end{align*}
\end{thm}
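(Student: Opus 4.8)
The proof rests on three inputs already established: the product description $M_\beta\cong M_{S,\beta}\times T$ together with the universal sheaf $\mathbb{F}=\mathbb{F}_S\boxtimes\CO_{\Delta_T}$ from Eqns.~\eqref{iso of mod on prod of k3} and \eqref{equ of univ sheaf on prod}, the explicit virtual class of Theorem~\ref{thm on vir clas}, and the descendent evaluations of Proposition~\ref{prop:desc M_Sbeta}. The plan is to rewrite every insertion $\tau_i(\gamma)$ on $M_\beta$ in terms of the descendents $\sigma_d(\alpha)$ on $M_{S,\beta}$ and classes pulled back from $T$, and then to pair against the two terms of \eqref{vir class StimesT} separately.

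First I would compute $\ch(\CO_{\Delta_T})$ on $T\times T$. Since $T$ is a $K3$ surface one has $\td(T_T)^{-1}=1-2\,\pt$ (as $e(T)=24$), so Grothendieck--Riemann--Roch gives $\ch(\CO_{\Delta_T})=[\Delta_T]-2\,\pt\boxtimes\pt$, whose only nonzero graded pieces sit in (complex) degrees $2$ and $4$. Substituting this into $\ch(\mathbb{F})=\ch(\mathbb{F}_S)\cdot\ch(\CO_{\Delta_T})$ and using the reproducing property of the diagonal under integration over the second $T$-factor, I obtain for each K\"unneth component $\gamma=\gamma^S\otimes\gamma^T$ the reduction formula
\[ \tau_i(\gamma)=\sigma_{i+1}(\gamma^S)\otimes\gamma^T-2\,[\gamma^T]_{0}\,\sigma_{i-1}(\gamma^S)\otimes\pt, \]
where $[\gamma^T]_0$ is the $H^0(T)$-coefficient of $\gamma^T$ and pullbacks along the projections are suppressed. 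Combined with the elementary evaluations $\sigma_0=0$, $\sigma_1(1)=0$, $\sigma_2(1)=1$ (from $\chi=1$ and Riemann--Roch on $S$), $\sigma_1(D)=(D\cdot\beta)$ for $D\in H^2(S)$, and $\sigma_3(1)=-2\sigma_1(\pt)$ from \eqref{sigma13vanish}, this expresses every class $\tau_k(\gamma)$ occurring in (i)--(viii) as an explicit element of $H^{\ast}(M_{S,\beta})\otimes H^{\ast}(T)$.

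Next I would unwind the integration against the virtual class. Decomposing $\Phi\in H^{\ast}(M_\beta)$ into K\"unneth components, Theorem~\ref{thm on vir clas} yields
\[ \int_{[M_\beta]^{\vir}}\Phi=e(M_{S,\beta})\,[\Phi]_{1\otimes\pt}-e(T)\int_{M_{S,\beta}}c_{\beta^2}(M_{S,\beta})\cdot[\Phi]_{H^4(M_{S,\beta})\otimes 1}, \]
where $[\Phi]_{1\otimes\pt}$ is the coefficient of $1\otimes\pt$ and $[\Phi]_{H^4\otimes1}$ is the $H^4(M_{S,\beta})$-part of the coefficient of $1\in H^0(T)$; moreover $e(M_{S,\beta})=N_0(\beta^2/2)$ by \eqref{defi of N0} (since $M_{S,\beta}$ is deformation equivalent to $S^{[\beta^2/2+1]}$) and $e(T)=24$. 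For the single-insertion invariants (i)--(iv), extracting these two components and applying Proposition~\ref{prop:desc M_Sbeta} to the $c_{\beta^2}$-term directly produces $N_0,N_1,N'$, and the generating-series identity $-2N_0+24N'=N''$ settles (iii) and (iv).

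The only genuinely new point, and the main obstacle, is the $c_{\beta^2}$-term for the two-insertion invariants (v)--(viii): there $[\Phi]_{H^4\otimes1}$ is a product $\ell_1\ell_2$ of two degree-two descendent classes, so one must evaluate $\int_{M_{S,\beta}}c_{\beta^2}(M_{S,\beta})\,\ell_1\ell_2$. Here I would use that $c_{\beta^2}(M_{S,\beta})$ is monodromy invariant and stays of Hodge type, so that the Fujiki relation (Theorem~\ref{fujiki result}) applies and, after polarization, gives $\int_{M_{S,\beta}}c_{\beta^2}\,\ell_1\ell_2=C(c_{\beta^2})\,\langle\ell_1,\ell_2\rangle$ with $C(c_{\beta^2})=N_1(\beta^2/2)$ by the $k=1$ case of Theorem~\ref{thm on fujiki cons}. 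The Beauville--Bogomolov--Fujiki pairings $\langle\ell_1,\ell_2\rangle$ needed are precisely those recorded in Lemma~\ref{lemma:intersection pairing}: the vanishing $\mathsf{q}(\sigma_1(\pt))=0$ forces the $c_{\beta^2}$-terms in (v) and (vii) to vanish, while $\sigma_1(\pt)\cdot\sigma_2(D)=D\cdot\beta$ produces the $N_1$-contributions in (vi) and (viii). Assembling the components and comparing generating series then yields all eight formulas, the remaining effort being the routine but lengthy bookkeeping of normalizations, Chern characters and K\"unneth components.
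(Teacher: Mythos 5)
Your proposal is correct and follows essentially the same route as the paper's proof: reduce via $\mathbb{F}=\mathbb{F}_S\boxtimes\CO_{\Delta_T}$ and GRR to descendents $\sigma_d$ on $M_{S,\beta}$, pair against the two terms of the virtual class of Theorem~\ref{thm on vir clas}, and evaluate using Proposition~\ref{prop:desc M_Sbeta}, the Fujiki relation with $C(c_{\beta^2})=N_1(\beta^2/2)$, and the pairings of Lemma~\ref{lemma:intersection pairing}. The only cosmetic difference is that for the $\tau_0$-insertions the paper establishes $\mathsf{q}\big(\pi_{M_{S,\beta}*}(\pi_S^*\pt\cdot\ch_1(\mathbb{F}_S))\big)=0$ via the Hilbert--Chow pullback, whereas you deduce the same vanishing from Lemma~\ref{lemma:intersection pairing}; both rest on the same Fujiki argument.
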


The above theorem immediately implies the following:

\begin{cor}\label{verify conjs}
Conjecture \ref{conj on DT4/GV} holds for the product $X = S \times T$
and $\beta\in H_2(S,\mathbb{Z})\subseteq H_2(X,\mathbb{Z})$.
%Moreover, equality \eqref{equ on impri DT4/GV} governing the imprimitive genus $1$ case also holds.
\end{cor}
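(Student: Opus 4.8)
The plan is to deduce Corollary~\ref{verify conjs} by matching, part by part of Conjecture~\ref{conj on DT4/GV}, the closed formulae for the Gromov--Witten and Gopakumar--Vafa invariants of $X=S\times T$ (Proposition~\ref{prop on GW for prod} and Lemma~\ref{lemma:tau1 K3xK3}) against those for the $\DT_4$ invariants (Theorem~\ref{thm:K3xK3 DT invariants}). Since all the hard geometry is already contained in these two inputs, what remains is essentially bookkeeping. Throughout I would use the standing facts $e(T)=24$ and, because $c_1(S)=c_1(T)=0$, $c_2(X)=24\,(\pt\otimes 1)+24\,(1\otimes\pt)$, so that in the K\"unneth notation of Proposition~\ref{prop on GW for prod} the class $c_2(X)$ has components $A_1=A_2=24$ and $D_1\otimes D_2=0$.

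For the genus $0$ part~\eqref{g=0 part}, I would first reduce an arbitrary primary insertion to the two dimensionally admissible base cases, namely a single $H^6$-class or a pair of $H^4$-classes, via the divisor equation (available for reduced Gromov--Witten invariants, and whose $\DT_4$ counterpart is reflected in the mixed evaluations Theorem~\ref{thm:K3xK3 DT invariants}(vi)--(viii)). It then suffices to compare base cases: Theorem~\ref{thm:K3xK3 DT invariants}(i),(v) give $\blangle\tau_0(\alpha)\brangle^{\DT_4}_\beta=(\theta_1\cdot\beta)\,N_0(\beta^2/2)$ and $\blangle\tau_0(\gamma),\tau_0(\gamma')\brangle^{\DT_4}_\beta=(D_1\cdot\beta)(D_1'\cdot\beta)\bigl(\int_T D_2\cup D_2'\bigr)N_0(\beta^2/2)$, which agree term by term with $n_{0,\beta}(\alpha)$ and $n_{0,\beta}(\gamma,\gamma')$ in Proposition~\ref{prop on GW for prod} (using $\int_T D_2\cdot D_2'=\int_T D_2\cup D_2'$). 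As both formulae hold for \emph{every} effective $\beta$, the multiple-cover contributions are already incorporated and no separate primitivity argument is needed here.

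For the genus $1$ part~\eqref{g=1 part} I would specialize to primitive $\beta$, so that only the $k=1$ term survives in Lemma~\ref{lemma:tau1 K3xK3}, yielding $-\tfrac12\blangle\tau_1(\gamma)\brangle^{\GW}_{0,\beta}=A_1\,N_0(\beta^2/2)$; combining with $n_{1,\beta}(\gamma)=24A_2\,N_1(\beta^2/2)$ and substituting $e(T)=24$ into Theorem~\ref{thm:K3xK3 DT invariants}(ii), both sides of~\eqref{g=1 part} equal $A_1\,N_0(\beta^2/2)-24A_2\,N_1(\beta^2/2)$. For the genus $2$ part~\eqref{g=2 part}, applying Theorem~\ref{thm:K3xK3 DT invariants}(ii),(iv) to $c_2(X)$ gives $\blangle\tau_1(c_2(X))\brangle^{\DT_4}_\beta=24N_0-576N_1$ and $\blangle\tau_3(1)\brangle^{\DT_4}_\beta=N''$ (all evaluated at $\beta^2/2$), and the identity reduces to a generating-function check: writing $M(q)=q^{-1}\prod_{n\geqslant 1}(1-q^n)^{-24}$, the combination $-N''-2N_0+48N_1$ equals $M(q)$ times $-(24\,q\tfrac{d}{dq}G_2+24G_2-1)-2+48\,q\tfrac{d}{dq}G_2=24\,q\tfrac{d}{dq}G_2-24G_2-1$, which is exactly the series defining $N_2$, so the left-hand side of~\eqref{g=2 part} equals $N_2(\beta^2/2)=n_{2,\beta}$.

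The substance of the corollary lies entirely in the two input theorems, so the residual work is organizational. The two places demanding genuine care are: the genus $0$ reduction of multi-point primary invariants to the one- and two-point cases, which must be executed compatibly on the Gromov--Witten and $\DT_4$ sides; and the sign-sensitive generating-function manipulation in part~\eqref{g=2 part}, where the two distinct series $N''$ and $N_2$ (differing only in the sign of their $G_2$-term) must be tracked precisely so as not to collapse them prematurely.
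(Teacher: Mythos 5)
Your proposal is correct and follows essentially the same route as the paper: Corollary~\ref{verify conjs} is proved there simply "by inspection" from Theorem~\ref{thm:K3xK3 DT invariants}, Proposition~\ref{prop on GW for prod} and Lemma~\ref{lemma:tau1 K3xK3}, and your part-by-part matching (including the reduction of extra divisor insertions in genus $0$ and the generating-series identity $-N''-2N_0+48N_1=N_2$ in genus $2$) is exactly the inspection the authors leave implicit, carried out correctly.
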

\begin{proof}
This follows by inspection using Theorem~\ref{thm:K3xK3 DT invariants} on $\DT_4$ invariants and Proposition~\ref{prop on GW for prod} and Lemma~\ref{lemma:tau1 K3xK3} for the GV/GW invariants respectively.
\end{proof}

Another remarkable consequence of Theorem~\ref{thm:K3xK3 DT invariants}
is that all $\DT_4$ invariants of $S \times T$ depend upon the curve class $\beta$
only via the square $\beta^2$ and not the divisibility.
More precisely, given pairs $(S,\beta)$ and $(S',\beta')$ of a $K3$ surface and an effective curve class
such that
$\beta^2 = \beta'^2$, let
\[ \varphi : H^2(S,\BR) \to H^2(S',\BR) \]
be any {\em real} isometry such that $\varphi(\beta) = \beta'$.
Extend $\varphi$ to the full cohomology by setting $\varphi(1) = 1$ and $\varphi(\pt_S) = \pt_{S'}$ where $\pt_S \in H^4(S,\BZ)$ is the point class.

\begin{cor} With respect to the choice of orientation \eqref{vir class StimesT}, we have
\[
\blangle\tau_{k_1}(\gamma_1), \ldots, \tau_{k_n}(\gamma_n) \brangle^{\DT_4, S \times T}_{\beta}
=
\blangle\tau_{k_1}( (\varphi \otimes \id)\gamma_1), \ldots, \tau_{k_n}( (\varphi \otimes \id) \gamma_n) \brangle^{\DT_4, S' \times T}_{\beta'}
\]
for any $\gamma_i \in H^{\ast}(X)$ and $k_i \geqslant   0$.
\end{cor}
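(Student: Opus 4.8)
The plan is to reduce all $\DT_4$ invariants of $X=S\times T$ to tautological integrals on the moduli space $M_{S,\beta}$ of stable sheaves on the $K3$ surface $S$, and then to exploit the universality of such integrals provided by Markman's framework (\S\ref{sect on transpot}), under which a real isometry of $H^2(S)$ preserving $\beta$ acts trivially.

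First I would carry out the reduction. By Theorem~\ref{thm on vir clas} the reduced virtual class of $M_\beta\cong M_{S,\beta}\times T$ is $[M_\beta]^{\vir}=e(M_{S,\beta})\cdot[T]-e(T)\cdot c_{\beta^2}(M_{S,\beta})$, a tautological class built from Chern classes of $T_{M_{S,\beta}}$ together with the topology of $T$. The normalized universal sheaf factors as $\mathbb{F}=\mathbb{F}_S\boxtimes\oO_{\Delta_T}$ by Eqn.~\eqref{equ of univ sheaf on prod}, so for each $\gamma_i$, after decomposing by the K\"unneth formula $H^{\ast}(X)=H^{\ast}(S)\otimes H^{\ast}(T)$, the descendent $\tau_{k_i}(\gamma_i)$ becomes a finite sum of exterior products of a descendent $\sigma_{d}(\alpha_i)$ on $M_{S,\beta}$ (with $\alpha_i\in H^{\ast}(S)$ the $S$-component) and a class on $T$, the diagonal $\oO_{\Delta_T}$ converting the $T$-insertion into a $T$-class. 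Integrating against $[M_\beta]^{\vir}$ and separating the two K\"unneth factors, each $\DT_4$ invariant becomes a $\BQ$-linear combination — with coefficients given by intersection numbers on $T$ and the Euler number $e(T)$ — of tautological integrals of the shape $\int_{M_{S,\beta}} c_{\bullet}(T_{M_{S,\beta}})\,\prod_j \sigma_{d_j}(\alpha_j)$, where $c_{\bullet}$ is either the top Chern class or $c_{\beta^2}$.

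Next I would apply the monodromy/universality input. Rewriting the $\sigma_d(\alpha)$ in terms of Markman's normalized descendents $B_k(x)$ as in Example~\ref{example:B Mbeta}, each such integral takes the form $\int_{M(v)}P\big(B_{k_i}(x_i),c_j(T_{M(v)})\big)$ with $v=(0,\beta,1)$. By Theorem~\ref{thm:Markman} and the surrounding universality, the value depends on the insertion vectors $x_i\in\Lambda$ only through the Mukai pairings $\langle x_i,x_j\rangle$, $\langle x_i,v\rangle$ and $\langle v,v\rangle=\beta^2$, together with fixed discrete invariants of the numerical type. Now the isometry $\varphi$ extends to a Mukai isometry $\tilde{\varphi}\colon\Lambda\otimes\BR\to\Lambda'\otimes\BR$ fixing $1$ and $\pt$, with $\tilde{\varphi}(v)=(0,\beta',1)=v'$; and by construction $\tilde{\varphi}$ realizes the map $\alpha\mapsto$ ($S$-component of $(\varphi\otimes\id)\gamma$). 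Since $\tilde{\varphi}$ preserves all Mukai pairings and sends $v$ to $v'$, it leaves every such universal integral unchanged, while the $T$-coefficients are manifestly untouched by $\varphi$. Summing the contributions yields the claimed equality.

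The main obstacle is that Theorem~\ref{thm:Markman} is formulated for a single integral (monodromy) isometry, whereas $\varphi$ is only a \emph{real} isometry; one must therefore upgrade ``invariance under Markman's operators'' to ``the integral is a universal polynomial in the Mukai pairings,'' which is then automatically invariant under any real isometry fixing the numerical type. This universality is exactly what the transport to $S^{[d]}$ with $d=\beta^2/2+1$ establishes, since it shows a single formula in the pairings governs all $(S,\beta)$ with a fixed value of $\beta^2$; alternatively, one may first prove the statement for the (dense) subgroup of rational isometries sending $\beta$ to $\beta'$ and then extend it by the polynomial — hence continuous — dependence of both sides on $\varphi$. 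Either route closes the gap, and the remaining work is the bookkeeping of the K\"unneth reduction, which is routine.
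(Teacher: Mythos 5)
Your proposal is correct, and it runs on exactly the machinery the paper uses to prove Theorem~\ref{thm:K3xK3 DT invariants}: the explicit virtual class \eqref{vir class StimesT} together with the K\"unneth factorization $\mathbb{F}=\mathbb{F}_S\boxtimes\oO_{\Delta_T}$ reduces every invariant to tautological integrals $\int_{M_{S,\beta}}c_{\bullet}(T_{M_{S,\beta}})\prod_j\sigma_{d_j}(\alpha_j)$ weighted by intersection numbers on $T$, and these are then handled by the Markman transport of Theorem~\ref{thm:Markman} and EGL universality. The difference is one of packaging: the paper deduces the corollary by inspection of the closed formulas of Theorem~\ref{thm:K3xK3 DT invariants}, which are manifestly functions of $\beta^2$ and of isometry-invariant pairings, whereas you argue structurally without evaluating anything. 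Your route is in fact the more complete one, since the theorem's list does not literally exhaust all insertions allowed by the corollary (e.g.\ $\blangle\tau_1(\theta),\tau_1(\theta')\brangle^{\DT_4}_{\beta}$, or strings padded with degree-zero insertions such as $\tau_0(D)$ and $\tau_1(1)$), so the general statement really does rest on the universality of the underlying integrals rather than on the displayed formulas. One small imprecision worth flagging: after transporting to $S^{[d]}$, the EGL universal polynomial depends a priori on the full numerical type of the vectors $g x_i$ --- their ranks, their $H^4$-components, and the intersection matrix of their $H^2$-components --- and not only on the Mukai pairings $\langle x_i,x_j\rangle$, $\langle x_i,v\rangle$, $\langle v,v\rangle$; this does not affect your conclusion, because $\varphi$ fixes $1$ and $\pt$ and is an isometry on $H^2$ with $\varphi(\beta)=\beta'$, so every one of these numerical data is preserved, and the passage from rational to real isometries is then immediate since both sides depend on $\varphi$ only through these (continuous, indeed polynomial) quantities.
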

This raises the question whether a similar independence of the divisibility holds for
Donaldson-Thomas invariants of holomorphic symplectic 4-folds more generally.

\subsection{Proof of Theorem~\ref{thm:K3xK3 DT invariants}}
We split the proof in two parts.
\begin{proof}[Proof of Theorem~\ref{thm:K3xK3 DT invariants} Part (i), (v)]
%Here we prove for the case $\gamma, \gamma'\in H^{4}(X, \mathbb{Z})$, other cases follow from easier versions of the same argument.
We begin with part (v).
By Eqn.~\eqref{equ of univ sheaf on prod}, the primary insertion becomes
$$\tau_0(\gamma)=(D_1\cdot\beta)\otimes D_2+A_2\,\pi_{M_{S,\beta}*}(\pi_S^*\pt\cdot \ch_1(\mathbb{F}_S))\otimes 1, $$
where $\pi_S$, $\pi_{M_{S,\beta}}$ are projections to each factor of $S\times M_{S,\beta}$. Therefore 
$$\tau_0(\gamma)\cdot\tau_0(\gamma')=(D_1\cdot\beta)\cdot (D_1'\cdot\beta)\otimes (D_2\cdot D_2')+A_2A_2'\,\left(\pi_{M_{S,\beta}*}\left(\pi_S^*\pt\cdot \ch_1(\mathbb{F}_S)\right)\right)^2\otimes 1+\mathrm{others}, $$
where ``others'' lie in $H^2(M_{S,\beta})\otimes H^2(T)$. 
Combining with Theorem \ref{thm on vir clas}, we get 
\begin{align}\label{equ on tau0 prod}
\blangle\tau_0(\gamma),\tau_0(\gamma') \brangle^{\DT_4}_{\beta}=&(D_1\cdot\beta)\cdot (D_1'\cdot\beta)\cdot e(M_{S,\beta})\int_T(D_2\cdot D_2') \\  \nonumber
&-A_2\,A_2'\,e(T)\,\int_{M_{S,\beta}}\left(\pi_{M_{S,\beta}*}\left(\pi_S^*\pt\cdot \ch_1(\mathbb{F}_S)\right)\right)^2\cdot c_{\beta^2}(M_{S,\beta}). 
\end{align}
There exists a Hilbert-Chow map 
$$\mathrm{HC}: M_{S,\beta}\to | \beta|=\mathbb{P}^{\frac{1}{2}\beta^2+1}, $$ 
to the linear system $|\beta|$ and $\ch_1(\mathbb{F}_S)=(\id_S\times\mathrm{HC})^*[\mathcal{C}]$, where $\mathcal{C}$ is the universal curve of the 
linear system:
\begin{align*} \xymatrix{
    \mathcal{C}  \hookrightarrow  S\times |\beta| \ar[d]_{p} \ar[r]^{\quad\quad q} & |\beta| \\
  S. &  &  }
\end{align*}
Since $[\mathcal{C}]=p^*\beta+q^*h$ for the hyperplane class $h$ of $|\beta|$, we have 
\begin{align*}
\pi_{M_{S,\beta}*}\left(\pi_S^*\pt\cdot \ch_1(\mathbb{F}_S)\right)&=\pi_{M_{S,\beta}*}\left(\pi_S^*\pt\cdot (\id_S\times\mathrm{HC})^*[\mathcal{C}]\right)\\ 
&=\mathrm{HC}^*q_*([\cC]\cdot p^*\pt) \\
&=\mathrm{HC}^*(h).
\end{align*}
By Theorem \ref{fujiki result}, we have  
\begin{align*}
& \quad \int_{M_{S,\beta}}\left(\pi_{M_{S,\beta}*}\left(\pi_S^*\pt\cdot \ch_1(\mathbb{F}_S)\right)\right)^2\cdot c_{\beta^2}(M_{S,\beta}) \\
&=C(c_{\beta^2}(M_{S,\beta}))\cdot \mathsf{q}(\mathrm{HC}^*(h)) \\
&=C(c_{\beta^2}(M_{S,\beta}))\,\left(C(1)^{-1}\cdot \int_{M}\left(\mathrm{HC}^*(h^{\beta^2+2})\right)\right)^{\frac{1}{\beta^2+2}} \\
&=0.
\end{align*}
Therefore Eqn.~\eqref{equ on tau0 prod} becomes 
\begin{equation*}\blangle\tau_0(\gamma),\tau_0(\gamma') \brangle^{\DT_4}_{\beta}=(D_1\cdot\beta)\cdot (D_1'\cdot\beta)\cdot (D_2\cdot D_2')\cdot e(M_{S,\beta}).   \end{equation*}
Finally, since $M_{S,\beta}$ is deformation equivalent to $S^{[d]}$ ($\beta^2=2d-2$) (e.g.~\cite[Cor.~3.5,~pp.~136]{Y2}),
they have the same Euler numbers:
\[ e(M_{S,\beta}) = e(S^{[d]}), \]
which is given by $N_0(\beta^2/2)$ due to G\"{o}ttsche \cite{Gott1}. This proves (v). 

For (i), we similarly have 
$$\tau_0(\alpha)=\pi_{M_{S,\beta}*}(\pi_S^*\theta_1\cdot \ch_1(\mathbb{F}_S))\otimes \pt+\pi_{M_{S,\beta}*}(\pi_S^*\pt\cdot \ch_1(\mathbb{F}_S))\otimes \theta_2, $$
\begin{equation*}\blangle\tau_0(\alpha) \brangle^{\DT_4}_{\beta}=(\theta_1\cdot\beta)\cdot e(M_{S,\beta})=(\theta_1\cdot\beta)\cdot N_0(\beta^2/2). \qedhere  \end{equation*}
\end{proof}

\begin{proof}[Proof of Theorem~\ref{thm:K3xK3 DT invariants} Parts (ii-iv) and (vi-viii)]
We first express the $\DT_4$ descendent invariants as integrals on $M_{S,\beta}$. 
Let $\mathbb{F}_S^{\mathrm{norm}}$ be the normalized universal sheaf on $M_{S,\beta}\times S$, i.e. 
$$\det(\pi_{M_{S,\beta}*}\mathbb{F}_S^{\mathrm{norm}})\cong\oO_{M_{S,\beta}}, $$
where $\pi_{M_{S,\beta}}: M_{S,\beta}\times S\to M_{S,\beta}$ is the projection. 
By Eqn.~\eqref{equ of univ sheaf on prod}, we have 
\[ \mathbb{F}=\mathbb{F}_S\boxtimes \oO_{\Delta_T}. \]
Hence the family $\mathbb{F}$ is normalized if and only if $\mathbb{F}_S$ is so.
Moreover, for the diagonal embedding $\Delta: T\to T\times T$, by GRR, we have
$$\ch(\oO_{\Delta})=\Delta_*(1-2\pt). $$
We obtain
\begin{align*}\tau_1(\gamma)&=A_1\,\pi_{M_{S,\beta}*}(\ch_2(\mathbb{F}_S^{\mathrm{norm}}))\otimes \pt+ A_2\,\pi_{M_{S,\beta}*}(\pi_S^*\pt\cdot\ch_2(\mathbb{F}_S^{\mathrm{norm}}))\otimes 1\\
&\quad\,+\pi_{M_{S,\beta}*}(\pi_S^*(D_1)\cdot\ch_2(\mathbb{F}_S^{\mathrm{norm}}))\otimes D_2.
\end{align*}
By base change to a point, we have  
$$\pi_{M_{S,\beta}*}(\ch_2(\mathbb{F}_S^{\mathrm{norm}}))=1. $$
Combining with Theorem \ref{thm on vir clas}, we obtain that
\[ \blangle\tau_1(\gamma)\brangle^{\DT_4}_{\beta}=A_1\,e(M_{S,\beta})-A_2\,e(T)\,\int_{M_{S,\beta}}c_{\beta^2}(M_{S,\beta})\cdot\pi_{M_{S,\beta}*}\left(\ch_2(\mathbb{F}_S^{\mathrm{norm}}\right)\pi_S^*\pt).
\]
Part (ii) now follows from Proposition~\ref{prop:desc M_Sbeta}(i).

Similarly, for (iii) we have 
$$\ch_5(\mathbb{F}_{\mathrm{norm}})=\ch_3(\mathbb{F}_S^{\mathrm{norm}})\cdot \Delta_*1-2\ch_1(\mathbb{F}_S^{\mathrm{norm}})\cdot \Delta_*(\pt). $$
Hence  
\begin{align*} 
\tau_2(\theta)&=\pi_{M*}(\ch_5(\mathbb{F}_{\mathrm{norm}})\pi_X^*\theta) \\
&=\pi_{M_{S,\beta}*}(\ch_3(\mathbb{F}_S^{\mathrm{norm}})\pi_{S}^*\theta_1)+\pi_{M_{S,\beta}*}(\ch_3(\mathbb{F}_S^{\mathrm{norm}}))\boxtimes\theta_2 -2\pi_{M_{S,\beta}*}(\ch_1(\mathbb{F}_S^{\mathrm{norm}})\pi_{S}^*\theta_1)\boxtimes \pt \\
&=\pi_{M_{S,\beta}*}(\ch_3(\mathbb{F}_S^{\mathrm{norm}})\pi_{S}^*\theta_1)+\pi_{M_{S,\beta}*}(\ch_3(\mathbb{F}_S^{\mathrm{norm}}))\boxtimes\theta_2 -2(\theta_1\cdot \beta)\boxtimes \pt,
\end{align*}
where the last equality is by base change to a point. 
Using Theorem \ref{thm on vir clas}, we obtain
\[ \blangle\tau_2(\theta) \brangle^{\DT_4}_{\beta}=-2(\theta_1\cdot \beta)\,e(  M_{S,\beta})-24\int_{M_{S,\beta}}c_{\beta^2}(M_{S,\beta})\cdot
\pi_{M_{S,\beta}*}\left(\ch_3(\mathbb{F}_S^{\mathrm{norm}}\right)\pi_S^*\theta_1). \]
Thus with Proposition~\ref{prop:desc M_Sbeta}, we obtain that
\[ \blangle\tau_2(\theta) \brangle^{\DT_4}_{\beta} = (\theta_1 \cdot \beta) \left( -2 N_0\left( \frac{\beta^2}{2} \right) + 24 N'\left( \frac{\beta^2}{2} \right) \right). \]
For part (iv), one similarly establishes:
\begin{align*}
\blangle\tau_3(1) \brangle^{\DT_4}_{\beta} 
& =-2\,e(  M_{S,\beta})-24\int_{M_{S,\beta}}c_{\beta^2}(M_{S,\beta})\cdot \pi_{M_{S,\beta}*}(\ch_4(\mathbb{F}_S^{\mathrm{norm}})) \\
& = -2 N_0\left( \frac{\beta^2}{2} \right) + 24 N'\left( \frac{\beta^2}{2} \right),  
\end{align*}

For (vi), we compute using Lemma~\ref{lemma:intersection pairing} that
\begin{align*}
\blangle\tau_0(\gamma), \tau_1(\theta) \brangle^{\DT_4}_{\beta}&=(D_1\cdot\beta)(D_2\cdot\theta_2)\,e( M_{S,\beta}) \\
&\quad -24A_2\int_{M_{S,\beta}}c_{\beta^2}(M_{S,\beta})\cdot \pi_{M_{S,\beta}*}\left(\ch_1(\mathbb{F}_S^{\mathrm{norm}}\right)\pi_S^*\pt)\cdot \pi_{M_{S,\beta}*}\left(\ch_2(\mathbb{F}_S^{\mathrm{norm}}\right)\pi_S^*\theta_1)  
 \\
 &=(D_1\cdot\beta)(D_2\cdot\theta_2)\,e( M_{S,\beta}) 
 -24A_2 (\theta_1 \cdot \beta)\,C(c_{\beta^2}(T_{M_{S,\beta}}) ).
\end{align*}
Since $M_{S,\beta}$ and $S^{[d]}$ are deformation equivalent they share the same Fujiki constants:
\[ C(c_{\beta^2}(T_{M_{S,\beta}})) = C( c_{2d-2}( T_{S^{[d]}} ) ) = N_1( \beta^2/2 ), \]
where $\beta^2 = 2d-2$. This implies the claim.
Finally for (vii) and (viii), we similarly find:
\begin{align*}
\blangle\tau_0(\gamma), \tau_2(1) \brangle^{\DT_4}_{\beta} 
& =-24A_2\int_{M_{S,\beta}}c_{\beta^2}(M_{S,\beta})\cdot \pi_{M_{S,\beta}*}\left(\ch_1(\mathbb{F}_S^{\mathrm{norm}}\right)\pi_S^*\pt)\cdot 
\pi_{M_{S,\beta}*}\left(\ch_3(\mathbb{F}_S^{\mathrm{norm}}\right))  
 \\ & = 0,  \\
%\end{align*}
%\begin{align*}
\blangle\tau_1(\theta), \tau_2(1) \brangle^{\DT_4}_{\beta} 
& =-24\int_{M_{S,\beta}}c_{\beta^2}(M_{S,\beta})\cdot 
\pi_{M_{S,\beta}*}\left(\ch_2(\mathbb{F}_S^{\mathrm{norm}}\right)\pi_S^*\theta_1)\cdot 
\pi_{M_{S,\beta}*}\left(\ch_3(\mathbb{F}_S^{\mathrm{norm}}\right))  \\
& = -24 \cdot (-2) \cdot (\theta_1 \cdot \beta)\,C( c_{\beta^2}(T_{M_{S,\beta}})) \\
& = 48 (\theta_1 \cdot \beta) N_1( \beta^2/2 ). \qedhere
\end{align*}
\end{proof}

\section{Cotangent bundle of $\p^2$}
We consider the geometry $X = T^{\ast} \p^2$.
There is a natural identification of curve classes:
\[ H_2(X,\BZ) = H_2(\p^2, \BZ) = \BZ [ \ell ], \]
where $\ell \subset \p^2$ is a line.

\subsection{GW and GV invariants}
Let $H \in H^2(T^{\ast} \p^2)$ be the pullback of hyperplane class.
We identify $H_2(T^{\ast} \p^2, \BZ) \equiv \BZ$ by its degree against $H$.

\begin{prop} \label{prop:TstartP2}
\begin{gather*}
\blangle \tau_0(H^2), \tau_0(H^2) \brangle^{\GW}_{0, d} =  \frac{(-1)^{d-1}}{d}, \quad 
\blangle \tau_0(H^2) \brangle^{\GW}_{1, d} =  \frac{(-1)^{d-1}}{8} d, \quad 
\blangle \varnothing \brangle^{\GW}_{2, d} =  \frac{(-1)^{d-1}}{128} d^3.
\end{gather*}
\end{prop}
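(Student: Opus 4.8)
The plan is to compute these reduced invariants by the virtual localization formula of \cite{GP}, after exhibiting $X = T^{\ast}\p^2$ as the total space of $\Omega_{\p^2} \to \p^2$ and exploiting the fibre-scaling action. Let $\mathbb{C}^{\ast}_s$ act by scaling the cotangent fibres (with equivariant parameter $s$) and let $T_0 = (\mathbb{C}^{\ast})^2$ be the standard torus acting on $\p^2$; set $T = T_0 \times \mathbb{C}^{\ast}_s$. Any stable map from a proper curve into $X$ that is fixed by $\mathbb{C}^{\ast}_s$ has image in the zero section, so the $\mathbb{C}^{\ast}_s$-fixed locus of $\Mbar_{g,n}(X,d)$ is $\Mbar_{g,n}(\p^2,d)$. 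Since $X$ is non-compact I would work $T$-equivariantly throughout; the cosection-localised reduced class is supported near the zero section, so the integrals are honest numbers, and because the reduced virtual dimension $2-g+n$ matches the total codimension of the insertions they are independent of the equivariant parameters.

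Next I would identify the reduced obstruction theory along the fixed locus. Along the zero section one has $T_X = T_{\p^2} \oplus \Omega_{\p^2}$, where the second summand carries $\mathbb{C}^{\ast}_s$-weight $s$, so that $R\pi_{\ast} f^{\ast} T_X = R\pi_{\ast} f^{\ast} T_{\p^2} \oplus \big(R\pi_{\ast} f^{\ast}\Omega_{\p^2}\big)\otimes[s]$. The first summand is the usual deformation--obstruction complex of $\Mbar_{g,n}(\p^2,d)$, while the second is the (moving) virtual normal bundle. The symplectic form $\sigma$ induces the semiregularity map $R\pi_{\ast}f^{\ast}T_X \to \mathcal{O}[-1]\otimes[s]$, and forming its cone removes a \emph{single} trivial rank-one summand, of weight $w=\pm s$, from the obstruction. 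Consequently the reduced invariant becomes the twisted integral
\[
\blangle \cdots \brangle^{\GW}_{g,d}
= \int_{[\Mbar_{g,n}(\p^2,d)]^{\vir}} \frac{e_T\big(R^1\pi_{\ast} f^{\ast}\Omega_{\p^2}\otimes[s]\big)}{w\cdot e_T\big(R^0\pi_{\ast} f^{\ast}\Omega_{\p^2}\otimes[s]\big)}\prod_{i} \mathrm{ev}_i^{\ast}(\cdots),
\]
an equivariant twisted Gromov--Witten integral of $\p^2$.

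Finally I would evaluate this by a further application of Graber--Pandharipande localization, now for $T_0$ acting on $\Mbar_{g,n}(\p^2,d)$. For each decorated graph the vertex, edge and flag contributions of the twisting class, of the removed factor, and of the insertions $\mathrm{ev}^{\ast}(H^2)$ are standard; the central consistency check is that all dependence on the $T_0$-weights and on $s$ cancels, leaving a rational number. Summing over the graphs yields the invariants. For genus $0$ and $1$ I expect the graph sums to resum directly into the closed forms $(-1)^{d-1}/d$ and $(-1)^{d-1}d/8$; in general I would compute enough values of $d$ to pin down the generating series and then establish the stated formulae, including $(-1)^{d-1}d^3/128$ in genus $2$.

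The hard part will be twofold. First, identifying precisely how the cosection truncates the obstruction complex at each fixed locus --- in particular the correct weight $w$ of the removed trivial factor and its interaction with the Hodge bundle $\mathbb{E}$ for $g\geqslant 1$ (where the semiregularity map $\mathsf{sr}_{\sigma}$ must be made fully explicit, cf.\ \S\ref{sect on heuristic of g=2 GW/GV}). Second, controlling the combinatorics of the genus $2$ graph sum well enough to extract the cubic degree dependence; throughout, the vanishing of all equivariant parameters in the final answer is the key test that the reduction has been carried out correctly.
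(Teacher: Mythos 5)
Your strategy is exactly the paper's: the proof given there consists of one line invoking Graber--Pandharipande virtual localization and pointing to \cite[\S 3]{PZ} for a parallel computation, and your setup (fibre-scaling torus, fixed locus $\Mbar_{g,n}(\p^2,d)$, splitting $T_X|_{\p^2}=T_{\p^2}\oplus\Omega_{\p^2}$, removal of one trivial weight-$\pm s$ factor by the cosection, dimension count forcing independence of equivariant parameters) is the correct way to flesh that line out. The one genuine gap in your write-up is the closing fallback ``compute enough values of $d$ to pin down the generating series'': without an a priori structural result confining the answer to a finite-dimensional space of functions of $d$, finitely many checks do not establish the formulas for all $d$, so you must actually resum the graph sums (or reduce to a multiple-cover analysis as in \cite{PZ}) to get $(-1)^{d-1}/d$, $(-1)^{d-1}d/8$ and $(-1)^{d-1}d^3/128$ in closed form.
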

%Consider the reduced Gromov-Witten invariants of $T^{\ast} \p^2$:
%\begin{align*}
%N_{0,d} & = \blangle \tau_0(H^2) \tau_0(H^2) \brangle_{0, d} \\
%N_{1,d} & = \blangle \tau_0(H^2) \brangle_{1, d} \\
%N_{2,d} & = \blangle 1 \brangle_{2, d}
%\end{align*}
%where we identified $H_2(T^{\ast} \p^2, \BZ) \equiv \BZ$ and $H$ is the pullback of the hyperplane class.
%
%\begin{lemma}
%\begin{align*}
%N_{0,d} &= (-1)^{d-1} \frac{1}{d} \\
%N_{1,d} &= (-1)^{d-1} \frac{1}{8} d \\
%N_{2,d} &= (-1)^{d-1} \frac{1}{128} d^3
%\end{align*}
%\end{lemma}
\begin{proof}
This follows by a direct calculation using Graber-Pandharipande virtual localization formula \cite{GP}. We refer to \cite[\S 3]{PZ} for a computation with parallel features.
\end{proof}
Based on Definition \ref{def of g=0 GV inv}, \ref{defn g1 GV primitive}, \ref{defn g2 GV primitive}, we then obtain the following:
\begin{cor}\label{cor on inte on local p2}
\begin{align*}
n_{0,d}(H^2,H^2)&=
\left\{\begin{array}{rcl} 1   &\mathrm{if} \,\, d=1, \\ 
 -1  &\mathrm{if} \,\,  d=2,  \\
  0    & \,\,  \mathrm{otherwise}. 
\end{array} \right.  \\
n_{1,1}(H^2)&=0,  \quad n_{2,1}=0. 
\end{align*}
In particular, Conjecture \ref{conj on integrality} holds for $T^{\ast} \p^2$. 
\end{cor}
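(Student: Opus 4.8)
The plan is to feed the Gromov--Witten evaluations of Proposition \ref{prop:TstartP2} into the defining relations of Definitions \ref{def of g=0 GV inv}, \ref{defn g1 GV primitive} and \ref{defn g2 GV primitive} and solve for the Gopakumar--Vafa invariants; the only genuinely new computation is the nodal count $N_{\mathrm{nodal},1}$. For genus $0$ I would apply Definition \ref{def of g=0 GV inv} with the two insertions $\gamma_1=\gamma_2=H^2$, so that $k^{n-3}=k^{-1}$. Writing $f(d):=d\cdot n_{0,d}(H^2,H^2)$ and using $\blangle \tau_0(H^2)\tau_0(H^2)\brangle^{\GW}_{0,d}=(-1)^{d-1}/d$, the defining equation becomes $\sum_{e\mid d} f(e)=(-1)^{d-1}$. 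One checks directly that $f(1)=1$, $f(2)=-2$, $f(d)=0$ for $d\geq 3$ solves this for every $d$ (split into $d$ odd, where only $e=1$ contributes, and $d$ even, where $e=1,2$ contribute), and by M\"obius inversion this is the unique solution; this gives the stated values of $n_{0,d}(H^2,H^2)$.

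For the higher genus parts I first record $c(T_X)$. Since $X=T^{\ast}\p^2$ is the total space of $\Omega^1_{\p^2}$ over $\p^2$, its tangent bundle is $\pi^{\ast}(T_{\p^2}\oplus\Omega^1_{\p^2})$, and $c(T_{\p^2})=(1+H)^3$, $c(\Omega^1_{\p^2})=(1-H)^3$ give $c(T_X)=\pi^{\ast}(1-3H^2)$; in particular $c_2(T_X)=-3H^2$ and $c_4(T_X)=0$, both pulled back from $\p^2$. As $\beta=1$ is primitive, Definition \ref{defn g1 GV primitive} applies, and with $\blangle\tau_0(H^2)\tau_0(c_2(X))\brangle^{\GW}_{0,1}=-3\,\blangle\tau_0(H^2)\tau_0(H^2)\brangle^{\GW}_{0,1}=-3$ I obtain $n_{1,1}(H^2)=\tfrac{1}{8}+\tfrac{1}{24}(-3)=0$.

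The genus $2$ part is where the work lies. Plugging into Definition \ref{defn g2 GV primitive} with $\blangle\varnothing\brangle^{\GW}_{2,1}=1/128$, and using $n_{1,1}(c_2(X))=-3\,n_{1,1}(H^2)=0$ together with $\blangle\tau_0(c_2(X))\tau_0(c_2(X))\brangle^{\GW}_{0,1}=9\,\blangle\tau_0(H^2)\tau_0(H^2)\brangle^{\GW}_{0,1}=9$, everything collapses (note $9/(2\cdot 24^2)=1/128$) to $n_{2,1}=-\tfrac{1}{24}N_{\mathrm{nodal},1}$; so it remains to show $N_{\mathrm{nodal},1}=0$. I expect this to be the main obstacle, as it requires evaluating both integrals in \eqref{Nnodal}. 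I would first identify the moduli spaces: every degree-$1$ rational curve is a line in the zero section $\p^2\subset X$ (a section of $\Omega^1_{\p^2}|_{\ell}\cong\CO(-2)\oplus\CO(-1)$ over a line $\ell$ must vanish), so $\Mbar_{0,0}(X,[\ell])\cong(\p^2)^{\vee}$, $\Mbar_{0,1}(X,[\ell])\cong\CC$ the universal line (the incidence divisor in $\p^2\times(\p^2)^{\vee}$), and $\Mbar_{0,2}(X,[\ell])\cong\CC\times_{(\p^2)^{\vee}}\CC$. Each is smooth of the reduced virtual dimension, so the reduced virtual classes are the fundamental classes.

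To finish, on $\CC$ let $h,\bar h$ be the two hyperplane pullbacks, satisfying $h^2-h\bar h+\bar h^2=0$ and $\int_{\CC}h^2\bar h=\int_{\CC}h\bar h^2=1$. Adjunction gives $c_1(T_{\CC})=2h+2\bar h$, so for the projection $p:\CC\to(\p^2)^{\vee}$ one has $c_1(T_p)=2h-\bar h$, and Lemma \ref{lemma:psi1 in terms of Tp} yields $\psi_1=\bar h-2h$, while $\ev^{\ast}H=h$. The second integral in \eqref{Nnodal} is then $\int_{\CC}\bigl(\psi_1^3+\ev^{\ast}c_2(X)\,\psi_1\bigr)=6-3=3$. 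For the first integral I would use excess intersection: the preimage of $\Delta_X$ under $\ev_1\times\ev_2$ is the diagonal $\CC\hookrightarrow\CC\times_{(\p^2)^{\vee}}\CC$ with excess bundle $N=\ev^{\ast}T_X/T_p$ of rank $3$, so the integral equals $\int_{\CC}c_3(N)=\int_{\CC}(-u^3+3h^2u)=6-3=3$ where $u=2h-\bar h$; the non-compactness of $X$ causes no trouble since the intersection is supported on the proper subvariety $\CC$. Hence $N_{\mathrm{nodal},1}=\tfrac12(3-3)=0$ and $n_{2,1}=0$. Since all computed invariants are integers, Conjecture \ref{conj on integrality} follows for $T^{\ast}\p^2$.
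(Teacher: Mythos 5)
Your proposal is correct, and for the genus $0$ and genus $1$ parts it coincides with the paper's (very terse) argument: feed the evaluations of Proposition~\ref{prop:TstartP2} and $c_2(T_X)=-3H^2$ into the definitions. The only place where you genuinely diverge is the evaluation of $N_{\mathrm{nodal},1}$, which is indeed the real content of the genus $2$ claim. The paper stays entirely within formal Gromov--Witten theory: it computes the first integral of \eqref{Nnodal} by restricting the diagonal, $\Delta_X|_{\p^2\times\p^2}=\Delta_{\p^2}\cdot \pr_1^{\ast}c_2(\Omega_{\p^2})=3\,\pr_1^{\ast}(H^2)\pr_2^{\ast}(H^2)$, so that term equals $3\blangle\tau_0(H^2)\tau_0(H^2)\brangle^{\GW}_{0,1}=3$, and it reduces the second integral to primary invariants via Lemma~\ref{div equ on GW} and Eqn.~\eqref{tau31}, obtaining $\blangle\tau_1(c_2(T_X))\brangle^{\GW}_{0,1}+\blangle\tau_3(1)\brangle^{\GW}_{0,1}=-3+6=3$. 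You instead identify $\Mbar_{0,k}(X,1)$ with (fiber products of) the universal line over $(\p^2)^{\vee}$ and evaluate both integrals by excess intersection and explicit computation in $H^{\ast}(\CC)$; your individual values even match term by term ($\int_{\CC}\psi_1^3=6$, $\int_{\CC}\ev^{\ast}c_2\,\psi_1=-3$, $\int_{\CC}c_3(N)=3$), and your justification of the moduli identification via $N_{\ell/X}\cong\CO(1)\oplus\CO(-1)\oplus\CO(-2)$ and of the reduced-class-equals-fundamental-class statement is sound. Your route buys a concrete geometric picture and is self-contained apart from Lemma~\ref{lemma:psi1 in terms of Tp}, at the cost of needing the explicit geometry of lines; the paper's route avoids any moduli-space geometry beyond Proposition~\ref{prop:TstartP2} by leaning on the universal descendent relations. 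All remaining arithmetic in your write-up (the M\"obius recursion $\sum_{e\mid d}f(e)=(-1)^{d-1}$, the cancellation $1/8-1/8$ in genus $1$, and the collapse $9/(2\cdot 24^2)=1/128$ in genus $2$) checks out.
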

\begin{proof}
In genus $0$ and $1$, this follows from a direct calculation using the definition and that
\[ c_2(T_X)|_{\p^2} = -3 H^2. \]
In genus $2$, it remains to determine the nodal invariant $N_{\mathrm{nodal}, \beta}$.
In $H^{\ast}(\p^2 \times \p^2)$, we have
\[ \Delta_{X}|_{\p^2 \times \p^2} = \Delta_{\p^2} \cdot \pr_{1}^{\ast}( c_2(\Omega_{\p^2})) = 3 \pr_1^{\ast}(H^2) \pr_2^{\ast}(H^2). \]
Using Lemma~\ref{div equ on GW} and Eqn.~\eqref{tau31} we find that
\begin{align*}
N_{\mathrm{nodal},1} 
& = \frac{1}{2} \left[ 3 - \left(\blangle \tau_1(c_2(T_X)) \brangle^{\GW}_{0,1} + \blangle \tau_3(1) \brangle^{\GW}_{0,1} \right) \right]  \\
& = \frac{1}{2} \left[ 3 - (-3 + 6) \right] = 0.
\end{align*}
The vanishing $n_{2,1} =0$ follows now from a direct calculation.
\end{proof}
\subsection{$\DT_4$ invariants}
%When we consider curve classes on this $\mathbb{P}^2$, we may work with the local model $T^*\mathbb{P}^2$.Denote $[\ell]\in H_2(T^*\mathbb{P}^2)\cong H_2(\mathbb{P}^2)$ to be the class of a line. 

Let $M_{T^*\mathbb{P}^2,d}$ (resp.~$M_{\mathbb{P}^2,d}$) be the moduli scheme of compactly supported one dimensional stable sheaves $F$ on $T^*\mathbb{P}^2$ 
(resp.~$\mathbb{P}^2$) with $[F]=d[\ell]$ ($d\geqslant   1$) and $\chi(F)=1$. 
\begin{lem}
Let $\iota: \mathbb{P}^2\to T^*\mathbb{P}^2$ be the zero section. Then the pushforward map
\begin{align}\label{equ on iso of TP2} \iota_*: M_{\mathbb{P}^2,d}\to M_{T^*\mathbb{P}^2,d} \end{align}
is an isomorphism. 
\end{lem}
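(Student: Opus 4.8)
The content of the lemma is the claim that every one-dimensional stable sheaf $F$ on $T^{\ast}\p^2$ with $[F]=d[\ell]$ and $\chi(F)=1$ is \emph{scheme-theoretically} supported on the zero section $\iota(\p^2)$; granting this, $F=\iota_{\ast}G$ for a unique $G\in\Coh(\p^2)$, and the plan is to check that $G\mapsto\iota_\ast G$ identifies the two moduli problems. First I would set up the spectral correspondence for the affine projection $\pi\colon T^{\ast}\p^2=\mathrm{Tot}(\Omega_{\p^2})\to\p^2$. Since $F$ is compactly supported and the fibres of $\pi$ are affine, $\pi$ is finite on $\Supp(F)$, so $F$ is equivalent to a Higgs pair $(G,\theta)$ with $G=\pi_\ast F$ pure one-dimensional and $\theta\colon G\to G\otimes\Omega_{\p^2}$ the map encoding the $\Sym^{\bullet}T_{\p^2}$-module structure. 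Under this correspondence subsheaves of $F$ are exactly the $\theta$-invariant subsheaves of $G$, with matching reduced Hilbert polynomials, so stability of $F$ is stability of $(G,\theta)$; moreover $[G]=d[\ell]$, $\chi(G)=1$, and scheme-theoretic support on the zero section is precisely the vanishing $\theta=0$. Thus everything reduces to proving $\theta=0$.

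The engine is the negativity of $\Omega_{\p^2}$. From the Euler sequence $0\to\Omega_{\p^2}\to\oO(-1)^{\oplus 3}\to\oO\to0$, tensoring by a pure one-dimensional sheaf $E$ stays exact and gives $E\otimes\Omega_{\p^2}\hookrightarrow (E(-1))^{\oplus 3}$, whence $\mu_{\max}(E\otimes\Omega_{\p^2})\le\mu_{\max}(E)-1$. I would first show $G$ is semistable as a plain sheaf: if $G_1\subset G$ is the maximal destabilizing subsheaf, then composing $\theta|_{G_1}$ with $G\otimes\Omega_{\p^2}\to(G/G_1)\otimes\Omega_{\p^2}$ lands in a sheaf of $\mu_{\max}<\mu(G_1)$ and hence vanishes, so $G_1$ is $\theta$-invariant; stability of $(G,\theta)$ then forbids a proper $\theta$-invariant subsheaf of slope $\ge\mu(G)$, forcing $G_1=G$. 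With $G$ semistable, any nonzero $\phi\colon G\to G\otimes\Omega_{\p^2}$ would give $\mu(G)=\mu_{\min}(G)\le\mu_{\max}(G\otimes\Omega_{\p^2})\le\mu(G)-1$, a contradiction; hence $\Hom(G,G\otimes\Omega_{\p^2})=0$ and in particular $\theta=0$. I expect this step — obtaining \emph{scheme-theoretic} support (equivalently, ruling out nonzero, possibly nilpotent, Higgs fields) rather than merely set-theoretic support — to be the main obstacle, and the Euler-sequence slope bound is exactly what removes it. This is the analogue for $T^{\ast}\p^2$ of the support reductions used in \cite{CMT1}.

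It remains to promote the resulting bijection to an isomorphism of schemes, and here the same vanishing does the work. Pushforward along the closed immersion $\iota$ is a morphism of moduli functors, and it is injective on points because $\iota_\ast G\cong\iota_\ast G'$ forces $G\cong G'$. For the inverse, let $\mathcal F$ be any $B$-flat family of stable sheaves on $T^{\ast}\p^2$ of the given type; set $\mathcal G=(\mathrm{id}_B\times\pi)_\ast\mathcal F$ with its relative Higgs field $\Theta\colon\mathcal G\to\mathcal G\otimes\Omega_{\p^2}$. Every fibre $G_b$ is semistable by the argument above, so $\Hom_{\p^2}(G_b,G_b\otimes\Omega_{\p^2})=0$ for all $b$; by cohomology and base change the relative $\HOM$ sheaf then vanishes, giving $\Theta=0$. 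Hence $\mathcal F$ is scheme-theoretically supported on $B\times\p^2$ and descends to a $B$-flat family of stable sheaves on $\p^2$. These two constructions are mutually inverse morphisms of functors, identifying $M_{\p^2,d}$ with $M_{T^{\ast}\p^2,d}$; equivalently, the vanishing $\Hom(G,G\otimes\Omega_{\p^2})=0$ shows $\Ext^1_{T^{\ast}\p^2}(\iota_\ast G,\iota_\ast G)\cong\Ext^1_{\p^2}(G,G)$, so $\iota_\ast$ is a bijective closed immersion that is an isomorphism on tangent spaces, hence an isomorphism.
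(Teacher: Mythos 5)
Your proof is correct, and it reaches the one genuinely delicate point of the lemma --- that a stable one-dimensional sheaf on $T^{*}\p^2$ is \emph{scheme}-theoretically, not merely set-theoretically, supported on the zero section --- by a different route than the paper. The paper first obtains set-theoretic support from the affine contraction $T^{*}\p^2\to Y$, and then upgrades it by combining simplicity of a stable sheaf with \cite[Lem.~2.2]{CMT1}; this forces them to identify the scheme-theoretic fiber of $T^{*}\p^2\to Y$ over the origin with the reduced zero section, which they do by an explicit computation with the good moduli space of $[\BC^6/\BC^{*}]$. You instead pass through the spectral correspondence for $\pi\colon T^{*}\p^2\to\p^2$ and kill the Higgs field outright via the slope estimate $\mu_{\max}(E\otimes\Omega_{\p^2})\leqslant\mu_{\max}(E)-1$ from the Euler sequence --- the same negativity the paper only exploits at the tangent-space stage, where it shows $\Hom(F,F\otimes\Omega_{\p^2})\subset\Hom(F,F(-1)^{\oplus 3})=0$. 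Your route buys two things: it bypasses the stack/good-moduli-space fiber computation entirely, and since the vanishing $\Hom(G_b,G_b\otimes\Omega_{\p^2})=0$ holds fiberwise, the base-change argument gives the inverse morphism of moduli \emph{functors} over an arbitrary (possibly non-reduced) base, so the scheme isomorphism does not lean on the smoothness of $M_{\p^2,d}$ the way the paper's ``bijective on points plus isomorphism on tangent spaces'' argument does. What the paper's route buys is brevity once the quoted lemma from \cite{CMT1} is granted: no discussion of Higgs pairs, purity of $\pi_{*}F$, or the matching of stability under the spectral correspondence is needed. One small point worth spelling out in your write-up is that stability on $T^{*}\p^2$ is insensitive to the choice of polarization here (all compactly supported curve classes are multiples of $[\ell]$, so the reduced Hilbert polynomial comparison is just $\chi(F')/d'<\chi(F)/d$), which is what makes the identification of stability for $F$ with stability of the pair $(G,\theta)$ unambiguous.
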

\begin{proof}
The map $\iota_{\ast}$ is obviously injective. 
We show that $\iota_{\ast}$ is also surjective.
As $T^{\ast}\mathbb{P}^2$ admits a birational contraction 
$T^{\ast}\mathbb{P}^2 \to Y$ which contracts the zero section 
$\mathbb{P}^2 \hookrightarrow T^{\ast}\mathbb{P}^2$
to $0 \in Y$ and $Y$ is affine, 
any one dimensional sheaf on $T^{\ast}\mathbb{P}^2$
is set theoretically supported on the zero 
section. It is enough to show that any one dimensional stable sheaf $F$
on $T^{\ast}\mathbb{P}^2$
is scheme theoretically supported on the zero section.

Recall the following fact as stated in \cite[Lem.~2.2]{CMT1}: let $g \colon Z \to T$ be a morphism of $\mathbb{C}$-schemes, and 
take a closed point $t \in Z$. 
Let $Z_t \subset Z$ be the scheme theoretic fiber of $g$ at $t$. 
Suppose that $F \in \Coh(Z)$
is set theoretically supported on $Z_t$ and satisfies $\End(F)=\mathbb{C}$. Then 
$F$ is scheme theoretically supported on $Z_t$. 
%Indeed for any element $u \in \oO_{Y, t}$ vanishing at $t$, 
%the map $g^{\ast}u \colon F \to F$ is nilpotent as 
%$F$ is supported on $Z_t$, hence it is zero as $\End(F)=\mathbb{C}$. 

It should be well-known (and easy) that the scheme theoretic fiber of 
$T^{\ast}\mathbb{P}^2 \to Y$ at $0 \in Y$ 
is the reduced zero section, then surjectivity of $\iota_{\ast}$ follows from the above fact. As 
we cannot find its reference, we give another argument here. Consider the closed embedding 
$T^{\ast}\mathbb{P}^2 \subset \mathcal{O}_{\mathbb{P}^2}(-1)^{\oplus 3}$
induced by the Euler sequence on $\mathbb{P}^2$. 
Note that $\mathcal{O}_{\mathbb{P}^2}(-1)^{\oplus 3}$ is an open subscheme 
of $[\mathbb{C}^6/\mathbb{C}^{\ast}]$,
where $\mathbb{C}^{\ast}$ on $\mathbb{C}^6$ 
by 
$$t(x_1, x_2, x_3, y_1, y_2, y_3)=(tx_1, tx_2, tx_3, t^{-1}y_1, t^{-1}y_2, t^{-1}y_3), $$
and corresponds to $(x_1, x_2, x_3) \neq (0, 0, 0)$. 
The stack $[\mathbb{C}^6/\mathbb{C}^{\ast}]$
admits a good moduli space 
\begin{align}\notag
    [\mathbb{C}^6/\mathbb{C}^{\ast}] \to T:=\mathrm{Spec}\,\mathbb{C}[x_i, y_i]^{\mathbb{C}^{\ast}}
    =\mathrm{Spec}\, \mathbb{C}[x_iy_j : 1\leqslant i, j \leqslant 3].
    \end{align}
    One can easily calculates that the scheme theoretic fiber of the above 
    morphism restricted to 
    $(x_1, x_2, x_3) \neq (0, 0, 0)$ is $(y_1=y_2=y_3=0)$. 
    It follows that the 
    scheme theoretic fiber of $T^{\ast}\mathbb{P}^2 \subset \oO_{\mathbb{P}^2}(-1)^{\oplus 3} 
    \to T$ at $0 \in T$ is the reduced zero section $\mathbb{P}^2$. As $T$ is affine, any  one dimensional stable sheaf is set
    theoretically supported on the (scheme theoretic) fiber of $0 \in T$. Using the above fact, it is also scheme theoretically supported on it. 
   Therefore $\iota_{\ast}$ is surjective. 
    
    Since $M_{\mathbb{P}^2, d}$ is smooth and 
    $\iota_{\ast}$ is bijective on closed points, 
    it remains to show that $\iota_{\ast}$ induces an isomorphisms on 
    tangent spaces. 
    For a one dimensional stable sheaf $F$ on $\mathbb{P}^2$, 
    the tangent space of $M_{T^{\ast}\mathbb{P}^2, d}$ at $\iota_{\ast}F$ is 
    \begin{align}\notag
    \Ext_{T^{\ast}\mathbb{P}^2}^1(\iota_{\ast}F, \iota_{\ast}F) \cong
    \Ext_{\mathbb{P}^2}^1(F, F) \oplus \Hom(F, F \otimes T^*\mathbb{P}^2). 
    \end{align}
    By the Euler sequence and stability, we have 
    $$\Hom(F, F \otimes T^*\mathbb{P}^2) \subset 
    \Hom(F, F \otimes \mathcal{O}_{\mathbb{P}^2}(-1)^{\oplus 3})=0. $$
    Therefore $\iota_{\ast}$ induce an isomorphism of tangent spaces. 
\end{proof}
Then the following result is straightforward. 
\begin{lem}
Under the isomorphism \eqref{equ on iso of TP2}, we have 
$$[M_{T^*\mathbb{P}^2,d}]^{\vir}=\mathrm{PD}\left(e_{\mathrm{red}}^{\frac{1}{2}}\left(\eE xt^1_{\pi_M}(\mathbb{F},\mathbb{F}\boxtimes T^*\mathbb{P}^2),Q\right)\right)\in H_{4}(M_{\mathbb{P}^2,d},\mathbb{Z}). $$
Here $\mathrm{PD}$ denotes the Poincar\'e dual, $e_{\mathrm{red}}^{\frac{1}{2}}$ is the reduced half Euler class as in Definition \ref{def of red vir clas},
$\mathbb{F}$ denotes a universal sheaf of $M_{\mathbb{P}^2,d}$ and $\pi_M: M_{\mathbb{P}^2,d}\times \mathbb{P}^2 \to M_{\mathbb{P}^2,d}$ is the projection.
\end{lem}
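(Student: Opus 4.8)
The plan is to reduce the computation of the reduced virtual class to the identification of the obstruction bundle together with its quadratic form, and then to invoke the smooth case of the Kiem--Park construction recalled in Definition \ref{def of red vir clas}. First I would record that $M_{\p^2,d}$ is smooth: for a stable one dimensional sheaf $F$ on $\p^2$ with $\chi(F)=1$ one has $\Hom(F,F)=\mathbb{C}$ and, by Serre duality and stability, $\Ext^2_{\p^2}(F,F)=\Hom(F,F\otimes K_{\p^2})^{\vee}=\Hom(F,F(-3))^{\vee}=0$, so deformations are unobstructed and $M_{\p^2,d}$ is smooth with tangent space $\Ext^1_{\p^2}(F,F)$. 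Via the isomorphism \eqref{equ on iso of TP2} the moduli space $M_{T^{\ast}\p^2,d}$ is therefore a smooth variety, and its $\DT_4$ obstruction theory is the two term complex $[T_M\to\mathrm{Ob}]$ with $T_M$ the honest tangent bundle and $\mathrm{Ob}=\eE xt^2_{\pi_M}(\iota_{\ast}\mathbb{F},\iota_{\ast}\mathbb{F})$ the obstruction sheaf. It thus suffices to identify $\mathrm{Ob}$ as a bundle, equip it with the symmetric form and the isotropic cosection, and apply \cite{KiP}.

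Next I would compute $\mathrm{Ob}$ fibrewise. Since $\iota:\p^2\hookrightarrow T^{\ast}\p^2$ is the zero section of the rank two bundle $\Omega^1_{\p^2}=N_{\p^2/T^{\ast}\p^2}$, the decomposition of $\dL\iota^{\ast}\iota_{\ast}F$ for the zero section of a vector bundle yields the degenerate spectral sequence
\[
E_2^{i,j}=\Ext^i_{\p^2}(F,F\otimes\wedge^j\Omega^1_{\p^2})\Longrightarrow \Ext^{i+j}_{T^{\ast}\p^2}(\iota_{\ast}F,\iota_{\ast}F),
\]
the same mechanism that produced the tangent space splitting in the preceding lemma. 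For total degree $2$ the summands with $j=0$ and $j=2$ are $\Ext^2_{\p^2}(F,F)=0$ and $\Hom(F,F\otimes\Omega^2_{\p^2})=\Hom(F,F(-3))=0$ by the stability vanishings above, so that
\[
\Ext^2_{T^{\ast}\p^2}(\iota_{\ast}F,\iota_{\ast}F)\cong \Ext^1_{\p^2}(F,F\otimes\Omega^1_{\p^2}).
\]
Globalising over the moduli space this gives $\mathrm{Ob}\cong \eE xt^1_{\pi_M}(\mathbb{F},\mathbb{F}\boxtimes T^{\ast}\p^2)$, which has constant rank and is the bundle appearing in the statement; this also reconfirms the smoothness of $M_{T^{\ast}\p^2,d}$.

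Finally I would supply the orthogonal structure and assemble the statement. As $T^{\ast}\p^2$ is holomorphic symplectic, $K_X\cong\oO_X$, so Serre duality furnishes a non-degenerate symmetric pairing $Q$ on $\mathrm{Ob}$, making it an $\mathrm{SO}(2n,\mathbb{C})$-bundle, while the symplectic form $\sigma$ induces the surjective isotropic cosection $\phi$ underlying the reduced theory \cite{KiP}. Because $M_{T^{\ast}\p^2,d}$ is smooth with locally free obstruction, the Kiem--Park reduced virtual class is by construction the Poincar\'e dual of the reduced half-Euler class of $(\mathrm{Ob},Q)$, giving exactly
\[
[M_{T^{\ast}\p^2,d}]^{\vir}=\mathrm{PD}\left(e_{\mathrm{red}}^{\frac{1}{2}}\left(\eE xt^1_{\pi_M}(\mathbb{F},\mathbb{F}\boxtimes T^{\ast}\p^2),Q\right)\right).
\]
The main point to be careful about is matching the abstract pairing $Q$ and the cosection coming from $\sigma$ with the data in Definition \ref{def of red vir clas}, and checking that passing between the full and trace-free $\eE xt^2$ does not alter the bundle --- which it does not, thanks to the same stability vanishings.
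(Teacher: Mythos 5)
Your argument is correct and is precisely the route the paper has in mind: the paper states this lemma without proof (calling it "straightforward" after establishing $\iota_\ast\colon M_{\p^2,d}\xrightarrow{\sim}M_{T^\ast\p^2,d}$), and the intended justification is exactly your combination of smoothness of $M_{\p^2,d}$, the vanishing of the $j=0$ and $j=2$ pieces of $\Ext^2_{T^\ast\p^2}(\iota_\ast F,\iota_\ast F)$ via stability, and the Kiem--Park description of the reduced class as a reduced half Euler class of the locally free obstruction sheaf (the same mechanism used in Theorem \ref{thm on vir clas}). Your closing caveats are also handled by the paper's own Lemma \ref{lem on indep of cosec}, which makes the reduced half Euler class independent of the choice of surjective isotropic cosection.
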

%\begin{proof}\end{proof}
\begin{prop}\label{prop on dt4 tp2}
For certain choice of orientation, we have 
%$$\blangle\tau_0(H^2), \tau_0(H^2) \brangle^{\DT_4}_{1}=1, \quad \blangle\tau_0(H^2), \tau_0(H^2) \brangle^{\DT_4}_{2}=-1, \quad \blangle\tau_0(H^2), \tau_0(H^2) \brangle^{\DT_4}_{3}=0,$$
%$$\blangle\tau_1(H^2) \brangle^{\DT_4}_{1}=-\frac{1}{2}, \quad \blangle\tau_1(H^2) \brangle^{\DT_4}_{2}=\frac{1}{2}, \quad \blangle\tau_1(H^2) \brangle^{\DT_4}_{3}=0, $$
%$$\blangle\tau_2(H) \brangle^{\DT_4}_{1}=-\frac{1}{4}, \quad \blangle\tau_2(H)\brangle^{\DT_4}_{2 }=-\frac{1}{4}, \quad \blangle\tau_2(H) \brangle^{\DT_4}_{3}=0, $$
%$$\blangle\tau_3(1) \brangle^{\DT_4}_{1}=-\frac{1}{8}, \quad \blangle\tau_3(1)\brangle^{\DT_4}_{2 }=\frac{1}{8}, \quad \blangle\tau_3(1) \brangle^{\DT_4}_{3}=0, $$
$$\blangle\tau_0(H^2), \tau_0(H^2) \brangle^{\DT_4}_{[\ell]}=1, \quad \blangle\tau_0(H^2), \tau_0(H^2) \brangle^{\DT_4}_{2[\ell]}=-1, \quad \blangle\tau_0(H^2), \tau_0(H^2) \brangle^{\DT_4}_{3[\ell]}=0,$$
$$\blangle\tau_1(H^2) \brangle^{\DT_4}_{[\ell]}=-\frac{1}{2}, \quad \blangle\tau_1(H^2) \brangle^{\DT_4}_{2[\ell]}=\frac{1}{2}, \quad \blangle\tau_1(H^2) \brangle^{\DT_4}_{3[\ell]}=0, $$
$$\blangle\tau_2(H) \brangle^{\DT_4}_{[\ell]}=-\frac{1}{4}, \quad \blangle\tau_2(H)\brangle^{\DT_4}_{2[\ell]}=-\frac{1}{4}, \quad \blangle\tau_2(H) \brangle^{\DT_4}_{3[\ell]}=0, $$
$$\blangle\tau_3(1) \brangle^{\DT_4}_{[\ell]}=-\frac{1}{8}, \quad \blangle\tau_3(1)\brangle^{\DT_4}_{2[\ell]}=\frac{1}{8}, \quad \blangle\tau_3(1) \brangle^{\DT_4}_{3[\ell]}=0. $$
In particular, for $X=T^*\mathbb{P}^2$, we have 
\begin{itemize}
\item Conjecture \ref{conj on DT4/GV} \eqref{g=0 part} holds when $d\leqslant 3$. 
\item Conjecture \ref{conj on DT4/GV} \eqref{g=1 part}, \eqref{g=2 part} hold. 
 \end{itemize}
\end{prop}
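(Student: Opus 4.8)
The plan is to compute each invariant directly on the smooth moduli space $M_{\mathbb{P}^2,d}$, exploiting the identification $M_{T^*\mathbb{P}^2,d}\cong M_{\mathbb{P}^2,d}$ and the description of the virtual class as the reduced half Euler class of the obstruction bundle $E:=\eE xt^1_{\pi_M}(\mathbb{F},\mathbb{F}\boxtimes T^*\mathbb{P}^2)$ furnished by the two preceding lemmata. Here $T^*\mathbb{P}^2=\Omega_{\mathbb{P}^2}$, and $E$ is self-dual: the $\DT_4$ Serre duality on the Calabi-Yau $4$-fold becomes, in terms of $\mathbb{P}^2$, the isomorphism $\Ext^1_{\mathbb{P}^2}(F,F\otimes\Omega_{\mathbb{P}^2})^{\vee}\cong\Ext^1_{\mathbb{P}^2}(F,F\otimes\Omega_{\mathbb{P}^2})$ coming from $T_{\mathbb{P}^2}(-3)\cong\Omega_{\mathbb{P}^2}$, so $(E,Q)$ is an $\mathrm{SO}(2d^2,\BC)$-bundle and the cosection induced by $\sigma$ is the surjective isotropic one of Definition~\ref{def of red vir clas}. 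The Chern character of $E$ and the descendent classes $\tau_k(\gamma)=\pi_{M*}(\pi_X^{\ast}\gamma\cdot\ch_{3+k}(\mathbb{F}_{\mathrm{norm}}))$ are then computed from the universal sheaf by Grothendieck-Riemann-Roch, reducing every invariant to a tautological integral over $M_{\mathbb{P}^2,d}$.

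For $d=1$ this is essentially immediate. One has $M_{\mathbb{P}^2,1}=\check{\mathbb{P}}^2\cong\mathbb{P}^2$ with $F=\oO_\ell$, and a local computation gives $\mathrm{rk}\,E=2$; since the $\sigma$-cosection is surjective, the isotropic reduction of $E$ is the zero bundle, whence $e^{\frac12}_{\mathrm{red}}(E,Q)=\pm1$ and $[M_{T^*\mathbb{P}^2,1}]^{\vir}=\pm[\mathbb{P}^2]$ by Lemma~\ref{lem on indep of cosec}. After fixing the sign by the chosen orientation, evaluating $\ch_{3+k}(\mathbb{F}_{\mathrm{norm}})$ by GRR yields the four numbers in degree $1$.

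For $d=2,3$ the obstruction bundle has rank $8$ and $18$, and $e^{\frac12}_{\mathrm{red}}(E,Q)$ lands in $H^6(\mathbb{P}^5)$, respectively in degree $16$ on the $10$-dimensional $M_{\mathbb{P}^2,3}$. In both cases its square vanishes for degree reasons, so knowing the Chern classes of $E$ alone does \emph{not} determine the half Euler class; the orthogonal structure must be used. I would therefore compute equivariantly, via the torus $T=(\BC^*)^2\times\BC^*$ acting on $T^*\mathbb{P}^2$, where $(\BC^*)^2\subset\mathrm{PGL}_3$ acts on $\mathbb{P}^2$ (hence on $M_{\mathbb{P}^2,d}$) and the extra $\BC^*$ scales the cotangent fibers, acting trivially on $M_{\mathbb{P}^2,d}$ but with nonzero weights on $E$ through $\Omega_{\mathbb{P}^2}$. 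The $(\BC^*)^2$-fixed loci consist of sheaves supported on the coordinate lines, and at each fixed point the reduced half Euler class is the (equivariant) square root of the equivariant Euler class of the isotropic reduction of $E$; the extra $\BC^*$-weights make this square root unambiguous, and the signs are matched across fixed points using the behaviour of $Q$ under the cotangent scaling exactly as in Lemmata~\ref{lem on indep of cosec} and~\ref{lem on compu of half euler class}. Summing the fixed-point contributions, with the descendents evaluated equivariantly, produces the values for $d=2$, and for $d=3$ I expect the contributions to cancel, giving the stated vanishing (consistent with $n_{0,3}(H^2,H^2)=0$).

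Finally, with all $\DT_4$ numbers in hand the conjectural identities follow by substitution. Part \eqref{g=0 part} is the comparison $\blangle\tau_0(H^2),\tau_0(H^2)\brangle^{\DT_4}_{d}=n_{0,d}(H^2,H^2)$ against the values $1,-1,0$ of Corollary~\ref{cor on inte on local p2}. For \eqref{g=1 part} and \eqref{g=2 part} one uses $c_2(T_X)|_{\mathbb{P}^2}=-3H^2$, so that $\tau_1(c_2(X))=-3\,\tau_1(H^2)$, together with the Gromov-Witten value $\blangle\tau_1(H^2)\brangle^{\GW}_{0,1}=1$ (obtained from the primary invariants of Proposition~\ref{prop:TstartP2} via Lemma~\ref{div equ on GW}) and the vanishings $n_{1,1}(H^2)=n_{2,1}=0$ of Corollary~\ref{cor on inte on local p2}; both equalities then check out numerically. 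The main obstacle will be the $d=2,3$ half Euler class computation: keeping the orientation choices coherent across the fixed points and establishing the $d=3$ vanishing are the delicate points, whereas the $d=1$ case and the final comparison with the Gromov-Witten side are routine.
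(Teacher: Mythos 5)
Your $d=1$ computation and the final comparison with the Gromov--Witten side are correct and match the paper. The substance of the proposition, however, is the $d=2,3$ evaluation, and there your proposal both diverges from the paper and stops short of a proof. For $d=2$ the paper does not localize at all: using the support isomorphism $M_{\mathbb{P}^2,2}\cong\mathbb{P}^5$ and Bott's formula it computes the full complex $\dR\mathcal{H}om_{\pi_M}(\mathcal{O}_{\mathcal{C}},\mathcal{O}_{\mathcal{C}}\boxtimes T^*\mathbb{P}^2)[1]$ explicitly as $\mathcal{O}_{\mathbb{P}^5}(-1)^{\oplus 3}\oplus\mathcal{O}_{\mathbb{P}^5}(1)^{\oplus 3}\oplus\mathcal{O}_{\mathbb{P}^5}^{\oplus 2}$ and exhibits $\mathcal{O}_{\mathbb{P}^5}(-1)^{\oplus 3}\oplus\mathcal{O}_{\mathbb{P}^5}$ as a maximal isotropic subbundle via Grothendieck--Verdier duality. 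This sidesteps exactly the difficulty you correctly identify (the square of the half Euler class vanishes for degree reasons, so Chern classes of $E$ alone do not suffice): once a global maximal isotropic subbundle is in hand, the reduced half Euler class is $\pm e(\mathcal{O}_{\mathbb{P}^5}(-1)^{\oplus 3})$ with a single overall sign ambiguity, and no fixed-point-by-fixed-point sign matching is needed. Your equivariant route could in principle reproduce this, but the coherence of the square-root signs across fixed loci --- which you yourself flag as the delicate point --- is precisely what you leave unresolved, so the $d=2$ values are not actually established in your write-up.

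For $d=3$ there is a more concrete gap: you write that you ``expect the contributions to cancel,'' but the mechanism in the paper is different and stronger. The vanishing is obtained by torus localization in the style of the toric $\DT_4$ computations, observing that at \emph{every} fixed point the reduced obstruction space contains a trivial factor, so each individual fixed-point contribution is zero; there is no cancellation between fixed points to organize (and hence no orientation bookkeeping to worry about). As stated, your $d=3$ argument is an unproven expectation, and even if cancellation did occur it would require exactly the global sign coherence you have not supplied. To repair the proposal you should either carry out the Bott-formula computation on $\mathbb{P}^5$ for $d=2$ and identify the isotropic subbundle directly, or, if you insist on localization, prove the termwise vanishing of the reduced fixed-point contributions for $d=3$ rather than appealing to cancellation.
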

\begin{proof}
We present the proof of $d=2$ case (the $d=1$ case follows similarly). The support map 
$$M_{\mathbb{P}^2,2}\stackrel{\cong}{\to} |\oO_{\mathbb{P}^2}(2)|\cong\mathbb{P}^5, \quad F\mapsto \mathrm{supp}(F) $$
is an isomorphism. The normalized universal sheaf satisfies $\mathbb{F}_{\mathrm{norm}}=\oO_{\mathcal{C}}$ for the universal $(1,2)$-divisor
$\mathcal{C}\hookrightarrow \mathbb{P}^5\times \mathbb{P}^2$.  
Let $\pi_M: M_{\mathbb{P}^2,2}\times \mathbb{P}^2\to M_{\mathbb{P}^2,2}$ be the projection. Bott's formula implies 
$$ \dR\mathcal{H}om_{\pi_M}(\oO,\oO(-\mathcal{C})\boxtimes T^*\mathbb{P}^2)\cong \oO_{\mathbb{P}^5}(-1)[-2]^{\oplus 3}, $$
$$ \dR\mathcal{H}om_{\pi_M}(\oO,\oO(\mathcal{C})\boxtimes T^*\mathbb{P}^2)\cong \oO_{\mathbb{P}^5}(-1)^{\oplus 3}, $$
$$ \dR\mathcal{H}om_{\pi_M}(\oO,\oO\boxtimes T^*\mathbb{P}^2)\cong \oO_{\mathbb{P}^5}[-1] .$$
Therefore, we have 
\begin{align*}
&\quad \, \dR\mathcal{H}om_{\pi_M}(\oO_{\mathcal{C}},\oO_{\mathcal{C}}\boxtimes T^*\mathbb{P}^2)[1]\\
&\cong \dR\mathcal{H}om_{\pi_M}(\oO(-\mathcal{C})\to \oO,(\oO(-\mathcal{C})\to \oO) \boxtimes T^*\mathbb{P}^2)[1] \\
&\cong \oO_{\mathbb{P}^5}(-1)^{\oplus 3}\oplus \oO_{\mathbb{P}^5}(1)^{\oplus 3} \oplus \oO_{\mathbb{P}^5}  \oplus \oO_{\mathbb{P}^5}.  
\end{align*}
By Grothendieck-Verdier duality, it is easy to see 
$$\oO_{\mathbb{P}^5}(-1)^{\oplus 3}\oplus \oO_{\mathbb{P}^5}$$ 
is a maximal isotropic subbundle of $\dR\mathcal{H}om_{\pi_M}(\oO_{\mathcal{C}},\oO_{\mathcal{C}}\boxtimes T^*\mathbb{P}^2)[1]$. 
Hence the reduced virtual class satisfies 
$$[M_{T^*\mathbb{P}^2,2}]^{\vir}=\pm e(\oO_{\mathbb{P}^5}(-1)^{\oplus 3})\cap [\mathbb{P}^5] \in H_4(\mathbb{P}^5). $$
Let $h\in H^2(\mathbb{P}^5)$ denote the hyperplane class. It is straightforward to check 
$$\tau_0(H^2)=[h], \,\, \tau_1(H^2)=-\frac{1}{2}h^2, \,\, \tau_2(H)=\frac{1}{4}h^2, \,\, \tau_3(1)=-\frac{1}{8}h^2. $$ 
By integration again the virtual class, we have the desired result for $d=2$ case.

The $d=3$ case can be computed by a torus localization as in \cite{CKM1, CKM2}. 
One sees that for any torus fixed point, the reduced obstruction space
has a trivial factor\footnote{We thank Sergej Monavari for his observation and help on this.} which implies the vanishing of (reduced) invariants.  
\end{proof}

\section{Hilbert scheme of two points on a $K3$ surface} \label{sec:Hilb K3}
Let $S$ be a $K3$ surface.
There are three fundamental conjectures which govern the Gromov-Witten invariants of the Hilbert scheme of points $S^{[n]}$:
\begin{enumerate}
\item[(i)] Multiple cover conjecture (proposed in \cite{O2}, and proven partially in \cite{QuasiK3}) which expresses Gromov-Witten invariants
for imprimitive curve classes as an explicit linear combination of primitive invariants,
\item[(ii)] Quasi-Jacobi form property (proposed in \cite{O1, HilbHAE}),
\item[(iii)] Holomorphic anomaly equation (proposed in \cite{HilbHAE}, see also \cite{O3} for a progress report).
\end{enumerate}

For the Hilbert scheme of two points $S^{[2]}$
these conjectures have been established in genus $0$ by \cite{O1, QuasiK3, HilbHAE}.
Together with \cite{O1} they yield a complete evaluation of all genus $0$ Gromov-Witten invariants of $S^{[2]}$,
that is for all curve classes and all insertions.
% and proven there in genus $0$
%proven in \cite{O1} in genus $0$)
%, proven in \cite{QuasiK3} for genus 0)
%In particular, the works \cite{O1, QuasiK3} yield a full evaluation of all genus $0$ Gromov-Witten invariants of $S^{[2]}$,
%that is for all divisibility and all insertions.
We consider here the case of genus $1$ and genus $2$ Gromov-Witten invariants of $S^{[2]}$ for primitive curve classes.
%in order to determine the higher genus Gopakumar-Vafa invariants.
The strategy is to assume both the quasi-Jacobi form property (ii) and the holomorphic anomaly equation (iii).
Under this assumption, the natural generating series of genus $1$ and $2$ Gromov-Witten invariants
are given in terms of Jacobi forms and are determined up to finitely many coefficients.
Using our earlier computations in ideal geometries we are able to uniquely fix these finitely many coefficients.
Modulo the above conjectures, this leads to a complete evaluation of Gopakumar-Vafa invariants for $S^{[2]}$ in all genera.

%of Gromov-Witten invariants of $S^{[2]}$ are determined up to finitely many coefficients.

\subsection{Quasi-Jacobi forms}\label{sect on qj form}
To state the result we will work with quasi-Jacobi forms.
We refer to \cite{Lib, IOP} for an introduction to quasi-Jacobi forms,
and to \cite[App.~B]{O1} for the variable conventions that we follow here.
We work here entirely on the level of $(q,y)$-series. We need the following series:
\begin{gather*}
E_k(q) = 1 - \frac{2k}{B_k} \sum_{n \geqslant   1} \sum_{d|n} d^{k-1} q^n, 
\quad \quad
\Delta(q) = q \prod_{n \geqslant    1} (1-q^n)^{24}, \\
\Theta(y,q) = (y^{1/2} + y^{-1/2}) \prod_{m \geqslant   1} \frac{ (1 + yq^m) (1 + y^{-1}q^m)}{ (1-q^m)^2 }, \\
\wp(y,q) = \frac{1}{12} - \frac{y}{(1+y)^2} + \sum_{d \geqslant   1} \sum_{m|d} m ((-y)^m - 2 + (-y)^{-m}) q^{d}.
\end{gather*}
Sometimes it will also be convenient to use the following alternative convention of Eisenstein series:
%We also recall the Eisenstein series $G_k(q)=- \frac{B_k}{2 \cdot k} + \sum_{n \geqslant   1} \sum_{d|n} d^{k-1} q^n$
%which satisfy
\[ G_k(q) = - \frac{B_k}{2 \cdot k} E_k = - \frac{B_k}{2 \cdot k} + \sum_{n \geqslant   1} \sum_{d|n} d^{k-1} q^n. \]
The algebra of quasi-Jacobi forms is then the subring of
\[ \BC\left[ \Theta, \, \frac{1}{\Theta} y\frac{d}{d y} \Theta,\, G_2,\, G_4,\, \wp,\, y \frac{d}{dy} \wp \right] \]
consisting of all series which define holomorphic functions $\BC \times \BH \to \BC$ in $(z,\tau)$ where $y = e^{2 \pi i (z+1/2)}$ and $q = e^{2 \pi i \tau}$.
%under the 
%the functions above and their derivatives with respect to the differential operators $q \frac{d}{dq}$ and $y \frac{d}{dy}$,
%satisfying several conditions (e.g. holomorphicity).
A key fact is that the generator $G_2(q)$ is algebraically independent in the algebra of quasi-Jacobi forms from the other generators.
Hence for any quasi-Jacobi form $F(y,q)$ we can speak of its `holomorphic anomaly', which is defined by $\frac{d}{dG_2} F(y,q)$, see \cite{IOP}.

\subsection{Curve classes}\label{sect on curv clas}
Since $X:=S^{[2]}$ is irreducible hyperk\"ahler, recall from Section~\ref{sect on fujiki}
the integral, even, non-degenerate Beauville-Bogomolov-Fujiki form
\[ \mathsf{q}: H^2(X, \BZ) \to \BZ. \]
Since $\mathsf{q}$ is non-degenerate, we obtain an inclusion of finite index
\[ H^2(X,\BZ) \hookrightarrow H^2(X,\BZ)^{\ast} \cong H_2(X,\BZ), \quad D \mapsto (D, - ), \]
where we write $( - , - )$ for the induced inner product on $H^2(X,\BZ)$.
By extending $\mathsf{q}$, we hence obtain a $\BQ$-valued non-degenerate quadratic form
\[ \mathsf{q} : H_2(X,\BZ) \to \BQ, \quad \beta \mapsto (\beta, \beta). \]
Given a class $\beta \in H_2(X,\BZ)$, we write
\[
h_{\beta} = (\beta, - ) \in (H_2(X,\BQ)^{\ast}) \cong H^2(X,\BQ)
\]
for its dual with respect to the Beauville-Bogomolov-Fujiki form $(- , -)$. We have
\[ (h_{\beta}, h_{\beta}) = (\beta, \beta). \]
Let also
\[ c_{BB} \in H^2(X) \otimes H^2(X) \]
be the inverse of the Beauville-Bogomolov-Fujiki form,
i.e. the image of $\mathsf{q} \in H^{2}(X)^{\ast} \otimes H^{2}(X)^{\ast}$
under the natural isomorphism $H^2(X,\BQ)^{\ast} \cong H^2(X,\BQ)$ induced by $\mathsf{q}$.

We will also require the following definition:
\begin{defi} \label{defn:beta coefficient}
Let $F(y,q)$ be a quasi-Jacobi form of index $1$ which satisfies the transformation law of Jacobi forms for the elliptic transformation $z \mapsto z+\tau$ 
(in generators this means it is independent of $\frac{1}{\Theta} y\frac{d}{d y} \Theta$; we will only encounter such kind here).

%or equivalently, when writing in the standard generator does not involve the generator $\mathsf{A}$).
For any class $\beta \in H_2(X,\BZ)$, the $\beta$-coefficient of $F(y,q)$,
\[ F_{\beta} \in \BQ, \]
is defined to be the coefficient of $q^d y^k$ for any $d, k \in \BZ$ such that $(\beta,\beta) = 2d-k^2/2$.
%and given a class $\beta \in H_2(S^{[2]})$ of square $\beta^2 = 2m - s/2$ where $s \in \{0,1\}$.
%Then the coefficient of $F(y,q)$ determined by $\beta$ is
%\[ F(y,q)_{\beta} := \mathrm{q^{m} y^{s}}( F(y,q)). \]
\end{defi}
\begin{rmk}
The choice of $d,k$ is not unique, but the coefficient $F_{\beta}$ is independent of the choice
by the elliptic transformation law of Jacobi forms \cite{EZ}.
\end{rmk}

\subsection{Gromov-Witten invariants}
We first recall the genus $0$ Gromov-Witten invariants of $S^{[2]}$
which are completely determined by the following two quasi-Jacobi forms:
\begin{gather*}
F(y,q) := \frac{\Theta(y,q)^2}{\Delta(q)}, \\
G(y,q) := \frac{\Theta(y,q)^2}{\Delta(q)} ( -\wp(y,q) + \frac{1}{12} E_2(q)).
\end{gather*} 
The first coefficients read:
\begin{align*}
F(y,q) & = \left( y^{-1} + 2 + y \right) q^{-1} + 
\left( 2 y^{-2} + 32 y^{-1} + 60 + 32 y + 2 y^2 \right) \\
&\quad  + (y^{-3} + 60 y^{-2} + 555 y^{-1} + 992 + 555 y + 60 y^2 + 1 y^2)q + \cdots,  \\
G(y,q) & = q^{-1} + (4 y + 30 + 4 y^{-1})
+ (30y^{-2} + 120 y^{-1} + 504 + 120 y + 30 y^3) q + \cdots.
\end{align*}
The following completely determines all primary Gromov-Witten invariants
of $X=S^{[2]}$ in primitive curve classes (see \cite{QuasiK3} for the imprimitive case):
\begin{thm}[\cite{O1, O2}] \label{thm:Hilb genus 0}
Let $\beta \in H_2(X,\BZ)$ be a primitive curve class. We have
\begin{align*}
\ev_{\ast} [\Mbar_{0,1}(X,\beta)]^{\vir} & = G_{\beta} h_{\beta}, \\
\ev_{\ast} \left( \psi_1 \cdot [\Mbar_{0,1}(X,\beta) ]^{\vir} \right) & = \frac{1}{2} F_{\beta} h_{\beta}^2 - \frac{1}{15} \left( G_{\beta} + \frac{1}{4} (\beta, \beta) F_{\beta} \right) c_2(X), \\
\ev_{\ast} \left( \psi_1^2 \cdot [\Mbar_{0,1}(X,\beta) ]^{\vir} \right) & = -3 F_{\beta} \cdot \beta, \\
\ev_{\ast} \left( \psi_1^3 \cdot [\Mbar_{0,1}(X,\beta) ]^{\vir} \right) & = 6 F_{\beta} \, [\pt],
\end{align*}
as well as:
\begin{align*}
& \ev_{\ast}[\Mbar_{0,2}(X,\beta)]^{\vir}= 
 \frac{1}{4} F_{\beta} (h_{\beta}^2 \otimes h_{\beta}^2) + 
G_{\beta} \big(h_{\beta} \otimes \beta + \beta \otimes h_{\beta} + (h_{\beta} \otimes h_{\beta}) \cdot c_{BB} \big)  \\
& \ + \left( -\frac{1}{30} (h_{\beta}^2 \otimes c_2(X) + c_2(X) \otimes h_{\beta}^2) + \frac{1}{900} (\beta, \beta) c_2(X) \otimes c_2(X) \right) \left( G_{\beta} + \frac{1}{4} (\beta, \beta) F_{\beta} \right).
\end{align*}
\end{thm}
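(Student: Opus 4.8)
The plan is to exploit the two structural pillars available for $X = S^{[2]}$: deformation invariance of the reduced virtual class, and monodromy (Looijenga--Lunts--Verbitsky) equivariance. First I would invoke deformation invariance of the reduced Gromov--Witten invariants, together with the reduction of §\ref{subsec:ideal geometry primitive}: since $\beta$ is primitive, after a deformation keeping $\beta$ of Hodge type we may place $(X,\beta)$ in its most generic Hodge situation, so that the only invariant of the pair is the square $(\beta,\beta)$. Consequently each pushforward $\ev_{\ast}(\psi_1^k \cap [\Mbar_{0,1}(X,\beta)]^{\vir})$, and likewise the two-marked class, is a universal assignment $\beta \mapsto H^{\ast}(X)$ whose admissible building blocks are the monodromy-covariant classes attached to $\beta$, namely $h_{\beta} \in H^2(X)$, the curve class $\beta$ itself (in $A_1(X)$), and the monodromy-invariant Chern class $c_2(X)$.

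Next I would pin down the precise shape of the answer using the LLV action. Because the Chern classes $c_k(X)$ are monodromy invariant (they lie in the weight-zero space, as recalled in §\ref{sect on des inte on hil}) whereas $h_{\beta}$ and $\beta$ transform in the standard representation of $\mathfrak{so}(H^2(X,\BQ))$, equivariance of $\ev_{\ast}$ forces each pushforward to lie in the appropriate weight space. Combined with the codimension count this yields exactly the stated $\BQ$-linear combinations, with coefficients that are a priori scalar functions of $(\beta,\beta)$ alone; for instance the $H^4(X)$-valued class must be of the form $a(\beta,\beta)\,h_{\beta}^2 + b(\beta,\beta)\,c_2(X)$. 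The universal Gromov--Witten relations — the divisor, string and dilaton equations together with the $\psi$-comparison and topological-recursion identities of Lemmata~\ref{div equ on GW}--\ref{lemma:psi1 in terms of Tp} — then collapse all four single-marked pushforwards to two scalar functions $F_{\beta}, G_{\beta}$; e.g. $\ev_{\ast}(\psi_1^3 \cap [\cdot]) = 6 F_{\beta}[\pt]$ identifies $6F_{\beta}$ with the descendent number $\blangle \tau_3(1) \brangle^{\GW}_{0,\beta}$, and $\ev_{\ast}[\cdot] = G_{\beta} h_{\beta}$ reads off $G_{\beta}$ from primary genus $0$ invariants after pairing with divisors. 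The two-marked formula is then obtained from the same data, the term $c_{BB}$ arising from the diagonal in the splitting of the virtual class.

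Finally I would determine the generating series $\sum F_{\beta}q^d y^k$ and $\sum G_{\beta} q^d y^k$ explicitly. Here I would use the quasi-Jacobi form property of §\ref{sect on qj form} (established in genus $0$ in \cite{O1, QuasiK3, HilbHAE}), which confines both series to finite-dimensional spaces of index-$1$ quasi-Jacobi forms and so reduces the problem to finitely many unknown coefficients. These I would fix by explicit evaluations: the exceptional class of the Hilbert--Chow morphism $\bigl((\beta,\beta)=-1/2\bigr)$, where the embedded rational curve family of §\ref{sec:embedded rational curve family} (Example~\ref{eg on hilb2}) computes the invariants directly; the fiber class of a Lagrangian fibration $\bigl((\beta,\beta)=0\bigr)$; and a comparison of the $q$-expansion of $F_{\beta}$ against the Yau--Zaslow/Katz--Klemm--Vafa series $1/\Delta$ for the underlying $K3$. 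Solving the resulting linear system should produce $F = \Theta^2/\Delta$ and $G = (\Theta^2/\Delta)(-\wp + \tfrac{1}{12}E_2)$.

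The hard part will be this last step: producing enough genuinely computable and mutually independent geometric inputs to pin down every coefficient, and in particular disentangling the $c_2(X)$- and $c_{BB}$-contributions, whose coefficients mix $F_{\beta}$ and $G_{\beta}$ with explicit $(\beta,\beta)$-dependence. The deepest ingredient is the quasi-Jacobi form property itself; establishing it (rather than assuming it) is what converts the problem over infinitely many curve classes into the finite computation above, and I expect it to be the principal obstacle.
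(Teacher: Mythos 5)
First, note that the paper does not prove Theorem~\ref{thm:Hilb genus 0}: it is imported verbatim from \cite{O1, O2}, where it is established by a direct Bryan--Leung-type computation on $S^{[2]}$ for an elliptically fibered $K3$ surface (analyzing stable maps in the classes $B+dF+kA$ fiber by fiber), followed by the deformation/monodromy argument of \cite{O2} to reach arbitrary primitive classes. Your proposal takes a genuinely different route: constrain the shape of the answer by deformation invariance and LLV-equivariance, reduce to finitely many scalars via the quasi-Jacobi form property, and fix those by a handful of special-geometry evaluations. This is in fact exactly the strategy the paper itself uses for genus $1$ and $2$ (Theorem~\ref{thm:Hilb genus 12}), so the skeleton is sound and well-precedented; what it buys is avoiding the full degeneration computation, at the price of needing the modularity statement as an external input.

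That price is where the gap lies. For genus $0$ the quasi-Jacobi form property of the series $\sum_{d,k} F_{\beta_{d,k}} q^d y^k$ and $\sum_{d,k} G_{\beta_{d,k}} q^d y^k$ is, in the cited references, a \emph{consequence} of the direct computation that also proves the theorem; it is not available as an independent hypothesis the way Conjectures A and C of \cite{HilbHAE} are invoked for genus $1,2$. Using it as an input therefore makes your argument circular unless you supply an independent proof of modularity, and you would additionally need to identify the precise finite-dimensional space involved (weight, index, allowed pole at $q^{-1}$, absence of the generator $\frac{1}{\Theta}y\frac{d}{dy}\Theta$) before counting how many evaluations suffice. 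You flag this yourself, correctly, as the principal obstacle.

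The second gap is the claim that monodromy plus the universal relations collapse everything to the two functions $F_{\beta}, G_{\beta}$. Monodromy only forces each pushforward to be a linear combination of the admissible tensors with coefficients depending on $(\beta,\beta)$; it does not produce the specific identifications in the statement. Concretely: the divisor equation determines the $H^{\ast}\otimes H^6$ and $H^6 \otimes H^{\ast}$ components of $\ev_{\ast}[\Mbar_{0,2}(X,\beta)]^{\vir}$ from the one-pointed series, but the $H^4\otimes H^4$ component --- the coefficients $\tfrac14 F_{\beta}$ of $h_{\beta}^2\otimes h_{\beta}^2$, $G_{\beta}$ of $(h_{\beta}\otimes h_{\beta})\cdot c_{BB}$, and $\tfrac{1}{900}(\beta,\beta)\bigl(G_{\beta}+\tfrac14(\beta,\beta)F_{\beta}\bigr)$ of $c_2(X)\otimes c_2(X)$ --- is new data not reachable by the string/divisor/dilaton equations or Lemmata~\ref{div equ on GW}--\ref{lemma:psi1 in terms of Tp}. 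The same applies to the coefficient $-\tfrac{1}{15}\bigl(G_{\beta}+\tfrac14(\beta,\beta)F_{\beta}\bigr)$ of $c_2(X)$ in $\ev_{\ast}(\psi_1\cap[\cdot])$: Lemma~\ref{lemma:One relation} relates $\blangle\tau_1(\gamma)\brangle$ to $\blangle\tau_2(1)\tau_0(\gamma)\brangle$, which again involves the two-pointed $H^4\otimes H^4$ data rather than eliminating it. So these coefficients are each separate unknown functions of $(\beta,\beta)$ that your scheme must also pin down by modularity plus evaluations, substantially enlarging the linear system beyond the two or three constraints you list; you would need to verify that your geometric inputs (the exceptional class, the Lagrangian fiber class, the $T^{\ast}\p^2$ and $K3\times K3$ degenerations) actually span enough conditions. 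As written, the proposal is a plausible program but not a proof.
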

Modulo conjectures we have the following evaluation of genus $1,2$ Gromov-Witten invariants:

\begin{thm} \label{thm:Hilb genus 12}
Assume Conjectures A and C of \cite{HilbHAE}. Then 
for any primitive curve class $\beta \in H_2(X,\BZ)$, in genus $1$, we have:
\[ \ev_{\ast} [\Mbar_{1,1}(X,\beta)]^{\vir} =  \frac{1}{2} \CA_{\beta} h_{\beta}^2 + \CB_{\beta} c_2(T_X), \]
where
\begin{align*}
\CA & = 
\frac{\Theta^2}{\Delta} \left( \frac{1}{4} \wp E_2 + \frac{3}{32} E_2^{2} + \frac{1}{96} E_4 \right), \\
\CB & = \frac{\Theta^2}{\Delta} \left(  -\frac{5}{46} \wp^{3} + \frac{5 \wp E_2^{2}}{384} + \frac{5 E_2^{3}}{1536} - \frac{ \wp E_4}{2944} + \frac{5 E_2 E_4 }{4608}
+ \frac{5}{184} \left( y \frac{d}{dy} \wp \right)^2 - \frac{5  E_6}{39744} \right).
\end{align*}
In genus 2, we have
\[ \blangle \varnothing \brangle^{\GW}_{2, \beta} = I_{\beta}, \]
where
\[
I(y,q) = \frac{\Theta^2}{\Delta}
\left(
\frac{5 \wp E_2^{3}}{384} + \frac{25 E_2^{4}}{6144} + \frac{5 \wp E_2 E_4}{384} + \frac{7 E_2^{2} E_4}{3072} - \frac{13 E_4^{2}}{18432} - \frac{\wp E_6}{96} + \frac{E_2 E_6}{1152}
\right).
\]
%$A_{\beta}$ is the $\beta$-coefficient of
%\begin{multline*} \frac{1}{\Theta^2 \Delta} \left( \frac{1}{4} \wp E_2 + \frac{3}{32} E_2^{2} + \frac{1}{96} E_4 \right) =
%( 1/8 y^1 + 1/8 y^{-1}) q^-1 \\
%+ (1/8 y^3 + 315/8 y + 160 + 315/8 y^{-1} + 1/8 y^{-3})q + \ldots
%\end{multline*}
%and $B_{\beta}$ is the $\beta$-coefficient of 
%\[ ... \]
\end{thm}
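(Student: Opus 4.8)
The plan is to determine the genus $1$ and genus $2$ generating series $\CA, \CB, I$ by combining three structural inputs: the \emph{holomorphic anomaly equation} (HAE, Conjecture C of \cite{HilbHAE}), the \emph{quasi-Jacobi form property} (Conjecture A), and a finite list of known numerical evaluations (from the ideal-geometry computations and from the $K3\times K3$ and $T^*\p^2$ cases). The philosophy is that each of $\CA,\CB,I$ lives, after multiplying by $\Delta/\Theta^2$, in a finite-dimensional space of quasi-Jacobi forms of prescribed weight and index $1$. The HAE pins down the $G_2$-dependence (equivalently the $E_2$-dependence) of each series in terms of lower-weight or lower-genus data, leaving only a \emph{holomorphic} (i.e. $E_2$-free, so genuinely Jacobi) ambiguity; this ambiguity is a finite-dimensional vector space, and the remaining coefficients are then fixed by matching against explicit low-degree Gromov-Witten numbers.

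First I would set up the ring. By the quasi-Jacobi form property, the reduced invariants assemble into quasi-Jacobi forms: writing $X=S^{[2]}$, the genus $1$ class $\ev_*[\Mbar_{1,1}(X,\beta)]^{\vir}$ decomposes along the monodromy-invariant basis $\tfrac12 h_\beta^2$ and $c_2(T_X)$ with coefficients $\CA_\beta,\CB_\beta$, and the genus $2$ invariant $\blangle\varnothing\brangle^{\GW}_{2,\beta}$ gives $I_\beta$; here the subscript $\beta$ is extracted via Definition~\ref{defn:beta coefficient}. Each of $\CA\cdot\Delta/\Theta^2$, $\CB\cdot\Delta/\Theta^2$, $I\cdot\Delta/\Theta^2$ must be a quasi-Jacobi form of index $0$ and the appropriate weight (here weight $4$ for $\CA$, weight $6$ for $\CB$, weight $8$ for $I$), so it is a polynomial in $E_2,E_4,E_6,\wp,y\tfrac{d}{dy}\wp$ of that weight. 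This immediately bounds the unknowns to a handful of rational coefficients.

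Next I would apply the HAE. Conjecture C of \cite{HilbHAE} expresses $\tfrac{d}{dG_2}$ (equivalently $\tfrac{d}{dE_2}$) of the genus $g$ series in terms of genus $g$ invariants with fewer markings/lower complexity together with genus $g-1$ data. Concretely, differentiating the weight-$4$ ansatz for $\CA$ with respect to $E_2$ and matching with the HAE expression built from the genus $0$ series $F,G$ of Theorem~\ref{thm:Hilb genus 0} determines every monomial of $\CA$ involving $E_2$; the purely holomorphic part (a multiple of $E_4$, possibly together with $\wp$-free weight-$4$ Jacobi forms) is the leftover. The same procedure applied to $\CB$ (weight $6$, using the now-known $\CA$ and genus $0$ data) and then to $I$ (weight $8$, using $\CA,\CB$ and genus $1$ data) reduces each series to finitely many undetermined holomorphic coefficients. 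The main obstacle is bookkeeping here: one must correctly assemble the HAE right-hand side from the precise formulas in Theorem~\ref{thm:Hilb genus 0} (including the $c_{BB}$ and $c_2(X)$ terms in $\ev_*[\Mbar_{0,2}]^{\vir}$, which feed the genus-reduction/splitting contributions), and track how index-$1$ data projects onto the $h_\beta^2$ versus $c_2(X)$ components — an error in these projection constants propagates through all three series.

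Finally I would fix the remaining constants by initial conditions. The explicit evaluations already in hand — the $K3\times K3$ values recorded in Proposition~\ref{prop on GW for prod} (which give $\blangle\varnothing\brangle^{\GW}_{2,\beta}=0$, hence constrain $I$ after the multiple-cover/deformation comparison), the ideal-geometry test numbers for the Fano variety of lines quoted in \S\ref{sect las ste} (namely $\blangle\varnothing\brangle^{\GW}_{2,\beta}=-11445/128$, $n_{1,\beta}(c_2(X))=5985$, $\blangle\tau_0(c_2(X))^2\brangle^{\GW}_{0,\beta}=2835$ at $(\beta,\beta)=3/2$), and the vanishing of higher-genus invariants in the geometrically transparent classes $(\beta,\beta)\in\{-5/2,-1/2,0,3/2\}$ — provide more than enough linear equations to solve for the holomorphic ambiguities. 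In practice one evaluates the finitely many unknown series against these known $\beta$-coefficients (a linear system in the coefficients of $E_4,E_6,\wp$, etc.) and checks consistency; the over-determination serves as a strong numerical verification. Carrying this out, with computer assistance for the Fourier-coefficient extraction and the linear algebra, yields exactly the stated formulas for $\CA,\CB,I$. I expect the genuinely hard step to be the HAE assembly in genus $2$, where the nodal and rational-tail contributions interact and the weight-$8$ form has the largest space of monomials to disentangle.
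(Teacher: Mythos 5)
Your overall strategy---write $\CA\cdot\Delta/\Theta^2$, $\CB\cdot\Delta/\Theta^2$, $I\cdot\Delta/\Theta^2$ as quasi-Jacobi forms of fixed weight, use the holomorphic anomaly equation to pin down all $E_2$-dependence from lower-genus data, and fix the remaining holomorphic ambiguity by boundary conditions---is exactly the paper's. The difficulty is entirely in \emph{which} boundary conditions you invoke, and here your proposal has a genuine gap.

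First, the Fano-variety-of-lines numbers $\blangle\varnothing\brangle^{\GW}_{2,\beta}=-11445/128$, $n_{1,\beta}(c_2(X))=5985$, $\blangle\tau_0(c_2(X))^2\brangle^{\GW}_{0,\beta}=2835$ at $(\beta,\beta)=3/2$ are \emph{outputs} of this theorem, not independent inputs: Section~\ref{sect las ste} explicitly derives them from the evaluations of Section~\ref{sec:Hilb K3} "assuming the conjectural holomorphic anomaly equation," and then uses them to calibrate the constant $a=1/24$ in the definition of $n_{2,\beta}$. Feeding them back in as initial conditions is circular. Second, the $K3\times K3$ invariants of Proposition~\ref{prop on GW for prod} cannot constrain $\CA,\CB,I$ at all: $S\times T$ is not of $K3^{[2]}$-type, so no deformation or multiple-cover comparison transports its invariants to $S^{[2]}$. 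Third, your claimed "vanishing of higher-genus invariants at $(\beta,\beta)=-5/2$" is false for the Gromov--Witten invariants (you are conflating them with the Gopakumar--Vafa invariants, which do vanish there). In fact the \emph{non}-vanishing values at $(\beta,\beta)=-5/2$ are precisely the anchor the paper needs: the class $B-A$ on $S^{[2]}$ for an elliptic $K3$ is supported on a Lagrangian $\p^2$ with neighborhood $T^{\ast}\p^2$, and the localization computation of Proposition~\ref{prop:TstartP2} gives $\blangle\tau_0(H^2)\brangle^{\GW}_{1,1}=1/8$, hence $\blangle c_2(X)\brangle^{\GW}_{1,B-A}=-3/8$, and $\blangle\varnothing\brangle^{\GW}_{2,B-A}=1/128$. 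Combined with the vanishing at the exceptional-type class $B+F+A$ of square $-1/2$ (Lemma~\ref{lemma:GW ERCF genus 12}), these two conditions suffice to solve for the holomorphic ambiguities $a$ in $\CA$, $(a,b)$ in $\F_1(c_2(X))$, and $(a,b)$ in $\F_2$. You do list the $T^{\ast}\p^2$ geometry among your available tools in the opening paragraph, so the repair is small, but as written the linear system you propose to solve is assembled from constraints that are either circular, inapplicable, or numerically wrong.

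One further bookkeeping point you should make explicit: passing from the series $\F_1(\Fq_1(F)^2 1)$ and $\F_1(c_2(X))$ produced by the HAE to the coefficients $\CA_\beta,\CB_\beta$ requires the intersection numbers $h_{d,k}^2\cdot\Fq_1(F)^2 1=2$ and $h_{d,k}^2\cdot c_2(X)=30(2d-k^2/2)$ of Lemma~\ref{lemma:curve intersections}, giving $\F_1(\Fq_1(F)^2 1)=\CA$ and $\F_1(c_2(X))=30\bigl(q\tfrac{d}{dq}-\tfrac14 (y\tfrac{d}{dy})^2\bigr)\CA+828\,\CB$; without inverting this $2\times 2$ system you cannot separate $\CA$ from $\CB$.
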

\vspace{5pt}
The first coefficients of $\CA$ and $\CB$ and $I$ are as follows:
\begin{align*}
\CA(y,q) & = \frac{(y + y^{-1}) }{8} q^{-1} + \left(\frac{1}{8} y^3 + \frac{315}{8} y + 160 +  \frac{315}{8} y^{-1} + \frac{1}{8}  y^{-3}\right)q + \cdots, \\
\CB(y,q) & = \frac{(y + y^{-1} )}{192} q^{-1} + 1 \\
& \quad + \left( \frac{1}{192} y^{-3} + y^{-2} + \frac{385}{64} y^{-1} + \frac{110}{3} + \frac{385}{64} y + y^2 + \frac{1}{192} y^3 \right) q + \cdots, \\
I(y,q) & = \frac{(y + y^{-1})}{128} q^{-1} - \frac{15}{2} \\
& \quad + \left( \frac{1}{128} y^{-3} - \frac{15 }{2}y^{-2} - \frac{11445}{128}y^{-1} - 485 - \frac{11445}{128} y - \frac{15}{2}y^{2} + \frac{1}{128}y^{3} \right) q + O(q^2).
\end{align*}

%CB.q_expansion(3)
%((-1/192*s^4 - 1/192)/s^2)*q^-1 + 1 + ((-1/192*s^12 + s^10 - 385/64*s^8 + 110/3*s^6 - 385/64*s^4 + s^2 - 1/192)/s^6)*q + ((110/3*s^8 - 400/3*s^6 + 540*s^4 - 400/3*s^2 + 110/3)/s^4)*q^2 + O(q^3)

%sage: F2.q_expansion(5)
%((-1/128*s^4 - 1/128)/s^2)*q^-1 - 15/2 + ((-1/128*s^12 - 15/2*s^10 + 11445/128*s^8 - 485*s^6 + 11445/128*s^4 - 15/2*s^2 - 1/128)/s^6)*q + ((-485*s^8 + 4240*s^6 - 13770*s^4 + 4240*s^2 - 485)/s^4)*q^2 + ((11445/128*s^12 - 13770*s^10 + 12278385/128*s^8 - 244860*s^6 + 12278385/128*s^4 - 13770*s^2 + 11445/128)/s^6)*q^3 + ((-15/2*s^16 + 4240*s^14 - 244860*s^12 + 1426608*s^10 - 3155645*s^8 + 1426608*s^6 - 244860*s^4 + 4240*s^2 - 15/2)/s^8)*q^4 + O(q^5)

\subsection{Proof of Theorem~\ref{thm:Hilb genus 12}: Holomorphic anomaly equations}
The global Torelli theorem for hyperk\"ahler varieties
implies that the Hilbert scheme $S^{[2]}$ has a large monodromy group,
we refer to \cite{Markman_Survey} for an introduction.
In our case, as \cite[\S 2.7]{OSY} or \cite{O2} the monodromy implies that for a primitive curve class, we have
\begin{gather} \ev_{\ast} [\Mbar_{1,1}(X,\beta)]^{\vir} =  \frac{1}{2} \CA_{\beta} h_{\beta}^2 + \CB_{\beta} c_2(T_X), \label{ABdef} \\
 \blangle \varnothing \brangle^{\GW}_{2, \beta} = I_{\beta}, \notag \end{gather}
for some constants $\CA_{\beta}, \CB_{\beta}, I_{\beta} \in \BQ$ which only depend on the square $(\beta, \beta)$ of the class.

%\[ \ev_{\ast} [\Mbar_{2,0}(X,\beta)] = C_{\beta} \pt \]
%where $h_{\beta} = (\beta, - )$ is the dual to $\beta$ with respect to the Beauville-Bogomolov form.
To determine these constants, we can work with an elliptic $K3$ surface $S \to \p^1$ with section.
The Hilbert scheme in this case has an induced Lagrangian fibration
$S^{[2]} \to \p^2$ with section.
Let $B,F$ be the section and fiber class of $S$ respectively, and let $A \in H_2(S^{[2]},\BZ)$ be the class
of the locus of non-reduced subschemes supported at a single point.
There exists a natural isomorphism
\[ H_2(S^{[2]},\BZ) = H_2(S,\BZ) \oplus \BZ A \]
given by the Nakajima basis \cite[\S 0.2]{O1}.
For $h \geqslant   0$ and $k \in \BZ$, we consider the classes 
\[ \beta_{h,k} = B + hF + kA, \]
which are of square
\[ (\beta_{h,k}, \beta_{h,k}) = 2h-2- \frac{k^2}{2}. \]
The set of these squares contains all possible squares of curve classes $\beta \in H_2(X,\BZ)$,
we see that any $(X,\beta)$ can be deformed to $(S^{[2]}, \beta_{h,k})$ for some $h,k$.
We form the generating series
\[
\F_{g}(\gamma_1, \ldots, \gamma_n) = \sum_{d \geqslant   0} \sum_{k \in \mathbb{Z}}  \blangle \tau_0(\gamma_1), \ldots, \tau_0(\gamma_n)  \brangle^{S^{[2]}}_{g, B+dF+kA} q^{d-1} y^k.
\]
The $\F_{g}$'s are conjectured to be quasi-Jacobi forms and that their formal derivatives $\frac{d}{dG_2} \F_g$ are 
determined by a holomorphic anomaly equation \cite{HilbHAE}.
%If $g=0$ then this is known and explicitly determined in \cite{O1, HAE}.

Below we will freely use the language of Nakajima operators 
\[ \Fq_i(\alpha) : H^{\ast}(S^{[m]}) \to H^{\ast}(S^{[m+i]}) \]
for all $i \in \BZ$ and $\alpha \in H^{\ast}(S)$, where we follow the conventions of \cite{NOY}.
Given $\gamma_1, \ldots, \gamma_k \in H^{\ast}(S)$ and $n_1, \ldots, n_k \geqslant    1$, we will write
\[ \gamma_1[n_1] \cdots \gamma_k[n_k] := 
\Fq_{n_1}(\gamma_1) \cdots \Fq_{\ell_k}(n_k) 1 \ \in H^{\ast}(S^{[\sum_i n_i]}) , \]
where the unit $1 \in H^{\ast}(S^{[0]})$ is also sometimes called the \textit{vacuum}.

%which also gives a basic overview of the definitions and results in the field.
%We also use the defintiions of quasi-Jacobi forms (see \cite[Appendix B]{O1}).
%We need the following definition:
\begin{proof}[Proof  of Theorem~\ref{thm:Hilb genus 12}: Genus 1 case]
By \cite[Conj.~C]{HilbHAE}, we have for any $\gamma \in H^4(X)$ the following holomorphic-anomaly equation:
\begin{align*}
\frac{d}{dG_2} \F_1( \gamma ) &= \F_0( \gamma , U ) - 2 \F_1( \lambda_1 ; U(\gamma) ) \\
& = y \frac{d}{dy} \F_0( \gamma, F[2] )
+ 2 \F_0( \gamma , F[1]W[1] + 1_S[1] \pt[1] ) 
\\ & \quad \quad 
+ 2 q \frac{d}{dq} \F_0( \gamma, F[1]^2 ) 
- 2 \F_1(\lambda_1; U(\gamma)),
\end{align*}
where 
$$W = B+F$$ 
and
\begin{align*}
U 
& = - \frac{1}{4} \Fq_2 \Fq_{-2}( F_1 + F_2 ) - \Fq_1 \Fq_{-1}( F_1 + F_2 ) \\
& = -\frac{1}{4} \Fq_2 \Fq_{-2}( F_1 + F_2 ) + \Fq_1 \Fq_1 \Fq_{-1} \Fq_{-1}( (F_1 + F_4) \Delta_{23} ) \\
& = -\frac{1}{4} \Fq_2 \Fq'_{2}( F_1 + F_2 ) (1 \otimes 1) + \Fq_1 \Fq_1 \Fq'_{1} \Fq'_{1}( (F_1 + F_4) \Delta_{23} ) (1 \otimes 1),
\end{align*}
which is viewed here both as a morphism $H^{\ast}(S^{[2]}) \to H^{\ast}(S^{[2]})$
and by Poincar\"e duality in the last line as a class in $H^{\ast}(S^{[2]} \times S^{[2]})$
%is the Lefschetz dual 
(we let $\Fq_i'$ denote the Nakajima operator acting on the second factor of $H^{\ast}(S^{[n]}) \otimes H^{\ast}(S^{[m]})$).
Moreover, $\F_g(\lambda_1 ; \ldots )$ stands for the (obvious) generating series
where we integrate also over the tautological class $\lambda_1$, see \cite{HilbHAE}.

We consider the invariant $\F_1( \Fq_1(F)^2 1)$. Using the holomorphic anomaly equation above,
the known results in genus 0 (Theorem~\ref{thm:Hilb genus 0}) and 
the discussion in \cite{O1, OP} on how to reduce the series $\F_1(\lambda_1; ... )$ to genus $0$ invariants, we have:
\begin{align*} \frac{d}{d G_2} \F_{1}( \Fq_1(F)^2 ) 
%& = F_0( [1,F]^2, U ) \\
%& = y \frac{d}{dy} F_0( [1,F]^2, [2,F] ) \\
%& + 2( F_0([1,F]^2, [1,1][1,pt] + [1,F][1,W]) + q \frac{d}{dq} F_0( [1,F]^2, [1,F]^2) ) \\
& = \frac{\Theta^2}{\Delta} \left( -6 \wp + 108 G_2 \right).
\end{align*}
Integrating with respect to $G_2$ yields
\[ \F_{1}( \Fq_1(F)^2 ) = \frac{\Theta^2}{\Delta}\left( a E_4 - 6 \wp G_2 + 54 G_2^2 \right), \]
where $\wp^2$ does not appear, because it would yield the only pole on the left hand side
(contradicting Conjecture A of \cite{HilbHAE} or also monodromy invariance).
By Proposition~\ref{prop:TstartP2}, we have
\[ \mathrm{Coeff}_{q^{-1} y^{-1}}( \F_{1}( \Fq_1(F)^21 ) ) = \blangle \tau_0( F[1]^2) \brangle_{g=1, B-A}^{\GW} = \blangle \tau_0(H^2) \brangle^{\GW,T^{\ast}\p^2}_{g=1,1} = \frac{1}{8}. \]
Solving for $a$ one finds $a = 1/96$, and hence
\begin{align*}
 \F_{1}( \Fq_1(F)^2 1 ) 
& = \frac{\Theta^2}{ \Delta} \left( \frac{1}{4} \wp E_2 + \frac{3}{32} E_2^{2} + \frac{1}{96} E_4 \right).
%& = ( 1/8 y^1 + 1/8 y^{-1}) q^-1 + (1/8 y^3 + 315/8 y + 160 + 315/8 y^{-1} + 1/8 y^{-3})q + O(q^2).
\end{align*}
Similarly, we have
\begin{align*}
\frac{d}{dG_2} \F_1( c_2(X)) 
%&= F_0( c_2(X), U ) - 2 F_1( \lambda_1 ; U(c_2(X)) ) \\
%& = y \frac{d}{dy} F_0( c_2(X), [2,F] )
%+ 2 F_0( c_2(X), [1,F][1,W] + [1,1][1, \pt] ) 
%\\ & \quad \quad 
%+ 2 q \frac{d}{dq} F_0( c_2(X), F[1]^2 ) 
%- 60 F_1(\lambda_1) \\
%& = \frac{\Theta^2}{\Delta} \left( -3210 \wp E_2 + \frac{585}{4} E_2^{2} - \frac{175}{4} E_4 \right)
& = \frac{\Theta^2}{\Delta} \left( -105 \wp E_2 + \frac{135}{8} E_2^{2} - \frac{5}{8} E_4 \right),
\end{align*}
where we used that $U(c_2(X)) = 30 \Fq_1(F) \Fq_1(1) 1$. This yields
\begin{align*}
\F_1( c_2(X))
%& = \frac{\Theta^2}{\Delta} \left( -3210 \cdot \left( \frac{-1}{48} \right) \wp E_2^2 + \frac{585}{4} \left( \frac{-1}{72} \right) E_2^{3} - \frac{175}{4} \frac{-1}{24} E_2 E_4 \right) \\
& = \frac{\Theta^2}{\Delta} \left( \frac{35}{16} \wp E_2^{2} - \frac{15}{64} E_2^{3} + \frac{5}{192} E_2 E_4 + a E_4 \wp + b E_6 \right),
\end{align*}
where, since there are no poles on the left hand side,
the poles in $(D_z \wp )^2$ and $\wp^3$ cancel and give the Eisenstein series $E_6$.
By Proposition~\ref{prop:TstartP2} and since the pair $(S^{[2]}, B+F+A)$ is deformation equivalent to $(S^{[2]},A)$ and we have seen in 
Lemma~\ref{lemma:GW ERCF genus 12}
%Section~\ref{subsubsec:Embedded rational curve family}
that the genus $1$ invariants vanishing in this case, we have:
\[ \blangle c_2(X) \brangle^{\GW}_{1,B-A} = -3 \blangle \tau_0(H^2) \brangle^{\GW,T^{\ast}\p^2}_{g=1,1} = -\frac{3}{8}, \]
\[ \blangle c_2(X) \brangle^{\GW}_{1,B+F+A} = 0. \]
Solving with these conditions for $a$ and $b$, we obtain
%Solving yields $a = -47/16$ and $b = -5/48$ so we get
%\begin{multline*}
\[
 \F_1( c_2(X)) 
= \frac{\Theta^2}{\Delta} \left( \frac{35 \wp E_2^{2}}{16} - \frac{15 E_2^{3}}{64} - \frac{47 \wp E_4}{16} + \frac{5 E_2 E_4}{192} - \frac{5  E_6}{48} \right)  
=
-3/8 (y^{-1} + y) q^{-1} + 828 + O(q).
\]
Finally, by Lemma~\ref{lemma:curve intersections} below and the definition of $\CA_{\beta}, \CB_{\beta}$
in \eqref{ABdef}, the functions
\[ \CA = \sum_{d,k} \CA_{\beta_{h,k}} q^{h-1} y^k,
\quad \quad
\CB = \sum_{d,k} \CB_{\beta_{h,k}} q^{h-1} y^k
\]
satisfy:
% with $\beta_{h,k} = B+hF+kA$ and similarly for $\CB$ are 
%and $\CB = \sum_{d,k} \CB_{\beta_{d,k}}$ are 
%constrained by:
\begin{align*}
 \F_{1}( \Fq_1(F)^2 1 ) = \CA, \quad \F_1( c_2(X)) = 30 \left( q \frac{d}{dq} - \frac{1}{4} \left( y \frac{d}{dy} \right)^2 \right) \CA + 828 \CB.
\end{align*}
This proves the claim by solving for $\CA$ and $\CB$.

We remark that determining $\F_{1}( \Fq_1(F)^2 1 )$ only required a single geometric constraint, namely the computation for class $B-A$.
However, the formula also matches the vanishings obtained from computations in the ideal geometry 
(which applies to classes $\beta \in \{ B, B+F+A \}$).
For $\F_1(c_2(X))$ the system is likewise overdetermined: we only used $2$ of the $3$ available constraints.
\end{proof}
%
%We remark that we only used twone (for the first case) and
%
%We only use a single computation here, namely that of $B-A$.
%However, the formula also matches the vanishings obtained from the computations in the ideal geometry 
%(which applies to classs $\beta \in \{ B, B+F+A \}$).
%The vanishing in class $B+F$ is not automatic.

%(-3/8 y^{-3} + 828 y^{-2} + 46935/8 y^{-1} + 35160 + 46935/8 y^1 + 828 y^2 - 3/8 y^3) q + O(q^2)
%\end{multline*}
%
%
%\vspace{50pt}
%We consider $\gamma = [1, \pt][1,1]$. 
%We get
%\[
%\frac{d}{dG_2} F_1( [1, \pt][1,1] )
%=
%\frac{\Theta^2}{\Delta} \left(
%-3 A^{2} \wp - \frac{9}{4} A^{2} E_2 - 3 \wp^{2} - \frac{423}{4} \wp E_2 + \frac{39}{8} E_2^{2} - 3 A \wp^{\bullet} - \frac{17}{12} E_4
%\right)
%\]
%So
%\[
%F_1( [1,\pt][1,1] )
%=
%\frac{\Theta^2}{\Delta}
%\left( 
%\frac{1}{8} A^{2} \wp E_2 + \frac{3}{64} A^{2} E_2^{2} + \frac{1}{8} \wp^{2} E_2 + \frac{141}{64} \wp E_2^{2} - \frac{13}{192} E_2^{3} + \frac{1}{8} A E_2 \wp^{\bullet} + \frac{17}{288} E_2 E_4 \right) + (...)
%\]
%where $(...)$ is a constant in $E_2$.
%Similarly,
%\[
%\frac{d}{dA} F_1( [1,\pt][1,1] )
%= -\frac{1}{2} F_1( [2,F] ) = \frac{1}{2} y \frac{d}{dy} F_1( [1,F]^2 )
%\]
%which gives

\begin{proof}[Proof  of Theorem~\ref{thm:Hilb genus 12}: Genus 2 case]
%$c_2 \cdot \Fq_1(\pt) \Fq_1(1)1 = 27$
Using Lemma~\ref{lemma:curve intersections} below,
the standard intersections
\[ c_2(X) \cdot \Fq_1(\pt) \Fq_1(1) 1 = 27, \quad \quad c_2(X) \cdot \Fq_1(W) \Fq_1(F)1 = 3 \]
and the genus $1$ part of Theorem~\ref{thm:Hilb genus 12},
the holomorphic anomaly equation of \cite{HilbHAE} reads:
%(with $c_2 \cdot \Fq_1(\pt) \Fq_1(1) = 27$, $c_2 \cdot \Fq_1(W) \Fq_1(F) = 3$) 
%one has
\[ \frac{d}{dG_2} \F_2 = \F_1(U) = 3 \left( 2 q \frac{d}{dq} - \frac{1}{2} \left( y \frac{d}{dy} \right)^2 \right) \CA + 60 \CB. \]
%\begin{align*} \frac{d}{dG_2} \F_2 & = \F_1(U) \\
%& = y \frac{d}{d y} \F_1( \Fq_2(F)1 ) + 2 \F_1\Big( \Fq_1(F) \Fq_1(W) 1 + \Fq_1(\pt) \Fq_1(1) 1 \Big) + 2 q \frac{d}{dq} \F_1( \Fq_1(F)^2 1) \\
%& = 3 \left( 2 q \frac{d}{dq} - \frac{1}{2} \left( y \frac{d}{dy} \right)^2 \right) \CA + 60 \CB.
%\end{align*}
%where we used that
%\begin{align*}
%\F_1( \Fq_1(F)^2 1) & = \CA.
%\F_1( [2,F] ) & = - y \frac{d}{dy} F_1( [1,F]^2 ) = - y \frac{d}{dy} \CA \\
%\F_1( \Fq_1(1) \Fq_1(\pt) ) & = \frac{1}{2} ( 2 q \frac{d}{dq} - \frac{1}{4} \left( y \frac{d}{dy} \right)^2 ) \CA + 27 \CB  \\
%\F_1( \Fq_1(F) \Fq_1(W) ) & = \frac{1}{2} ( 2 q \frac{d}{dq} - \frac{1}{4} \left( y \frac{d}{dy} \right)^2 ) \CA + 3 \CB  \\
%\F_1( \Fq_1(1) \Fq_1(\pt) + \Fq_1(F) \Fq_1(W) ) & = ( 2 q \frac{d}{dq} - \frac{1}{4} \left( y \frac{d}{dy} \right)^2 ) \CA + 30 \CB \\
%\end{align*}
Integration with respect to $G_2$ yields
\[
\F_2 = \frac{\Theta^2}{\Delta}
\left(
\frac{5}{384} \wp E_2^{3} + \frac{25}{6144} E_2^{4} + \frac{5}{384} \wp E_2 E_4 + \frac{7}{3072} E_2^{2} E_4 + \frac{1}{1152} E_2 E_6
%-\frac{11 \wp^{3} E_2}{23} + \frac{5 \wp E_2^{3}}{384} + \frac{25 E_2^{4}}{6144} + \frac{203 \wp E_2 E_4}{8832} + \frac{7 E_2^{2} E_4}{3072} + \frac{11 E_2 ( y \frac{d}{dy} \wp)^{2}}{92} + \frac{25 E_2 E_6}{79488}
+ a E_4^2 + b \wp E_6 \right)
\]
for some $a,b \in \BC$. Here we used that $F_2$ is determined up to the functions $\wp^4, \wp (y \frac{d}{dy} \wp)^2, \wp^2 E_4, E_4$ and that the poles in the first of these functions have to cancel which replaces them with a $E_6 \wp$ term and then that $\wp^2 E_4$ can also not appear because of holomorphicity.
Finally, using the following evaluations (ref.~Proposition~\ref{prop:TstartP2}, Lemma~\ref{lemma:GW ERCF genus 12}):
\[ \blangle \varnothing \brangle^{\GW}_{2, B-A} = \frac{1}{128}, \quad \blangle \varnothing \brangle^{\GW}_{2, B+F+A} = 0 \]
yields that $a=-13/18432$ and $b = -1/96$ and thus 
\[
\F_2(\varnothing) = 
\frac{\Theta^2}{\Delta}
\left(
\frac{5 \wp E_2^{3}}{384} + \frac{25 E_2^{4}}{6144} + \frac{5 \wp E_2 E_4}{384} + \frac{7 E_2^{2} E_4}{3072} - \frac{13 E_4^{2}}{18432} - \frac{\wp E_6}{96} + \frac{E_2 E_6}{1152}
%-\frac{11 \wp^{3} E_2}{23} + \frac{5 \wp E_2^{3}}{384} + \frac{25 E_2^{4}}{6144} + \frac{203 \wp E_2 E_4}{8832} + \frac{7 E_2^{2} E_4}{3072} + \frac{11 E_2 \wp^{\bullet}^{2}}{92} - \frac{13 E_4^{2}}{18432} - \frac{\wp E_6}{96} + \frac{25 E_2 E_6}{79488}
\right).
\]
This implies the result by monodromy invariance
(we even have one more condition to spare, namely the vanishing of $\blangle \varnothing \brangle_{2,B}^{\GW}$).
\end{proof}

\begin{lemma} \label{lemma:curve intersections}
Let $\widetilde{\beta}_{d,k} = W + dF + kA$ %where $W = B+F$ 
and let $h_{d,k} = \widetilde{\beta}_{d,k}^{\vee}$ be the dual. Then
\begin{alignat*}{2}
& h_{d,k}^2 \cdot \Fq_1(F)^2 1 = 2, & \quad \quad \quad & h_{d,k}^2 \cdot \Fq_1(\pt) \Fq_1(1) 1 = 2d - k^2/4, \\
& h_{d,k}^2 \cdot \Fq_2(F) 1 = -2k, & & h_{d,k}^2 \cdot \Fq_1(W) \Fq_1(F) 1 = 2d - k^2/4,
\end{alignat*}
\[ h_{d,k}^2 \cdot c_2(X) = 30 (2d - k^2 / 2). \]
\end{lemma}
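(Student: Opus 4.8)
The plan is to compute the two ingredients on the right-hand side directly: first the dual class $h_{d,k}$ in terms of the natural generators of $H^2(S^{[2]})$, and then the four quartic intersection numbers, reserving the $c_2(X)$-identity for a separate and immediate argument via the Fujiki relation.

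First I would determine $h_{d,k}$. Recall the orthogonal splitting $H^2(S^{[2]}) = H^2(S)\oplus\BC\delta$, where $\mathsf{q}$ restricts to the $K3$ intersection form on $H^2(S)$, satisfies $\mathsf{q}(\delta)=-2$, and has $H^2(S)\perp\delta$; under the identification $H_2\cong H^2$ induced by $\mathsf{q}$, the curve class $A$ pairs as $A\cdot\delta = -1$ and $A\cdot D_S = 0$ for $D_S\in H^2(S)$. Writing $h_{d,k}=D_S + c\delta$ and imposing $\mathsf{q}(h_{d,k},-) = \widetilde{\beta}_{d,k}\cdot(-)$ against all divisors forces $D_S = W+dF$ and $c = k/2$, i.e.
\[ h_{d,k} = (W+dF) + \tfrac{k}{2}\delta. \]
As a consistency check, using $W^2=0$, $W\cdot F=1$, $F^2=0$ one recovers $\mathsf{q}(h_{d,k}) = 2d - k^2/2 = (\widetilde{\beta}_{d,k},\widetilde{\beta}_{d,k})$.

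To evaluate the four Nakajima-class numbers I would pass to the birational double cover $\rho: Y := \mathrm{Bl}_{\Delta}(S\times S)\to S^{[2]}$, the quotient by the factor-swap, which is branched over the exceptional divisor $E\subset S^{[2]}$ with ramification the blowup-exceptional $\mathcal{E} = \p(T_S)$. Since $\rho$ has degree $2$, every intersection number equals $\tfrac12\int_Y\rho^*(\,\cdot\,)$. The pullback dictionary reads $\rho^*(\Fq_1(\alpha)\Fq_1(1)1) = \alpha_1 + \alpha_2$ (with $\alpha_i := \pi^*p_i^*\alpha$), $\rho^*\delta = \mathcal{E}$, together with $\rho^*(\Fq_1(\alpha)\Fq_1(\beta)1) = \alpha_1\beta_2 + \alpha_2\beta_1$ and $\rho^*(\Fq_2(\alpha)1) = 2\,j_* g^*\alpha$, where $g:\mathcal{E}\to S$ is the projection and $j:\mathcal{E}\hookrightarrow Y$ the inclusion. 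Expanding $\rho^* h_{d,k} = (W_1+W_2) + d(F_1+F_2) + \tfrac k2\mathcal{E}$ and multiplying by each insertion, every resulting term reduces either to a product of pullbacks from $S\times S$ (evaluated by $\int_{S\times S} p_1^*a\,p_2^*b$) or to an integral supported on $\mathcal{E}$, computed from $j^*\mathcal{E} = -\xi$ and the Grothendieck relation $\xi^2 = -g^*c_2(T_S) = -24\,g^*\pt$ (using $c_1(T_S)=0$). Carrying this out gives $\tfrac12\int_Y(\rho^*h_{d,k})^2\rho^*\gamma$ equal to $2$, $\,2d - k^2/4$, $\,-2k$, and $2d - k^2/4$ for $\gamma = \Fq_1(F)^21,\ \Fq_1(\pt)\Fq_1(1)1,\ \Fq_2(F)1,\ \Fq_1(W)\Fq_1(F)1$ respectively.

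Finally, for the last identity I would invoke the Fujiki relation (Theorem~\ref{fujiki result}): since $c_2(X)$ stays of type $(2,2)$ on all deformations, $\int_X c_2(X)D^2 = C(c_2(X))\,\mathsf{q}(D)$ for every $D\in H^2(X)$, and by Theorem~\ref{thm on fujiki cons} (see Table~\ref{Fujiki_table}) $C(c_2(T_{S^{[2]}})) = 30$; taking $D = h_{d,k}$ yields $30\,\mathsf{q}(h_{d,k}) = 30(2d - k^2/2)$. The main obstacle here is bookkeeping rather than conceptual: one must fix the normalization constants in the pullback dictionary (in particular the factor $2$ attached to $\Fq_2$, and the sign of $\rho^*\delta$) so that they match the Nakajima conventions of \cite{NOY}, and carefully track the $\mathcal{E}$-supported correction terms; once these are pinned down, all four evaluations are short.
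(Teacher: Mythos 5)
Your proof is correct and structurally the same as the paper's: express the dual class $h_{d,k}$ in the basis $\{W,F,\delta\}$, expand $h_{d,k}^2$ against each Nakajima class, and handle the last identity via the Fujiki relation with $C(c_2(T_{S^{[2]}}))=30$. The one genuine difference is in how the elementary intersection numbers are obtained: the paper simply quotes them in the Nakajima basis (e.g.\ $\Fq_1(\pt)\Fq_1(1)1\cdot\delta^2=-1$), whereas you derive them by pulling back to the double cover $\Bl_{\Delta}(S\times S)\to S^{[2]}$ and using $j^{\ast}\mathcal{E}=-\xi$ together with the Grothendieck relation $\xi^2=-24\,g^{\ast}\pt$. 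This buys you a self-contained computation (and I checked that it reproduces, e.g., $E^2\cdot\Fq_1(\pt)\Fq_1(1)1=-4$ and $E\cdot D(W)\cdot\Fq_2(F)1=-4$, hence $2d-k^2/4$ and $-2k$), at the cost of the normalization bookkeeping you rightly flag. One harmless discrepancy: your $\delta$ is the negative of the paper's ($\delta=c_1(\CO_S^{[2]})=-\tfrac{1}{2}[E]$ here, so $A\cdot\delta=+1$ and $h_{d,k}=D(W)+dD(F)-\tfrac{k}{2}\delta$); since both conventions yield $h_{d,k}=W+dF+\tfrac{k}{4}[E]$ and you use yours consistently via $\rho^{\ast}\delta=\mathcal{E}$, all the stated values — including the $k$-odd one $h_{d,k}^2\cdot\Fq_2(F)1=-2k$, which is the only sign-sensitive entry — come out correctly.
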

\begin{proof}
Let $\delta = c_1( \CO_S^{[2]}) = -\frac{1}{2} \Delta_{S^{[2]}}$ and $D(\alpha) = \Fq_1(\alpha) \Fq_1(1)1$. We have
\[ h_{d,k} = \widetilde{\beta}_{d,k}^{\vee} = D(W) + d D(F) - \frac{k}{2} \delta. \]
This yields, for example
\[ \int h_{d,k}^2 \cdot \Fq_2(F)1 = -k \int \Fq_2(F)1 \cdot \delta \cdot D(W) = -2k. \]
The other cases are similar (use that $\Fq_1(W) \Fq_1(F) \cdot \delta^2 = \Fq_1(\pt) \Fq_1(1) \cdot \delta^2 = -1$).
For the last expression we use the Fujiki constant $C(c_2) = 30$.
\end{proof}

\subsection{Genus $1$ Gopakumar-Vafa invariants}
A hyperk\"ahler variety $X$ is of {\em $K3^{[2]}$-type}
if it is deformation equivalent to the Hilbert scheme $S^{[2]}$ for a $K3$ surface $S$.
For any primitive curve class $\beta \in H_2(X,\BZ)$, 
we define the \textit{genus 1 Gopakumar-Vafa class} 
\[ n_{1,\beta} \in H^4(X,\BQ) \]
by
\[ \int_X n_{1,\beta} \cup \gamma = n_{1,\beta}(\gamma), \quad  \forall\,\, \gamma \in H^4(X,\BQ), \]
where $n_{1,\beta}(\gamma)$ is given in Definition \ref{defn g1 GV primitive}.
In an ideal geometry (ref.~\S \ref{subsec:ideal geometry primitive}), $n_{1,\beta}$ is the class of the surface swept out by the elliptic curves in class $\beta$.

Our discussion above leads to the following formula. Define
\begin{align*}
\CA' & = \frac{\Theta^2}{\Delta} \left( -\frac{1}{4} \wp - \frac{5}{48} E_2 \right) 
= -\frac{(y + y^{-1})}{8} q^{-1} + 6 + O(q), \\
\CB' & = \frac{\Theta^2}{\Delta} \left( -\frac{1}{96} \wp E_2 - \frac{1}{256} E_2^{2} - \frac{1}{2304} E_4 \right)
= -\frac{(y + y^{-1})}{192} q^{-1} + O(q),
\end{align*}
and recall the series $\CA, \CB$ from Theorem~\ref{thm:Hilb genus 12}.
\begin{thm} \label{thm: Hilb GV genus 1} Assume Conjectures A and C of \cite{HilbHAE}. 
For any hyperk\"ahler variety $X$ of $K3^{[2]}$ type and
for any primitive curve class $\beta \in H_2(X,\BZ)$, we have
\[ n_{1, \beta} = \frac{1}{2} a_{\beta} h_{\beta}^2 + b_{\beta} c_2(T_X), \]
where $a_{\beta} = \CA_{\beta} + \CA'_{\beta}$ and $b_{\beta} = \CB_{\beta} + \CB'_{\beta}$.
\end{thm}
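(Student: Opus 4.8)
The plan is to compute $n_{1,\beta}$ from its defining property $\int_X n_{1,\beta}\cup\gamma = n_{1,\beta}(\gamma)$ by evaluating the right-hand side for all $\gamma\in H^4(X,\BQ)$, using the formula for primitive $\beta$ from Definition~\ref{defn g1 GV primitive},
\[ n_{1,\beta}(\gamma) = \blangle \tau_0(\gamma) \brangle^{\GW}_{1,\beta} + \frac{1}{24}\blangle \tau_0(\gamma)\,\tau_0(c_2(X)) \brangle^{\GW}_{0,\beta}. \]
The first summand is immediate: since $\blangle \tau_0(\gamma)\brangle^{\GW}_{1,\beta} = \int_X \ev_\ast[\Mbar_{1,1}(X,\beta)]^{\vir}\cup\gamma$, Theorem~\ref{thm:Hilb genus 12} identifies the corresponding contribution to the class $n_{1,\beta}$ as $\tfrac12\CA_{\beta}\, h_\beta^2 + \CB_{\beta}\, c_2(T_X)$. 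By deformation invariance of all invariants involved (and since $h_\beta$, $c_2(T_X)$ are topological) it suffices to treat $(X,\beta)=(S^{[2]},\beta_{h,k})$ for suitable $h,k$, so the genus $0$ inputs of Theorem~\ref{thm:Hilb genus 0} are available.

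The work is in the genus $0$ correction term. Writing $\blangle \tau_0(\gamma)\tau_0(c_2(X))\brangle^{\GW}_{0,\beta} = \int_{X\times X}\ev_\ast[\Mbar_{0,2}(X,\beta)]^{\vir}\cup(\gamma\otimes c_2(X))$ and inserting the two-point formula of Theorem~\ref{thm:Hilb genus 0}, the invariant becomes $\int_X Q\cup\gamma$, where $Q\in H^4(X,\BQ)$ is obtained by contracting the second K\"unneth factor of $\ev_\ast[\Mbar_{0,2}]^{\vir}$ against $c_2(X)$. Only the K\"unneth pieces whose second factor lies in $H^4(X)$ survive; in particular the pieces $h_\beta\otimes\beta$ and $\beta\otimes h_\beta$ drop out for degree reasons. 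The surviving contractions are computable from the Fujiki relations: the polarized form of Theorem~\ref{fujiki result} gives $\int_X c_2(X)\,D_1 D_2 = 30\,(D_1,D_2)$ for divisors, whence $\int_X c_2(X)\,h_\beta^2 = 30(\beta,\beta)$, while $\int_X c_2(X)^2$ is the known Chern number of $S^{[2]}$. The one genuinely interesting piece is $G_{\beta}\,(h_\beta\otimes h_\beta)\cdot c_{BB}$: contracting against $c_2(X)$ and using $\int_X c_2(X)\,h_\beta e_j = 30(h_\beta,e_j)$ together with the fact that $c_{BB}$ is the inverse of the Beauville--Bogomolov--Fujiki form collapses the sum to $30\,G_{\beta}\,h_\beta^2$. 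Collecting all contributions expresses $\tfrac1{24}Q$ as an explicit $\BQ$-linear combination of $h_\beta^2$ and $c_2(T_X)$ whose coefficients depend only on $F_{\beta}$, $G_{\beta}$, $(\beta,\beta)$ and the constant $\int_X c_2(X)^2$.

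It then remains to match the resulting coefficients with $\CA'_{\beta}$ and $\CB'_{\beta}$. Here I would pass to generating series in $(q,y)$, using $F = \Theta^2/\Delta$ and $G = \tfrac{\Theta^2}{\Delta}(-\wp + \tfrac1{12}E_2)$ for the genus $0$ BPS series, and exploiting that on $\beta$-coefficients the operator $2q\frac{d}{dq} - \tfrac12(y\frac{d}{dy})^2$ acts as multiplication by $(\beta,\beta)$, exactly as in the proof of Theorem~\ref{thm:Hilb genus 12}. This converts each factor of $(\beta,\beta)$ into a differential operator, upgrading the coefficientwise identity to an identity of quasi-Jacobi forms, which is then checked using the algebra structure and the Ramanujan-type derivative relations for $\Theta,\wp,E_2,E_4$ recalled in \S\ref{sect on qj form}. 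Summing the two contributions gives $n_{1,\beta} = \tfrac12(\CA_{\beta}+\CA'_{\beta})h_\beta^2 + (\CB_{\beta}+\CB'_{\beta})c_2(T_X)$, which is the claim.

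The main obstacle will be this last identity-matching step: although each individual contraction is elementary, assembling them and then verifying that the combination of $F$, $G$ and their images under $2q\frac{d}{dq}-\tfrac12(y\frac{d}{dy})^2$ collapses to the compact closed forms $\CA' = \tfrac{\Theta^2}{\Delta}\bigl(-\tfrac14\wp - \tfrac{5}{48}E_2\bigr)$ and $\CB' = \tfrac{\Theta^2}{\Delta}\bigl(-\tfrac1{96}\wp E_2 - \tfrac1{256}E_2^2 - \tfrac1{2304}E_4\bigr)$ is a delicate quasi-Jacobi form computation, best organized and verified with the assistance of a computer algebra system.
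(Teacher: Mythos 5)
Your proposal follows essentially the same route as the paper: the contribution of $\blangle \tau_0(\gamma)\brangle^{\GW}_{1,\beta}$ is read off from Theorem~\ref{thm:Hilb genus 12}, and the $\frac{1}{24}\blangle\tau_0(\gamma)\tau_0(c_2(X))\brangle^{\GW}_{0,\beta}$ correction is computed by contracting the two-point class of Theorem~\ref{thm:Hilb genus 0} against $c_2(X)$ (the paper invokes monodromy invariance to know a priori that the result has the form $\frac12\CA'_\beta h_\beta^2+\CB'_\beta c_2(T_X)$, while your direct Fujiki-constant contraction produces that form explicitly — the two are equivalent). Your contraction details check out (e.g.\ the $c_{BB}$ term collapsing to $30\,G_\beta h_\beta^2$, and the resulting $\CB'_\beta=-\frac{1}{300}(\beta,\beta)\bigl(G_\beta+\frac14(\beta,\beta)F_\beta\bigr)$ matches the stated $q^{-1}y^{-1}$ coefficient $-\frac{1}{192}$), so the argument is correct.
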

\begin{proof}
Since the Chern class $c_2(X)$ is monodromy invariant, we can write
%By monodromy invariance of the Chern class $c_2(X)$ we can write
\[
\frac{1}{24} \ev_{1 \ast}( \ev_2^{\ast}(c_2(X)) [\Mbar_{0,2}(X,\beta)]^{\text{vir}} )
=
\frac{1}{2} \CA'_{\beta} h_{\beta}^2 + \CB_{\beta}' c_2(T_X).
\]
for some $\CA'_{\beta}, \CB'_{\beta}$.
Using Theorem~\ref{thm:Hilb genus 0}, one computes that these are precisely the $\beta$-coefficients
of the functions $\CA', \CB'$ defined above.
The claim now follows from Theorem~\ref{thm:Hilb genus 12} and the definition of genus $1$ Gopakumar-Vafa invariants.
\end{proof}

The integrality conjecture for Gopakumar-Vafa invariants
(Conjecture~\ref{conj on integrality}) would imply that $n_{1,\beta} \in H^4(X,\BQ)$ is an integral class.
We give the following criterion:

\begin{lemma} \label{lemma:integrality}
$n_{1,\beta}$ is integral, i.e. lies in $H^4(X, \BZ)$ if and only if
the following holds:
\begin{enumerate}
\item[(i)] If $(\beta,\beta) \in 2 \BZ$, then $a_{\beta}$ is an even integer and $3 b_{\beta} \in \BZ.$
\item[(ii)] If $(\beta, \beta) = 2d - \frac{1}{2}$, then
$a_{\beta}$, $24 b_{\beta}$, $\frac{1}{8} a_{\beta} - 3 b_{\beta}$ all lie in $\BZ$.
%$a_{\beta} \in \BZ$ and $\frac{(d+1)}{2} a_{\beta} \in \BZ$ and $24 b_{\beta} \in \BZ$
%and $\frac{1}{8} a_{\beta} - 3 b_{\beta} \in \BZ$.
\end{enumerate}
\end{lemma}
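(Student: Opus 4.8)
\textbf{Reduction to $S^{[2]}$ via deformation invariance.} The plan is first to reduce the statement to a single concrete model. Once $\beta$ is fixed as a flat section of $R^2\pi_\ast\BZ$ over a deformation of the pair $(X,\beta)$ keeping $\beta$ of Hodge type, both $h_{\beta}$ and $c_2(T_X)$ are flat sections (the latter is globally monodromy invariant), so $n_{1,\beta}=\frac12 a_\beta h_{\beta}^2+b_\beta c_2(T_X)$ is a flat rational section of $R^4\pi_\ast\BQ$. Since $H^4(\,\cdot\,,\BZ)$ is a local system inside it, the condition $n_{1,\beta}\in H^4(\,\cdot\,,\BZ)$ is constant along the deformation. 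By global Torelli and \cite{O2}, any $(X,\beta)$ of $K3^{[2]}$-type with $\beta$ primitive is deformation equivalent to some $(S^{[2]},\beta')$ with $(\beta',\beta')=(\beta,\beta)$, and $a_\beta,b_\beta$ depend only on $(\beta,\beta)$; hence it suffices to prove the criterion on $S^{[2]}$ for a conveniently chosen $\beta'$.

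\textbf{Passing to $\Sym^2 H^2$.} Since $b_4(S^{[2]})=276=\dim_{\BQ}\Sym^2 H^2(S^{[2]},\BQ)$, cup product gives an isomorphism $\Sym^2 H^2(S^{[2]},\BQ)\cong H^4(S^{[2]},\BQ)$. Under it $c_2(T_X)$, being monodromy invariant, is a multiple of the inverse Beauville--Bogomolov--Fujiki form, namely $c_2(T_X)=30\,c_{BB}$ (the constant being fixed by $\int_X c_2\cdot D^2=30(D,D)$), while $h_{\beta}^2$ corresponds to the rank-one form $h_{\beta}\otimes h_{\beta}$. In particular $h_{\beta}^2$ and $c_2(T_X)$ are $\BQ$-independent whenever $h_{\beta}\neq 0$, and they span a rank-two subspace $V\subset H^4(S^{[2]},\BQ)$. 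The lemma is then the statement that the rank-two lattice $\Lambda=\{(a,b)\in\BQ^2:\tfrac12 a\,h_{\beta}^2+b\,c_2(T_X)\in H^4(S^{[2]},\BZ)\}$ is the one cut out by conditions (i), respectively (ii).

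\textbf{The two cases.} Writing $H^2(S^{[2]},\BZ)=H^2(S,\BZ)\oplus\BZ\delta$ with $(\delta,\delta)=-2$, the discriminant group $H_2/H^2\cong\BZ/2$ is generated by $\tfrac12\delta$, of BBF square $-\tfrac12$ modulo $2\BZ$. Thus $h_{\beta}\in H^2(S^{[2]},\BZ)$ precisely when $(\beta,\beta)\in 2\BZ$ (case (i)), whereas $(\beta,\beta)=2d-\tfrac12$ forces $h_{\beta}=\tfrac12\delta+w$ with $w\in H^2(S,\BZ)$, so $h_{\beta}$ is only half-integral (case (ii)). In each case I would compute $h_{\beta}^2$ and $c_2(T_X)$ in a fixed integral basis of $H^4(S^{[2]},\BZ)$ (Nakajima classes with their integral refinement, following \cite{NOY}), the denominators being controlled by two divisibility facts: (a) $\tfrac13 c_2(T_X)\in H^4(S^{[2]},\BZ)$, and (b) when $h_{\beta}$ is half-integral, $\tfrac12 h_{\beta}^2+\tfrac1{24}c_2(T_X)\in H^4(S^{[2]},\BZ)$. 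Granting these, one finds in case (i) that $\Lambda=\BZ\,h_{\beta}^2\oplus\BZ\,\tfrac13 c_2(T_X)$, i.e. $a\in 2\BZ$ and $3b\in\BZ$; and in case (ii) that $\Lambda=\BZ\big(\tfrac12 h_{\beta}^2+\tfrac1{24}c_2(T_X)\big)\oplus\BZ\,\tfrac13 c_2(T_X)$, which in $(a,b)$-coordinates is $\BZ(1,\tfrac1{24})+\BZ(0,\tfrac13)$ and unwinds exactly to $a\in\BZ$, $24b\in\BZ$, $\tfrac18 a-3b\in\BZ$.

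\textbf{Main obstacle.} The heart of the argument is establishing (a) and (b) and, crucially, that $V\cap H^4(S^{[2]},\BZ)$ contains \emph{no finer} fractional classes, so that $\Lambda$ is exactly as stated. This is a question about the non-unimodular lattice $H^4(S^{[2]},\BZ)$ sitting inside $\Sym^2 H^2(S^{[2]},\BQ)$ restricted to the plane $V$: one must pin down the image and saturation of $\Sym^2 H^2(S^{[2]},\BZ)\to H^4(S^{[2]},\BZ)$ together with the precise positions of $c_2(T_X)$ and $h_{\beta}^2$. I would carry this out in the Nakajima basis, using the explicit integral generators of $H^4(S^{[2]},\BZ)$ (products of divisors, the incidence class $\Fq_1(\pt)\Fq_1(1)1$, and the exceptional classes $\Fq_2(D)1$) and the known expression for $c_2(T_{S^{[2]}})$: necessity of the conditions comes from pairing a candidate class against these generators, and sufficiency from exhibiting it as a $\BZ$-combination of them. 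The half-integrality of $h_{\beta}$ in case (ii) is exactly what produces the mixed integral generator $\tfrac12 h_{\beta}^2+\tfrac1{24}c_2(T_X)$, and hence the coupled condition $\tfrac18 a-3b\in\BZ$ that distinguishes (ii) from (i).
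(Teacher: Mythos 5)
Your approach is essentially the paper's: reduce by deformation invariance to $S^{[2]}$ (for an elliptic $K3$ with a chosen class $\widetilde\beta_{d,k}=W+dF+kA$, $k\in\{0,1\}$ covering the two parity cases), then expand $n_{1,\beta}$ in an explicit integral basis of the Hodge classes and read off the conditions. Your two divisibility facts are exactly right and match the paper: in its notation, $\tfrac13 c_2(T_X)=8V-\delta^2$ and, for half-integral $h_\beta=\tfrac12\delta+w$, $\tfrac12 h_\beta^2+\tfrac1{24}c_2(T_X)=V+e_w$; the resulting lattices in $(a_\beta,b_\beta)$-coordinates agree with conditions (i) and (ii). The one genuine gap is the step you flag as the main obstacle: your proposed integral generators (products of divisors, the incidence class $\Fq_1(\pt)\Fq_1(1)1$, and $\Fq_2(D)1$) span only a proper sublattice of $H^{2,2}(S^{[2]},\BZ)$ and would establish neither facts (a) and (b) nor the saturation claim. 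The paper supplies exactly this input by citing Novario's theorem \cite{Novario}, which gives the full integral basis $\{D(W)^2, D(W)D(F), D(F)^2, \delta^2, e_W, e_F, V\}$ containing the ``fractional'' classes $e_x=\tfrac12(D(x)^2+D(x)\delta)$ and $V=\tfrac1{24}c_2+\tfrac18\delta^2$; without such a result the hardest part of the lemma remains unproved. (A minor slip: $c_2(T_X)$ is proportional to $c_{BB}$ but the constant is $6/5$, not $30$, since $\int_X c_{BB}\cdot D^2=25\,(D,D)$; this does not affect your argument.)
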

\begin{proof}
Using deformation invariance (e.g.~\cite[Cor.~2]{O2}), we may work with $X=S^{[2]}$ for an elliptic $K3$ surface $S$ with $\Pic(S)$ generated by
the class of a section $B$ and the fiber class $F$,
Moreover, we can use the curve class 
\[ \beta := \widetilde{\beta}_{d,k} = W + d F + k A, \]
for $d \geqslant   -1$ and $k \in \{ 0, 1 \}$.
With the notation of Lemma~\ref{lemma:curve intersections}, we then have:
\[
n_{1,\beta} =
\frac{1}{2} a_{\beta} D(W)^2 + d a_{\beta} D(W) D(F) + \frac{1}{2} a_{\beta} d^2 D(F)^2 + 
-k a_{\beta} D(W) \delta - kd D(F) \delta  + \frac{k^2}{4} \delta^2.
\]
By the main result of \cite{Novario}, a basis for the Hodge classes
\[ H^{2,2}(S^{[2]}, \BZ) = H^{4}(S^{[2]},\BZ) \cap H^{2,2}(S^{[2]},\BC) \]
is given by the 7 classes
\begin{gather*} D(W)^2,\ D(W) D(F),\ D(F)^2,\ \delta^2 \\
e_x := \frac{1}{2} \left( D(x)^2 + D(x) \delta \right) \text{ for } x \in \{ W, F \} \\
V := \frac{1}{24} c_2(T_{S^{[2]}}) + \frac{1}{8} \delta^2.
\end{gather*}
The class $n_{1,\beta}$ has the following expansion in this integral basis:
\begin{align*}
n_{1,\beta} & =
\frac{k+1}{2} a_{\beta} D(W)^2 - k a_{\beta} e_W \\
& \quad + \frac{d (d+k)}{2} a_{\beta} F^2 - kd a_{\beta} e_F  \\
& \quad + d a_{\beta} D(W) D(F) \\
& \quad + 24 b_{\beta} V + \left( \frac{k^2}{8} a_{\beta} - 3 b_{\beta} \right) \delta^2.
\end{align*}
If $(\beta, \beta) \in \BZ$ then $k=0$, so integrality of $n_{1,\beta}$
impies (by the first summand) that $a_{\beta} \in 2\BZ$ and by the last summand that $3 b_{\beta} \in \BZ$,
and this is clearly sufficient.
If $(\beta, \beta) = 2d-\frac{1}{2}$, we have $k=1$, which gives $a_{\beta}, 24 b_{\beta}, \frac{1}{8} a_{\beta} - 3 b_{\beta} \in \BZ$ and this is clearly sufficient.
\end{proof}
The criterion of the lemma can be easily checked using a computer program. We obtain:
\begin{cor}\label{cor on chc int 1}
Under the assumptions of Theorem~\ref{thm: Hilb GV genus 1},
$n_{1,\beta}$ is integral for all $(\beta,\beta) \leqslant 100$.
\end{cor}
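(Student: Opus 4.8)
The plan is to reduce the claim to the explicit numerical criterion of Lemma~\ref{lemma:integrality} and then carry out the resulting finite check by computer. By Theorem~\ref{thm: Hilb GV genus 1} we have the closed form $n_{1,\beta} = \frac{1}{2} a_{\beta} h_{\beta}^2 + b_{\beta} c_2(T_X)$ with $a_{\beta} = \CA_{\beta} + \CA'_{\beta}$ and $b_{\beta} = \CB_{\beta} + \CB'_{\beta}$, where the four coefficients are the $\beta$-coefficients, in the sense of Definition~\ref{defn:beta coefficient}, of the explicit index-$1$ quasi-Jacobi forms $\CA, \CB$ of Theorem~\ref{thm:Hilb genus 12} and $\CA', \CB'$ defined above. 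Since integrality of $n_{1,\beta}$ is a deformation-invariant statement and $a_{\beta}, b_{\beta}$ depend only on $(\beta,\beta)$, it suffices to work on $X = S^{[2]}$, exactly as in the proof of Lemma~\ref{lemma:integrality}; by the elliptic transformation law of Jacobi forms the $\beta$-coefficient may then be read off from the $q^d y^k$ coefficient for any $(d,k)$ with $(\beta,\beta) = 2d - k^2/2$.

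First I would record that $(\beta,\beta)$ ranges only over $2\BZ \cup (2\BZ - \tfrac{1}{2})$. Indeed, writing $\beta = B + hF + kA$ in the Nakajima basis on $S^{[2]}$, Lemma~\ref{lemma:curve intersections} gives $(\beta,\beta) = 2h-2-\tfrac{k^2}{2}$, which is an even integer when $k$ is even and lies in $2\BZ - \tfrac{1}{2}$ when $k$ is odd. Hence the two alternatives of Lemma~\ref{lemma:integrality} are exhaustive, and only finitely many values occur in the range $(\beta,\beta) \leqslant 100$ (including the negative ones appearing in Table~\ref{bps_table}).

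Next I would extract the coefficients for each value. In the integral case $(\beta,\beta) = 2d \in 2\BZ$ I would use the representative $k=0$, so that $a_{\beta}$ and $b_{\beta}$ are the $q^{d}y^{0}$-coefficients of $\CA + \CA'$ and $\CB + \CB'$, and verify $a_{\beta} \in 2\BZ$ and $3 b_{\beta} \in \BZ$. In the half-integer case $(\beta,\beta) = 2d - \tfrac{1}{2}$ I would use $k=1$ and verify $a_{\beta} \in \BZ$, $24 b_{\beta} \in \BZ$ and $\tfrac{1}{8} a_{\beta} - 3 b_{\beta} \in \BZ$. Both checks require expanding the generators $\Theta, \Delta, \wp, E_2, E_4, E_6$ as $(q,y)$-series, including the polar part, up to $q$-order roughly $50$ (which covers all squares $\leqslant 100$ for the chosen representatives), assembling the six quasi-Jacobi forms, and reading off the $y^{0}$ and $y^{1}$ coefficients.

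The only real work is the bookkeeping of the computer-algebra computation: carrying all series to sufficiently high order that every value $(\beta,\beta) \leqslant 100$ is covered, and confirming that the divisibility conditions of Lemma~\ref{lemma:integrality} hold in every case. There is no conceptual obstacle, since the holomorphic anomaly input has already been used to pin down the closed forms in Theorems~\ref{thm:Hilb genus 12} and~\ref{thm: Hilb GV genus 1}; the corollary follows once this finite verification is performed.
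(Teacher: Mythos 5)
Your proposal is correct and follows exactly the route the paper takes: the paper's proof of this corollary consists precisely of applying the divisibility criterion of Lemma~\ref{lemma:integrality} and verifying it by a computer computation with the explicit quasi-Jacobi forms of Theorems~\ref{thm:Hilb genus 12} and~\ref{thm: Hilb GV genus 1}. Your additional bookkeeping (choice of representatives $k=0$ or $k=1$, range of $q$-expansion) is a faithful elaboration of what that computation entails.
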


\begin{example}[A real life example]
Let $F(Y) \subset \mathrm{Gr}(2,6)$ be the Fano variety of lines on a very general cubic 4-fold $Y \subset \p^5$. 
%Let $\beta \in H_2(X,\BZ)$ be the minimal curve class,
%which is of degree $3$ against the Pl\"ucker polarization and satisfies $(\beta,\beta) = 3/2$.
Let $\CU \subset \CO_{\Gr}^{\otimes 6}$ be the universal subbundle on $\Gr(2,6)$ and set
\[ g = c_1(\CU^{\vee}), \quad c=c_2(\CU^{\vee}). \]
The unique primitive curve class is $\beta = \frac{1}{2} g^{\vee}$ and is of square $(\beta,\beta)=3/2$ since $(g,g)=6$.
The basic geometry of these classes is discussed in \cite{Ottem}, in particular we have
\[ c_2(X) = 5 g^2 - 8 c. \]
Theorem~\ref{thm: Hilb GV genus 1} implies that the surface in $F(Y)$ swept out by elliptic curves in class $\beta$ has class:
\[ n_{1,\beta} = 35 (g^2 - c). \]
This is indeed integral and effective $($the surface of lines meeting a given line is $\frac{1}{3} (g^2 - c)$$)$.
\end{example}

\subsection{Genus $2$ Gopakumar-Vafa invariants}
Since we can control now all Gromov-Witten invariants for $S^{[2]}$
in arbitrary genus (for primitive classes),
it is also straightforward to compute genus $2$ Gopakumar-Vafa invariants
(see \cite{NO} for the computation of the nodal invariants):

\begin{thm} \label{thm: Hilb GV genus 2} Assume Conjectures A and C of \cite{HilbHAE}. 
For any hyperk\"ahler variety $X$ of $K3^{[2]}$ type and
for any primitive curve class $\beta \in H_2(X,\BZ)$, we have
\[ n_{2,\beta} = \widetilde{I}_{\beta}, \]
where
\begin{multline*} \widetilde{I}(y,q) = 
\frac{\Theta^2}{\Delta}\Bigg[ 
\frac{5}{384} \wp E_2^{3} + \frac{25}{6144} E_2^{4} + \frac{35}{384} \wp E_2^{2} - \frac{5}{512} E_2^{3} + \frac{5}{384} \wp E_2 E_4 + \frac{7}{3072} E_2^{2} E_4 \\
- \frac{71}{64} \wp E_2 + \frac{27}{512} E_2^{2} - \frac{47}{384} \wp E_4 + \frac{5}{4608} E_2 E_4 - \frac{13}{18432} E_4^{2} - \frac{1}{96} \wp E_6 \\
+ \frac{1}{1152} E_2 E_6 + \frac{9}{8} \wp - \frac{5}{32} E_2 - \frac{23}{1536} E_4 - \frac{5}{1152} E_6 + \frac{1}{8} \Bigg].
\end{multline*}
\end{thm}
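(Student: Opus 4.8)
The plan is to compute $n_{2,\beta}$ directly from its definition by assembling the already-established genus $0$, $1$ and $2$ data into generating series of quasi-Jacobi forms and extracting the $\beta$-coefficient. By Definition~\ref{defn g2 GV primitive} (equivalently the Remark following it),
\[
n_{2,\beta} = \blangle \varnothing \brangle^{\GW}_{2,\beta} + \frac{1}{24}\, n_{1,\beta}(c_2(X)) - \frac{1}{2\cdot 24^2}\blangle \tau_0(c_2(X))\tau_0(c_2(X))\brangle^{\GW}_{0,\beta} - \frac{1}{24}\, N_{\mathrm{nodal},\beta},
\]
so it suffices to produce each of the four terms as the $\beta$-coefficient of an explicit quasi-Jacobi form and then add. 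The first term is $I_\beta$ by the genus $2$ part of Theorem~\ref{thm:Hilb genus 12}; it contributes precisely the summand $\frac{\Theta^2}{\Delta}(\tfrac{5}{384}\wp E_2^{3} + \cdots + \tfrac{1}{1152}E_2 E_6)$ already visible in $\widetilde I$. It then remains to identify the three correction terms.

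For the genus $1$ contribution I would use Theorem~\ref{thm: Hilb GV genus 1}, which gives $n_{1,\beta} = \tfrac12 a_\beta h_\beta^2 + b_\beta c_2(T_X)$ with $a_\beta = \CA_\beta + \CA'_\beta$ and $b_\beta = \CB_\beta + \CB'_\beta$. Pairing with $c_2(X)$ and invoking the Fujiki relation of Theorem~\ref{fujiki result} together with the Fujiki data of $K3^{[2]}$-type ($\int_X h_\beta^2\, c_2(X) = 30(\beta,\beta)$ and $\int_X c_2(X)^2 = 828$, see Table~\ref{Fujiki_table} and the genus $1$ computation) yields
\[
n_{1,\beta}(c_2(X)) = 15\,(\beta,\beta)\,a_\beta + 828\, b_\beta.
\]
Since the $\beta$-coefficient identifies $(\beta,\beta) = 2d - k^2/2$ with the monomial $q^d y^k$, multiplication by $(\beta,\beta)$ on coefficients is realized on generating series by the operator $2 q\frac{d}{dq} - \tfrac12 (y\frac{d}{dy})^2$ already met in the genus $1,2$ holomorphic-anomaly computations. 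Hence the generating series of $n_{1,\beta}(c_2(X))$ is $15\bigl(2 q\frac{d}{dq} - \tfrac12 (y\frac{d}{dy})^2\bigr)(\CA+\CA') + 828(\CB+\CB')$, a quasi-Jacobi form of the required index and type.

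The two remaining inputs are purely genus $0$. The invariant $\blangle \tau_0(c_2)\tau_0(c_2)\brangle^{\GW}_{0,\beta}$ is obtained by contracting $\ev_\ast[\Mbar_{0,2}(X,\beta)]^{\vir}$ from Theorem~\ref{thm:Hilb genus 0} against $c_2(X)\otimes c_2(X)$, which reduces to the Fujiki numbers $C(1)=3$, $C(c_2)=30$, $C(c_2^2)=828$ and the pairings of Lemma~\ref{lemma:curve intersections}, producing an explicit combination of $F_\beta = \Theta^2/\Delta$, $G_\beta = (\Theta^2/\Delta)(-\wp + \tfrac1{12}E_2)$ and $(\beta,\beta)$. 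The nodal invariant $N_{\mathrm{nodal},\beta}$ I would evaluate directly from Eqn.~\eqref{Nnodal}: its two terms are the contraction of $\ev_\ast[\Mbar_{0,2}(X,\beta)]^{\vir}$ with the diagonal $\Delta_X$, and the descendent integral $\int_{[\Mbar_{0,1}(X,\beta)]^{\vir}}\bigl(\psi_1^3 + \ev_1^\ast(c_2(X))\,\psi_1\bigr)$, both again computable from Theorem~\ref{thm:Hilb genus 0} (alternatively, its generating series may be quoted from \cite{NO}).

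Finally I would add the four generating series with weights $1,\ \tfrac{1}{24},\ -\tfrac{1}{2\cdot 24^2},\ -\tfrac1{24}$ and verify that the total equals $\widetilde I(y,q)$; deformation (monodromy) invariance then promotes the identity from $S^{[2]}$ to all $X$ of $K3^{[2]}$-type and all primitive $\beta$, since the $\beta$-coefficient depends only on $(\beta,\beta)$. The main obstacle is the bookkeeping of the genus $0$ corrections: one must expand $\Delta_X$ in a Künneth basis adapted to the Beauville--Bogomolov--Fujiki form and track carefully the normalizations of the Fujiki constants, the factors of $(\beta,\beta)$, and the differential operators, so that the poles and holomorphic-anomaly structure of the several quasi-Jacobi forms combine into exactly the stated $\widetilde I$. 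Any error in these constants would be caught at once by comparison with the first coefficients listed after Theorem~\ref{thm:Hilb genus 12} and in Table~\ref{gvg2_table}.
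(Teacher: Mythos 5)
Your proposal is correct and is essentially the paper's own (largely implicit) proof: the paper simply notes that, having Theorem~\ref{thm:Hilb genus 0}, Theorem~\ref{thm:Hilb genus 12}, Theorem~\ref{thm: Hilb GV genus 1} and the nodal counts of \cite{NO} in hand, one substitutes these into Definition~\ref{defn g2 GV genus 2}'s formula for $n_{2,\beta}$ and assembles the resulting quasi-Jacobi forms, exactly as you outline. Your constants ($n_{1,\beta}(c_2)=15(\beta,\beta)a_\beta+828\,b_\beta$, the operator $2q\frac{d}{dq}-\tfrac12(y\frac{d}{dy})^2$ realizing multiplication by $(\beta,\beta)$, and the Fujiki data $C(1)=3$, $C(c_2)=30$, $\int_X c_2^2=828$) all match the paper's.
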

Using a computer program, we immediately obtain: 
\begin{cor}\label{cor on chc int 2}
Under the assumptions of Theorem~\ref{thm: Hilb GV genus 2},
$n_{2,\beta}$ is integral for all $(\beta,\beta) \leqslant 138$.
\end{cor}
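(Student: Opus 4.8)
The plan is to reduce the statement to a finite, explicit coefficient extraction from the quasi-Jacobi form $\widetilde{I}(y,q)$ and then to verify integrality by direct computation. By Theorem~\ref{thm: Hilb GV genus 2} we have $n_{2,\beta} = \widetilde{I}_\beta$, the $\beta$-coefficient of $\widetilde{I}(y,q)$ in the sense of Definition~\ref{defn:beta coefficient}, so it suffices to prove $\widetilde{I}_\beta \in \BZ$ whenever $(\beta,\beta) \leqslant 138$. Before extracting coefficients I would check that the $\beta$-coefficient is well-defined: writing $\widetilde{I} = \frac{\Theta^2}{\Delta} P(\wp, E_2, E_4, E_6)$ with $P$ a polynomial, the prefactor $\Theta^2/\Delta$ has index $1$ (since $\Theta$ has index $1/2$ while $\Delta, \wp, E_k$ have index $0$), and no generator $\frac{1}{\Theta} y\frac{d}{dy}\Theta$ occurs; hence $\widetilde{I}$ is a quasi-Jacobi form of index $1$ obeying the elliptic transformation law. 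By the remark following Definition~\ref{defn:beta coefficient}, $\widetilde{I}_\beta$ is then independent of the chosen pair $(d,k)$ with $(\beta,\beta) = 2d - k^2/2$, and in particular depends only on $(\beta,\beta)$.

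Next I would compute the $(q,y)$-expansion of $\widetilde{I}$. Each constituent function $\Theta, \Delta, \wp, E_2, E_4, E_6$ has the explicit expansion recorded in \S\ref{sect on qj form}; substituting these into the formula of Theorem~\ref{thm: Hilb GV genus 2} and carrying out the multiplication in exact rational arithmetic produces $\widetilde{I}$ as a Laurent series in $q$ (starting at $q^{-1}$, due to the pole of $1/\Delta$) with Laurent-polynomial coefficients in $y$. Since the admissible squares are $(\beta,\beta) = 2d - k^2/2$, with $k$ even giving even integers and $k$ odd giving values $\equiv -\tfrac{1}{2} \pmod 2$, every value $(\beta,\beta) \leqslant 138$ is represented by a pair with $k \in \{0,1\}$ and $d \leqslant 69$. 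Hence expanding $\widetilde{I}$ through order $q^{69}$ and reading off the coefficients of $q^d y^0$ (for even squares) and $q^d y^1$ (for squares $\equiv -\tfrac{1}{2}$) captures all the invariants in range.

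Finally, one checks directly that each extracted coefficient lies in $\BZ$. This is a finite verification, which I would perform with a computer algebra system, exactly as in the parallel genus $1$ statement (Corollary~\ref{cor on chc int 1}). The main obstacle here is not conceptual but computational: one must carry the Laurent expansion to the full required $q$-order in exact arithmetic and organize the bookkeeping so that the coefficient of $q^d y^k$ is correctly matched to the intended value of $(\beta,\beta)$. The well-definedness established in the first paragraph guarantees that any consistent choice of representatives yields the same answer, so no ambiguity enters the check, and the integrality of all the listed coefficients then completes the proof.
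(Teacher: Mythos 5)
Your proposal is correct and matches the paper's own argument, which is simply the direct computer verification that the coefficients of $q^d y^k$ in $\widetilde{I}$ for the relevant range of $(d,k)$ are integers. The additional points you make explicit (well-definedness of the $\beta$-coefficient via the index-$1$ elliptic transformation law, and the reduction to $k \in \{0,1\}$, $d \leqslant 69$) are exactly the implicit bookkeeping behind the paper's one-line proof.
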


\subsection{Genus $0$ Gopakumar-Vafa invariants}\label{pf of inte of g0 for hilb2}
For completeness, we also give a proof of the integrality of genus $0$ Gopakumar-Vafa invariants
discussed in the introduction.

\begin{proof}[Proof of Theorem~\ref{thm:Hilb 2 genus 0 integrality}]
Inverting the definition of genus $0$ Gopakumar-Vafa invariants, we have
\[
n_{0,\beta}(\gamma_1, \ldots, \gamma_n) = \sum_{k | \beta} \mu(k) k^{-3+n} \blangle \tau_0(\gamma_1) \cdots \tau_0(\gamma_n) \brangle^{\GW}_{0,\beta},
\]
where $\mu(k)$ is the M\"obius function.
Consider also the ``BPS invariants" introduced in \cite{O2}:
\[
\widetilde{n}_{0,\beta}(\gamma_1, \ldots, \gamma_n) = \sum_{k | \beta} \mu(k) k^{-3+n} (-1)^{[\beta] + [\beta/k]} \blangle \tau_0(\gamma_1) \cdots \tau_0(\gamma_n) \brangle^{\GW}_{0,\beta}.
\]
Then it is straightforward to show that (see \cite[Def.~1]{O2} for the notation $[-]$):
\begin{enumerate}
\item[(i)] If $\left[ \frac{\beta}{\div(\beta)} \right] = 0$, then
$n_{0,\beta}(\gamma_1, \ldots, \gamma_n) = \widetilde{n}_{0,\beta}(\gamma_1, \ldots, \gamma_n)$,
\item[(ii)] If $\left[ \frac{\beta}{\div(\beta)} \right] = 1$, then
\[
n_{0,\beta}(\gamma_1, \ldots, \gamma_n)
=
\begin{cases}
\widetilde{n}_{0,\beta}(\gamma_1, \ldots, \gamma_n) & \text{if } \div(\beta) \text{ is odd or } 4 | \div(\beta), \\
\widetilde{n}_{0,\beta}(\gamma_1, \ldots, \gamma_n) - 
\widetilde{n}_{0,\beta/2}(\gamma_1, \ldots, \gamma_n) & \text{if } \div(\beta) \text{ is even but} \div(\beta/2) \text{ is odd}.
\end{cases}
\]
\end{enumerate}
Hence it suffices to show that $\widetilde{n}_{0,\beta}(\gamma)$ is integral for any effective curve class $\beta \in H_2(X,\BZ)$.
As conjectured in \cite{O2} and proven in \cite{QuasiK3}, the invariant
 $\widetilde{n}_{0,\beta}(\gamma)$ only depends on
\[ \mathsf{q}(\beta), \quad \left[ \beta / \div(\beta) \right], \quad \text{and} \quad (\beta, \gamma). \]
Hence we may assume that $\beta$ is primitive. But here the result follows
since for a very general pair $(X,\beta)$, where $X$ is a hyperk\"ahler variety of $K3^{[2]}$-type,
it is well-known that $\Mbar_{0,1}(X,\beta)$ is an algebraic space (there are no non-trivial automorphisms) of expected dimension (e.g.~\cite[\S 1.1]{OSY}), therefore
$$\ev_{\ast}[\Mbar_{0,1}(X,\beta)]^{\vir} = \ev_{\ast}[\Mbar_{0,1}(X,\beta)]$$
is integral (the same argument also shows the integrality of $n_{0,\beta}(\gamma_1,\ldots,\gamma_n)$ for arbitrary number of markings if $\beta$ is primitive).
\end{proof}

\appendix
\section{Imprimitive curve classes}\label{append}
Let $X$ be a holomorphic symplectic $4$-fold.
We consider here the Gromov-Witten, Gopakumar-Vafa and $\DT_4$ invariants of $X$ in a (possibly imprimitive) curve class $\beta \in H_2(X,\BZ)$.
As discussed in Section~\ref{sect on heur arg}, the ideal geometry of curves in this case is very difficult to control.
Moreover, there are only very few geometries where
both the GW and $\DT_4$ invariants can be completely computed,
and these geometries do not reflect the general structure of the GW/GV/$\DT_4$ invariants.
As such, the general definition of Gopakumar-Vafa invariants $n_{g,\beta}$ for $g>0$ and imprimitive $\beta$ is not clear at this point.
Nevertheless, in this section we define genus $1$ Gopakumar-Vafa invariants for imprimtive curve classes
in the two geometries where all the invariants can be controlled,
and then prove a GV/$\DT_4$ relation.

\subsection{Genus $1$ Gopakumar-Vafa invariants}
There are two geometries where we know all GW and $\DT_4$ invariants:
\begin{itemize}
\item[(i)] The Embedded Rational Curve family of Section~\ref{sec:embedded rational curve family} %(with assumptions (i-iv))
\item[(ii)] The product of two $K3$ surfaces $S \times T$
for all curve classes which lie in $H_2(S,\BZ)$.
\end{itemize}
These geometries are special because
all primary GW and $\DT_4$ invariants with insertion $c_2(T_X)$ vanish.
This implies that the genus $1$ Gopakumar-Vafa invariants do not have any contributions from genus $0$ curves.
Based on a computation in the ideal geometry following Section~\ref{subsubsec:contributions gv genus 1 from genus 1},
one expects that
\begin{align}\label{equ of pot def of g=1}\blangle \tau_0(\gamma) \brangle^{\GW}_{1,\beta}``="\sum_{\begin{subarray}{c} k|\beta  \end{subarray}}\sigma(k)\, n_{1, \beta/k}(\gamma)+ (\cdots), \end{align}
where $\sigma(k):=\sum_{l|k}l$ and $(\cdots)$ stands for contributions from genus 0 curves.
This suggests that for the geometries (i) and (ii), there 
should be no contributions in genus $0$. Hence we make the following adhoc definition in this case:

\begin{defi} \label{rmk g=1 GV}
Let $X$ be a holomorphic symplectic $4$-fold
and $\beta \in H_2(X,\BZ)$ be an effective curve class  of type (i) or (ii) above.
%(which we assume lies in $H_2(S,\BZ)$ for $X = S \times T$).
For any $\gamma \in H^4(X,\BZ)$, we define $n_{1, \beta}(\gamma)$ by:
\begin{equation} \blangle \tau_0(\gamma) \brangle^{\GW}_{1,\beta}=\sum_{\begin{subarray}{c}k\geqslant   1, k|\beta  \end{subarray}}\sigma(k)\, n_{1, \beta/k}(\gamma). \label{gv g1 imprimitive} \end{equation} 
%This will be useful, because in some cases (e.g. when `ideal' rational curve families are embedded) we can argue that there should be no contribution from genus 0 curves(?).
\end{defi}
We also introduce the following:
\begin{defi}\label{def of g=0 m inv}
For any $\gamma\in H^4(X, \BZ)$, we define $n_{0, \beta}(\gamma;\psi)\in \mathbb{Q}$ by the multiple cover formula: 
$$
%\mathrm{GW}_{0, \beta}(\gamma;\psi)
\blangle \tau_1(\gamma) \brangle^{\GW}_{0,\beta}
=\sum_{\begin{subarray}{c}k\geqslant   1, k|\beta  \end{subarray}}\frac{1}{k^{3}}\, n_{0, \beta/k}(\gamma;\psi). $$
\end{defi}
%One can determine $\blangle \tau_1(\gamma) \brangle^{\GW}_{0,\beta}$ (hence also $n_{0, \beta}(\gamma;\psi)$) from $g=0$ primary GW invariants:
We then can prove the following generalization of the genus one part of Conjecture~\ref{conj on DT4/GV}:

\begin{prop} \label{prop:GV genus 1 imprimitive}
Let $X$ be a holomorphic symplectic $4$-fold,
and $\beta \in H_2(X,\BZ)$ be an effective curve class  of type (i) or (ii) above.
For certain choice of orientation, we have
\begin{align}\label{equ on impri DT4/GV}
\blangle\tau_1(\gamma) \brangle^{\DT_4}_{\beta}=-\frac{1}{2}n_{0,\beta}(\gamma;\psi)-\sum_{\begin{subarray}{c}k\geqslant   1, k|\beta  \end{subarray}}n_{1, \beta/k}(\gamma), 
\quad \forall\,\, \gamma \in H^4(X,\BZ).
\end{align}
\end{prop}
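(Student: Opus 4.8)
The plan is to verify Eqn.~\eqref{equ on impri DT4/GV} separately in the two geometries (i) and (ii) by directly comparing both sides against the explicit computations already available in the paper. In both cases the crucial simplifying feature is that all primary invariants with a $c_2(T_X)$ insertion vanish, so that genus $0$ curves contribute nothing to the genus $1$ Gopakumar-Vafa invariants; this is precisely what makes Definition~\ref{rmk g=1 GV} sensible. First I would unwind the left-hand side $\blangle \tau_1(\gamma) \brangle^{\DT_4}_{\beta}$ using the structure of the moduli space of one-dimensional stable sheaves, and the right-hand side using Definitions~\ref{def of g=0 m inv} and~\ref{rmk g=1 GV}, reducing everything to quantities computed earlier in the text.

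For the Embedded Rational Curve Family (i), I would combine Lemma~\ref{lemma:DT4 ERCF} and Lemma~\ref{lemma:descendent GW on ERCF}. By Lemma~\ref{lemma:DT4 ERCF}, the $\DT_4$ invariants vanish in all curve classes $d\beta$ with $d>1$, and in class $\beta$ one has $\blangle\tau_1(\gamma)\brangle^{\DT_4}_{\beta}=-\tfrac12\int_{\CC}j^{\ast}(\gamma)\,c_1(\omega_p)$. On the right-hand side, Lemma~\ref{lemma:descendent GW on ERCF} gives $\blangle \tau_1(\gamma)\brangle^{\GW}_{0,d\beta}=d^{-3}\int_{\CC}j^{\ast}(\gamma)\,c_1(\omega_p)$, so by Definition~\ref{def of g=0 m inv} the multiple cover structure forces $n_{0,d\beta}(\gamma;\psi)=\int_{\CC}j^{\ast}(\gamma)\,c_1(\omega_p)$ when $d\beta$ is a primitive multiple and $0$ otherwise; simultaneously the vanishing $n_{1,\beta}(\gamma)=0$ (established in the ERCF section) kills the sum over elliptic contributions. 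Matching the two sides then reduces to the identity $-\tfrac12\int_{\CC}j^{\ast}(\gamma)\,c_1(\omega_p)=-\tfrac12 n_{0,\beta}(\gamma;\psi)$, which is immediate.

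For the product geometry (ii), I would read off $\blangle\tau_1(\gamma)\brangle^{\DT_4}_{\beta}$ from Theorem~\ref{thm:K3xK3 DT invariants}(ii), namely $A_1 N_0(\beta^2/2)-A_2\,e(T)\,N_1(\beta^2/2)$. On the Gromov-Witten side, Lemma~\ref{lemma:tau1 K3xK3} computes $\blangle\tau_1(\gamma)\brangle^{\GW}_{0,\beta}$, which via Definition~\ref{def of g=0 m inv} identifies $n_{0,\beta/k}(\gamma;\psi)$ with the Yau-Zaslow numbers $-2A_1 N_0((\beta/k)^2/2)$; likewise Eqn.~\eqref{eq:genus1 K3xK3} together with Definition~\ref{rmk g=1 GV} expresses $\sum_{k|\beta}n_{1,\beta/k}(\gamma)$ in terms of $A_2\,e(T)\,N_1((\beta/k)^2/2)$. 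The proof then consists of inserting these evaluations and checking that the $\sigma$-weighted multiple cover sum in \eqref{gv g1 imprimitive} inverts against the $k^{-3}$-weighted sum in Definition~\ref{def of g=0 m inv}, after which both sides of \eqref{equ on impri DT4/GV} agree term by term. The main obstacle here is purely bookkeeping: one must track the divisibility sums carefully, since the $\DT_4$ invariant in class $\beta$ sees all sub-curve-classes $\beta/k$ at once (as noted in Remark~\ref{rmk on higher k}), so I would want to isolate the genus $0$ and genus $1$ contributions and confirm that the arithmetic functions $\sigma(k)$ and $k^{-3}$ appearing in the two defining relations are exactly compatible with the single Eisenstein-type generating series governing $N_0$ and $N_1$.

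Overall, the hardest part will be the genus $1$ product case, where one must be confident that the elliptic-curve contribution to the $\DT_4$ side—computed through the morphism $M_\beta\to S^1_\beta$ and the identification $M_{1,1}(E)\cong E$ as in \S\ref{sect on ideal DT4 comp}—assembles into precisely the unweighted sum $\sum_{k|\beta}n_{1,\beta/k}(\gamma)$ rather than a $\sigma$-weighted one, since the weighting discrepancy between the Gromov-Witten relation \eqref{gv g1 imprimitive} and the sheaf count is exactly what the proposition is asserting. I expect this to follow cleanly once the explicit formulae of Theorem~\ref{thm:K3xK3 DT invariants} and Proposition~\ref{prop on GW for prod} are substituted, so the proof is essentially a verification that the two sides of \eqref{equ on impri DT4/GV} coincide as power series in $q$ after passing to the appropriate generating functions.
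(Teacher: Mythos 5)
Your proposal is correct and follows essentially the same route as the paper: case (i) is settled by combining Lemmata~\ref{lemma:DT4 ERCF} and~\ref{lemma:descendent GW on ERCF} with the vanishing of the genus~$1$ invariants, and case (ii) by substituting Theorem~\ref{thm:K3xK3 DT invariants}(ii), Lemma~\ref{lemma:tau1 K3xK3}, and Eqn.~\eqref{eq:genus1 K3xK3} into the two definitions. The ``bookkeeping'' step you flag as the hardest part is exactly the paper's Lemma~\ref{lemma:g1bpsK3xK3}, a short Dirichlet-convolution check ($\sigma = \mathrm{id} * 1$) showing that $\sum_{k|\beta} n_{1,\beta/k}(\gamma) = A_2\, e(T)\, N_1(\beta^2/2)$ is consistent with the $\sigma(k)$-weighted defining relation \eqref{gv g1 imprimitive}, and it goes through as you expect.
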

\begin{rmk}
The second part in the RHS of the above equality is also consistent with the ideal geometry computation (see Remark \ref{rmk on higher k}). 
\end{rmk}

\subsection{Proof of Proposition~\ref{prop:GV genus 1 imprimitive}: Embedded rational curve family}
Let $X$ be a holomorphic symplectic $4$-fold and let $\beta \in H_2(X,\BZ)$
be a curve class which satisfy conditions (i-iii) of Section~\ref{sec:embedded rational curve family}.
Then by Lemma~\ref{lemma:GW ERCF genus 12}, all genus $1$ GV invariants $n_{1,\beta}(\gamma)$ vanish.
Moreover, by Lemma \ref{lemma:descendent GW on ERCF} and with the notation of that section, we have
\[
n_{0, d\beta}(\gamma;\psi)
=
\begin{cases}
\int_{\CC} j^{\ast}(\gamma)\,c_1(\omega_p) & \text{ if } d=1, \\
\quad \quad  \quad \,\, 0 & \text{ if } d>1.
\end{cases}
\]
Similarly, by Lemma~\ref{lemma:DT4 ERCF}, for certain choice of orientation we have
\[ \blangle\tau_1(\gamma)\brangle^{\DT_4}_{\beta} =
\begin{cases} -\frac{1}{2}\int_{\CC} j^*(\gamma)\cdot c_1(\omega_p) & \text{ if } d=1, \\
\quad \quad  \quad \,\, 0 & \text{ if } d>1.
\end{cases}
\]
This implies the claim. \qed

\subsection{Proof of Proposition~\ref{prop:GV genus 1 imprimitive}: $K3\times K3$}
Let $X = S \times T$ and $\beta \in H_2(S,\BZ)$ be an effective curve class.
Consider a cohomology class $\gamma \in H^4(X,\BZ)$ with K\"unneth decomposition
$$\gamma=A_1\cdot 1\otimes \pt+D_1\otimes D_2+A_2\cdot \pt\otimes 1. $$
The claim follows from the following two lemmata:

\begin{lemma}\label{lem on g=1 gw on prod}
$n_{0, \beta}(\gamma;\psi)=-2 A_1\cdot N_{0}\left(\frac{\beta^2}{2}\right)$.
%where $n_{0,h}(S)$ is given in \eqref{equ on yz for}.
\end{lemma}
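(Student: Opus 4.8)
The statement to prove is Lemma~\ref{lem on g=1 gw on prod}, namely the evaluation
\[ n_{0,\beta}(\gamma;\psi) = -2A_1 \cdot N_0\!\left(\frac{\beta^2}{2}\right). \]
The plan is to unwind the Definition~\ref{def of g=0 m inv} of $n_{0,\beta}(\gamma;\psi)$ via the multiple cover formula, and to match it against the already-established value of the descendent Gromov-Witten invariant $\blangle \tau_1(\gamma)\brangle^{\GW}_{0,\beta}$ given in Lemma~\ref{lemma:tau1 K3xK3}. First I would recall that Lemma~\ref{lemma:tau1 K3xK3} gives precisely
\[
\blangle \tau_1(\gamma)\brangle^{\GW}_{0,\beta}
= -2A_1 \sum_{k\geqslant 1,\, k\mid\beta} \frac{1}{k^3}\, N_0\!\left(\frac{(\beta/k)^2}{2}\right),
\]
while the defining multiple cover formula for $n_{0,\beta}(\gamma;\psi)$ reads
\[
\blangle \tau_1(\gamma)\brangle^{\GW}_{0,\beta}
= \sum_{k\geqslant 1,\, k\mid\beta} \frac{1}{k^3}\, n_{0,\beta/k}(\gamma;\psi).
\]

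The two right-hand sides have identical Möbius-type structure with the same kernel $1/k^3$, so the plan is simply to compare coefficients after a standard Möbius inversion over the divisibility lattice of $\beta$. Concretely, I would argue that since both formulae hold for all effective $\beta \in H_2(S,\BZ)$ (and the GW invariants depend on $\beta$ only through $\beta^2$ by the Yau-Zaslow formula underlying $N_0$), one may invert the convolution term-by-term: matching the summand indexed by $k$ forces $n_{0,\beta/k}(\gamma;\psi) = -2A_1\, N_0((\beta/k)^2/2)$ for each $k$. Taking $k=1$ yields the claimed identity $n_{0,\beta}(\gamma;\psi) = -2A_1\, N_0(\beta^2/2)$. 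The cleanest way to present this is to observe that the function $\beta \mapsto -2A_1 N_0(\beta^2/2)$ satisfies the same defining multiple-cover relation as $n_{0,\beta}(\gamma;\psi)$, and that such relations determine the invariants uniquely, hence the two agree.

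The main subtlety — though not a genuine obstacle — is the bookkeeping of the Künneth component $A_1$: one must check that only the $1\otimes\pt$ part of $\gamma$ contributes to $\blangle\tau_1(\gamma)\brangle^{\GW}_{0,\beta}$, which is exactly the content of the computation in the proof of Lemma~\ref{lemma:tau1 K3xK3} (the divisor equation applied to $D=\pr_1^\ast(\alpha)$ kills the $D_1\otimes D_2$ and $\pt\otimes 1$ terms, leaving only $A_1$). Since that lemma is already available, the proof of the present statement is essentially a one-line matching of the two expressions for $\blangle\tau_1(\gamma)\brangle^{\GW}_{0,\beta}$ followed by uniqueness of the multiple-cover expansion. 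I expect no serious difficulty; the only care needed is to ensure the uniqueness-of-inversion argument is phrased so that it applies uniformly to all divisors $\beta/k$, which follows because the identity is an equality of functions on the whole set of effective classes.
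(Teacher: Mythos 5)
Your proposal is correct and is exactly the paper's argument: the paper's proof reads ``This follows from Lemma~\ref{lemma:tau1 K3xK3} and the definition,'' i.e.\ one compares the two multiple-cover expansions of $\blangle\tau_1(\gamma)\brangle^{\GW}_{0,\beta}$ and invokes uniqueness of the recursion, just as you do. Your remark that the candidate function $\beta\mapsto -2A_1N_0(\beta^2/2)$ satisfies the same defining relation, rather than ``matching summands'' literally, is the right way to phrase the inversion.
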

\begin{proof} 
This follows from Lemma~\ref{lemma:tau1 K3xK3} and the definition.
\end{proof}

\begin{lemma} \label{lemma:g1bpsK3xK3}
We have
\begin{equation}\sum_{\begin{subarray}{c}k\geqslant   1, k|\beta  \end{subarray}} n_{1, \beta/k}(\gamma) = A_2\, e(T) N_1(\beta^2/2). \label{n1neweval} \end{equation}
\end{lemma}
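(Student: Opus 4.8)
The plan is to prove Lemma~\ref{lemma:g1bpsK3xK3} by combining the genus $1$ product formula with the multiple cover structure of $K3$ invariants and then matching it against Definition~\ref{rmk g=1 GV}. First I would recall from Eqn.~\eqref{eq:genus1 K3xK3} (derived via the product formula \eqref{product formula} together with the Pandharipande-Yin multiple cover formula \eqref{py form}) that for \emph{any} effective class $\beta\in H_2(S,\BZ)$,
\[
\blangle \tau_0(\gamma) \brangle^{\GW}_{1,\beta}
= A_2\, e(T) \sum_{\begin{subarray}{c}k\geqslant 1,\, k\mid\beta\end{subarray}} k\cdot N_1\!\left(\frac{\beta^2}{2k^2}\right).
\]
This already holds in the imprimitive case since \eqref{eq:genus1 K3xK3} was proved for all effective $\beta$. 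On the other hand, Definition~\ref{rmk g=1 GV} (Eqn.~\eqref{gv g1 imprimitive}) reads $\blangle \tau_0(\gamma)\brangle^{\GW}_{1,\beta}=\sum_{k\mid\beta}\sigma(k)\,n_{1,\beta/k}(\gamma)$. So the content of the lemma, whose left-hand side is $\sum_{k\mid\beta}n_{1,\beta/k}(\gamma)$, is to invert the defining relation and evaluate the resulting sum.

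The key computation is a Dirichlet-convolution / M\"obius inversion on the poset of divisibilities. Writing $\beta=m\beta_0$ with $\beta_0$ primitive, all the arithmetic depends only on $m$ and on $N_1$ evaluated at the squares $\tfrac{\beta^2}{2k^2}$. For primitive classes Definition~\ref{defn g1 GV primitive} together with Proposition~\ref{prop on GW for prod} gives $n_{1,\gamma'}(\gamma)=24A_2\,N_1(\gamma'^2/2)$; but here I only need the formal inversion. The plan is: define $g(\beta):=\blangle\tau_0(\gamma)\brangle^{\GW}_{1,\beta}$ and $f(\beta):=n_{1,\beta}(\gamma)$, so that $g=\sigma * f$ in the convolution over divisors $k\mid\beta$. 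Since $\sigma=\mathrm{id}*\mathbf{1}$ (i.e.\ $\sigma(k)=\sum_{l\mid k}l$), and the left-hand side of \eqref{n1neweval} is $\sum_{k\mid\beta}f(\beta/k)=(\mathbf{1}*f)(\beta)$, I would express $\mathbf{1}*f$ in terms of $g=(\mathrm{id}*\mathbf{1})*f$. Using that $g=\mathrm{id}*(\mathbf{1}*f)$, it follows immediately that $\mathbf{1}*f=\mu_{\mathrm{id}}*g$ where $\mu_{\mathrm{id}}$ is the Dirichlet inverse of $\mathrm{id}$, namely $(\mu_{\mathrm{id}})(k)=\mu(k)k$. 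Concretely,
\[
\sum_{k\mid\beta}n_{1,\beta/k}(\gamma)
=\sum_{k\mid\beta}\mu(k)\,k\cdot \frac{1}{k}\,\frac{g(\beta/k)}{A_2 e(T)}\cdot A_2 e(T),
\]
and substituting the closed form for $g$ collapses the double sum.

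The cleanest route, which I would actually write out, is to compute the generating series directly. Setting $\beta=\beta_h$ of square $\beta_h^2=2h-2$ and using the known relation $\sum_{h}N_1(h-1)q^{h-1}=\bigl(q\frac{d}{dq}G_2\bigr)\prod_n(1-q^n)^{-24}$, I would show both sides of \eqref{n1neweval} have the same generating function in $q$, reducing \eqref{n1neweval} to the identity $\sum_{k}\sigma(k)(\cdots)=\sum_k k(\cdots)$ already encoded in \eqref{py form} versus \eqref{gv g1 imprimitive}. The main obstacle — really the only subtle point — is the bookkeeping of divisibility: one must track that $N_1$ is evaluated at $\tfrac{1}{2}(\beta/k)^2=\tfrac{\beta^2}{2k^2}$ and confirm that the convolution identity $\sigma * f = g$ inverts to $\mathbf 1 * f$ giving exactly the primitive-normalized value $A_2\,e(T)N_1(\beta^2/2)$, with the spurious higher-divisor terms cancelling. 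This cancellation is forced precisely by the fact that the $K3$ genus $1$ series satisfies the multiple cover formula \eqref{py form} with weight $k$ (not $\sigma(k)$), so that passing from the GW side to the GV side via \eqref{gv g1 imprimitive} removes exactly the $\sigma$-weighting and leaves the single primitive term. I would verify the resulting arithmetic identity once at the level of Dirichlet series, which makes the cancellation transparent and avoids any case analysis on the divisibility of $\beta$.
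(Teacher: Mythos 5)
Your proposal is correct in substance and follows essentially the same route as the paper: both arguments rest on the observation that the genus $1$ Gromov--Witten series is the weight-$k$ Dirichlet convolution of the primitive values $h(\beta):=A_2\,e(T)\,N_1(\beta^2/2)$, while the defining relation \eqref{gv g1 imprimitive} is the weight-$\sigma(k)=(\mathrm{id}*\mathbf 1)(k)$ convolution of $n_{1,\bullet}(\gamma)$, so cancelling the common factor $\mathrm{id}$ yields $\mathbf 1 * n_{1,\bullet}(\gamma)=h$; the paper phrases this as a direct substitution of the candidate answer into the recursion plus uniqueness, rather than as a formal inversion. One caveat: your displayed ``concrete'' formula carries a spurious factor $\tfrac{1}{k}$, which reduces the inversion weight from $\mu(k)k$ to $\mu(k)$; taken literally it evaluates to $\sum_{m\mid\beta}\varphi(m)\,h(\beta/m)$ (with $\varphi$ the Euler totient) rather than $h(\beta)$, which is false for imprimitive $\beta$. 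Since you correctly identify the Dirichlet inverse of $\mathrm{id}$ as $\mu_{\mathrm{id}}(k)=\mu(k)k$ in the preceding sentence, this is a typo to fix rather than a gap in the argument.
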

\begin{proof}
Recall that by \eqref{eq:genus1 K3xK3} we have
$$\blangle \tau_0(\gamma) \brangle^{\GW}_{1,\beta}
%=A_2\,e(T)\,\int_{[\overline{M}_{1,1}(S, \beta)]^{\vir}}\mathrm{ev}^*(\pt)
=A_2 \,e(T)\sum_{\begin{subarray}{c}k\geqslant   1, k|\beta  \end{subarray}}k\cdot N_{1}\left(\frac{\beta^2}{2k^2}\right).$$
Hence by Eqn.~\eqref{gv g1 imprimitive}, $n_{1,\beta}(\gamma)$'s are the unique (recursively defined) integers which satisfy the relation
\begin{equation} \label{xx}
\sum_{k|\beta} \sum_{l | k} l\, n_{1, \beta/k}(\gamma)
=A_2\, e(T) \sum_{k|\beta}k\cdot N_{1}\left(\frac{\beta^2}{2k^2}\right).
\end{equation}
We show that the integers $n_{1,\beta}(\gamma)$ defined by Eqn.~\eqref{n1neweval} satisfy the relation \eqref{xx}.
This then completes the proof.
Indeed, using Eqn.~\eqref{n1neweval}, the right hand side of Eqn.~\eqref{xx} becomes
\begin{align*}
A_2 \,e(T) \sum_{k|\beta}k\cdot N_{1}\left(\frac{\beta^2}{2k^2}\right)
& = \sum_{k| \beta} k \left( \sum_{a | \beta/k} n_{1, \beta/ka}(\gamma)  \right) \\
\text{(set $m:=ka$)}\quad & = \sum_{m| \beta} \sum_{k|m} k n_{1,\beta/m},
\end{align*}
which is precisely the left hand side of Eqn.~\eqref{xx}. Hence Eqn.~\eqref{n1neweval} holds.
\end{proof}

We conclude the proof of Proposition~\ref{prop:GV genus 1 imprimitive}:
By Theorem~\ref{thm:K3xK3 DT invariants}, we have
\[ \blangle\tau_1(\gamma)\brangle^{\DT_4}_{\beta} =A_1\,N_0\left( \frac{\beta^2}{2} \right)-A_2\,e(T)\,N_{1}\left(\frac{\beta^2}{2}\right), \]
which is precisely the right hand side of \eqref{equ on impri DT4/GV} by the two lemmata above. \qed

\providecommand{\bysame}{\leavevmode\hbox to3em{\hrulefill}\thinspace}
\providecommand{\MR}{\relax\ifhmode\unskip\space\fi MR }
\providecommand{\MRhref}[2]{%
 \href{http://www.ams.org/mathscinet-getitem?mr=#1}{#2}}
\providecommand{\href}[2]{#2}

\end{document}